\numberwithin{equation}{section}
\newtheorem{theorem}{Theorem}[section]
\newtheorem{lemma}[theorem]{Lemma}
\newtheorem{proposition}[theorem]{Proposition}
\newtheorem{corollary}[theorem]{Corollary}
\theoremstyle{definition}
\newtheorem{definition}[theorem]{Definition}
\newtheorem{example}[theorem]{Example}
\newtheorem{examples}[theorem]{Examples}
\newtheorem{remark}[theorem]{Remark}
\theoremstyle{plain}
\def\RR{\ensuremath{\mathbb{R}}}
\date{}
\newtheorem{thm}{Theorem}[section]
\newtheorem{prop}{Proposition}[section]
\newtheorem{defn}{Definition}[section]
\newcommand{\thistheoremname}{}
\newtheorem{genericthm}[thm]{\thistheoremname}
\tikzset{
	labl/.style={anchor=south, rotate=90, inner sep=.5mm}
}
\begin{document}
	\pagestyle{plain}
	
	\title{The Stolz' positive scalar curvature sequence for $\Gamma$-proper manifolds and depth-1 pseudomanifolds}
	\author{Massimiliano Puglisi}
	\maketitle

	\setcounter{page}{1}
	\tableofcontents
\newpage

	\section{Introduction}

	The Atiyah-Singer theorem, and the consequent index theory, have provided modern mathematics with important tools capable of connecting fields such as analysis, geometry, and global topology. In their original paper (see \cite{AtiyahSinger} and their follow-up works), M.F. Atiyah and I. Singer relate the index of elliptic differential operators (the Fredholm one, defined in terms of the dimensions of Ker and Coker) acting on sections of smooth vector bundles, with purely topological data concerning the operator itself and the underlying space.
	
	One of the main areas where index theory has achieved significant results and has been extensively studied is in the context of metrics with positive scalar curvature.
	If \( M \) is a smooth \( n \)-dimensional manifold and \( g \) is a Riemannian metric on it, recall that the scalar curvature can be defined as the trace of its Ricci tensor or, in even more "geometric" terms, its evaluation at a point \( x \in M \) is the term \( k_g(x) \) that appears in the following expansion in \( \epsilon \):
	
	\[
	\frac{\text{vol}(B_\epsilon(M,x))}{\text{vol}(B_\epsilon(\mathbb{R}^n,o))} = 1 - \frac{k_g(x)}{6(n+2)}\epsilon^2 + O(\epsilon^4),
	\]
	where \( B_\epsilon \) denotes balls of infinitesimal radius \( \epsilon \) and \( \text{vol} \) denotes the volume functions associated with the respective metrics.
	
	At this point, one might ask: "Why positive scalar curvature?".
	The answer lies in the resolution of the "prescribed scalar curvature problem", primarily due to J. Kazdan and F. Warner, who in \cite{KazdanWarner} proved that any smooth closed manifold \( M \) of dimension at least 3 admits a metric with negative scalar curvature. They went further: any smooth function \( f \) that takes negative values somewhere is the scalar curvature of some metric on \( M \).
	
	Regarding the positive case, unfortunately, no similar result exists. Specifically, the problem concerns two aspects: the existence of metrics with positive scalar curvature (often abbreviated as psc-metrics) and, if possible, their classification up to suitable relations. 
	
	Concerning existence, the most powerful tool available in index theory is the use of appropriate \textit{obstructions} to the existence of psc-metrics. In this sense, the most famous result pertains to the so-called Schrödinger-Lichnerowicz formula, established for Dirac operators $D$ on spin manifolds. This, given by:
	
	\[D^2 = \nabla^*\nabla + \frac{1}{4}k_g,\]
	relates the scalar curvature to the so-called Dirac Laplacian $D^2$ and has the significant implication that if the scalar curvature \(k_g\) is (uniformly) positive, then the operator \(D\) is invertible. For a smooth closed manifold \(M\), this implies that if the Fredholm index of the operator \(D\) is non-zero, then \(M\) cannot admit psc-metrics. Thus, this index is precisely an obstruction to its existence.
	
	From here, numerous generalizations and stronger obstructions have been obtained, thereby refining the notion of index. For example, the \textit{Rosenberg index} \(\alpha^\Gamma(M)\) (see \cite{Ros1, Ros3})is the foundation of the Gromov-Lawson-Rosenberg conjecture. This conjecture asserts that for a closed, connected, smooth spin manifold of dimension \(\geq 5\), the index \(\alpha^\Gamma(M)\), defined as an element in $KO$-theory (namely, $K$-theory for real $C^*$-algebras) of the real reduced \(C^*\)-algebra of the fundamental group \(\Gamma=\pi_1(M)\), namely   \(C^{\ast}_{r,\mathbb{R}}\pi_1(M)\), is a primary obstruction to the existence of psc-metrics on \(M\) (i.e., its vanishing is a necessary and sufficient condition for their existence).
	
	This conjecture has been proven true under certain conditions. In his works \cite{Stolz1} and \cite{Stolz2}, Stolz established that it holds if the manifold \(M\) has a trivial fundamental group. More generally, if \(\pi_1(M)\) satisfies the \textit{strong Novikov conjecture}, then the conjecture is verified in its stable formulation. Recall that the strong Novikov conjecture, along with the \textit{Baum-Connes conjecture}, concerns the relationship between the \(K\)-theory of the reduced \(C^*\)-algebra of a group and the \(K\)-homology of an appropriate classifying space of that group. Both can be formulated in terms of the so-called \textit{assembly map} defined from these \(K\)-homology groups to the \(K\)-theory groups. In particular, the strong Novikov conjecture asserts that this map is injective, while the Baum-Connes conjecture asserts that it establishes an isomorphism.
	
	Index theory for Dirac-type operators is generally formulated in terms of operator algebras and \(K\)-theory. The general idea is that such operators define elements in certain \(C^*\)-algebras $A$, and, in particular, using auxiliary structures and tools such as functional calculus, these elements represent classes in the \(K\)-theory of \(C^*\)-algebras. In this context, the condition for an operator to be Fredholm (i.e., invertible modulo compact operators) is replaced by the more general condition of invertibility within the \(C^*\)-algebra modulo one of its ideals.

	Assuming \( I \subset A \) is such an ideal, then considering the short exact sequence:
	
	\[ 0 \rightarrow I \rightarrow A \rightarrow A/I \rightarrow 0 \]
	which induces a long exact sequence in K-theory, then we assoiate to the Dirac operator on an n-dimensional manifold a fundamental class \([M] \in K_{n+1}(A/I)\) ($KO_{n+1}(A/I)$ when dealing with real $C^*$-algebras).
	
	The index is then given by applying to this class the boundary map \(\delta\) associated with the long exact sequence, so \(\text{ind}(D)=\delta([M]) \in K_n(I)\). At this point, the additional geometric condition regarding the uniform positivity of the scalar curvature of a metric \(g\) ensures that the operator is already invertible in \(A\). This implies the existence of a lift of the fundamental class, namely a class \(\rho(M,g) \in K_{n+1}(A)\) and, by exactness of the K-theory sequence, the vanishing of the index class. It is observed that the group \(K_{n+1}(A)\) is often called the structure group and \(\rho\) is an example of a \textit{secondary invariant}.

	In the case of a compact manifold, all this formulation describes the index exactly like the usual Fredholm index. However, an important generalization, where the classical Fredholm condition fails, concerns the case of non-compact manifolds. From here, we move to the so-called \textit{coarse index theory}, based on techniques due to John Roe, which then goes through the \textit{Roe algebras}.
	
	These, introduced in detail in \ref{secroealgebras} in a more general setting, are specific operator algebras defined on spaces endowed with a representation of the \(C^*\)-algebra \(C_0(X)\) of functions vanishing at infinity defined on a certain space \(X\). These fit into a short exact sequence:
	
	\[
	0 \to C^*(X) \to D^*(X) \to D^*(X)/C^*(X) \to 0,
	\]
	which in particular gives rise to a long exact sequence in K-theory:
	
	\[
	\ldots \to K_{n+1}(C^*(X)) \to K_{n+1}(D^*(X)) \to K_{n+1}(D^*(X)/C^*(X)) \xrightarrow{\delta} K_{n}(C^*(X)) \to \ldots
	\]
	known as the \textit{Higson-Roe surgery exact sequence}. This sequence gives rise, in particular, to an index class \( \text{ind}(D) = \delta([M]) \in K_n(C^*(M)) \) and a class $\rho$ in $K_{n+1}(D^*(M))$.
	
	Secondary invariants, of which \(\rho\), as mentioned above, is an example, are useful for classifying metrics with positive scalar curvature on a given manifold \(M\). Certainly, such a classification is useful if it allows distinguishing "classes" of metrics with respect to a certain equivalence relation. This relation will be given by \textit{concordance}, which will be defined later in \ref{stolzsequence}, and in that same section, we will introduce in detail the main tool enabling such classification: the Stolz sequence. This has the following form:
	
	\[
	\ldots \to R^{spin}_{n+1}(M) \to Pos^{spin}_n(M) \to \Omega^{spin}_n(M) \to R^{spin}_n(M) \to \ldots
	\]
	where all of these are bordism groups whose definitions involve the geometric content given by a positive scalar curvature metric.
	The group $Pos^{spin}_n(M)$, in particular, contains information about psc-metrics on \( M \) modulo an appropriate bordism relation. The group $R^{spin}_{n+1}(M) \simeq R^{spin}_{n+1}(B\pi_1(M))$ (this isomorphism will follow from Theorem \ref{G2conn}), on the other hand, as stated later in Theorem \ref{freeactionofR}, acts freely and transitively on the space of concordance classes of psc-metrics on \( M \).
	This has been used, for instance, in \cite{SZ,PSZ} in order to give an estimation of the virtual rank of this affine group and of the moduli space of concordance classes of psc metrics, obtained through the action of the diffeomorphism group of $M$.
	
	A fundamental step in \cite{SZ} uses the fact, that a 2-connected map between CW-complexes, such as the classyfing map $u\colon M\to B\pi_1(M)$, induces an isomorphism between $\mathrm{R}^{spin}_*$ groups. 
	
	Unfortunately, these $R$ bordism groups are difficult to compute. This problem is generally addressed by mapping this sequence into one where the terms are better understood.
	In particular, one such mapping we want to refer to is the one carried out in \cite{PiazzaSchick}, where the Stolz sequence is indeed mapped into the analytical sequence of Higson-Roe (in its equivariant formulation). In their version (\cite[Theorem 1.28]{PiazzaSchick}), valid in the odd-dimensional case (but later proved on each dimension, see \cite{XieYu}), the mapping has the following form:
	
	\begin{equation} 
		\begin{tikzcd}[sep=small, row sep=1.5em]
			\ldots \ar[r] & R^{spin}_{n+1}(M) \ar[r] \ar[d] & Pos^{spin}_{n}(M)\ar[r] \ar[d] & \Omega^{spin}_{n}(M)\ar[r] \ar[d] & R^{spin}_{n}(M)\ar[r] \ar[d] & \ldots \\
			\ldots \ar[r] & K_{n+1}(C^{\ast}_{r}\Gamma) \ar[r] & K_{n+1}(D^{\ast}(\widetilde{M})^\Gamma) \ar[r] & K_n(M) \ar[r, "\mu"] & K_n(C^{\ast}_{r}\Gamma) \ar[r] & \ldots
		\end{tikzcd}	
	\end{equation}
where \(\Gamma = \pi_1(M)\), \(\widetilde{M} \rightarrow M\) is the universal cover of \(M\), and the well-known isomorphisms \(K_*(D^*(\widetilde{M})^\Gamma / C^*(\widetilde{M})^\Gamma) \simeq K_{* - 1}(M)\) and \(K_*(C^*(\widetilde{M})^\Gamma) \simeq K_*(C^*_r \Gamma)\) have been used.
Observe the the map $\mu$, in the universal version of this sequence for the group $\Gamma$, is the assembly map mentioned before involved in the Baum-Connes conjectures.

	\subsection{Outline of main results}
	
	The work contained in this thesis mainly revolves around the Stolz sequence, and in particular its $R$-groups, which will be introduced in more detail in the following section. Specifically, the thesis can be fundamentally considered as composed of two parts.
	
	The first part, meaning Chapter \ref{g,f}, which is based on a joint work with Thomas Schick and Vito Felice Zenobi, addresses the case of $(G, \mathcal{F})$-spaces, i.e., proper $G$-spaces whose isotropy groups lie in a given family $\mathcal{F}$ of subgroups of $G$. 
	
	The second part, essentially given by the rest of the discussion, investigates the case of stratified spaces in their "smooth" version, where singularities, even non-isolated ones, are present. See also \cite{Botvinnik} and \cite{BR} for general references regarding positive scalar curvature metrics in singular cases. However, most of the results in this part regard spaces with particular types of singularities, namely $(L,G)$-singularities, mainly introduced in \cite{BPR2} and \cite{BPR1}.
	
	Now let's summarize the main results contained here. 
	
	In the case of $(G, \mathcal{F})$-spaces, an analogous version of the Stolz sequence will be introduced, whose exactness is derived in a nearly identical manner to the standard case (see Proposition \ref{exactg,fstolz}). Essentially, it requires that cycles be represented by spin $(G, \mathcal{F})$-manifolds and maps to be $G$-equivariant.
	Additionally, the following theorem is introduced, stating the general principle mentioned above, and which still holds true even in this context, providing a more direct proof of the original result.\\

\noindent
{\bf Theorem \ref{G2conn}.} {\it Let $f\colon X \rightarrow Y$ be a continuous, 2-connected $G$-map between $(G,\mathcal{F})$-CW-complexes which induces an isomorphism between the fundamental groups of $X$ and $Y$. Then the functorially induced map $f_{*}\colon R_{n}^{spin}(X)^{(G,\mathcal{F})} \rightarrow R_{n}^{spin}(Y)^{(G,\mathcal{F})}$ is an isomorphism.} \\

The main tools in its proof are provided by Morse theory and the Gromov-Lawson theorem (\cite{GL, SchoenYau}), in an appropriately adapted version for the context.

Next, a realization of a universal space for this category of spaces is introduced. Specifically, using the formalism of groupoids, a generalization of the fundamental group for a CW complex will be presented, namely the \textit{fundamental functor} $\Pi_1(X; G, \mathcal{F})$ associated with a $(G, \mathcal{F})$-CW complex $X$. This is a finer object as it takes into account the fundamental groups associated with all the fixed point spaces with respect to the action of subgroups in the family $\mathcal{F}$.

Associated with this functor, a universal space $B\Pi_1(X; G, \mathcal{F})$ will be introduced, and a concrete realization will be given starting from the complex $X$. Specifically, as stated more precisely in the corollary below, every $(G, \mathcal{F})$-CW complex whose fundamental functor is "isomorphic" to $\Pi_1(X; G, \mathcal{F})$ will have a map to this universal space, which, in particular, will establish an isomorphism between their respective Stolz $R$ groups.\\

\noindent
{\bf Corollary \ref{univ2conn}.} {For all $(G, \mathcal{F})$-CW complex $Y$ such that its fundamental functor is isomorphic to that of $X$, that is, there exists a isomorphism of groupoids:

\[\Phi\colon \Pi_1(Y;G,\mathcal{F})\to \Pi_1(X;G,\mathcal{F}),\]
then there exists a $2$-connected $G$-equivariant cellular map $\varphi$ realizing an isomorphism:

\[\varphi_* \colon R_{n}^{spin}(Y)^{(G,\mathcal{F})} \rightarrow R_{n}^{spin}(B \Pi_1(X;G,\mathcal{F}))^{(G,\mathcal{F})}\]}

After a general introduction to stratified spaces, or more precisely to Thom-Mather stratified spaces, and to the class of wedge metrics that can be defined on them, Chapter \ref{smoothlystrspace} introduces a Stolz sequence, namely the \textit{$L$-fibered Stolz sequence} in the depth 1 case:
	\[
\ldots \to R^{spin, L-fib}_{n+1}(X) \to Pos^{spin, L-fib}_n(X) \to \Omega^{spin, L-fib}_n(X) \to R^{spin, L-fib}_n(X) \to \ldots
\]
whose exactness is proven in detail in Theorem \ref{Lstolzexact}.

In Chapter \ref{secstrmorse}, the problem of establishing an analogue of Morse theory in the context of stratified spaces is addressed. This topic, known as Stratified Morse Theory (for a general treatment, see \cite{GMP}), is introduced by approaching the problem directly to stratified spaces of depth 1, specifically to achieve a decomposition of these spaces as $CW$-complexes.

To establish a result analogous to Theorem \ref{G2conn} mentioned above, it will be necessary to restrict to special case of stratified spaces of depth 1. These are referred to as spaces with $(L,G)$-singularities, where $L$, the link associated with the depth-$1$ stratum of the stratified space, is fixed and has the structure of a homogeneous space, i.e., $L = G/K$. This particular geometry, which was introduced in \cite{BPR2, BPR1}, will be detailed in Section \ref{L,Gsetting}.

In Chapter \ref{mappingstolzlg}, the discussion focuses on how it is possible to establish a mapping between the Stolz sequence (in the $(L,G)$ case mentioned above) and the Higson-Roe surgery exact sequence. In particular, the aim is to prove the following result.\\	
	
\noindent	
{\bf Theorem \ref{mapstolzhigsonthm}.} {Denoting by $B\Gamma=E\Gamma/\Gamma$ the classifying space for Galois $\Gamma$-coverings, then the $(L,G)$-Stolz sequence \ref{l,gstolzseq} with $B\Gamma$ as reference space maps to the universal Higson Roe surgery sequence (\ref{higsonroelocalizedu}).
This means that the following diagram is commutative:

\begin{center}
\begin{tikzcd}[sep=small, row sep=1.5em]
		\ldots \ar[r, "\iota"] & R^{spin, (L,G)}_{n+1}(B\Gamma) \ar[r, "\partial"] \ar[d, "Ind^{\Gamma}_{rel}"] & Pos^{spin, (L,G)}_{n}(B\Gamma)\ar[r, "\varphi"] \ar[d, "\rho^{\Gamma}"] & \Omega^{spin, (L,G)}_{n}(B\Gamma)\ar[r, "\iota"] \ar[d, "Ind^{\Gamma}_L"] & R^{spin, (L,G)}_{n}(B\Gamma)\ar[r, "\partial"] \ar[d, "Ind^{\Gamma}_{rel}"] & \ldots \\
		\ldots \ar[r] & KO_{n+1}(C^{\ast}_{r,\mathbb{R}}\Gamma) \ar[r] & KO_n(C^{\ast}_{L,0;\Gamma}) \ar[r] & KO_n(C^{\ast}_{L;\Gamma}) \ar[r] & KO_n(C^{\ast}_{r,\mathbb{R}}\Gamma) \ar[r] & \ldots
\end{tikzcd}	
\end{center}
}

As can be deduced from the statement, the exact sequence below differs from the one previously proposed in the introductory part. Indeed, two equivalent formulations have been used to describe it. The first involves the use of graded K-theory groups, defined as homotopy classes of appropriate morphisms, rather than the standard approach in terms of projections and unitaries. This approach accounts for the grading as well as the real structure of a $C^*$-algebra, with the consequent advantage of being able to address easily the real case. The second involves the use of localization algebras instead of Roe algebras (which will still be introduced in detail within the chapter).

This result, as previously mentioned, has the direct advantage of providing useful information regarding the bordism groups present in the Stolz sequence, which are otherwise difficult to compute. This becomes even more explicit in Chapter \ref{estimate}, where a lower bound estimate on the rank of the structure group $Pos^{spin, (L,G)}$ is proposed as its application.

At the beginning of this chapter, an overview of some results concerning the spaces of metrics definable on a manifold is provided, and in particular, the definition of the index-difference homomorphism is recalled. This homomorphism provides an example of an obstruction to the existence of a concordance relation between two positive scalar curvature metrics.

Finally, all these constructions are revisited in the case of wedge metrics, particularly in the \textit{wedge well-adapted} case, on spaces with $(L,G)$-singularities.

Reducing this homomorphism to the smooth case, thanks to the particular structure of these well-adapted metrics, will allow us to obtain that, under appropriate conditions on a group $\Gamma$, including the validity of the Baum-Connes conjecture (in a rational version), it is rationally surjective onto the $KO$-theory group of its reduced $C^*$-algebra. This, together with the mapping theorem, will then provide information on the minimum rank of the structure group $Pos^{spin, (L,G)}$ (see Theorem \ref{rankpos}). Specifically, the following result will be demonstrated in the final section of this thesis.\\

\noindent
{\bf Theorem \ref{rankpos}.}{ Let $M$ be a compact, spin stratified pseudomanifold with $(L,G)$-singularities of dimension $n \geq 6$ with link $L$ simply connected. Assume that $\Gamma=\pi_1(M)$ satisfies all the hypothesis of proposition \ref{winddiffsurj} and denote by $f\colon M \to B\Gamma$ the 2-connected classifying map of the universal cover of $M$. 

If:

\[k:=\dim\left(Coker\left(f_{\sharp}\colon KO_{n+1}(M) \otimes \mathbb{Q} \to KO_{n+1}(B\Gamma) \otimes \mathbb{Q}\right)\right),\]

then the following estimate holds:

\[rk\left(Pos^{spin,(L,G)}_n(M)\right)\geq k\]}

	\newpage
	
	\section{The Stolz sequence}\label{stolzsequence}
	
	In \cite{stolz} Stolz introduced the notion of R-groups for compact manifolds with boundary and explained how those fit in a exact sequence, namely the Stolz positive scalar curvature sequence. These groups were defined as bordism groups associated to a \textit{supergroup} $\gamma$ whose cycles were given by compact $\gamma$-manifolds with boundary and with a specific positive scalar curvature on such boundary. 
	
	A supergroup $\gamma$ can be briefly thought as a triple made by a group $\pi$, a $\mathbb{Z}_2$-graduation of $\pi$ (i.e. a group homomorphism $ \pi \to \mathbb{Z}_2$) and a suitable group-extension $\widehat{\pi} \to \pi$. endowed with a canonical $\gamma$-structure. On a vector bundle $E$, one can define a $\gamma$-structure as a reduction of the structure group of $E$ for a specific group defined from $\gamma$. For specific supergroups, this can be reduced to the existence of a spin structure on a manifold, which will be the case we discuss here. 
	
	The main application of the $R$-groups and the Stolz sequence has to be found in the classification of concordance classes of positive scalar metrics. We now recall briefly the main definitions and results.

	First of all, we fix a generic space $X$, called \textit{reference space}. In general, \( X \) can be arbitrary. However, we will discuss later two natural choices for \( X \) that turn out to be convenient for our purposes.
	
	Firstly, a brief recall on the notion of bordism. This can be introduced as an equivalence relation: if \( M_1 \) and \( M_2 \) are two smooth compact manifolds of dimension \( n \), we say that \( M_1 \) and \( M_2 \) are \textit{bordant} if there exists a compact \( (n+1) \)-dimensional manifold $W$ such that its boundary is non-empty and equal to \( M_1 \sqcup M_2 \). 
	In particular, we say that $W$ is a bordism between $M_1$ and $M_2$.
	
	\begin{definition}
		 \( \Omega_n \) is the set of the equivalence classes with respect to the previous equivalence relation.  If we denote by \( [M_i] \) its elements (i.e. its cycles), then an addition operation in \( \Omega_n \) is well-defined, given by \( [M_1] + [M_2] = [M_1 \sqcup M_2] \), which makes \( \Omega_n \) an abelian group, which will be called the \textbf{bordism group} of closed $n$-manifolds.
	\end{definition}
	
	\begin{remark}
		A similar reasoning can be made regarding the Cartesian product of manifolds, which thus defines a product \( \Omega_n \times \Omega_m \to \Omega_{n+m} \). With these two operations, \( \Omega_* \) has a ring structure and is called the \textit{bordism ring}.
	\end{remark}
	By enriching the category on which the equivalence relation is defined, different bordism theories are obtained. Obviously, this relation must be compatible with any enrichments. 
	
	For example, if the manifolds \( M_i \) are all considered to have maps \( f_i:  M_i \to  X \), then the bordism will also have a map to \( X \), denoted by \( f \), and this map must restrict to \( f_i \) on their respective boundary components. This gives rise to the bordism ring \( \Omega_*(X) \). 
	
	Another standard example is when manifolds are considered with orientation: in this case, it will be required that a bordism between \( M_1 \) and \( M_2 \) is also oriented, and that the boundary is \( M_1 \sqcup (-M_2) \), where \(-M_2 \) indicates the manifold taken with the opposite orientation. This gives rise to $\Omega_*^{SO}$. 
	This last example can be further generalized to the case of manifolds equipped with a \( G \)-structure. Here, the bordism will also have to be equipped with a \( G \)-structure that restricts appropriately on the boundary. In this case, we will obtain \( \Omega_*^G \) (or, possibly, \( \Omega_*^G(X) \)).
	
	 Then we introduce the following three bordism groups:
	
	\begin{itemize}
		\item $\Omega^{spin}_{n}(X)$ is the spin bordism group with reference space $X$. This means that a cycle here is given by a pair $(M, f)$, where $M$ is a closed spin manifold of dimension $n$ with fixed spin structure on its tangent bundle and $f\colon M \to X$ is a reference map. Bordisms between two cycles have to extend both the reference maps and the spin structures, meaning that two cycles are equivalent if there exists a $n+1$-dimensional spin manifold with boundary given by the union of the two manifolds representing the cycles, together with a spin structure and a map to $X$ which restrict to those of the cycles on the respective boundary components.
		\item $Pos^{spin}_{n}(X)$ is defined as the spin bordism group, but with the additional geometric information of a Riemannian metric of positive scalar curvature. A cycle is made by a triple $(M,f,g)$ where $M$ and $f$ are as before, while $g$ is a psc metric on $M$. Again, in addition to the requirements already discussed for the spin bordism group, the bordisms between two cycles have to be endowed with psc-metrics wich are product-like on a neighborhood of the boundary and restrict to those of the cycles there.
		\item $R^{spin}_{n}(X)$ is the \textit{Stolz R-group} we referred before. Cycles in this group are made by triples $(M,f,g_{\partial})$, where $M$ is a compact, spin n-manifold with boundary, $f \colon M \to X$ is again a reference map and $g_{\partial}$ is a Riemannian psc-metric on the boundary $\partial M$. Bordisms are then manifolds with corners. In particular, two cycles $(M,f,g)$ and $(M',f',g')$ are equivalent if there is a compact, spin $n+1$-manifold whose boundary decomposes as $\partial M \cup_{\partial M} V \cup_{\partial M'} \partial M'$, where $V$ realizes a bordism between $(\partial M, f|_{\partial M}, g|_{\partial M})$ and $(\partial M', f|_{\partial M'}, g|_{\partial M'})$ in the sense of $Pos^{spin}_{n-1}(X)$. Of course, we require that the spin structures and the reference maps of $M$ and $M'$ are extended along the bordism.
	\end{itemize}

	\begin{remark}  All these bordism groups are covariantly functorial in X: in fact, a map $g:X \rightarrow Y$ induces a mapping $g_{*}: \Omega^{spin}_{n}(X) \rightarrow \Omega^{spin}_{n}(Y)$ (similarly for $Pos^{spin}_{n}(X)$ and $R^{spin}_{n}(X)$) by pushing-forward the reference map. 
	\end{remark}
	
Given the definition of the \( R \)-group, it follows directly that if a manifold \( M \) with boundary admits a metric with positive curvature on its boundary \( g_{\partial} \), defining a class in the \( R \)-group that extends over the entire manifold, then this class is null-bordant, meaning \( [M,h]=0 \).
	
	\begin{proposition}
		The previous abelian groups fit into the so called \textbf{Stolz positive scalar curvature exact sequence}:
		
		\[
		\begin{tikzcd}
			\ldots \arrow[r] & R^{spin}_{n+1}(X) \arrow[r,"\partial"] & Pos^{spin}_{n}(X) \arrow[r, "d"] & \Omega^{spin}_{n}(X) \arrow[r,"i"] & R^{spin}_{n}(X) \arrow[r, "\partial"] & \ldots
		\end{tikzcd}
		\]
		where $\partial$ is the mapping sending a manifold to its boundary, $d$ is the forgetful map (i.e. it forgets about the psc metric) and $i$ is the obvious inclusion.
	\end{proposition}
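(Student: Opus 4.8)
The plan is to verify exactness at each of the three types of terms in the sequence, following the standard strategy for long exact sequences arising from "pairs" but carried out entirely in the geometric/bordism setting. Throughout, the key fact to exploit is the one noted right before the statement: if a psc-metric $g_\partial$ on $\partial M$ extends to a psc-metric on all of $M$ (product-like near the boundary), then $[M,f,g_\partial]=0$ in $R^{spin}_n(X)$. Dually, a closed psc manifold is null-bordant in $Pos^{spin}$ precisely when it bounds a psc manifold. These two principles, together with a gluing/cylinder construction, will drive every inclusion.

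First I would treat exactness at $\Omega^{spin}_n(X)$. The composite $i\circ d$ sends $[M,f,g]$ (with $g$ psc, hence $g|_\emptyset$ trivially psc on $\partial M=\emptyset$) to the class of the \emph{closed} manifold $M$ viewed as a manifold-with-empty-boundary in $R^{spin}_n(X)$; since $M$ already carries the psc-metric $g$, the preceding remark gives $i([M])=0$, so $\mathrm{im}(d)\subseteq\ker(i)$. Conversely, if $[M,f]\in\ker(i)$, there is a spin $(n{+}1)$-manifold with corners $W$ whose boundary is $M \cup_{\partial M} V \cup_{\partial M'}\partial M'$ with $\partial M=\partial M'=\emptyset$; thus $W$ is an honest bordism from $M$ to some $M'$ carrying a psc-metric $g'$ on all of $M'$ (the "$V$-part" degenerates). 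Then $[M,f]=[M',f']=d([M',f',g'])$, giving $\ker(i)\subseteq\mathrm{im}(d)$.

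Next, exactness at $R^{spin}_n(X)$. For $\partial\circ i$: applying $\partial$ to $i([M,f])$ returns $[\partial M]=[\emptyset]=0$ in $Pos^{spin}_{n-1}(X)$, so $\mathrm{im}(i)\subseteq\ker(\partial)$. Conversely, suppose $(M,f,g_\partial)$ represents a class in $\ker(\partial)$, i.e. $(\partial M, f|_{\partial M}, g_\partial)$ bounds in $Pos^{spin}_{n-1}(X)$, say by $(V,F,G)$ with $\partial V=\partial M$ and $G$ psc and product-like. Glue $M$ to $V$ along $\partial M$ to form the closed spin $n$-manifold $M'=M\cup_{\partial M} V$ with reference map $f'$; I claim $i([M',f'])=[M,f,g_\partial]$. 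Indeed, $M'\times[0,1]$ (suitably rounded), with the $V$-part equipped with $G$ and a collar interpolation, furnishes the required manifold-with-corners bordism between $(M',f',\emptyset)$ and $(M,f,g_\partial)$: one side is the closed manifold $M'$, the middle $V$-piece carries the psc-bordism, and the other side is $M$ with $g_\partial$ on its boundary. Hence $\ker(\partial)\subseteq\mathrm{im}(i)$.

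Finally, exactness at $Pos^{spin}_n(X)$. For $d\circ\partial$: given $[W,f,g_\partial]\in R^{spin}_{n+1}(X)$, the class $\partial[W,f,g_\partial]=[\partial W, f|_{\partial W}, g_\partial]$ maps under $d$ to the spin bordism class of the \emph{closed} manifold $\partial W$, which bounds $W$; hence $d(\partial[W,\dots])=0$ in $\Omega^{spin}_n(X)$, so $\mathrm{im}(\partial)\subseteq\ker(d)$. Conversely, let $(N,f,g)$ represent a class in $\ker(d)$, so the closed spin manifold $N$ bounds a spin $(n{+}1)$-manifold $W$ with reference map extending $f$; then $(W, F, g)$ — with $g$ the given psc-metric on $\partial W=N$ — is a legitimate cycle for $R^{spin}_{n+1}(X)$, and $\partial[W,F,g]=[N,f,g]$, so $\ker(d)\subseteq\mathrm{im}(\partial)$.

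I expect the main obstacle to be the careful manipulation of manifolds with corners and product-like/collar structures: in each "conversely" direction one must produce an explicit bordism with corners realizing the desired equality of classes, and this requires attention to smoothing corners, interpolating psc-metrics along collars so they remain psc and product-like, and checking that reference maps and spin structures extend compatibly. None of this is conceptually deep, but it is the technical heart of the argument; everything else is the formal diagram-chase dictated by the definitions.
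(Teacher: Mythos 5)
Your proposal is correct and follows essentially the same approach as the paper's proof of the analogous singular statement, Theorem \ref{Lstolzexact}: verify that the sequence is a complex, then establish each kernel inclusion via the standard cylinder and gluing constructions ($M\times[0,1]$, $M\cup_{\partial M}V$, etc.). One small wrinkle in your verification of $\ker(i)\subseteq\operatorname{im}(d)$: when $i([M,f])=0$ the decomposition of $\partial W$ has the ``$M'$-piece'' equal to $\emptyset$, while it is the $V$-piece (the $Pos$-bordism from $\partial M=\emptyset$ to $\partial M'=\emptyset$) that survives as the closed psc manifold bordant to $M$ --- so the roles you assign to $V$ and $M'$ are reversed; the conclusion $[M]\in\operatorname{im}(d)$ is nevertheless exactly what the construction gives, matching the paper's argument.
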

	
	Stolz's R-groups are typically used for the classification of metrics with positive scalar curvature on a fixed manifold, up to concordance. In particular, by concordance here we mean the following equivalence relation:

		\begin{definition}\label{concordant}
		Two Riemannian metrics $g_{1}$, $g_{2}$ of positive scalar curvature are said \textbf{concordant} if there exists a Riemannian psc-metric $h$ on $M \times [0,1]$ such that it is product like on a neighborhood of the boundary and restricts to $g_{1}$  and $g_{2}$ there. 
	\end{definition}

	 Once the manifold \( M \) is fixed, two natural choices for the reference space \( X \) emerge. The first one is to choose \( X = M \), noting that both \( M \) and \( M \times [0,1] \) trivially possess a map to \( X \). The second choice is to set \( X = B\Gamma \), where \( \Gamma = \pi_1(M) \), observing that \( M \) canonically admits a map to this space, and furthermore, every space with fundamental group \( \Gamma \) admits one.
	 
	 Let's consider the case where \( X = B\Gamma \): using techniques from surgery theory, it can be shown that the null-bordance of the bordism class in \( R^{spin}_n(B\Gamma) \) is not only a necessary condition but also a sufficient one for the existence of an extension to positive curvature of the metric defined on the boundary. Furthermore, in the case where the dimension of the manifold \( M \) is \( \geq 5 \), Stolz exhibited the following result:

	\begin{theorem}\label{freeactionofR}
		Let M be a closed spin manifold, with dimension $n \geq 5$ and fundamental group $\Gamma$. Suppose that M admits a psc metric $g_0$, implying that $[(M,f:M \rightarrow B\Gamma, g_0|_{\partial M})]=0 \in R_{n}^{spin}(B\Gamma)$ vanishes. Then the group $R_{n+1}^{spin}(B\Gamma)$ acts freely and transitively on the set of concordance classes of metrics of positive scalar curvature on M.
	\end{theorem}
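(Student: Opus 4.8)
The plan is to exploit the long exact Stolz sequence together with the classical surgery-theoretic fact, recalled just above the statement, that for the reference space $B\Gamma$ the vanishing of a class in $R^{spin}_n(B\Gamma)$ is \emph{necessary and sufficient} for extending a boundary psc-metric over the whole manifold. First I would define the action. Given a concordance class $[g]$ of a psc metric on $M$ and a class $\xi\in R^{spin}_{n+1}(B\Gamma)$, represent $\xi$ by a cycle $(W,F,h_\partial)$ with $\partial W$ carrying the psc metric $h_\partial$. After replacing $W$ within its bordism class by $M\times[0,1]$ glued to $W$ along a collar — using that $[(M,f,g|_{\partial M})]=0$ so $M$ itself, viewed as a manifold with empty boundary, contributes trivially — one arranges a cycle of the form $(M\times[0,1],\,\mathrm{pr}\circ f,\,g\sqcup g')$ for some psc metric $g'$; then set $\xi\cdot[g]:=[g']$. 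The content of Theorem~\ref{freeactionofR} is that this is well defined on concordance classes, free, and transitive.

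The key steps, in order: (i) \textbf{Well-definedness.} Show that the resulting $[g']$ is independent of the chosen representative of $\xi$ and of $[g]$; this reduces to a gluing/bordism argument, stacking bordisms of cycles in $R^{spin}_{n+1}(B\Gamma)$ end to end along $M\times[0,1]$ pieces and invoking that concordance of psc metrics corresponds exactly to a $Pos^{spin}_n$-bordism of the boundary data. (ii) \textbf{Transitivity.} Given two psc metrics $g_0,g_1$ on $M$, form the manifold $M\times[0,1]$ with boundary psc metric $g_0\sqcup g_1$; this is a cycle defining a class in $R^{spin}_{n+1}(B\Gamma)$, and by construction that class sends $[g_0]$ to $[g_1]$. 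The only point to check is that this cycle is a \emph{closed-up} $R$-cycle in the required normal form, which is immediate since its underlying manifold already has the product form. (iii) \textbf{Freeness.} Suppose $\xi\cdot[g]=[g]$. Then $g$ and $g'$ are concordant, so one can glue the concordance onto the cylinder appearing in the representative of $\xi$ to produce a psc metric on the boundary of a closed-up bordism that extends to all of it; exactness of the Stolz sequence at $R^{spin}_{n+1}(B\Gamma)$ — together with the surgery fact that $[W,F,h_\partial]=0$ in $R^{spin}_{n+1}(B\Gamma)$ \emph{iff} $h_\partial$ extends to a psc metric on $W$ — forces $\xi=0$.

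The main obstacle I expect is step (iii), freeness: one must upgrade the \emph{bordism-theoretic} vanishing $\xi=0$ to an honest extension of the psc metric over the representing bordism, and conversely. This is precisely where the hypothesis $n\ge 5$ enters, via the Gromov--Lawson surgery technique (\cite{GL, SchoenYau}) applied to bordisms of dimension $n+1\ge 6$: one performs surgeries in codimension $\ge 3$ on the interior of the bordism to make the reference map to $B\Gamma$ highly connected while carrying the psc metric along, and then the analytic/surgery input on $B\Gamma$ (the necessary-and-sufficient statement quoted before Theorem~\ref{freeactionofR}, ultimately resting on Stolz's work \cite{Stolz1,Stolz2,stolz}) closes the argument. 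A secondary technical nuisance is bookkeeping the spin structures and collars so that all gluings are genuinely compatible; this is routine but must be stated carefully so that the normal form of $R$-cycles used in steps (ii)--(iii) is actually attained.
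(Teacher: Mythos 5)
First, a point of comparison: the paper does not prove Theorem \ref{freeactionofR} at all --- it is quoted as a result of Stolz --- so there is no internal argument to measure you against. Your proposal follows the standard route (the torsor structure mediated by the map $i_{g}\colon [g']\mapsto [M\times[0,1],\,f\circ\mathrm{pr},\,g\sqcup g']$), and you correctly name the external inputs: Gromov--Lawson surgery in dimension $n+1\geq 6$ and the ``null-bordant over $B\Gamma$ iff the boundary metric extends'' statement.

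There is, however, a genuine gap exactly where you define the action. You assert that an arbitrary cycle $(W,F,h_\partial)$ representing $\xi$ can be ``replaced within its bordism class by $M\times[0,1]$ glued to $W$ along a collar'' so as to reach the normal form $(M\times[0,1],\,\mathrm{pr}\circ f,\,g\sqcup g')$. As written this does not parse: $\partial W$ need not be $M$, so there is no collar along which to glue, and the remark that ``$M$ contributes trivially'' produces no reduction. This normal-form claim is precisely the surjectivity of $i_g$ and is the heart of the theorem: one must pass to $W\sqcup(M\times[0,1])$ (still representing $\xi$, since $[M\times I,\,g\sqcup g]=0$), perform interior surgeries of index $\leq 2$ to make the bordism connected and to make both the reference map to $B\Gamma$ and the inclusion of one boundary copy of $M$ $2$-connected, rearrange handles so that the bordism from that copy of $M$ to the rest of the boundary consists only of handles of codimension $\geq 3$, and then carry the psc metric across by the (improved) Gromov--Lawson theorem to exhibit $\xi$ as $[M\times I,\,g\sqcup g']$. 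Without this, the action is not defined on all of $R^{spin}_{n+1}(B\Gamma)$. A second, related gap is in your step (i): you invoke that ``concordance of psc metrics corresponds exactly to a $Pos^{spin}_n$-bordism of the boundary data,'' but concordance is a very particular $Pos$-bordism (a product cylinder), and upgrading a general $Pos^{spin}_n$-bordism between $(M,g')$ and $(M,g'')$ to a concordance is again a surgery-theoretic theorem requiring $n\geq 5$ and the $2$-connectedness of $M\to B\Gamma$, not a tautology. In short, the two places you treat as bookkeeping are where the hypothesis $n\geq 5$ actually does its work; once the bijectivity of $i_g$ (for every base metric $g$) and the stacking identity $[M\times I,\,g\sqcup g']+[M\times I,\,g'\sqcup g'']=[M\times I,\,g\sqcup g'']$ are established, freeness and transitivity are formal --- contrary to your expectation that freeness is the hard step.
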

	
This theorem, in particular, asserts that, given a psc metric \( g_0 \) on \( M \), there exists a map:

\[ i_{g_0}: C^+(M) \rightarrow R^{spin}_n(B\Gamma) \]
which is bijective for every choice of \( g_0 \), where \( C^+(M) \) is the set of concordance classes of psc metrics on \( M \). This implies that the two are non-canonically isomorphic: in particular, \( C^+(M) \) assumes a structure of \( R^{spin}_n(B\Gamma) \)-torsor, and \( i_{g_0} \) induces a group structure, in which \( g_0 \) plays the role of the identity, isomorphic to \( R^{spin}_n(B\Gamma) \).

In any case, beyond the specific utilities that arise from a given problem, the two choices for the reference spaces we discussed before are equivalent. Indeed, in the case where \( \pi_1(M) = \Gamma \), the existence of a $2$-connected map\footnote{A continuous map $f\colon X \to Y$ between topological spaces is \textit{n-connected} if for all $x \in X$ the induced map $f_*\colon \pi_i(X,x) \to \pi_i(Y,f(x))$ is an isomorphism for all $0<i<n$ and is surjective for $i=n$} \( f: M \to B\Gamma \) is ensured, which, together with the following theorem, establishes the equivalence of such choices.
	
	\begin{theorem}\label{2connthm}
		Let $f: X \rightarrow Y$ be a continuous, 2-connected map. Then the functorially induced map $f_{*}: R_{n}^{spin}(X) \rightarrow R_{n}^{spin}(Y)$ is an isomorphism.
	\end{theorem}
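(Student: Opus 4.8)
The plan is to show that $f_* \colon R^{spin}_n(X) \to R^{spin}_n(Y)$ is simultaneously surjective and injective by systematically replacing, up to bordism, any given cycle or null-bordism over $Y$ by one that factors through $X$. The essential tool is that a $2$-connected map induces an isomorphism on $\pi_1$ and a surjection on $\pi_2$, and that for a manifold $N$ of dimension $\geq 5$ (or with boundary of dimension $\geq 5$) one can use surgery below the middle dimension to modify a reference map $N \to Y$ into a $2$-connected map, without altering the bordism class. I would first treat the absolute case (spin bordism $\Omega^{spin}_*$) as a warm-up — this is classical — and then adapt the argument to the relative, geometric setting of the $R$-groups, where the subtlety is that surgeries on the interior of a cycle $(N, h, g_\partial)$ must not disturb the psc metric $g_\partial$ living on $\partial N$.

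For \textbf{surjectivity}: given a cycle $(N, h\colon N \to Y, g_\partial)$ representing a class in $R^{spin}_n(Y)$, I would perform surgeries on the interior of $N$ (which is possible since these are $0$- and $1$-surgeries in the interior, away from $\partial N$, hence they leave $g_\partial$ and the spin structure on $\partial N$ untouched, and they are realized by an $R$-bordism whose $Pos$-part on the boundary is the constant cylinder on $(\partial N, g_\partial)$) to arrange that $h$ becomes $2$-connected. Since $f$ is $2$-connected and $h$ is now $2$-connected, obstruction theory produces a lift $\tilde h \colon N \to X$ with $f \circ \tilde h \simeq h$; then $(N, \tilde h, g_\partial)$ maps to the original class, giving surjectivity. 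One must be slightly careful when $\dim N$ is small, but the statement is about all $n$ and the standard trick of crossing with a suitable manifold, or handling low dimensions by hand, resolves this.

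For \textbf{injectivity}: suppose $(N, \tilde h, g_\partial)$ is a cycle over $X$ whose pushforward bounds in $R^{spin}_n(Y)$, witnessed by an $R$-bordism $W$ over $Y$ (a manifold with corners, with a $Pos$-bordism $V \subset \partial W$ on the boundary part). I would again use surgery, now on the interior of $W$ rel $\partial W$, to make the reference map $W \to Y$ $2$-connected; the surgeries are of index $\leq 2$ in the interior of $W$, so they do not touch $V$ or the psc-data there, and they are themselves realized by $R$-bordisms, so they do not change the class being bounded. Once $W \to Y$ is $2$-connected I lift it to $\tilde W \to X$, obtaining an $R$-null-bordism of $(N, \tilde h, g_\partial)$ over $X$ — but I must check the lift restricts to the given $\tilde h$ on the relevant boundary piece, which follows by choosing the lift rel that boundary, using that $\tilde h$ already lifts $f \circ \tilde h$. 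Hence $f_*$ is injective.

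The \textbf{main obstacle} is ensuring compatibility of all the surgery moves with the layered boundary structure of $R$-bordisms: an $R$-cycle carries a psc metric only on its boundary, and an $R$-bordism is a manifold with corners in which one face is a $Pos$-bordism. I must check that every surgery I perform to improve connectivity can be chosen in the interior stratum, so that it is invisible to the psc-metric faces, and that the resulting trace is a legitimate $R$-bordism (i.e.\ its boundary decomposes correctly with a constant $Pos$-bordism on the metric face). This is exactly the point where the geometric content of the $R$-groups, as opposed to plain $\Omega^{spin}_*$, enters, and it is handled by the Gromov--Lawson--Schoen--Yau surgery techniques; I expect that once the bookkeeping of corners is set up, each individual step is routine, but setting it up cleanly is the real work. (Indeed this is precisely the argument generalized in Theorem \ref{G2conn}, where the additional $(G,\mathcal F)$-equivariance must be threaded through the same construction.)
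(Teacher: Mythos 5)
Your overall plan — replace a cycle over $Y$ by a bordant one that factors through $X$ — matches the spirit of the paper, but the mechanism you propose is different from the one the paper actually uses, and the difference is exactly where the gap lies. The paper (in the proof of Theorem \ref{G2conn}, from which Theorem \ref{2connthm} follows) never modifies the reference map $h$ to make it $2$-connected. Instead it chooses a self-indexing Morse function on the cycle $W$ itself and splits it as $W = W_1 \cup W_2$, where $W_1$ carries only handles of index $\leq 2$ (so $W_1$ is homotopy equivalent to a $2$-dimensional CW-complex) and $W_2$ carries only handles of index $\geq 3$. Reading $W_2$ backwards gives a sequence of surgeries of codimension $\geq 3$, so Gromov--Lawson extends the given psc metric $g$ on $\partial W$ across $W_2$ and endows $W_1$ with a new boundary metric $g_1$. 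Then $W\times[0,1]$ is an $R$-bordism from $(W,\varphi,g)$ to $(W_1,\varphi|_{W_1},g_1)$. Finally, because $W_1$ is a $2$-complex, $\varphi|_{W_1}$ factors (by cellular approximation) through $Y_{(2)}$, and the $2$-connectedness of $f$ gives a section $Y_{(2)} \to X_{(2)}$ by Whitehead's theorem, since $\dim Y_{(2)} \leq 2$ equals the connectivity of $f$. The essential point is that the object being mapped is reduced to a $2$-complex; only then is the lift automatic.

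Your version instead performs surgery on the interior of $N$ to make $h\colon N \to Y$ itself $2$-connected, and then asserts that ``obstruction theory produces a lift $\tilde h\colon N \to X$.'' This last step does not hold. With $F = \mathrm{hofib}(f)$, $2$-connectedness of $f$ only gives $\pi_0(F) = \pi_1(F) = 0$, so the obstructions to lifting $h\colon N \to Y$ against $f$ live in $H^{k+1}\bigl(N;\pi_k(F)\bigr)$ for $k \geq 2$, and these do not vanish just because $h$ is $2$-connected — they vanish when $N$ has the homotopy type of a $2$-complex. A cheap illustration (ignoring spin and boundary, which are irrelevant to the homotopy-theoretic point): the inclusion $f\colon S^2 \hookrightarrow \mathbb{CP}^2$ is $3$-connected, and $h = \mathrm{id}_{\mathbb{CP}^2}$ is $\infty$-connected, but no $\tilde h\colon \mathbb{CP}^2 \to S^2$ with $f\circ\tilde h \simeq \mathrm{id}$ exists, since a retraction would split the truncated polynomial ring $H^*(\mathbb{CP}^2)$ off $H^*(S^2)$. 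So the content of making $h$ $2$-connected is not what you need, and the Morse-theoretic splitting plus Gromov--Lawson is not mere corner bookkeeping: it is the step that produces the $2$-complex $W_1$ onto which your ``lift'' argument is valid. (Your observation that interior surgeries give legitimate $R$-bordisms with constant-cylinder $Pos$-faces, without invoking Gromov--Lawson, is correct; but note that the paper still needs Gromov--Lawson in an essential way, to build the psc metric on $W_2$ so that the $R$-bordism from $W$ to $W_1$ has the required $Pos$-face.)
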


	The proof of this theorem relies on the Gromov-Lawson theorem and basic tools from Morse theory (we will be more detailed later). The objective of the next sections will be to extend this notions and prove this theorem in other contexts, e.g. for $(G, \mathcal{F})$-CW-complexes and some kind of smoothly stratified spaces. In particular, this theorem will follow from these new results.
	
	\newpage
	
	\section{(G,$\mathcal{F}$)-Stolz} \label{g,f}
	
	\subsection{Proper actions of discrete groups}

	In this section, we begin by providing a general introduction to transformation groups and the notion of a \( G \)-space. We will then introduce the concept of a \( G \)-CW-complex for a family of subgroups of \( G \) and observe how the Stolz sequence can be naturally adapted for these spaces.
	
	Suppose \( G \) is a topological group and \( X \) is a topological space. A \textit{left \( G \)-space} (resp. right \( G \)-space) is a pair \( (X,\rho) \) where \( \rho: G \times X \to X \) is a continuous map satisfying the following properties:
	\begin{itemize}
	\item \( \rho(e,x) = x \) for all \( x \in X \), where \( e \) is the identity element of \( G \);
	\item \( \rho(g, \rho(h,x)) = \rho(gh, x) \) for all \( g,h \in G \) and \( x \in X \) for a left \( G \)-space (resp. \( \rho(h, \rho(g,x)) \rho(gh, x)  \) for a right \( G \)-space).
	\end{itemize}

	\begin{remark}
		For a left \( G \)-space, \( \rho \) is called a \textit{left \( G \)-action} and is often abbreviated as \( \rho(g,x) = gx \) (resp. for a right \( G \)-space, the \textit{right \( G \)-action} \( \rho \) is denoted by \( \rho(g,x) = xg \)).
	\end{remark}
	
	In general, we work with left \( G \)-spaces, which is why unless otherwise specified, it is assumed that a \( G \)-space has a left-action. So let \( X \) be a \( G \)-space, and consider the following equivalence relation in \( X \):
	
	\[ x \sim gx, \quad \forall g \in G. \]
	
	The set of its equivalence classes \( X/G \) is called the \textit{orbit space} of \( X \) and is equipped with the quotient topology induced by the quotient map \( X \to X/G \). The class of \( x \in X \), denoted by \( Gx \), is called the \textit{orbit} of \( x \). An action is called \textit{transitive} if $X$ has only one orbit, i.e. for each pair of points $x,y \in X$, there is a $g \in G$ such that $y=gx$. 
	
\begin{definition}\label{homogeneousspace}
	Consider the right action of a subgroup \( H \subseteq G \) on \( G \) itself and the orbit space with respect to this action (whose elements are thus indicated as \( gH \), with \( g \in G \)).
	We can introduce the following left action:
	
	\[ G \times G/H \to G/H, \quad (g',gH) \mapsto g'gH. \]
	
	We define a \textit{homogeneous space} any \( G \)-space \( G/H \) equipped with this action.
\end{definition}

\begin{remark}
	Alternatively, a homogeneous space is any \( G \)-space on which \( G \) acts transitively. In fact, if \( X \) is such a space, let's choose a point \( x \in X \) and consider its \textit{stabilizer} (also called the \textit{isotropy group} of \( x \)), which is the subgroup \( G_x = \{ g \in G | gx = x \} \)\footnote{Observe that $G_{gx}=gG_xg^{-1}$}.Then the points of \( X \) are in correspondence with the classes in \( G/G_x \) (in particular, the point \( x \) corresponds to the class \( eG_x \)). Conversely, a \( G \)-space defined as in the previous definition is a space equipped with a transitive action.
\end{remark}

\begin{example}
	Examples of homogenous spaces are:
	\begin{itemize}
		\item The orthogonal group $O(n+1)$ acts transitively on the $n$-sphere $S^n$. However, the stabilizer subgroup of each point on the sphere is isomorphic to $O(n)$, and then $S^n \simeq O(n+1)/O(n)$.
		\item Consider the \textit{Stiefel manifold} given by the set $V_k(\mathbb{R}^n)$ of all orthonormal $k$-frames in $\mathbb{R}^n$. Of course, orthogonal transformations preserves orthonormality and two frames are obtained one by another by an orthogonal transformation. However, since $k \neq n$, the group $O(n-k)$ keeps fixed the $k$-frames. Then $V_k(\mathbb{R}^n) \simeq O(n)/O(n-k)$. Similarly, one gets $V_k(\mathbb{C}^n) \simeq U(n)/U(n-k)$ and $V_k(\mathbb{H}^n)\simeq Sp(n)/Sp(n-k)$.
	\end{itemize}
\end{example}
	
If $H$ is a subgroup of $G$, then the \textit{$H$-fixed point set} $X^H$ is defined as:

\[X^H:=\{x \in X \ | \ \ hx=x, \ \forall h \in H\}.\]

Suppose \( f: X \to Y \) is a (continuous) map of \( G \)-spaces. \( f \) is called \textit{\( G \)-equivariant} if it preserves the \( G \)-actions, i.e., \( f(gx) = gf(x) \) for every \( g \in G \) and \( x \in X \). The space of \( G \)-equivariant maps from \( X \) to \( Y \), denoted by \( C_G(X,Y) \), can be topologized using the \textit{compact-open topology}, obtained by considering the subbase generated by the subsets \( W(K,\mathcal{U}) = \{ f \in C_G(X,Y) | f(K) \subset \mathcal{U} \} \), where \( K \subset X \) is a compact subset and \( \mathcal{U} \subset Y \) is an open set. 

Two \( G \)-equivariant maps \( f_0, f_1: X \to Y \) are said to be \textit{\( G \)-homotopic} if there exists a \( G \)-equivariant map \( F: X \times [0,1] \to Y \) such that \( F(-,0) = f_0 \) and \( F(-,1) = f_1 \), considering the interval \([0,1]\) equipped with the trivial action of \( G \) and \( X \times [0,1] \) as a \( G \)-space with the diagonal action of \( G \).

\begin{remark}
Observe that for each subgroup $H \subset G$, a map $f \in C_G(X,Y)$ preserves the $H$-fixed point set, i.e $f(X^H)\subseteq Y^H$. 
Clearly, \( f \) maps the orbit of a given point \( x \in X \) to the orbit of its image. For this reason, \( f \) induces a map between the orbit spaces \( f/G: X/G \to Y/G \). Furthermore, observe that for every point \( x \in X \), we have \( G_x \subset G_{f(x)} \).
\end{remark}

Let's recall that a continuous map \( f: X \to Y \) is called \textit{proper} if it is a closed map and the fibers \( f^{-1}(y) \) of each point \( y \in Y \) are compact. If \( X \) and \( Y \) are Hausdorff spaces and \( Y \) is locally compact, then \( f \) is a proper map if for every compact subset \( K \subset Y \), \( f^{-1}(K) \) is compact\footnote{In this case, \( X \) is also locally compact.}. 

If \( X \) is a \( G \)-space, and the action \( \rho \) of \( G \) on \( X \) is called proper (or alternatively, \( X \) is called a \textit{proper \( G \)-space}) if:

\[ \theta: G \times X \to X \times X, \quad (g,x) \mapsto (x,gx), \]
is a proper map.

	\subsection{(G-$\mathcal{F}$)-CW-complex}
	
	Let us fix a discrete group $G$ and a family $\mathcal{F}$ of finite subgroups of $G$, which is closed under conjugation and finite intersection. 
	
	\begin{definition}
		A \textbf{(G,$\mathcal{F}$)-CW-complex} X is a $G$-space together
		with a $G$-invariant filtration
		$$\emptyset = X_{(-1)}\subseteq X_{(0)} \subseteq X_{(1)} \subseteq \dots \subseteq X_{(n)}\subseteq \dots \subseteq\bigcup_{n\geq0}X_{(n)}=X$$
		such that:
		\begin{itemize}
			\item $X$ carries the colimit topology with respect to this filtration, meaning that a set $C \subseteq X$ is closed if and only if $C \cap X_{(n)}$ is closed $\forall n \geq 0$;
			\item $X_{(n)}$
			is obtained from $X_{(n-1)}$ for each $n\geq0$ by attaching equivariant $n$-dimensional
			cells, i.e. there exists a $G$-pushout
			\begin{equation}\label{gcwcomplex}
			\xymatrix{\bigsqcup_{i\in I_n}G/H_i\times S^{n-1}\ar[r]\ar[d]& X_{(n-1)}\ar[d]\\
				\bigsqcup_{i\in I_n}G/H_i\times D^n\ar[r]& X_{(n)}}
			\end{equation}
			where the $H_i$ belong to $\mathcal{F}$.
		\end{itemize}

		\end{definition}

		The space $X_{(n)}$ is called the \textit{n-skeleton} of $X$. An \textit{equivariant open n-dimensional cell} (closed cells are the respective closure) is a $G$-subset of $X_{(n)} - X_{(n-1)}$, namely a preimage of a path component of $G \backslash (X_{(n)} - X_{(n-1)})$.
		
		\begin{remark}
			In the above definiton, only the filtration by skeletons belongs to the $(G,\mathcal{F})$-CW-structure but not the $G$-pushouts, only their existence is required. However, once a $G$-pushout is chosen, then the equivariant closed n-dimensional cells are given explicitly by (\ref{gcwcomplex}) as the image of each $G/H_i \times D^n$ via the lower arrow.
		\end{remark}

	\begin{remark}
		We assumed that the family of subgroups \( \mathcal{F} \) was composed of finite subgroups so that these would be compact. In this way, \( X \) turns out to be a proper \( (G,\mathcal{F}) \)-CW-complex. Recall that a \( G \)-space is said to be proper if for every pair of points \( x \) and \( y \) in \( X \), there are open neighborhoods \( V_x \) and \( V_y \) such that the closure of the set \( \{ g \in G \ | \ gV_x \cap V_y \neq \emptyset \} \) is compact.
	\end{remark}

	Let us fix some notations. Let $f: X \rightarrow Y$   be a continuous $G$-equivariant map between $(G,\mathcal{F})$-spaces. If $H \subseteq G$ is a subgroup of $G$ and $X^H$ is the $H$-fixed point set, we will denote by $f^H\colon X^H\to Y^H$ the restriction of $f$ to the $H$-fixed point sets, with $H$ a subgroup of $G$. 
	
	\begin{definition} 
		We say that $f$ is \textbf{cellular} if both X and Y are CW-complexes and one has $f(X_{(k)}) \subseteq Y_{(k)}$, i.e. it preserves the filtrations of $X$ and $Y$.
	\end{definition}
	We have a Cellular Approximation Theorem  in the equivariant context too \cite[Theorem 2.1]{ttd}. 
	\begin{theorem} \label{cellular}
		Let $f\colon X\to Y$ be a $G$-map. Then there exists a G-homotopy $h\colon X\times I\to Y$ such that $h_0=f$ and $h_1$ is cellular.
	\end{theorem}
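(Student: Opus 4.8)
The plan is to run the classical skeletal induction behind the cellular approximation theorem, but carried out one \emph{equivariant} cell at a time and reduced, cell by cell, to the ordinary geometric lemma applied on fixed point sets. So this is an adaptation of the non-equivariant argument; the result is \cite[Theorem 2.1]{ttd}.

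First I would record two structural facts. (a) For any subgroup $H\subseteq G$ the fixed point set $Y^{H}$ of a $(G,\mathcal F)$-CW-complex $Y$ is an ordinary CW-complex with $(Y_{(n)})^{H}=(Y^{H})_{(n)}$: indeed $(G/H_i\times D^n)^{H}=(G/H_i)^{H}\times D^n$ and $(G/H_i)^{H}=\{\,gH_i : g^{-1}Hg\subseteq H_i\,\}$ is a discrete subset of $G/H_i$, so passing to $H$-fixed points in the $G$-pushout \eqref{gcwcomplex} exhibits $Y_{(n)}^{H}$ as obtained from $Y_{(n-1)}^{H}$ by attaching ordinary $n$-cells. (b) A $G$-map $G/H\times D^n\to Y$ (with $G$ acting only on the first factor) is the same datum as its restriction to $\{eH\}\times D^n$, a map $D^n\to Y^{H}$; the same holds with $D^n$ replaced by $D^n\times[0,1]$, so that $G$-homotopies of such $G$-maps correspond exactly to homotopies of the associated maps into $Y^{H}$. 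Both facts are immediate, but they are precisely what allows an equivariant cell to be treated by classical means.

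Next comes the induction. Assume $f$ has already been modified by a $G$-homotopy to a map (still denoted $f$) with $f(X_{(n-1)})\subseteq Y_{(n-1)}$, and fix a $G$-pushout presentation of $X_{(n)}$ as in \eqref{gcwcomplex}. For each $i\in I_n$ the composite of the characteristic $G$-map $G/H_i\times D^n\to X_{(n)}$ with $f$ corresponds, by (b), to a map $\Phi_i\colon D^n\to Y^{H_i}$ whose restriction to $S^{n-1}$ lands in $(Y_{(n-1)})^{H_i}=(Y^{H_i})_{(n-1)}$, since the attaching map sends $G/H_i\times S^{n-1}$ into $X_{(n-1)}$. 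By compactness $\Phi_i(D^n)$ meets only finitely many open cells of $Y^{H_i}$, so the standard geometric lemma of cellular approximation --- pushing off an interior point of each cell of dimension $>n$, working downward from the top dimension, then retracting radially --- yields a homotopy of $\Phi_i$, rel $S^{n-1}$, to a map into $(Y^{H_i})_{(n)}=(Y_{(n)})^{H_i}$. Translating back through (b), this is a $G$-homotopy of the $G$-map $G/H_i\times D^n\to Y$, rel $G/H_i\times S^{n-1}$, into $Y_{(n)}$. Carrying these out simultaneously over $i\in I_n$ and declaring the homotopy to be stationary on $X_{(n-1)}$, the pieces agree on the spheres $G/H_i\times S^{n-1}$ and glue to a $G$-homotopy on $X_{(n)}$ that fixes $X_{(n-1)}$ and carries $X_{(n)}$ into $Y_{(n)}$; I then extend it to a $G$-homotopy on all of $X$ by the equivariant homotopy extension property of the $(G,\mathcal F)$-CW pair $(X,X_{(n)})$.

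Finally I would splice the countably many stages together: reparametrize the $n$-th stage to occur on the subinterval $[1-2^{-n},\,1-2^{-n-1}]$, so that on each skeleton --- hence on each cell --- only finitely many stages are non-constant, and continuity of the resulting map $X\times[0,1]\to Y$ follows from the colimit topology on $X$. Since stage $n$ leaves $X_{(n-1)}$ pointwise fixed and every later stage is constant on $X_{(n)}$, the limiting map $h_1$ satisfies $h_1(X_{(n)})\subseteq Y_{(n)}$ for all $n$, i.e.\ it is cellular, and the concatenation $h$ is a $G$-homotopy from $f$ to $h_1$. The main obstacle is not a single difficult step but making the two reductions (a)--(b) fully rigorous and coherent --- in particular verifying that the radial push-off on each fixed-point CW-complex can be performed genuinely rel boundary and that the per-cell homotopies are mutually compatible after equivariant reassembly --- all of which is routine and is the content of the cited theorem.
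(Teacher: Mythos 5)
Your argument is correct: it is the standard skeletal induction for equivariant cellular approximation — reduce each equivariant cell $G/H_i\times D^n$ via the adjunction $\mathrm{Map}_G(G/H_i\times D^n,Y)\cong\mathrm{Map}(D^n,Y^{H_i})$ to the classical push-off lemma on the fixed-point CW-complexes, glue, extend by the equivariant HEP, and concatenate with a telescoping reparametrization. The paper itself gives no proof, citing the result from tom Dieck, and your reconstruction matches that reference's approach; the only points deserving an explicit word are the ones you flag yourself, namely that $H$-fixed points commute with the attaching pushouts (so that $(Y_{(n)})^{H}=(Y^{H})_{(n)}$) and that the stagewise homotopies are eventually constant on each skeleton.
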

	
	We also have an equivariant version of the Whitehead Theorem for $(G,\mathcal{F})$-spaces. 
	
	\begin{defn}
		Consider a function $\nu\colon \mathcal{F}\to\mathbb{N}$, then we say that $f$ is \textbf{$\nu$-connected} if $f^H$ is $\nu(H)$-connected for all $H\in \mathcal{F}$, namely the induced maps are isomorphisms on the first $\nu(H)-1$ homotopy groups of $X^H$ and $Y^H$ and a surjection on the $\nu(H)$-th one. 
		In particular, we say that it is {$k$-Connected} if $\nu$ is constantly equal to $k\geq 0$. 
		
		Moreover we say that a relative $G$-complex $(X,A)$ has dimension less or equal to $\nu$ if the cells in $X\setminus A$ are of the form $G/H\times D^k$ with $k\leq\nu(H)$. 
	\end{defn}
	\begin{proposition} \label{whitehead}
		Let $f\colon Y\to Z$ be a $\nu$-connected map and $X$ a $(G, \mathcal{F})$-CW-complex. Then $f_*\colon [X,Y]^G\to [X,Z]^G$ is surjective (resp. bijective) if $\dim X\leq\nu$ (resp. $\dim X<\nu$).
	\end{proposition}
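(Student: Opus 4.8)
The plan is to run the classical cellular obstruction-theory argument in the equivariant setting, inducting over the skeleta of $X$. First I would put $f$ in a convenient form: since $[X,-]^G$ is invariant under $G$-homotopy equivalence, I may replace $f\colon Y\to Z$ by the equivariant mapping path space construction, obtaining $p\colon Y'\to Z$ with $Y'\simeq_G Y$ over $Z$ and such that every restriction $p^H\colon (Y')^H\to Z^H$ with $H\in\mathcal{F}$ is an ordinary Serre fibration; by hypothesis its fibre $F^H$ (the homotopy fibre of $f^H$) is $(\nu(H)-1)$-connected. Given a $G$-map $u\colon X\to Z$, a $G$-lift of $u$ along $p$ is the same thing as a $G$-section of $u^{*}Y'\to X$, whose fixed-point restrictions $(u^{*}Y')^{H}\to X^{H}$ are again Serre fibrations with $(\nu(H)-1)$-connected fibres.

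Next I would build such a $G$-section one skeleton at a time. Nothing is needed over $X_{(-1)}=\emptyset$; assume a $G$-section has been defined over $X_{(n-1)}$. Using the $G$-pushout (\ref{gcwcomplex}) and the adjunction $\mathrm{Map}_G(G/H_i\times A,W)\cong\mathrm{Map}(A,W^{H_i})$ for $A$ with trivial action, extending the section over $X_{(n)}$ reduces, for each $i\in I_n$, to extending a map $S^{n-1}\to (u^{*}Y')^{H_i}$ lying over the characteristic map along $S^{n-1}\hookrightarrow D^n$. Pulling this fibration back to the contractible disc and trivializing it, the problem becomes that of extending a map $S^{n-1}\to F^{H_i}$ to $D^n$, whose only obstruction is its class in $\pi_{n-1}(F^{H_i})$. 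Since $F^{H_i}$ is $(\nu(H_i)-1)$-connected, this class vanishes whenever $n\le\nu(H_i)$, which holds for every equivariant $n$-cell of $X$ precisely when $\dim X\le\nu$. The induction then goes through, so $f_{*}\colon[X,Y]^G\to[X,Z]^G$ is surjective.

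For bijectivity I would feed the same machinery the relative $(G,\mathcal{F})$-complex $(X\times I,\,X\times\partial I)$. If $p\widetilde u_0\simeq_G p\widetilde u_1$ via a $G$-homotopy $h\colon X\times I\to Z$, then a $G$-homotopy $\widetilde u_0\simeq_G\widetilde u_1$ is exactly a $G$-section of $h^{*}Y'\to X\times I$ extending the one over $X\times\partial I$ given by $\widetilde u_0$ and $\widetilde u_1$, and the same skeletal induction, relative to $X\times\partial I$, solves this. Since each $n$-cell $G/H_i\times D^n$ of $X$ now contributes an $(n+1)$-cell $G/H_i\times D^{n+1}$ to the relative complex, the obstruction to extending over it lies in $\pi_n(F^{H_i})$, which vanishes when $n\le\nu(H_i)-1$, i.e.\ exactly when $\dim X<\nu$. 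Hence $f_*$ is then bijective. I expect the delicate part to be not the recursion but the set-up and the bookkeeping: making the fibration replacement of $f$ coherent across all fixed-point sets $Y^H\to Z^H$ ($H\in\mathcal{F}$) so that the cellwise fibre-connectivity statement really holds, and --- since we are working with unbased homotopy classes --- tracking all path components of the spaces $(Y')^H$ and keeping the auxiliary homotopies into $Z$ consistent as one passes from $X_{(n-1)}$ to $X_{(n)}$. This is the standard argument, essentially the one in \cite{ttd}.
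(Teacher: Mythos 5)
Your argument is correct. Note first that the paper does not actually prove this proposition; it is stated as a citation to tom Dieck's \emph{Transformation Groups} (as is the cellular approximation theorem a few lines earlier), where it is established by precisely the equivariant obstruction-theoretic induction you describe. Replacing $f$ by its mapping-path-space fibration commutes with $H$-fixed points for all $H$, so $p^H$ is the fibration replacement of $f^H$ and the fibre over each point of $Z^H$ is the $(\nu(H)-1)$-connected homotopy fibre of $f^H$; the adjunction $\mathrm{Map}_G(G/H\times A,W)\cong\mathrm{Map}(A,W^H)$ then converts the extension problem over an equivariant $n$-cell $G/H_i\times D^n$ into the familiar single-obstruction extension $S^{n-1}\to F^{H_i}$ over $D^n$, which vanishes exactly when $n\le\nu(H_i)$; and injectivity follows from the relative complex $(X\times I,\,X\times\partial I)$ shifting every cell dimension up by one, yielding the strict inequality $\dim X<\nu$. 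This is the same skeleton-by-skeleton compression argument as in the cited reference, just phrased through fibrations and sections rather than tom Dieck's mapping cylinders; the two packagings are equivalent, so there is nothing to flag beyond the bookkeeping points you already identify.
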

	
	As a particular case of this proposition we obtain the following result \cite[Proposition 2.7]{ttd}. 
	
	\begin{proposition}
		Let $f\colon Y\to Z$ be a $G$-map between $(G,\mathcal{F})$-complexes such that $f^H$ is a homotopy equivalence for all $H$. Then $f$ is a $G$-homotopy equivalence.
	\end{proposition}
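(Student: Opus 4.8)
The plan is to deduce this from Proposition~\ref{whitehead} via the standard two-step Whitehead argument, the only extra input being the observation that a $G$-map which restricts to a homotopy equivalence on every fixed-point set is ``$\nu$-connected for all $\nu$''. First I would note that if $f^{H}\colon Y^{H}\to Z^{H}$ is a homotopy equivalence for each $H\in\mathcal{F}$, then $f^{H}$ induces isomorphisms on \emph{all} homotopy groups and is in particular $n$-connected for every $n\in\mathbb{N}$; hence, by definition, $f$ is $\nu$-connected for \emph{every} function $\nu\colon\mathcal{F}\to\mathbb{N}$.

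Next I would apply Proposition~\ref{whitehead} with $X=Z$. Picking $\nu$ with $\dim Z<\nu$ (possible skeleton by skeleton, see below), the induced map
\[
f_{*}\colon [Z,Y]^{G}\longrightarrow [Z,Z]^{G}
\]
is a bijection. Let $g\colon Z\to Y$ be a $G$-map with $f_{*}[g]=[\mathrm{id}_{Z}]$, i.e.\ with $f\circ g$ $G$-homotopic to $\mathrm{id}_{Z}$; thus $g$ is a right $G$-homotopy inverse of $f$.

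It remains to see that $g$ is also a left inverse. Comparing the two $G$-maps $g\circ f,\ \mathrm{id}_{Y}\colon Y\to Y$ via post-composition with $f$, we have
\[
f\circ(g\circ f)=(f\circ g)\circ f\ \simeq_{G}\ \mathrm{id}_{Z}\circ f=f=f\circ\mathrm{id}_{Y},
\]
so $f_{*}[g\circ f]=f_{*}[\mathrm{id}_{Y}]$ in $[Y,Z]^{G}$. Applying Proposition~\ref{whitehead} again, now with $X=Y$ and $\nu$ chosen so that $\dim Y<\nu$, the map $f_{*}\colon[Y,Y]^{G}\to[Y,Z]^{G}$ is injective, whence $g\circ f\simeq_{G}\mathrm{id}_{Y}$. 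Therefore $g$ is a two-sided $G$-homotopy inverse of $f$, and $f$ is a $G$-homotopy equivalence.

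The step I expect to require the most care is the passage from the single-$\nu$ form of Proposition~\ref{whitehead} to its use on the complexes $Z$ and $Y$, which need not be finite-dimensional: one applies the proposition to each skeleton $X_{(k)}$ — where only finitely many cell dimensions occur for each orbit type $G/H$, so a finite-valued $\nu$ with $\dim X_{(k)}<\nu$ exists precisely because $f$ is $\infty$-connected — and then assembles the resulting $G$-maps and $G$-homotopies compatibly along the filtration, using that $X$ carries the colimit topology. Once this bookkeeping is carried out the rest is the formal argument above; alternatively one can bypass it by proving the statement directly by induction over equivariant cells with a HELP-type lifting/extension lemma, as in \cite[Proposition~2.7]{ttd}.
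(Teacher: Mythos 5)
Your proof is correct and takes essentially the same route the paper does: the paper gives no argument for this proposition, simply remarking that it is a consequence of the equivariant Whitehead statement (Proposition~\ref{whitehead}), and your two-step bijection argument is the standard way to extract the result. The one point you raise that the paper glosses over — that the hypothesis $\dim X<\nu$ with $\nu\colon\mathcal{F}\to\mathbb{N}$ literally requires finite-dimensional complexes — is a genuine subtlety, and your proposed fixes (skeleton-by-skeleton assembly, or invoking the HELP-type lemma from \cite{ttd} directly) are the right ways to handle it.
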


	\subsection{The (G,$\mathcal{F}$)-Stolz sequence}

	Let $X$ be a $(G,\mathcal{F})$-CW-complex. In the following, by a spin $(G, \mathcal{F})$-manifold we mean a spin manifold with a  $G$-action preserving the spin structure and whose isotropy groups are given by the family $\mathcal{F}$. 
	
	We can then define the following groups.
	
	\begin{itemize}
		\item $\Omega^{spin}_{n}(X)^{(G,\mathcal{F})}$ is the $(G,\mathcal{F})$-equivariant spin bordism group: a cycle here is given by a pair $(M,f)$, where $M$ is a  $n$-dimensional spin $(G,\mathcal{F})$-manifold with a $G$-equivariant reference map $f\colon M\to X$. Two cycles $(M,f)$ and $(M',f')$ are equivalent if there is a spin $(G,\mathcal{F})$-bordism $W$ from $M$ to $M'$ and there exists a $G$-equivariant reference map $F\colon W\to X$ extending $f$ and $f'$.
		\item $Pos^{spin}_{n}(X)^{(G,\mathcal{F})}$ consists of equivalence classes of cycles $(M,f,g)$, where the pair $(M,f)$ is as before and $g$ is a $G$-invariant metric with positive scalar curvature on $M$.  And two cycles $(M,f,g)$ and $(M',f',g')$ are equivalent if there exists a pair $(W,F)$ as before, along with a $G$-invariant metric $g_W$ on $W$ which is of product type near the boundary and restricts to $g$ on $M$ and to $g'$ on $M'$.
		\item $R^{spin}_{n}(X)^{(G,\mathcal{F})}$ is the bordism group of spin $(G,\mathcal{F})$-manifolds with boundary (not necessarily non-empty) of dimension $n$, endowed with a $G$-invariant Riemannian psc-metric on the latter. Bordisms are then manifolds with corners. In particular  $(M,f,g)$ and $(M',f',g')$ are equivalent if there exists a $(G,\mathcal{F})$-bordism $(W,F,\bar{g})$, where $\bar{g}$ is a $G$-invariant psc-metric on  the boundary $(\partial W,\partial F)$, so that it is a bordism between $(\partial M,\partial f,g)$ and $(\partial M',\partial f',g')$ in the sense of $Pos^{spin}_{n-1}(X)^{(G,\mathcal{F})}$.
	\end{itemize}
	
	As before, each of these sets is equipped with an abelian group structure given by disjoint union of manifolds and is covariantly functorial in $X$, meaning that a $G$-equivariant map of $(G,\mathcal{F})$-CW-Complexes $\varphi:X \rightarrow Y$ induces a mapping $\varphi_{*}$ on these groups just by composing it with the reference maps.

	\begin{remark}
		Of course here we assume that all the actions on a manifold with boundary fix the boundary. 
	\end{remark}
	
	\begin{prop}\label{exactg,fstolz}
		The previous abelian groups fit into the following $(G,\mathcal{F})$-equivariant version of the Stolz positive scalar curvature exact sequence:
		
		\[
		\xymatrix{
			\ldots \ar[r] & R^{spin}_{n+1}(X)^{(G,\mathcal{F})} \ar[r] & Pos^{spin}_{n}(X)^{(G,\mathcal{F})} \ar[r] & \Omega^{spin}_{n}(X)^{(G,\mathcal{F})} \ar[r] & R^{spin}_{n}(X)^{(G,\mathcal{F})} \ar[r] & \ldots}
		\]
		where the first map sends  a manifold to its boundary, the second one is the forgetful map (i.e. it forgets about the psc metric) and the last one is the obvious map.
	\end{prop}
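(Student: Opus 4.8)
The plan is to mimic the proof of the non-equivariant Stolz sequence, the only novelty being that every manifold, metric and bordism involved must carry a $G$-action with isotropy in $\mathcal{F}$, a constraint which, by the compactness of the groups in $\mathcal{F}$, causes no difficulty with the averaging arguments one needs. First I would check exactness at $\Omega^{spin}_{n}(X)^{(G,\mathcal{F})}$. For $d\circ\partial=0$: if $(M,f,g_\partial)$ represents a class in $R^{spin}_{n+1}(X)^{(G,\mathcal{F})}$, then $\partial$ of it is $(\partial M, f|_{\partial M})$ with its psc metric $g_\partial$; applying $i$ (the map into the $R$-group of a closed manifold, viewed as a manifold with empty boundary) and comparing, one sees that $M$ itself is a $(G,\mathcal{F})$-bordism exhibiting $i(d\,\partial[M,f,g_\partial])=0$, because $M$ has boundary $\partial M$ on which $g_\partial$ extends by hypothesis. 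Conversely, if $(N,h)$ is a closed spin $(G,\mathcal{F})$-manifold with $i[N,h]=0$ in $R^{spin}_{n}(X)^{(G,\mathcal{F})}$, then by definition of the $R$-group there is a compact spin $(G,\mathcal{F})$-manifold $W$ with $\partial W=N\sqcup V\sqcup\emptyset$ where $V$ carries a $G$-invariant psc metric realizing a $Pos$-bordism; cutting $W$ along a collar one extracts a $(G,\mathcal{F})$-manifold with boundary whose boundary carries a psc metric, and whose class in $Pos^{spin}_{n}(X)^{(G,\mathcal{F})}$ (after capping) maps to $[N,h]$ under $d$. This is the standard ``cut-and-paste'' argument, and it goes through verbatim $G$-equivariantly because all the gluings are along $G$-invariant collars.

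Next I would treat exactness at $Pos^{spin}_{n}(X)^{(G,\mathcal{F})}$. The composite $\partial\circ\partial=0$ here reads: the boundary of a manifold-with-boundary has empty boundary, and so $\partial$ of anything coming from $R^{spin}_{n+1}$ is a closed psc manifold that bounds (namely, it bounds the manifold it was the boundary of) compatibly with its psc metric — hence is zero in $Pos$; more precisely $d\circ\partial$, not $\partial\circ\partial$, is the relevant composite and I mislabel: the two arrows out of $Pos$ are $\partial$ (to $R_{n+1}\to Pos_n$ is the arrow \emph{into} $Pos$) — let me restate. The sequence is $R^{spin}_{n+1}\xrightarrow{\partial}Pos^{spin}_{n}\xrightarrow{d}\Omega^{spin}_{n}$, so I must show $d\circ\partial=0$ and that $\ker d\subseteq\operatorname{im}\partial$. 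That $d\,\partial[M,f,g_\partial]=0$: the underlying closed manifold of $\partial M$ bounds $M$ as a spin $(G,\mathcal{F})$-manifold, so it is null-bordant in $\Omega^{spin}_{n}(X)^{(G,\mathcal{F})}$. For $\ker d\subseteq\operatorname{im}\partial$: if $(M,f,g)$ is a closed psc $(G,\mathcal{F})$-manifold that is spin-null-bordant via $(W,F)$, then $(W,F,g)$ — where $g$ now plays the role of a psc metric on the boundary $M$ of the manifold-with-boundary $W$ — defines a class in $R^{spin}_{n+1}(X)^{(G,\mathcal{F})}$ whose $\partial$ is $[M,f,g]$.

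Finally, exactness at $R^{spin}_{n}(X)^{(G,\mathcal{F})}$: the composite $R^{spin}_{n+1}\xrightarrow{\partial}Pos^{spin}_{n}\xrightarrow{\ }\dots$ is not adjacent to the $i$ out of $\Omega$; the relevant composite there is $i\circ d=0$ into $R^{spin}_{n}$ (a closed manifold underlying a $Pos$-class is, as a manifold with empty boundary carrying a psc metric on that empty boundary, null-bordant in the $R$-group via $M\times[0,1]$ — actually via $M$ itself viewed with product collar), and $\ker\partial\subseteq\operatorname{im}i$: a class $[M,f,g_\partial]\in R^{spin}_{n}$ with $\partial[M,f,g_\partial]=0$ in $Pos^{spin}_{n-1}$ means $(\partial M,f|_{\partial M},g_\partial)$ bounds a $Pos$-bordism $(V,\cdot,g_V)$; gluing $V$ to $M$ along $\partial M$ produces a closed spin $(G,\mathcal{F})$-manifold $\widehat M$ with a $G$-invariant reference map, and one checks $[M,f,g_\partial]=i[\widehat M]$ by using the $(G,\mathcal{F})$-bordism-with-corners built from $M\times[0,1]$ with $V$ attached. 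The main obstacle — and it is really the only place where the equivariant setting demands care rather than copying — is ensuring at every gluing that the resulting metric and spin structure are genuinely $G$-invariant and $G$-preserved: one attaches along $G$-invariant collar neighborhoods of boundary components (which exist because $G$ acts by isometries and preserves the boundary, so a $G$-invariant collar can be obtained by averaging an arbitrary collar over the compact isotropy groups, or directly from equivariant tubular neighborhood theory), and one checks that the spin structures, which are $G$-equivariant by hypothesis on the cycles, glue to a $G$-equivariant spin structure on the result. Because all isotropy groups lie in $\mathcal{F}$ and hence are finite, the isotropy condition is automatically inherited by all glued manifolds, and no transversality or genericity issue arises. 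I therefore expect the proof to be a careful but routine adaptation, with the bookkeeping of $G$-invariant collars being the only step worth spelling out.
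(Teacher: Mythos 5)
Your argument is correct and is essentially the proof the paper intends: the proposition is stated as a routine adaptation of the classical Stolz argument, and the detailed template the paper does write out (Theorem \ref{Lstolzexact}, the $L$-fibered case) matches your verifications step by step --- $M\times[0,1]$ for $i\circ d=0$, the null-bordism $(W,F,g)$ for $\ker d\subseteq\operatorname{im}\partial$, and gluing the $Pos$-nullbordism $V$ onto $M$ along $\partial M$ for $\ker\partial\subseteq\operatorname{im} i$, with equivariant collars supplied by the slice/tubular-neighborhood theory for proper actions. The only blemishes are bookkeeping slips (you twice mislabel which group you are checking exactness at, and the parenthetical ``actually via $M$ itself'' should be discarded in favour of the $M\times[0,1]$ null-bordism you first wrote), none of which affect the substance.
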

	
	Let $X$ be a $(G,\mathcal{F})$-CW-complex with non-trivial fundamental group $\pi_1(X,x)$. Let $\widetilde{X}$ its universal covering, then the proper $(G,\mathcal{F})$-action on $X$ lifts to a $(\widetilde{G}, \widetilde{\mathcal{F}})$-action on $\widetilde{X}$ via the generalized lifting lemma, where: 
	\begin{itemize}
		\item $1\to\pi_1(X,x)\to \widetilde{G}\to G\to 1$ is an extension of discrete groups, which depends on how $G$ moves the base point $x$ chosen to define $\pi_1(X,x)$;
		\item $\widetilde{\mathcal{F}}$ is a family of finite subgroups of $\widetilde{G}$, namely that of the finite isotropy subgroups with respect to the $\tilde{G}$-action, such that its elements are sent (not necessarily surjectively) to elements of $\mathcal{F}$.
	\end{itemize}

\begin{example}
	Consider $S^1\subset \mathbb{R}^2$ whose $\mathbb{Z}_2$ action is given by reflection along the $X$-axis, i.e. $(-1)(x,y)=(x,-y)$. Observe that if $\mathcal{F}=\{e, \mathbb{Z}_2\}$, where $e$ is the identity, \( S^1 \) has the following \( (\mathbb{Z}_2, \mathcal{F}) \)-CW complex structure: its 0-skeleton consists of two equivariant 0-cells of the form \( \mathbb{Z}_2/\mathbb{Z}_2 \times D^0 \), to which an equivariant 1-cell of the form \( \mathbb{Z}_2/\{e\} \times D^1 \) is attached.
	
	Now, $\pi_1(S^1)=\mathbb{Z}$ and the universal cover is realized as $\pi\colon \mathbb{R} \to S^1$ given by $\pi(\theta)=e^{2\pi i \theta}$.
	
	Since $\varphi\colon\mathbb{Z}_2 \to Aut(\mathbb{Z})$ via $\varphi((-1)^j)(m)=(-1)^jm$, then one can form the semidirect product $\mathbb{Z} \rtimes_{\varphi} \mathbb{Z}_2$, whose product law is:
	
	\[\left(k,(-1)^i\right)\cdot\left(m,(-1)^j\right):=\left(k+(-1)^im, (-1)^{i+j}\right).\]
	
	By defining the action of this group (which is an extension of $\mathbb{Z}_2$ via $\mathbb{Z}$) on $\mathbb{R}$ as:
	
	\[\left(k,(-1)\right)\theta:=k-\theta \in \mathbb{R},\]
	then this action covers that of $\mathbb{Z}_2$ on $S^1$, since if $(x,y)=e^{2\pi i \theta}$:
	
	\[\pi(k-\theta)=\pi(-\theta)=e^{-2\pi i \theta}=(x,-y).\]
	
	Note that the action defined on \( \mathbb{R} \) is free, so the family of isotropy subgroups is only given by the trivial group \( \widetilde{\mathcal{F}} = \{e\} \). Therefore, this family maps to \( \mathcal{F} \), but not surjectively.
	
\end{example}

	Then we have the following easy identification. 
	\begin{proposition}\label{extension}
		The $(G,\mathcal{F})$-equivariant Stolz exact sequence associated to $X$ is isomorphic to the $(\widetilde{G}, \widetilde{\mathcal{F}})$-equivariant Stolz exact sequence associated to $\widetilde{X}$.
	\end{proposition}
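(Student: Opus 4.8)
The plan is to construct an explicit isomorphism of exact sequences by exhibiting mutually inverse maps between the cycles of the two Stolz sequences, compatible with the bordism relations and with the three structure maps $\partial$, $d$ (forgetful), $i$ (inclusion). The key point is that the lifting of the proper $(G,\mathcal{F})$-action on $X$ to a $(\widetilde{G},\widetilde{\mathcal{F}})$-action on $\widetilde{X}$ is functorial enough to be performed simultaneously on all the reference maps appearing in cycles and bordisms.

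First I would set up the correspondence on $\Omega^{spin}$-cycles. Given a cycle $(M,f)$ for $\Omega^{spin}_n(X)^{(G,\mathcal{F})}$, form the pullback covering $\widetilde{M}:=f^*\widetilde{X}\to M$; since $\widetilde{X}\to X$ is the universal cover and $M$ is a spin $(G,\mathcal{F})$-manifold with $G$-equivariant $f$, the pullback carries a natural spin structure, and by the generalized lifting lemma the $G$-action on $M$ lifts to a $\widetilde{G}$-action on $\widetilde{M}$ whose isotropy subgroups lie in $\widetilde{\mathcal{F}}$. The tautological map $\widetilde{f}\colon\widetilde{M}\to\widetilde{X}$ is $\widetilde{G}$-equivariant by construction. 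This produces a cycle $(\widetilde{M},\widetilde{f})$ for $\Omega^{spin}_n(\widetilde{X})^{(\widetilde{G},\widetilde{\mathcal{F}})}$. Conversely, given a $(\widetilde{G},\widetilde{\mathcal{F}})$-cycle $(N,h)$ over $\widetilde{X}$, the composite $\pi\circ h\colon N\to X$ is $G$-equivariant for the quotient action, and quotienting $N$ by the (free, since it covers the deck transformations) action of $\pi_1(X,x)\subset\widetilde{G}$ gives a $(G,\mathcal{F})$-cycle over $X$. I would check these two assignments are mutually inverse up to canonical isomorphism of cycles. The same construction applies verbatim to $Pos^{spin}$ (a $G$-invariant psc metric pulls back to a $\widetilde{G}$-invariant psc metric, scalar curvature being a local invariant, and conversely descends) and to $R^{spin}$ (apply the construction to the manifold with boundary and to the boundary metric, noting the manifold-with-corners bordisms behave the same way).

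Next I would verify that the correspondence respects bordism: a $(G,\mathcal{F})$-bordism $(W,F)$ between $(M,f)$ and $(M',f')$ pulls back to a $(\widetilde{G},\widetilde{\mathcal{F}})$-bordism $(\widetilde{W},\widetilde{F})$ between the lifts, because $W$ is connected to $M,M'$ through the bordism and the restriction of $F^*\widetilde{X}$ to the boundary components recovers $f^*\widetilde{X}$ and $(f')^*\widetilde{X}$ — here one uses that $\widetilde{X}$ is simply connected so the pullback covering is determined by the map up to the deck group, giving coherence along the bordism. Then the induced maps on bordism classes are well-defined group homomorphisms (disjoint union is respected), and they visibly commute with $\partial$, $d$, and $i$ since all three are defined by forgetting or restricting structure, operations that commute with pullback of coverings. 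Finally, being level-wise isomorphisms commuting with all maps in the sequence, the collection gives an isomorphism of the two long exact sequences, as claimed.

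The main obstacle I expect is the bookkeeping around the group extension $1\to\pi_1(X,x)\to\widetilde{G}\to G\to 1$: one must check that the $\widetilde{G}$-action obtained on $\widetilde{M}$ is genuinely the restriction of the "correct" extension (the one depending on how $G$ moves the basepoint), and in particular that the isotropy family one gets really is $\widetilde{\mathcal{F}}$ and not some ad hoc family depending on $M$. This is handled by observing that the lifting lemma is natural in the $(G,\mathcal{F})$-space: the $G$-equivariant map $f\colon M\to X$ induces, on fundamental-group-level data, a commuting diagram of extensions, so the extension governing $\widetilde{M}$ is pulled back from the one governing $\widetilde{X}$, forcing the isotropy subgroups of $\widetilde{M}$ to map into $\widetilde{\mathcal{F}}$. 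Everything else is routine naturality of pullback constructions.
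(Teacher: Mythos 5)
Your construction is exactly the one the paper uses: the forward map is pullback of the universal covering $\widetilde{X}\to X$ along the reference map $f$ (yielding a $\pi_1(X,x)$-Galois covering with equivariantly lifted metrics), and the inverse is the quotient by $\pi_1(X,x)\subset\widetilde{G}$. Your additional checks on bordism compatibility, commutation with $\partial$, $d$, $i$, and the extension bookkeeping fill in details the paper leaves implicit, but the approach is the same.
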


In particular, the correspondence is given by associating to a cycle whose representative map valued in $X$ is $f$, the cycle represented by the pullback of the universal cover covering $\widetilde{X} \to X$ along the map $f$, namely a total space of a $\pi_1(X,x)$-Galois covering, and lifting the metrics in an equivariant way. The vertical maps are then given by taking quotient with respect to the action of $\pi(X,x)\subset \widetilde{G}$.

We now proceed to state the main result of this section, namely the $(G, \mathcal{F})$-version of Theorem \ref{2connthm} stated before, for which it represents a generalization.

	\begin{theorem}\label{G2conn}
		Let $f\colon X \rightarrow Y$ be a continuous, 2-connected $G$-map between $(G,\mathcal{F})$-CW-complexes which induces an isomorphism between the fundamental groups of $X$ and $Y$. Then the functorially induced map $f_{*}\colon R_{n}^{spin}(X)^{(G,\mathcal{F})} \rightarrow R_{n}^{spin}(Y)^{(G,\mathcal{F})}$ is an isomorphism.
	\end{theorem}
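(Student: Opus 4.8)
The plan is to adapt the classical argument behind Theorem \ref{2connthm} to the equivariant setting, exploiting the fact that a $(G,\mathcal{F})$-CW-complex is built out of cells of the form $G/H_i \times D^k$ with $H_i \in \mathcal{F}$, so that everything reduces, fiberwise over the orbit type, to statements about ordinary manifolds with a residual action of the finite group $H_i$. First I would reduce to the case where $f$ is a cellular inclusion of a subcomplex: by the equivariant Cellular Approximation Theorem (Theorem \ref{cellular}) we may assume $f$ is cellular, and by replacing $Y$ with the equivariant mapping cylinder of $f$ (which is again a $(G,\mathcal{F})$-CW-complex and is $G$-homotopy equivalent to $Y$, the homotopy equivalence inducing an isomorphism on $R^{spin}_*$ by functoriality and the homotopy invariance built into the bordism description) we reduce to $X \hookrightarrow Y$ a subcomplex inclusion. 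Since $f$ is $2$-connected and induces an isomorphism on $\pi_1$, the pair $(Y,X)$ can be taken to have relative cells only in dimensions $\geq 3$; here one must be slightly careful, because $2$-connectedness is a statement about the non-equivariant map $f$ (equivalently $f^{\{e\}}$), not about all the $f^H$, so the relative $(G,\mathcal{F})$-cells of the form $G/H\times D^k$ with $H\neq \{e\}$ are not a priori controlled — but for those cells the orbit bundle $G/H\times D^k$ has the free part $G/H$ of positive dimension only over the trivial isotropy locus, and the point is that attaching a cell $G/H\times D^k$ changes $X^{H'}$ only for $H'$ subconjugate to $H$, where it contributes a cell of dimension $k$; the upshot is that after a standard rearrangement of the relative CW structure we may assume all relative cells, of every orbit type, are attached in dimension $\geq 3$.

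The heart of the argument is then surjectivity and injectivity of $f_*$ on $R^{spin}_n$, which I would prove by the same bordism-theoretic surgery that proves Theorem \ref{2connthm}, performed $G$-equivariantly. For surjectivity: given a cycle $(N,h\colon N\to Y, g_\partial)$ representing a class in $R^{spin}_n(Y)^{(G,\mathcal{F})}$, one makes $h$ transverse (equivariantly, using that $N$ is a spin $(G,\mathcal{F})$-manifold and the relative cells of $Y$ over $X$ sit in the free part up to orbit type) to the relative cells of $(Y,X)$ and does equivariant surgery on $N$ below the middle dimension to push $h$ into $X$, exactly as in the non-equivariant case; the key geometric input is the Gromov--Lawson theorem in its $(G,\mathcal{F})$-equivariant form, which guarantees that surgeries of codimension $\geq 3$ on the boundary $\partial N$ can be done while keeping (and producing) a $G$-invariant psc metric, and that these surgery traces assemble into a valid bordism of $R^{spin}$-cycles. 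Because the relative cells are in dimension $\geq 3$, all the required surgeries on the ambient $n$- (resp. $(n+1)$-) manifolds are in codimension $\geq 3$ in the interior and codimension $\geq 3$ on the boundary, which is precisely the range in which equivariant Gromov--Lawson applies. For injectivity: if a $Y$-cycle coming from $X$ bounds in $R^{spin}_n(Y)^{(G,\mathcal{F})}$, one takes the bounding manifold-with-corners $(W,H,\bar g)$, makes $H$ transverse to the relative cells and surgers $W$ rel boundary to push $H$ into $X$, again using equivariant Gromov--Lawson on the boundary part carrying the psc metric and equivariant surgery theory (Proposition \ref{whitehead} controls when the relevant equivariant maps can be homotoped into the subcomplex) in the interior; this exhibits the original class as zero in $R^{spin}_n(X)^{(G,\mathcal{F})}$.

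The main obstacle I expect is the equivariant transversality and surgery step, specifically making sure that all surgeries occur in codimension $\geq 3$ \emph{simultaneously on every fixed-point stratum} $X^H$, $H\in\mathcal{F}$, so that the Gromov--Lawson psc-preservation applies uniformly: the hypothesis only gives $2$-connectedness of the underlying map, so the cells of non-trivial orbit type $G/H\times D^k$ with $k=1,2$ are not excluded by hypothesis and must instead be eliminated by a geometric rearrangement of the relative complex (trading low-dimensional relative cells of non-free orbit type, using that they do not affect $\pi_1$ of the free part and that $\mathcal{F}$ is closed under intersection and conjugation). Handling this cell-trading carefully, and checking that the equivariant Gromov--Lawson construction is compatible with the $G$-invariance of the metrics throughout the surgery traces (product-like near corners, restricting correctly on boundary faces), is where the real work lies; the rest is the standard five-lemma-free direct verification that $f_*$ is bijective, done exactly as for the non-equivariant Theorem \ref{2connthm}. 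Once surjectivity and injectivity are established, the theorem follows, and in particular the isomorphism $R^{spin}_{n+1}(M)\simeq R^{spin}_{n+1}(B\pi_1(M))$ referenced in the introduction is recovered as the special case $G=\{e\}$.
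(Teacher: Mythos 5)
The central gap is the cell-trading step. You interpret ``$2$-connected'' as a statement only about the underlying map $f^{\{e\}}$, note (correctly, under that reading) that relative $(G,\mathcal{F})$-cells $G/H\times D^k$ with $H\neq\{e\}$ and $k\leq 2$ are then uncontrolled, and propose to eliminate them by a ``standard rearrangement'' of the relative CW structure. This cannot work: if $X^H\to Y^H$ fails to be $2$-connected for some nontrivial $H$, the relative pair $(Y^H,X^H)$ genuinely carries cells in dimensions $\leq 2$ on that fixed-point stratum, and no combinatorial rearrangement removes them. In the paper's own terminology (Section~\ref{g,f}) ``$k$-Connected'' means $\nu$-connected with $\nu\equiv k$, i.e.\ \emph{every} restriction $f^H$, $H\in\mathcal{F}$, is $k$-connected; under that reading the difficulty you flag does not arise, but then there is no cell trading to do --- the hypothesis already gives what you are trying to earn, and the route through Proposition~\ref{whitehead} is immediate.

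Separately, the paper's proof never reduces to a cellular subcomplex inclusion $X\hookrightarrow Y$ with high relative cells. It first passes to universal covers via Proposition~\ref{extension} (a reduction you omit), and then applies equivariant Morse theory directly to the cycle $W$ (respectively to the null-bordism $B$ with corners, for injectivity), rearranging the Morse function to split $W=W_1\cup W_2$ with $W_1$ carrying only $0$-, $1$-, $2$-handles and $W_2$ the trace of codimension-$\geq3$ surgeries. Equivariant Gromov--Lawson applies to $W_2$ to propagate the $G$-invariant psc metric to $M_1=W_1\cap W_2$. Since $W_1$ is $G$-homotopy equivalent to a $2$-dimensional $(G,\mathcal{F})$-CW complex, equivariant cellular approximation (Theorem~\ref{cellular}) factors the reference map through $Y_{(2)}$, and the equivariant Whitehead theorem (Proposition~\ref{whitehead}) --- this is precisely where the $2$-connectedness of all $f^H$ is invoked --- yields a section $h\colon Y_{(2)}\to X_{(2)}$ through which the map factors. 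So the codimension-$\geq3$ surgeries happen on the manifold representing the bordism class, not relative to a modified CW structure on $Y$; the mapping-cylinder, equivariant transversality, and surgery-theoretic apparatus you propose, together with the obstacle they create, are bypassed entirely.
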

	\begin{proof}
		First of all, observe that Proposition \ref{extension} allows us to assume that $X$ and $Y$ are simply connected, by passing to universal coverings.
		
		%
		\medskip
		
		\textbf{Surjectivity.} Start by showing the surjectivity of the map $f_{*}\colon R_{n}^{spin}(X)^{(G,\mathcal{F})} \rightarrow R_{n}^{spin}(Y)^{(G,\mathcal{F})}$. 
		Let us consider the class $[W,\varphi\colon W \rightarrow Y, g] \in R_{n}^{spin}(Y)^{(G,\mathcal{F})}$, we want to find a \textit{bordant}  cycle whose reference map factors through $f$.

		Consider W as a bordism between its boundary $\partial W$ and the empty set and choose a $G$-invariant Morse function $\alpha\colon W\to \RR$ on it with critical points rearranged as described in \cite[Theorem 4.8]{milnor}, namely for any critical points $p_{i}$ and $p_{j}$ such that $\alpha(p_{i}) < \alpha(p_{j})$, we have that $Ind(p_{i}) < Ind(p_{j})$. Notice that we are going to use the enhanced version of this result to the equivariant setting, see for instance \cite{Mayer, Wassermann}. 
		
		Then there exists a suitable $t\in \RR$ such that the subset $W_1:=\alpha^{-1}([0,t])\subset W$ consists only of $G$-handles of dimensions 0,1 and 2. We immediately obtain a decomposition of $W$ as $W_{1} \cup W_{2}$ such that $W_{1}$ is a bordism from the empty set to $M_{1}:=\alpha^{-1}(t)$ and $W_{2}$ a bordism from $M_{1}$ to $\partial W$. 
		Of course, $W_{2}$ has only critical points $p_{i}$ with $Ind_{\alpha}(p_{i}) \geq 3$. Consider now the function $-\alpha$: this is a Morse function on $W_{2}$ seen as a bordism from $\partial W$ to $M_{1}$ with same critical points $p_{i}$ but with indices now given by $Ind_{-\alpha}(p_{i})=dim(W)-Ind_{\alpha}(p_{i})$. These critical points $p_{i}$ are then associated to $G$-equivariant  $(Ind_{-\alpha}(p_{i})-1)$-surgeries, hence with codimension $Index_{\alpha}(p_{i})+1$ which is $\geq 3$. 
		
		\begin{figure}[h]
			\centering
			\includegraphics[width=
			0.5\textwidth]{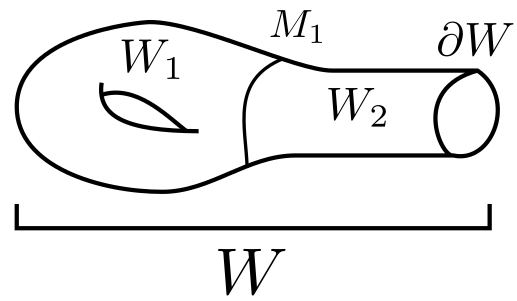}
		\end{figure}
		
		This allows us to apply the Gromov-Lawson Theorem in its $G$-equivariant version as it is proved in \cite{Hanke}. We can then extend the metric with positive scalar curvature $g$ on $\partial W$ to a $G$-invariant metric with positive scalar curvature $\bar{g}$ on $W_2$. Let us denote by $g_{1}$ its restriction to $M_{1}$. 
		Observe that the triad $(W_{1},\varphi_{|W_{1}},g_{1})$ defines a class in $R_{n}^{spin}(Y)^{(G,\mathcal{F})}$ and the manifold $W \times [0,1]$ provides a bordism between $(W_{1},\varphi_{|W_{1}},g_{1})$ and $(W,\varphi, g)$.

		Consider now  the natural $G$-equivariant inclusion $i\colon Y_{(2)}\hookrightarrow Y$, then we have the following facts.
		\begin{itemize}
			\item 	Since the manifold $W_{1}$ is obtained from the empty set by attaching  $(G,\mathcal{F})$-handles of dimension 0,1 and 2, it is homotopy equivalent to a 2-dimensional $(G,\mathcal{F})$-CW-complex. It follows from Theorem \ref{cellular} that the map $\varphi_1:=\varphi_{|W_{1}}$ factors through $i$ up to $G$-homotopy.
			\item Since $f$ is 2-connected,  up to $G$-homotopy we can assume that its restriction to the 2-skeleton  $f_{(2)}\colon  X_{(2)} \rightarrow Y_{(2)}$ has a right inverse, i.e. there exists a $G$-equivariant map $h\colon Y_{(2)} \rightarrow X_{(2)}$ such that $f_{(2)}\circ h=i$.
			To see this, observe that the existence of such a map $h$ is guaranteed, up to $G$-homotopy, by proposition \ref{whitehead}. In fact, since $f_{(2)}$ is $2$-connected and $Y_{(2)}$ has dimension $\leq 2$, it suffices to apply \ref{whitehead} with $X=Y_{(2)}$, $Y=X_{(2)}$, $Z=Y$ and $f=f_{(2)}$ to the map $i \in [X,Z]^G$: the surjectivity of $f_*$ then guarantees the existence of $h$. 
			
		\end{itemize}
		Thus, we obtain the following commutative diagram of $G$-equivariant maps
		\begin{equation}\label{diagram1}
			\xymatrix{W_1 \ar[rr]^{\varphi_1} \ar[d]_{\varphi_1}&& Y &  \\
				Y_{(2)} \ar[d]_h \ar[urr]^i & & \\
				X_{(2)} \ar[uurr]_{f_{(2)}}  \ar[rr]_j  && X\ar[uu]_f}
		\end{equation}
		and, if we set $\psi:= j\circ h\circ \varphi_1\colon W_1\to X$, we obtain  by construction that the following equality holds: $$f_*[W_1,\psi\colon W_1\to X, g_1]=[W,\varphi\colon W\to Y,g]\in R^{spin}_n(Y)^{(G,\mathcal{F})},$$ which proves that  $f_*$ is surjective. \\
		
		\medskip
		
		\noindent \textbf{Injectivity.} In order to prove the injectivity of $f_*\colon R_{n}^{spin}(X)^{(G,\mathcal{F})}\to R_{n}^{spin}(Y)^{(G,\mathcal{F})} $, let us  consider a class $[W,\varphi:W \rightarrow X,g]\in R_{n}^{spin}(X)^{(G,\mathcal{F})}$ such that its image $f_{*}[W,\varphi:W \rightarrow X,g]$ is equal to the trivial element in $R_{n}^{spin}(Y)^{(G,\mathcal{F})}$. This means that there exists:
		\begin{itemize}
			\item a $n+1$-dimensional $(G,\mathcal{F})$-manifold with corners $B$, whose codimension 1 faces are $W$ itself and a bordism $V$ from $\partial W$ to the empty set, which intersect into the only codimension 2 corner  $\partial W=W\cap V$;
			\item a $G$-invariant metric $g_V$ on $V$ of positive scalar curvature of product type near the boundary which restricts to metric $g$ on $\partial W$;
			\item a $G$-equivariant map $\Psi\colon B\to Y$ which restricts to $f\circ\varphi$ on $W$.
		\end{itemize} 
		
		Consider now a $G$-invariant collar neighborhood of $\partial W$ inside $V$ such that the boundary of $B$ is  made of three faces of  codimension 1: $W$ on the bottom, $\partial W \times [0,1]$ vertically and $\bar{V}=V\setminus\partial W \times [0,1)$ on the top.
		
		\begin{figure}[h]\label{B}
			\centering
			\includegraphics[width=
			0.3\textwidth]{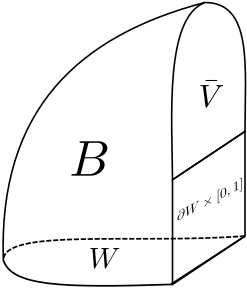}
			
		\end{figure}
		
		We want to split the bordism $B$, as we did to prove the surjectivity, into the composition of two bordisms first from $W$ to a manifold with boundary $W_1$ and then from $W_1$ to $\bar{V}$, such that the first one involves only handle attachments of dimension less or equal than 2 and the second one only of dimension greater or equal than 3.
		Since the vertical boundary face $\partial W \times [0,1]$ is a cylinder, $B$ can be obtained from $W$ by attaching all the handles  to the interior of $W$, away from $\partial W \times [0,1]$. 
		Hence we can find a  Morse function on $B$ which has all critical points there. 
		
		Thus, we can decompose $B$ as desired: $B_{1}$ from $W$ to $W_{1}$ involving only 0,1,2 handle attachments and $B_{2}$ from $W_{1}$ to $\bar{V}$.  We can assume that these two bordisms have vertical boundaries faces equal to $\partial W \times [0,1/2]$ and $\partial W \times [1/2,1]$ respectively and therefore that $W_{1}$ has boundary equal to $\partial W$. 
		
		\begin{figure}[h]\label{B1-B2}
			\centering
			\includegraphics[width=
			0.7\textwidth]{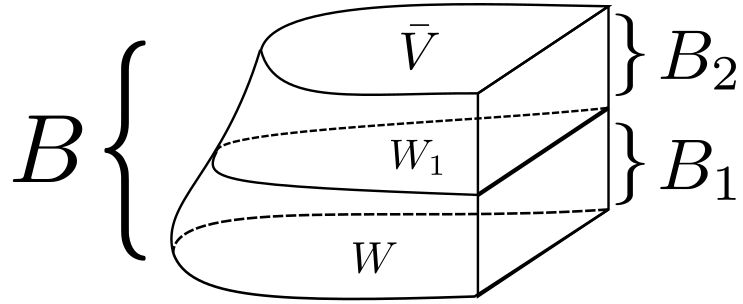}
		\end{figure}
		
		By construction, the bordism $B_{2}$ is the trace of surgeries of codimension $\geq 3$. Therefore, we can apply the Gromov-Lawson Theorem to extend the metric $g_{\bar{V}}$ to a $G$-invariant metric of positive scalar curvature $g_{2}$ on $B_2$. Let us denote by $g_1$ the $G$ -invariant metric of positive scalar curvature obtained by restricting $g_{2}$ to $W_{1}$. 
		
		The last fact to prove is that $\Psi_{|B_1}\colon B_1\to Y$ factors through $f\colon X\to Y$.
		Indeed $B_1$ is obtained form $W$ by attaching, up to homotopy, cells of dimension up to 2, let us call the union of these closed 2-cells $Z$. Up to homotopy, $\Psi_{|B_1}$ maps $Z$ to the 2-skeleton $Y_{(2)}\subset Y$ and,
		as for $W_1$ in \eqref{diagram1}, 
		we obtain a factorization of $\Psi_{|Z}\colon Z\to Y$ as $f\circ \varphi_Z$, by setting $\varphi_Z:= j\circ h\circ \Psi_{|Z}\colon Z\to X$.
		
		Finally, observe that $B_1$ is, up to homotopy, the push-out of the inclusions $W\hookrightarrow B_1$ and $Z\hookrightarrow B_1$ over $\partial Z$. Then, by universality, there exists a unique map $\Phi\colon B_1\to X$ which in particular restricts
		to $\varphi$ on $W$.
		Just by observing that $(B_1, \Phi, g_1)$ is a null-bordism of $(W,\varphi,g)$ in $R^{spin}_n(X)^{(G,\mathcal{F})}$, the injectivity of $f_*$ is proved.

	\end{proof}

	\subsection{A (G,$\mathcal{F}$) universal space}
	
	In this section we are going to define the  \emph{fundamental functor} of a $(G,\mathcal{F})$-CW-complex as a natural generalization of the fundamental group of a CW-complex. This functor will be constructed in such a way that it contains all the information related to the fundamental groups, or rather the fundamental groupoids, of the fixed point spaces of the groups \( H \) belonging to the family \( \mathcal{F} \). To be clearer, let's recall some basic notions about groupoids.
	
	\begin{definition}
		A \textbf{groupoid} is an algebraic structure \( G \rightrightarrows G_0 \), where both \( G \) and \( G_0 \) are sets, and the two arrows represent two surjective maps \( r,s: G \to G_0 \), called respectively the \textit{range} and \textit{source} maps. Furthermore, it is required that:
		
		\begin{itemize}
			\item \( G_0 \) is injectively included in \( G \), and identifying \( G_0 \) with its image, both \( r \) and \( s \) are the identity on \( G_0 \);
			\item defining the set of composable pairs:
			\[ G_2 = \{ (\gamma_1,\gamma_2) \in G \times G | s(\gamma_1) = r(\gamma_2) \}, \]
			on this set there exists a product operation:
			\[ p: G_2 \to G , \quad (\gamma_1,\gamma_2) \mapsto \gamma_1\gamma_2, \]
			such that \( s(\gamma_1\gamma_2) = s(\gamma_2) \) and \( r(\gamma_1\gamma_2) = r(\gamma_1) \);
			\item there is an involution:
			\[ (-)^{-1}: G \to G, \quad \gamma \mapsto \gamma^{-1}, \]
			such that $s(\gamma^{-1})=r(\gamma)$ (implying that $s(\gamma)=r(\gamma^{-1})$). In particular, \( \gamma\gamma^{-1}=r(\gamma) \) and \( \gamma^{-1}\gamma=s(\gamma) \).
		\end{itemize}
	
	Furthermore, it is required that for every \( \gamma \in G \), \( r(\gamma)\gamma = \gamma \) and \( s(\gamma)\gamma = \gamma \), which explains why the set \( G_0 \) is also called the \textit{set of units}, and that the product \( p \) satisfies an associative property, i.e.:
	
	\[ (\gamma_1\gamma_2)\gamma_3 = \gamma_1(\gamma_2\gamma_3), \quad \forall \  (\gamma_1,\gamma_2),(\gamma_2,\gamma_3) \in G_2. \] 
	\end{definition}

\begin{remark}
	A more intuitive way to think about a groupoid is to consider an element \( \gamma \) as an arrow that starts from \( s(\gamma) \) and arrives at \( r(\gamma) \) (which explains the names "range" and "source"). In particular, the product is simply the composition of these arrows, and the inverse is the same arrow but "traversed" in the opposite direction.
	
	From this perspective, it is possible to consider a groupoid as a category, where the objects are given by the elements of \( G_0 \) and the morphisms by the arrows in \( G \).
\end{remark}

From the categorical perspective, a \textit{morphism} of groupoids is simply a functor between them.

\begin{examples} 
		
\begin{enumerate}[label=(\roman*)]
\item Any group \( G \) is a groupoid with \( G_0 = \{ e \} \), where \( e \) is its identity element. 
\item If \( X \) is a \( G \)-space, then we can form the \textit{action groupoid} whose objects are the points of \( X \), and the arrows are elements of \( G \times X \), where \( s(g,x) = x \) and \( r(g,x) = gx \). Clearly, the product is well-defined using the properties of the action of \( G \), thus given by:

\[ (h,y)(g,x) = (hg,x) \quad \text{if} \quad y = gx. \]

\item  The \textit{ fundamental groupoid} of topological space $X$, which constitutes a generalization of the fundamental group, without the choice of a fixed base point.
This is denoted by \( \Pi_1(X) \rightrightarrows X \), where the objects are the points of \( X \), and the elements of \( \Pi_1(X) \) are given by the homotopy classes with fixed endpoints of continuous paths between two points in \( X \). Clearly, if $[\gamma\colon [0,1] \to X]\in \Pi_1(X)$, the source and range maps send $[\gamma]$ to its respective initial and final points, i.e. $\gamma(0)$ and $\gamma(1)$ respectively. The product is the usual composition of paths (wherever this is possible) and the inverse is given by the opposite path, i .e. the path with changed parameter via $t \mapsto 1-t$.
	\begin{remark}
		If $x \in X$, it is clear that:
		
		\[\Pi_1(X)|_x=s^{-1}(x) \cap r^{-1}(x)\simeq \pi_1(X,x),\]
		i.e. it is exactly the fundamental group of $X$ with base point $x$.
	\end{remark}
	
\end{enumerate}
\end{examples}

	Now consider the following \textit{orbit category}: 
	\[
	\mathrm{Orb}_{\mathcal{F}}(G)
	\]
	whose objects are all subgroups in $\mathcal{F}$ and morphisms are sub-conjugations, namely compositions of natural inclusions $H\hookrightarrow K$ as subgroups of $G$ and conjugations 
	$K\rightarrow gKg^{-1}$ by elements $g\in G$.

	\begin{definition}\label{fundamentalfunctor}
		Let $X$ be a  $(G,\mathcal{F})$-CW-complex. The \textbf{fundamental functor} of $X$ is the contravariant functor:
		\[
		\Pi_1(X;G,\mathcal{F})\colon \mathrm{Orb}_{\mathcal{F}}(G)\to Groupoids
		\]
		which associates:
		\begin{itemize}
			\item  to $H\in \mathcal{F}$ the fundamental groupoid of $X^H$ restricted to the $0$-skeleton of  $X^H$, which we denote by $\Pi_1(X^H)_{|X^H_{(0)}}$, 
			\item to a morphism between two objects $H$ and $K$ in  $\mathrm{Orb}_{\mathcal{F}}(G)$ the induced homomorphism of groupoids  between $\Pi_1(X^K)_{|X^K_{(0)}}$ and $\Pi_1(X^H)_{|X^H_{(0)}}$.
		\end{itemize}
	\end{definition}

		We recall the following result \cite[Proposition 3.8]{ttd}, from which it is easy to observe how morphisms in \( \text{Orb}_{\mathcal{F}}(G) \) induce functorially morphisms between the fixed point spaces for the subgroups of the family \( \mathcal{F} \) and consequently between the groupoids of these.
		
		\begin{proposition}	Let $G$ be compact and $H \subset G$ a closed subgroup. If $X$ is a $G$-space, there exists a canonical homeomorphism:
		
		\[X^H \simeq C_G(G/H,X)\]
		\end{proposition}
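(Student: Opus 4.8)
The statement is the $G$-equivariant analogue of the Yoneda lemma for the orbit category, and the plan is to display the homeomorphism explicitly and then check continuity on both sides. First I would define $\Theta\colon X^H\to C_G(G/H,X)$ by $\Theta(x)=\phi_x$, where $\phi_x\colon G/H\to X$ is the orbit map $\phi_x(gH)=gx$. This is well defined precisely because $x\in X^H$: if $g'H=gH$, say $g'=gh$ with $h\in H$, then $g'x=g(hx)=gx$. By construction $\phi_x$ is $G$-equivariant, and it is continuous: the map $G\to X$, $g\mapsto gx$, is continuous (it is the restriction of the action $G\times X\to X$) and factors through the quotient $G\to G/H$, so $\phi_x$ is continuous by the universal property of the quotient topology. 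In the other direction I would define $\Lambda\colon C_G(G/H,X)\to X^H$ by $\Lambda(f)=f(eH)$; this lands in $X^H$ because for $h\in H$ one has $h\cdot f(eH)=f(hH)=f(eH)$ by equivariance of $f$.

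Next I would observe that $\Theta$ and $\Lambda$ are mutually inverse. Indeed $\Lambda(\Theta(x))=\phi_x(eH)=x$, so $\Lambda\circ\Theta=\mathrm{id}_{X^H}$; and for a $G$-map $f\colon G/H\to X$, equivariance gives $f(gH)=g\cdot f(eH)$ for every $g\in G$, which says exactly that $f=\phi_{f(eH)}=\Theta(\Lambda(f))$, so $\Theta\circ\Lambda=\mathrm{id}$. Thus $\Theta$ is a bijection with inverse $\Lambda$, and it only remains to see that both are continuous, where $C_G(G/H,X)\subseteq C(G/H,X)$ carries the subspace of the compact-open topology.

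Continuity of $\Lambda$ is immediate, since $\Lambda^{-1}(U)$ is the subbasic set $W(\{eH\},U)$ for every open $U\subseteq X$. For continuity of $\Theta$ I would use the tube lemma: given a subbasic open set $W(K,U)$ with $K\subseteq G/H$ compact and $U\subseteq X$ open, a point $x$ lies in $\Theta^{-1}(W(K,U))$ iff $\{x\}\times K$ is contained in the open set $\{(y,gH)\mid gy\in U\}$, the preimage of $U$ under the continuous adjoint $X^H\times G/H\to X$, $(y,gH)\mapsto gy$ (continuous since it factors through the action); the tube lemma then produces a neighbourhood $V$ of $x$ with $V\times K$ still inside this set, i.e.\ $V\subseteq\Theta^{-1}(W(K,U))$, and since $\Theta$ already lands in $C_G(G/H,X)$ this also gives continuity into the subspace. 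There is no serious obstacle here: the compactness of $G$ enters only to place us in the standard setting for these compact-open manipulations, guaranteeing that $G/H$ is a compact Hausdorff space, and the only point requiring mild care is keeping track of the $G$-equivariance throughout the adjunction.
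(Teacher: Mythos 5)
Your argument is correct and is the standard one; the paper itself does not prove this proposition but cites it directly from tom Dieck \cite[Proposition 3.8]{ttd}, where essentially the same adjunction (orbit map versus evaluation at $eH$) is used. One small remark: the compactness of $G$ is not really used in your verification beyond ensuring $G/H$ is compact Hausdorff, and the continuity of the adjoint $X^H\times G/H\to X$ rests on the fact that $G\to G/H$ is an \emph{open} quotient, so that $\mathrm{id}\times q$ is again a quotient map — worth stating explicitly since that is the only non-formal point in passing to $G/H$.
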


	The construction of Definition \ref{fundamentalfunctor} is functorial: this means that if $\varphi\colon Y\to X$ is a $G$-equivariant cellular map between $(G,\mathcal{F})$-CW-complexes, then there is an induced natural  transformation $$\varphi_{\#}\colon \Pi_1(Y;G,\mathcal{F})\to \Pi_1(X;G,\mathcal{F})$$ 
	whose component at $H$ is the homomorphism of groupoids 
	$$\varphi_{\#}(H)\colon \Pi_1(Y^H)_{|Y^H_{(0)}}\to \Pi_1(X^H)_{|X^H_{(0)}}$$
	induced by $\varphi_{|Y^H}\colon Y^H\to X^H$.
	
	Now, given a $(G,\mathcal{F})$-CW-complex $X$, consider its $2$-skeleton $X_{(2)}$. Then, for each $H \in \mathcal{F}$ we attach a suitable amount of $G/H$-cells of dimension $k$ to $X^H$ in order to make $\pi_{k-1}(X^H)$ trivial, for all $k \geq 3$. Then, we call the such obtained  $(G,\mathcal{F})$-CW-complex  $B\Pi_1(X;G,\mathcal{F})$. This latter is universal in the following sense. 
	
	\begin{proposition}\label{universalspace}
		For all  $(G,\mathcal{F})$-CW-complex $Y$ and for all natural transformations $$\Phi\colon \Pi_1(Y;G,\mathcal{F})\to \Pi_1(X;G,\mathcal{F}),$$ there exists, unique up to $G$-equivariant homotopy, a $G$-equivariant cellular map $$\varphi\colon Y\to B \Pi_1(X;G,\mathcal{F})$$ such that 
		$\varphi_{\#}=\Phi$.
	\end{proposition}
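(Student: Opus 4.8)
The plan is to mimic the classical construction of maps into Eilenberg--MacLane-type universal spaces, but carried out equivariantly, skeleton by skeleton, using obstruction-theoretic input packaged in Proposition \ref{whitehead}. First I would fix a $G$-equivariant cellular model and try to build $\varphi\colon Y\to B\Pi_1(X;G,\mathcal{F})$ inductively over the skeleta $Y_{(k)}$. On the $0$- and $1$-skeleta the data of the natural transformation $\Phi$ directly prescribes where objects (points of $Y^H_{(0)}$, i.e.\ $0$-cells) and generating edges must go: a $0$-cell $G/H\times D^0$ of $Y$ lands on the $0$-skeleton of $(B\Pi_1(X;G,\mathcal{F}))^H$ according to $\Phi(H)$ on objects, and a $1$-cell $G/H\times D^1$ is sent along a path representing the morphism $\Phi(H)$ assigns to the corresponding edge class; compatibility of these choices across the orbit category $\mathrm{Orb}_{\mathcal{F}}(G)$ is exactly the naturality of $\Phi$, so the $1$-skeleton map is well-defined and $G$-equivariant, and by construction $\varphi_{\#}=\Phi$ at the level of fundamental groupoids.

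Next I would extend over the $2$-skeleton: for each equivariant $2$-cell $G/H\times D^2$ attached along some loop in $Y^H_{(1)}$, we must check that the image loop in $(B\Pi_1(X;G,\mathcal{F}))^H$ is null-homotopic. This is where the fact that $B\Pi_1(X;G,\mathcal{F})$ is obtained from $X_{(2)}$ by killing higher homotopy enters only indirectly; the genuine reason the extension exists over dimension $2$ is that the attaching loop already represents the trivial element in the fundamental groupoid by hypothesis on $\Phi$ (it is the image under a groupoid homomorphism of a relation holding in $\Pi_1(Y^H)$), hence bounds a disc in the target, and we fill it equivariantly, one orbit of cells at a time, using that $H$ fixes the cell. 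Then for $k\geq 3$ the extension over $Y_{(k)}$ is automatic and unobstructed: the relevant obstruction to extending a map $Y_{(k-1)}\to B\Pi_1(X;G,\mathcal{F})$ over a $G/H\times D^k$-cell lives in $\pi_{k-1}\bigl((B\Pi_1(X;G,\mathcal{F}))^H\bigr)$, which has been arranged to vanish; equivalently, on each fixed-point space the target is, above dimension $2$, aspherical in the sense that $(B\Pi_1(X;G,\mathcal{F}))^H$ has trivial $\pi_{k-1}$ for $k\geq 3$, so by Proposition \ref{whitehead} (or directly by elementary obstruction theory) the map extends. Passing to the colimit over skeleta, using that $Y$ carries the colimit topology, produces $\varphi$ on all of $Y$, and cellularity is automatic from the construction (or can be arranged afterwards by Theorem \ref{cellular}).

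For uniqueness up to $G$-equivariant homotopy, I would run the same inductive scheme one dimension up: given two cellular maps $\varphi_0,\varphi_1$ with $\varphi_{0\#}=\varphi_{1\#}=\Phi$, build a $G$-homotopy $Y\times I\to B\Pi_1(X;G,\mathcal{F})$ skeleton by skeleton on $Y\times I$, where $(Y\times I)_{(k)} = Y_{(k)}\times\partial I\cup Y_{(k-1)}\times I$. On the $1$-skeleton of $Y\times I$ the homotopy is pinned down (up to a choice) by the equality of the two maps on fundamental groupoids; the obstruction to extending it over a $(k+1)$-cell of $Y\times I$ again lands in $\pi_{k}\bigl((B\Pi_1(X;G,\mathcal{F}))^H\bigr)$, which vanishes for $k\geq 2$, and the $k=1$ case is handled because $\varphi_{0\#}=\varphi_{1\#}$ forces the two boundary paths to be homotopic rel endpoints in each fixed-point space.

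The main obstacle I anticipate is not any single high-dimensional extension step — those are formal once the homotopy groups are known to vanish — but rather the bookkeeping of $G$-equivariance and orbit-category naturality at the bottom dimensions: one must choose the path-images for $1$-cells and the disc-fillings for $2$-cells \emph{simultaneously coherently} for all $H\in\mathcal{F}$, so that the restriction maps $X^K\to X^H$ (and $Y^K\to Y^H$) induced by morphisms in $\mathrm{Orb}_{\mathcal{F}}(G)$ intertwine them; verifying that the naturality of $\Phi$ is exactly the compatibility needed, and that choices can be made $G$-equivariantly (fixing one representative in each $G$-orbit of cells and translating), is the technical heart of the argument. It may be cleanest to phrase the whole induction in terms of the relative equivariant Whitehead statement, Proposition \ref{whitehead}, applied to the inclusion $X_{(2)}\hookrightarrow B\Pi_1(X;G,\mathcal{F})$ which is $\nu$-connected for $\nu(H)=\infty$ beyond dimension $2$, thereby reducing both existence and uniqueness to that proposition together with the elementary $1$- and $2$-dimensional analysis dictated by $\Phi$.
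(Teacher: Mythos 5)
Your proposal is correct and follows essentially the same skeleton-by-skeleton strategy as the paper: prescribe $\varphi$ on $Y_{(0)}$ and $Y_{(1)}$ directly from the data of $\Phi$ (using naturality across $\mathrm{Orb}_{\mathcal{F}}(G)$ for coherence), fill the $2$-cells by observing that $\Phi(H)$ sends the attaching loop to a null-homotopic loop since groupoid homomorphisms preserve units, extend above dimension $2$ using the arranged vanishing of $\pi_k\bigl((B\Pi_1(X;G,\mathcal{F}))^H\bigr)$ for $k\geq 2$, and run the same induction one dimension higher on $Y\times I$ for uniqueness. You correctly identify the equivariance/naturality bookkeeping at low dimensions as the technical core; the paper handles this exactly as you suggest, by choosing the image of a representative cell in each $G$-orbit and then translating so that $g\cdot\varphi(c)=\varphi(g\cdot c)$, with compatibility guaranteed by $\Phi(gHg^{-1})(g\cdot\gamma)=g\cdot\Phi(H)(\gamma)$.
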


	\begin{proof}
		We begin by defining the map $\varphi$ on the $G$-equivariant $0$-skeleton of $Y$ by putting 
		$$\varphi_{|Y_{(0)}}:=\Phi(\{e\})_{|Y_{(0)}}.$$ 
		Now, we proceed to define $\varphi$ on the $1$-skeleton of $Y$. Let us fix $H \in \mathcal{F}$, then a cell $c^1$ of dimension $1$ in $Y^H$ defines an element $\gamma \in \Pi_1(Y^H)_{|Y^H_{(0)}}$: set $\varphi(c^1)$ as the unique cell of dimension $1$ in $X^H$ which represents $\Phi(H)(\gamma) \in \Pi_1(X^H)_{|X^H_{(0)}}$ and set $\varphi$ on the $G$-orbit of $c^1$ in such a way that if $g \cdot \gamma$ represents $g\cdot c^1$ in $\Pi_1(X^{gHg^{-1}})_{|X^{gHg^{-1}}_{(0)}}$, then $\varphi(g\cdot c^1)$ is the unique cell representing $\Phi(gHg^{-1})(g \cdot \gamma) \in \Pi_1(X^{gHg^{-1}})_{|X^{gHg^{-1}}_{(0)}}$. 
		In this way we have that $\varphi$ is $G$-equivariant, indeed $g\cdot \varphi(c^1)=\varphi(g\cdot c^1)$, because $\Phi(gHg^{-1})(g\cdot \gamma)=g \cdot \Phi(H)(\gamma) $.
		Observe that since $\Phi$ is a natural transformation, we can do it for every $H \in \mathcal{F}$ in a compatible way. 
		
		Now, let us take a cell $c^2$ of dimension $2$ in $Y^H$. Its attaching map $\psi \colon S^1 \to Y^{H}_{(1)}$ defines a contractible loop and then a unit in the $\Pi_1(Y^H)_{|Y^H_{(0)}}$. Since $\Phi(H)$ is a homomorphism of groupoids, then $\varphi \circ \psi$ has to represent a unit as well in $\Pi_1(X^H)_{|X^H_{(0)}}$, namely there exists a $2$-cell with attaching map $\varphi \circ \psi$ which we define to be $\varphi(c^2)$.

		Finally, by higher contractibility of each fixed point subsets of $B \Pi_1(X;G,\mathcal{F})$, we can extend $\varphi$ from $Y_{(2)}$ to $Y$, in a $G$-equivariant way.
		
		It follows by construction to check that $\varphi_{\#}=\Phi$. Let us check that if there exists an other $G$-equivariant map $\varphi'\colon Y \to B \Pi_1(X;G,\mathcal{F})$ such that $\varphi'_{\#}=\Phi$, then it is $G$-homotopic to $\varphi$.
		It is immediate to check that if $\varphi'_{\#}=\varphi_{\#}$, then their restriction to $Y_{(0)}$ are equal and that their restriction to $Y_{(1)}$ are $G$-homotopic through $\varphi_{(1)}^t$ with $t\in[0,1]$.
		Then we have a $G$-equivariant map 
		\[
		\varphi\cup\varphi'\cup \varphi_{(1)}^t\colon Y\times\{0\}\cup Y\times\{1\}\cup Y_{(1)}\times (0,1)\to B \Pi_1(X;G,\mathcal{F})
		\]
		which we can extend to a $G$-homotopy
		\[
		\varphi^t\colon  Y\times [0,1]\to  B \Pi_1(X;G,\mathcal{F})
		\]
		always because all the fixed point subsets of $B \Pi_1(X;G,\mathcal{F})$ have trivial homotopy groups in dimensions bigger than 1.
	\end{proof}

	\begin{remark}
		Observe that when $G=\{e\}$ is the trivial group, then of course the family $\mathcal{F}$ is trivial and $X$ is a standard $CW$-complex. In particular, if $X$ is connected, the above construction reduces to attach cells to its $2$-skeleton $X_{(2)}$ in order to make its homotopy groups $\pi_k(X)$  trivial for $k \geq 2$. It follows that the space obtained in this way is an Eilenberg-Mac Lane space $K(\pi_1(X),1)$. Since the homotopy type of a $K(\pi_1(X),1)$ $CW$-complex depends only on the group $\pi_1(X)$, it follows that $B \Pi_1(X;G,\mathcal{F})$ is exactly the classifying space $B\pi_1(X)$ up to homotopy (see \cite[Theorem 1B.8.]{hatcher}).
	\end{remark}

Now we combine the results obtained in this section and Theorem \ref{G2conn}, obtaining as a corollary the fact that the Stolz $(G, \mathcal{F})$-equivariant $R$-groups depend only on the isomorphism class of the fundamental functor.

Recall that an isomorphism between two functors $F, G: C \to D$ is a \textit{natural transformation} $\eta$ with a two-sided inverse or, equivalently, a natural transformation such that:

\[\forall c \in \text{Obj}(C),\quad \eta(c): F(c) \to G(c) \ \  \text{is an isomorphism in D.}\]

\begin{corollary}\label{univ2conn}
	Let $Y$ be a $(G, \mathcal{F})$-CW complex such that there exists an isomorphism:

\[\Phi\colon \Pi_1(Y;G,\mathcal{F})\to \Pi_1(X;G,\mathcal{F}),\]
meaning that its fundamental functor is isomorphic to that of $X$.

Then the $G$-equivariant cellular map  $\varphi\colon Y\to B \Pi_1(X;G,\mathcal{F})$ of Proposition \ref{universalspace} is $2$ connected. Moreover, $\varphi$ functorially induces the following isomorphism between the Stolz groups:
 
 \[\varphi_* \colon R_{n}^{spin}(Y)^{(G,\mathcal{F})} \rightarrow R_{n}^{spin}(B \Pi_1(X;G,\mathcal{F}))^{(G,\mathcal{F})}\]
 
\end{corollary}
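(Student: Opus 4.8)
The plan is to obtain the corollary as an immediate consequence of Theorem~\ref{G2conn} applied to the $G$-equivariant cellular map $\varphi\colon Y\to B\Pi_1(X;G,\mathcal{F})$ furnished by Proposition~\ref{universalspace}. Thus the whole content is to verify that this $\varphi$ is $2$-connected in the $(G,\mathcal{F})$-sense (which, taken for $H=\{e\}$, already subsumes the hypothesis on fundamental groups appearing in Theorem~\ref{G2conn}); once this is done, Theorem~\ref{G2conn} gives directly that $\varphi_*\colon R_n^{spin}(Y)^{(G,\mathcal{F})}\to R_n^{spin}(B\Pi_1(X;G,\mathcal{F}))^{(G,\mathcal{F})}$ is an isomorphism.

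To check $2$-connectedness I would argue at the level of fixed-point sets, i.e.\ show that $\varphi^H\colon Y^H\to B\Pi_1(X;G,\mathcal{F})^H$ is $2$-connected for every $H\in\mathcal{F}$. The first point is to pin down the homotopy type of the target: by the construction preceding Proposition~\ref{universalspace}, $B\Pi_1(X;G,\mathcal{F})$ agrees with $X$ on $2$-skeleta and is then obtained by attaching, for each $H\in\mathcal{F}$, $G/H$-cells of dimension $\geq 3$ chosen so as to kill $\pi_k(X^{H'})$ for all $k\geq 2$ and all relevant subgroups $H'$, compatibly over the orbit category $\mathrm{Orb}_{\mathcal{F}}(G)$. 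Consequently each fixed-point set $B\Pi_1(X;G,\mathcal{F})^H$ has vanishing homotopy groups in degrees $\geq 2$, and its fundamental groupoid, restricted to the $0$-skeleton, is exactly $\Pi_1(X^H)_{|X^H_{(0)}}$; in other words $B\Pi_1(X;G,\mathcal{F})^H$ is an Eilenberg--MacLane-type space for that groupoid.

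Next I would use the defining relation $\varphi_{\#}=\Phi$ from Proposition~\ref{universalspace}: its component at $H$ is the homomorphism of groupoids induced by $\varphi^H$ between $\Pi_1(Y^H)_{|Y^H_{(0)}}$ and $\Pi_1(X^H)_{|X^H_{(0)}}$, and by hypothesis it coincides with $\Phi(H)$, which is an isomorphism of groupoids. Since the restriction of the fundamental groupoid of a CW-complex to its $0$-skeleton is equivalent to the full fundamental groupoid, an isomorphism of these restricted groupoids is precisely the statement that $\varphi^H$ induces a bijection on $\pi_0$ and an isomorphism on $\pi_1$ based at every point. Because the target has trivial $\pi_2$, the map on $\pi_2$ is automatically surjective, so $\varphi^H$ is $2$-connected; as $H\in\mathcal{F}$ was arbitrary, $\varphi$ is a $2$-connected $G$-map inducing an isomorphism on fundamental groups, and Theorem~\ref{G2conn} applies and yields the claimed isomorphism.

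The step I expect to be the main obstacle is the middle one: making rigorous that the cells attached to build $B\Pi_1(X;G,\mathcal{F})$ do simultaneously trivialise $\pi_k$ of all the fixed-point sets $X^H$ for $k\geq 2$ without altering $\pi_0$ and $\pi_1$, so that every $B\Pi_1(X;G,\mathcal{F})^H$ really is aspherical with the prescribed fundamental groupoid. Once this structural fact about the universal space is established, the groupoid-theoretic translation of $2$-connectedness and the invocation of Theorem~\ref{G2conn} are purely formal.
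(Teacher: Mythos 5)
Your proposal is correct and follows the only reasonable route, which is the one the paper implicitly takes: the paper states the corollary without a proof, saying only that it is obtained by ``combining'' Proposition \ref{universalspace} and Theorem \ref{G2conn}, and your argument supplies exactly the missing glue. The key observation you make rigorous is the translation: $\varphi_{\#}=\Phi$ with $\Phi$ an isomorphism of fundamental functors forces, for each $H\in\mathcal{F}$, the map $\varphi^H\colon Y^H\to B\Pi_1(X;G,\mathcal{F})^H$ to induce a bijection on $\pi_0$ and isomorphisms on $\pi_1$ (since the restricted fundamental groupoid is equivalent to the full one), while the vanishing of $\pi_2$ of the target makes the $\pi_2$-surjectivity automatic. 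The case $H=\{e\}$ also discharges the extra fundamental-group hypothesis in Theorem \ref{G2conn}, as you note. Two small remarks: the phrase ``is precisely the statement'' is a mild overstatement (what you actually use is that an isomorphism of the restricted groupoids \emph{implies} the $\pi_0$-bijection and $\pi_1$-isomorphisms, which is all you need); and the ``main obstacle'' you flag --- that attaching $G/H$-cells of dimension $\geq 3$ kills all $\pi_k(X^{H'})$, $k\geq 2$, without disturbing $\pi_0$ or $\pi_1$ of any fixed-point set --- is really a structural fact about the construction of $B\Pi_1(X;G,\mathcal{F})$ asserted in the paragraph before Proposition \ref{universalspace}, not something that belongs to the proof of the corollary. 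So while you are right to note it, the corollary may legitimately take it as given; once it is, your argument is complete.
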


	\newpage
	
	\section{Smoothly Stratified Spaces}\label{smoothlystrspace}
	
	Let X be a locally compact, second countable, metrizable topological space. 
	\begin{definition} 
		A locally finite family $\mathcal{S}=\{Y_{\alpha}\}$ is a \textbf{stratification} of X if each $Y_{\alpha}$ is a locally closed (i.e. it is open in its closure), smooth manifold without boundary and X decomposes as $X=\bigcup_{\alpha} Y_{\alpha}$. 
	\end{definition}	

		Observe that the dimension of each $Y_\alpha$ is not fixed, but depends on the index $\alpha$.
		In essence, a stratified space will be given by a topological space, its stratification, and compatibility conditions that govern how the stratification is realized. In particular, these conditions are expressed by the use of tubular neighborhoods around each stratum. 
		
		Let's introduce the following \textit{control data}. Given a stratification $\mathcal{S}=\{Y_\alpha\}$ consider a family of tubular neighborhoods $\{T_{\alpha}, \pi_{\alpha}, \rho_{\alpha}\}$, where for each $\alpha$:
	
	\begin{itemize}
		\item $T_{\alpha}$ is an open neighborhood of $Y_{\alpha}$ in X;
		\item $\pi_{\alpha}: T_{\alpha} \rightarrow Y_{\alpha}$ is a continuous retraction;
		\item $\rho_{\alpha}: T_{\alpha} \rightarrow \mathbb{R}_{\geq 0}$ is a continuous map such that $\rho_{\alpha}^{-1}(0) = Y_{\alpha}$.
	
	\end{itemize}

\begin{definition} \label{stratifiedspace}
	Let $X$ be as above. Then X is a \textbf{stratified space} if there exist a stratification $\mathcal{S}=\{Y_{\alpha}\}$ together with a family $\{T_{\alpha}, \pi_{\alpha}, \rho_{\alpha}\}$ satisfying the following:
	
\begin{enumerate}[label=(\roman*)]
	\item  if $Y_{\alpha}, Y_{\beta} \in \mathcal{S}$ are such that $T_{\alpha} \cap Y_{\beta} \neq \emptyset$, then $Y_{\alpha} \subset \overline{Y_{\beta}}$ and we will indicate it as $Y_{\alpha} < Y_{\beta}$, providing a partial order on $\mathcal{S}$;
	\item if $Y_{\alpha} < Y_{\beta}$, then the map:
	\[(\pi_{\alpha},\rho_{\alpha}): T_{\alpha} \cap Y_{\beta} \longrightarrow Y_{\alpha} \times \mathbb{R}_{\geq 0}\]
	is a proper differentiable submersion\footnote{Observe that by Ehresmann's theorem, this guarantees that $(\pi_\alpha,\rho_\alpha)$ is a locally trivial fibration.}.
	\item for each pair of strata $Y_{\alpha} \subset \overline{Y_{\beta}}$ and all $x \in T_{\alpha} \cap T_{\beta}$ such that $\pi_{\beta}(x) \in T_{\alpha} \cap Y_{\beta}$, then:
	\begin{enumerate}
		\item $\pi_{\alpha}\pi_{\beta}(x)=\pi_{\alpha}(x)$;
		\item $\rho_{\alpha}\pi_{\beta}(x)=\rho_{\alpha}(x)$.
	\end{enumerate}
\end{enumerate}
\end{definition}

By \textit{dimension} of the stratified space $X$, we mean the maximal dimension of all of its strata.

\begin{definition}
The \textbf{depth} of a stratum $Y_{\alpha} \in \mathcal{S}$, indicated $d(Y_{\alpha})$, is the maximal length of all ascending chains (with respect to the partial order $<$), with $Y_{\alpha}$ as first element, i.e.:
\[d(Y_{\alpha})=sup\{n \colon Y_{\alpha} < Y_{1} < Y_2 < \ldots < Y_n \}.\]

In particular, the depth of a stratified space $X$ is the maximal depth of any stratum.
\end{definition}

Clearly, a stratified space of depth $0$ is simply a smooth manifold without boundary.

As a consequence of Thom's First Isotopy Lemma (\cite[Theorem 2.6]{Verona}), the retraction $\pi_{\alpha}: T_{\alpha} \rightarrow Y_{\alpha}$ is a locally trivial fibration with fibre the cone $C(L_{\alpha})$ (i.e. the product $L_\alpha \times [0,1]$ with $L_\alpha \times \{0\}$ collapsed to a point which be the vertex of the cone) over some stratified space $L_{\alpha}$ of depth $d(Y_{\alpha})-1$. This means that each point $p \in Y_{\alpha}$ has a neighborhood in X homeomorphic via a strata-preserving homeomorphism to $B_{\epsilon}^{dim(Y_{\alpha})}(p) \times C(L_{\alpha})$ and $L_{\alpha} \simeq (\pi_{\alpha}, \rho_{\alpha})^{-1}(p,\epsilon)$. Observe that $\rho_{\alpha}$ can be identified with the radial coordinate along these cones. From now on, we refer to $L_\alpha$ as the \textit{link} of the stratum $Y_\alpha$.

 \begin{figure}[h]\label{hhhhh}
 	\centering
 	\includegraphics[width=
 	0.6\textwidth]{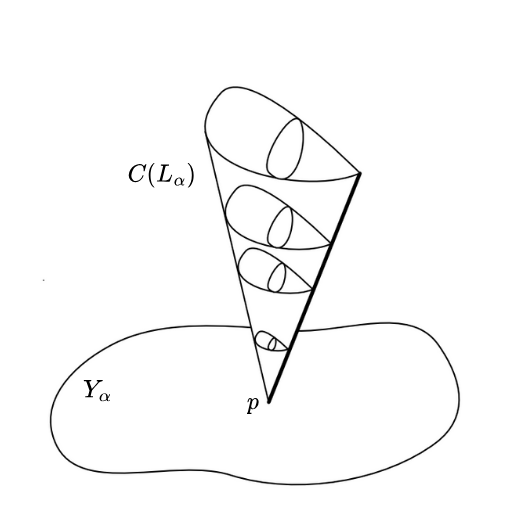}
 	
 \end{figure}

 \begin{definition}
 	Let $X$, $X'$ be two stratified spaces and $F: X \rightarrow X'$ be a continuous map. $F$ is a \textbf{weak morphism} if:
 	
 	\begin{itemize} 
 		\item for any stratum $Y_{\alpha}$ of $X$, there is a stratum $Y'_{\alpha'}$ such that $F(Y_{\alpha})\subseteq Y'_{\alpha'}$;
 		\item for each $\alpha$, $F|_{Y_{\alpha}}$ is smooth;
 		
 		\item $F$ is compatible with the control data in the following sense: 
 		
 		\[ F(T_{\alpha}) \subseteq T_{\alpha'}, \quad  \pi_{\alpha'} \circ F = F \circ \pi_{\alpha}. \]
 	\end{itemize}
 
 		$F$ is a \textbf{morphism} if, in addition, we have:
 		
 		\[\rho_{\alpha'} \circ F = \rho_{\alpha}.\]

 		Moreover, if $F$ maps the open strata of $X$ diffeomorphically to those of $X'$, we say that $F$ is a \textbf{stratified isomorphism}.

 \end{definition}
 
 Observe that in the definition of morphism is not required for the spaces $X$, $X'$ to have same depth or dimension.
 
In order to work in a smooth category, we require that all the fibrations $\pi_{\alpha}$ have transition functions which are stratified isomorphisms. We then obtain the following definition.

\begin{definition}
	Let X be a stratified space as before. We say that X is a \textbf{smoothly stratified space} (or a \textit{stratified pseudomanifold}) if the following hold:
	
	\begin{enumerate}[label=(\roman*)]
		\item for each stratum $Y_{\alpha} \in \mathcal{S}$, the fibration $\pi_{\alpha}$ has transition functions which are stratified isomorphisms, which then preserve the radial variable $\rho_{\alpha}$;
		\item denoting by $X_{k}$ the union of all strata of dimension less or equal than $k$, and by $n$ the largest dimension of all strata, then we get the following filtration:
		
		\[X=X_n \supseteq X_{n-1}=X_{n-2} \supseteq X_{n-3} \supseteq \ldots \supseteq X_{0}, \]
		 and we require that $X_{n} \setminus X_{n-2}$ is an open smooth manifold dense in X.
	\end{enumerate}
\end{definition}

It is common to indicate the union of all strata of depth greater than 0 and its complement as the singular set and regular set respectively of the pseudomanifold, i.e.:

\[sing(X)=\bigcup_{d(Y_{\alpha}) >0} Y_{\alpha},\ \ \ reg(X)=X \setminus sing(X)\]

\begin{remark}
If \( X \) is a stratified space of depth \( k \), then by constructing the cone \( C(X) \) over \( X \), we obtain a stratified space of depth \( k+1 \). In particular, its stratification will be given by that of \( X \) plus the vertex of the cone \( C(X) \), represented by \( X \times \{0\} \), which will be the only stratum at depth \( k+1 \).	
\end{remark}

Defining bordism groups of smoothly stratified space will require the notion of a stratified space with boundary.

\begin{definition}
	By a \textbf{smoothy stratified space with boundary} of dimension $n$ we mean a pair $(X, \partial X)$ such that:
	\begin{enumerate}[label=(\roman*)]
		\item $\partial X$ is a smoothly stratified space of dimension $n-1$;
		\item $X$ satisfies all the requests in the definition of a smoothly stratified space except that it is required that $X_{n} \setminus (X_{n-2} \cup \partial X)$ to be an open smooth, oriented manifold dense in X;
		\item $\partial X$ has a collar neighborhood, meaning that there exists $\mathcal{U} \subset X$ closed with an orientation-preserving, stratified isomorhism $\mathcal{U} \simeq \partial X \times [0,1]$;
		\item if $\mathcal{S}=\{Y_{\alpha}\}$ are the strata of X, then $\{Y_{\alpha} \cap \partial X\}$ are the strata of $\partial X$.
	\end{enumerate}
\end{definition}

An important aspect regarding smoothly stratified spaces is that singularities, which are given by strata of depth $> 0$, can be \textit{resolved}. Specifically, there is an equivalence between the class of smoothly stratified spaces and the class of manifolds with corners with so-called iterated fibration structures. This correspondence, primarily introduced by Melrose and well developed in \cite[Section 2.2, 2.3]{ALMP}, is made explicit through two processes called blowup and blowdown. The transition from a stratified space to the corresponding manifold with corners is useful for applying methods from geometric microlocal analysis.

Recall that a manifold with corners \( X \) of dimension \( n \) is such that every point \( p \in X \) has a neighborhood \( \mathcal{U} \ni p \) diffeomorphic to \( \mathbb{R}_+^l \times \mathbb{R}^{n-l} \), for some \( l \leq n \). In the case where \( l \leq 1 \), we obviously have a manifold with boundary.
Observe that there exists a decomposition of such a space into its interior and the union of all its boundary components of different codimensions \( l \). In particular, all these faces can be obtained by intersecting those of codimension \( 1 \).

Roughly speaking, a manifold is said to have an iterated fibration structure if each of its codimension \( 1 \) faces is equipped with a fibration (in a suitable sense) in which both the base and the fiber are themselves manifolds with corners. Moreover, certain compatibility conditions of these fibrations on the intersections of the faces and how they restrict there are required (for more details, we refer again to \cite[Section 2.2]{ALMP}). 

Now, Propositions 2.3 and 2.5 govern the correspondence discussed above. Starting from a compact manifold with corners with an iterated fibration structure \( \widetilde{X} \), we obtain a smoothly stratified space \( \widehat{X} \) through a process of blowing down, or collapsing, performed for each connected component of the fibers of each boundary hypersurface, resulting in a \textit{blowdown} map \( \beta: \widetilde{X} \to \widehat{X} \). Conversely, given a smoothly stratified space \( \widehat{X} \), we obtain a manifold with corners with an iterated fibration structure \( \widetilde{X} \), called the \textit{resolution} of $\widehat{X}$, and a map \( \beta: \widetilde{X} \to \widehat{X} \) through a \textit{blowup} process. In particular, this process ensures (see \cite[Proposition 2.5]{ALMP}):
\begin{itemize}
\item the existence of a bijective correspondence between the strata of \( \widehat{X} \) and the boundary hypersurfaces of \( \widetilde{X} \);
\item the interior of \( \widetilde{X} \) and the regular set of \( \widehat{X} \) are diffeomorphic via \( \beta \);
\item \( \beta \) is a smooth fibration over the interior of each boundary hypersurface, whose base is the corresponding stratum \( Y \) and the fiber is the regular part of the link of \( Y \) in \( \widehat{X} \).
\end{itemize}

As an example, which will be the case of our interest, consider a manifold \( \widetilde{X} \) with boundary \( \partial \widetilde{X} \) as the total space of a fibration with fiber \( F \) and base space \( Y \), both closed manifolds. Consider a collar neighborhood of the boundary \( \partial \widetilde{X} \times [0,1) \), which is equipped with a retraction onto \( \partial \widetilde{X} \) and a fibration \( \partial \widetilde{X} \times [0,1) \to Y \), whose fiber is \( F \times [0,2) \). If at this point we collapse \( F \times \{0\} \) to a point, we obtain a fibration over \( Y \) with fiber given by the cone \( C(F) \). The space and the quotient map thus obtained are then the blowdown $\widehat{X}$ of \( \widetilde{X} \) and the blowdown map respectively. The image of the collar neighborhood through this map realizes the tubular neighborhood of \( Y \), which will be the singular stratum of depth \( 1 \) of the resulting stratified space.
Conversely, given a smoothly stratified space $\widehat{X}$ of depth $1$, with depth-$1$ stratum $Y$ and control data given by $\{T,\pi,\rho\}$, one simply consider the manifold with boundary $\widetilde{X}=\widehat{X} \setminus \rho^{-1}([0,1))$. The restriction of $\pi$ to gives the desired fibration on the boundary face $\rho^{-1}(1)$ with fiber diffeomorphic to the link of the stratum $Y$.

\subsection{Wedge metrics}\label{secwedge}

Consider a manifold with boundary \( M \) such that its boundary is the total space of a fibration \( \pi: \partial M \to Y \). We have discussed that this can be an example of a space obtained by blowup from a smoothly stratified space \( X \) of depth 1. In particular, the fiber of \( \pi \) will be given by the link \( L \) of the depth-$1$ stratum \( Y \) of $X$.
If the intention is to study differential forms of \( X \), while working on \( M \), indicating by $\iota_\partial \colon \partial M \to M$ the natural inclusion of the boundary, one considers the set:

\begin{equation}\label{wedgesections}
	\{\omega \in \Gamma(T^*M) \,|\, \iota_\partial^*\omega \in \pi^*(\Gamma(T^*Y))\},
\end{equation}
that is, differential 1-forms such that, when restricted to the boundary, they originate from forms defined on the stratum \( Y \).

\begin{definition}
	We call the \textbf{wedge cotangent bundle} $\leftindex^{w}T^*M$ the vector bundle over \( M \) defined through the Serre-Swan theorem such that its sections are given by the finitely generated projective \( C^\infty(M) \)-module as shown in (\ref{wedgesections}).
\end{definition}

Given a point \( p \in \partial M \), it is always possible to find a coordinate system \( (r,y,l) \) around \( p \) such that \( r \) represents a normal coordinate to the boundary, and \( (y,l) \) are coordinates on \( \mathcal{V} \times L \subset Y \times L \), where \( \mathcal{V} \) is a trivialization domain of the fibration \( \pi \) around the point \( \pi(p) \). More precisely, $r$ is a smooth non negative function on $M$ such that \( r=0 \) on \( |\text{d}r| \neq 0 \) on \( \partial M \) and  is often also called a \textit{boundary defining function} on M. In these coordinates, $\leftindex^{w}T^*M$ has a local basic of 1-forms given by:

\[\{dr,rdl_j, dy_k\}, \quad j=1,\ldots, dim(L), \quad k=1, \ldots, dim(Y).\]

Observe that the basis elements $rdl_j$ vanish as sections of $T^*M$, but not as elements of $\Gamma(\leftindex^{w}T^*M)$.
Moreover, the map which sends a section of the wedge cotangent bundle to the exact 1-form in $T^*M$ is an isomorphism on the interior of $M$. 

By duality we can obtain the \textit{wedge tangent bundle} \(\leftindex^{w}TM\) as a vector bundle over \(M\). This, using the same coordinate system as above, is locally spanned by the following vector fields:

\begin{equation}\label{wedgeframe}
\{\partial_r, \frac{1}{r}\partial_{l_j}, \partial_{y_k}\}, \quad j=1, \ldots, \text{dim}(L), \quad k=1,\ldots, \text{dim}(Y).
\end{equation}

Note that even in this case, there exists an isomorphism between the wedge tangent bundle and $TM$ over the interior of $M$, and that the frame (\ref{wedgeframe}) is globally well defined when viewed as a frame in $\leftindex^{w}T^*M$ over $M$.

Now let's proceed to introduce a new class of vector fields on M. Consider the following set:

\begin{equation}\label{edgesections}
	\{V \in \Gamma(TM) \,|\, V|_{\partial M} \text{ is tangent to the fibers of } \pi\} 
\end{equation}	

In particular, if we introduce the following subspace of smooth functions on M, corresponding to those functions that are continuous on the corresponding stratified space X:

\[ C^\infty_b(M) := \{f \in C^\infty(M) \,|\, f|_{\partial M} \in \pi^*C^\infty(Y)\}, \]
then (\ref{edgesections}) corresponds exactly to those vector fields that, when applied to elements of \( C^\infty_b(M) \), produce functions that vanish on the boundary.

Similarly as before, using the Serre-Swan theorem, we can define the \textit{edge tangent bundle} $\leftindex^{e}TM$ as the vector bundle over M such that its sections are given by (\ref{edgesections}). Its local basis is then given by:

\[ \{r\partial_r, \partial_{l_j}, r\partial_{y_k}\}, \quad j=1, \ldots, \text{dim}(L), \quad k=1,\ldots, \text{dim}(Y). \]

The universal enveloping algebra of (\ref{edgesections}) is the ring of edge differential operators (see \cite{Mazzeo}), denoted as $Diff_e^*(M)$, composed of all operators that can be locally written as polynomials in elements of (\ref{edgesections}).
This means that if we consider two vector bundles \(E\) and \(F\) over \(M\), then, in the usual local coordinate system near the boundary, an edge differential operator of order \(k\) in \(Diff_e^k(M; E, F)\) is locally written as:

\[P = \sum_{|\alpha| + |\gamma| + j \leq k} a_{j,\alpha,\gamma}(r,y,l) (r\partial_r)^j(r\partial_y)^\alpha(\partial_l)^\gamma,\]
where each $a_{j,\alpha,\gamma}$ denotes a local section of $Hom(E,F)$.

\begin{definition}
  A differential operator $P\colon \Gamma(E) \to \Gamma(F)$ of order $k$ is a \textbf{wedge differential operator} if there exists an operator $P' \in Diff^k_e(M;E,F)$ such that:
  
  \[P=r^{-k}P',\]
  where $r$ is a boundary defining function. The set of wedge differential operators between $E$ and $F$ is then called $Diff^k_w(M;E,F)$.
\end{definition}

Similarly to what happens for differential operators, it is possible to define a principal symbol for this class of operators. In particular, the \textit{principal wedge symbol} of \(P \in Diff^k_w(M)\) is defined as a smooth section of the bundle \(p^*\left(Hom(E,F)\right) \to \leftindex^{w}T^*M\), where \(p^*(Hom(E,F))\) is the pullback bundle via the projection map of the wedge cotangent bundle \(p\colon \leftindex^{w}T^*M \to M\). Locally, this means:

\[ \sigma_w(P)(r,y,l,\xi) = \sum_{|\alpha| + |\gamma| + j = k} a_{j,\alpha,\gamma}(r,y,l) (\xi_r)^j(\xi_y)^\alpha(\xi_l)^\gamma. \]

Here, \( \xi = (\xi_r, \xi_y, \xi_l) \) are coordinates on the fiber of the wedge cotangent bundle \( \leftindex^{w}T^*M \). Such operator is said to be \textit{elliptic} if its principal wedge symbol is invertible whenever $\xi \neq 0$.

Keeping in mind the example just described for depth 1, we will now introduce a special class of metrics, which will turn out to be simply metrics on the wedge tangent bundle.

Let X be an smoothly stratified space of arbitrary depth. A Riemannian metric on X is, by definition, a metric on its regular set, $reg(X)$. To define this special class of metrics, it is observed that it is possible to construct an open covering of $reg(X)$ by recalling that each point \( p_\alpha \in Y_\alpha \) admits a neighborhood \( \mathcal{U}_\alpha \) such that \( \pi_\alpha^{-1}(\mathcal{U}_\alpha)\simeq\mathcal{U}_\alpha \times c(L_\alpha) \). Since $L_\alpha$ is again a smoothly stratified space of depth equal to $d(Y_\alpha)-1$, then a point $p_{\alpha_1}$ of a stratum $Y_{\alpha_1}$ of $L_\alpha$ has again a neighborhood such that $\pi^{-1}_{\alpha_1}(\mathcal{U}_{\alpha_1}) \simeq\mathcal{U}_{\alpha_1} \times C(L_{\alpha_1})$. 
Continuining in this way, one can obtain open sets of the form:

\[\mathcal{U}_\alpha \times C(\mathcal{U}_{\alpha_1} \times C(\mathcal{U}_{\alpha_2} \times \ldots \times C(\mathcal{U}_s)),\]

for $s \leq d(Y_\alpha)$. Then one can choose coordinates $y_{\alpha_i}$ on each $\mathcal{U}_{\alpha_i}$ and radial coordinates $r_{\alpha_i}$ on the cones $C(L_{\alpha_i})$. Of course, these constitute a covering of $X$ and of $reg(X)$.

\begin{definition}
	A Riemannian metric \( g \) on X is called a \textbf{wedge metric} if there exists an open covering of the form just introduced such that \( g \) in each of such sets:
	
	\[g=dr_\alpha^2+h_\alpha+r^2_\alpha(dr_{\alpha_2}^2+h_{\alpha_2}+r^2_{\alpha_2}(dr_{\alpha_3}^2+h_{\alpha_3}+r^2_{\alpha_3}(\ldots+r^2_{s-1}h_{\alpha_s})),\]
	
	where $h_{\alpha_j}$ are metrics on $\mathcal{U}_{\alpha_j}$.
	Moreover, it is required that the tangent spaces of each stratum $Y_{\alpha_j}$ are lifted horizontally as subbundles of the tangent bundles of the total spaces of the cone bundles (for example, by choosing some connections). This implies that the metrics $h_{\alpha_j}$ depends only on $y_{\alpha_i}$, for $i \leq j$.
	
\end{definition}

\begin{proposition}[\cite{ALMP}, Proposition 3.1, 3.2]
	Let $X$ be a smoothly stratified space, then:
	
	\begin{itemize}
		\item there always exists a wedge metric $g$ on $X$;
		\item any two wedge metrics $g$, $g'$ on $X$ are homotopic within the class of wedge metrics.
	\end{itemize}
\end{proposition}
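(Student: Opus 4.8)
The plan is to argue by induction on the depth $d=\mathrm{depth}(X)$, establishing simultaneously the slightly stronger statement that the set $\mathcal{W}(X)$ of wedge metrics on $X$ is non-empty and convex; the second bullet then follows at once, since for $g,g'\in\mathcal{W}(X)$ the straight-line path $t\mapsto(1-t)g+tg'$ is a homotopy within the class of wedge metrics. For the base case $d=0$ a wedge metric is just an ordinary Riemannian metric on the smooth manifold $X$: these exist by patching local Euclidean metrics with a partition of unity, and they form a convex subset of the symmetric $2$-tensors because a convex combination of positive-definite forms is positive-definite.

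For the inductive step, assume the statement for all smoothly stratified spaces of depth $<d$ and let $\mathrm{depth}(X)=d$. By Thom's First Isotopy Lemma each tubular neighbourhood $T_\alpha$ of a stratum $Y_\alpha$ admits a strata-preserving identification $T_\alpha\cong\mathcal{U}_\alpha\times C(L_\alpha)$ in which $\rho_\alpha$ is the cone coordinate $r_\alpha$, $\pi_\alpha$ is the projection, and $L_\alpha$ is a smoothly stratified space of depth $d(Y_\alpha)-1<d$. Cover $X$ by $\mathrm{reg}(X)$ together with a locally finite refinement of the $T_\alpha$ into such product charts; over each chart choose a metric $h_\alpha$ on $\mathcal{U}_\alpha$, a horizontal lift of $TY_\alpha$, and --- by the inductive hypothesis --- a wedge metric $g_{L_\alpha}$ on the link, and take as local model $dr_\alpha^2+h_\alpha+r_\alpha^2\,g_{L_\alpha}$; iterating this prescription inside $L_\alpha$ recovers the nested normal form appearing in the definition. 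Gluing the local models by a subordinate partition of unity yields a global metric on $\mathrm{reg}(X)$, and the point to check is that it still lies in $\mathcal{W}(X)$. This is exactly where the compatibility axioms are used: on an overlap the identities $\pi_\alpha\pi_\beta=\pi_\alpha$ and $\rho_\alpha\pi_\beta=\rho_\alpha$ of Definition \ref{stratifiedspace}(iii), together with the requirement that the fibrations $\pi_\alpha$ have transition functions which are stratified isomorphisms preserving the radial variables, force the local models to share the same iterated normal form in compatible coordinates, so their partition-of-unity combination is again of that form and is positive-definite summand by summand. This proves the first bullet.

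For convexity, let $g,g'\in\mathcal{W}(X)$. Passing to a common refinement of the covers witnessing that $g$ and $g'$ are wedge metrics, and using the coordinate rigidity just discussed, we may write in each chart $g=dr_\alpha^2+h_\alpha+r_\alpha^2 k_\alpha$ and $g'=dr_\alpha^2+h'_\alpha+r_\alpha^2 k'_\alpha$ with $k_\alpha,k'_\alpha$ wedge metrics on $L_\alpha$. Then $(1-t)g+tg'=dr_\alpha^2+\bigl((1-t)h_\alpha+t h'_\alpha\bigr)+r_\alpha^2\bigl((1-t)k_\alpha+t k'_\alpha\bigr)$, where $(1-t)h_\alpha+t h'_\alpha$ is again a metric on $\mathcal{U}_\alpha$ and, by the inductive hypothesis applied to $L_\alpha$, $(1-t)k_\alpha+t k'_\alpha$ is again a wedge metric on the link. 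Hence $(1-t)g+tg'\in\mathcal{W}(X)$ for every $t\in[0,1]$, so $\mathcal{W}(X)$ is convex, which is the second bullet.

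The main obstacle I anticipate is not the elementary interpolation but the reduction to ``the same chart'': two a priori unrelated wedge metrics need not be adapted to the same trivialisations of the cone bundles $T_\alpha\cong\mathcal{U}_\alpha\times C(L_\alpha)$ nor to the same horizontal complements of the $TY_\alpha$, and reconciling these choices compatibly across all the nested cones $C(\mathcal{U}_{\alpha_1}\times C(\mathcal{U}_{\alpha_2}\times\cdots))$ needs care. The clean way around this is to observe that all such auxiliary data --- equivalently, the identification of a wedge metric with a positive-definite bundle metric on the iterated wedge tangent bundle $\leftindex^{w}TX$ --- vary in a contractible family, so one first deforms both metrics to be adapted to a common choice and then runs the straight-line homotopy; alternatively, and more economically, this is precisely what the induction on depth already absorbs, pushing the genuine matching problem onto the links where the inductive hypothesis disposes of it.
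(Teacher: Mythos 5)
The paper does not prove this proposition; it is quoted from \cite{ALMP}, so the comparison below is with the argument of that reference, which, like yours, is an induction on the depth together with a partition-of-unity patching on the resolution. Your existence argument is essentially that one and is fine (modulo the standard remark that the horizontal lifts must be chosen globally over each tubular neighbourhood, e.g.\ as a connection on the cone bundle obtained by patching on the base, rather than chart by chart, since two local trivialisations of $T_\alpha\cong\mathcal{U}_\alpha\times C(L_\alpha)$ induce different flat horizontal distributions).

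The genuine gap is in the second half: the set of wedge metrics is \emph{not} convex, and the ``coordinate rigidity'' you invoke to write $g$ and $g'$ in the same normal form on a common chart does not follow from the axioms. The control data fix $\pi_\alpha$ and $\rho_\alpha$ (hence the radial coordinate $r_\alpha$) and the cone-bundle trivialisations up to stratified isomorphism, but they do \emph{not} fix the horizontal complement of $TY_\alpha$: that is an auxiliary choice of connection, and two wedge metrics adapted to different connections have, in a fixed coordinate system $(r,y,l)$, different cross terms of the form $r^2\,k(A\,dy,dl)$ which are not of the required rigid shape for either splitting; their convex combination therefore need not be a wedge metric. You correctly identify this obstacle in your closing paragraph, but the fix you gesture at (first deform both metrics to a common choice of auxiliary data, using that these data vary in a contractible family) is exactly the missing content of the homotopy statement, and it is incompatible with the structure of your induction, which uses convexity of the wedge metrics on the link $L_\alpha$ as the inductive hypothesis. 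The repair is to change the statement you prove by induction from ``$\mathcal{W}(X)$ is non-empty and convex'' to ``$\mathcal{W}(X)$ is non-empty and contractible (in particular path-connected)'': the space of wedge metrics fibres over the (convex, hence contractible) space of connections and base metrics, with fibre controlled by the wedge metrics on the links, which is contractible by induction; a path from $g$ to $g'$ is then obtained by first sliding the auxiliary data of $g$ onto those of $g'$ and only then interpolating linearly. This is the argument of \cite{ALMP}; as written, your proof asserts a stronger fact (convexity) that is false and on which the inductive step relies.
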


Let's now consider again a stratified pseudomanifold $X$ of depth-$1$. Recall that this consists of a space $X$ with a depth-$0$ stratum $X_{reg}$ (dense in X) and a depth-$1$ stratum $Y$. Moreover, supposed be fixed a closed manifold $L$ (a depth-0 stratified space) as the link of the stratum $Y$. We will indicate as $T_Y$ the tubular neighborhood associated to the stratum $Y$ and $\pi: T_Y \rightarrow Y$, $\rho: T_Y \rightarrow \mathbb{R}_{\geq 0}$ the associated mappings. Once $L$ is fixed, we refer to $X$ as a \textit{pseudomanifold with fibered $L$-singularities}.

To such a space there is associated its \textit{resolution}, meaning a smooth manifold with boundary $X_{r} :=X \setminus \rho^{-1}([0,1))$ obtained by the blowup process described before: observe that there is an obvious diffeomorphism between its interior and the smooth stratum $X_{reg}$. By construction, the boundary $\partial X_{r} := \rho^{-1}(1)$ is endowed with a fibration $\pi_{r} \colon \partial X_{r} \rightarrow Y$, which is the restriction of the fibration $\pi$ and whose fibers are diffeomorphic to the link $L$.

Now we describe more precisely wedge metrics in this setting.
Fix a Riemannian metric on both $\partial X_{r}$ and $Y$, which are smooth manifolds, and call them $g_{\partial X_{r}}$ and $g_{Y}$ respectively. We ask that the fibration $\pi_{r}: (\partial X_{r}, g_{\partial X_{r}}) \rightarrow (Y, g_{Y})$ is a Riemannian submersion, i.e. $(\pi_{r})_{*} \colon (Ker(\pi_{r*}))^{\perp} \rightarrow T(Y)$ is an isometric isomorphism, meaning that we have chosen a connection on the tangent bundle $T(\partial X_{r})$ inducing a splitting $T(\partial X_{r})=T_{H}(\partial X_{r}) \oplus T_{V}(\partial X_{r})$, i.e. in its horizontal and vertical part. In particular, we obtain the following identifications for any $y \in Y$:
 \[T_{H}(\partial X_{r}) \simeq (\pi_{r})^*T(Y), \quad  T_{V}(\partial X_{r})|_{\pi_r^{-1}(y)} \simeq TL.\]

 Then, consider a Riemannian metric on $\partial X_{r}$ of the following form:

\[
	g_{\partial X_{r}}=(\pi_{r})^*g_{Y} \oplus g_{\partial X_r / Y},
\]
where $g_{\partial X_r / Y}$ means a metric on the vertical tangent bundle of $\pi_r$, which restricts to a Riemannian metric of $L$ on each fiber.
Similarly, by choosing a connection for the restriction to $T_Y \setminus Y$ of the bundle $\pi$, which therefore becomes a Riemannian submersion, with open cones as fibers, we have that a wedge metric on $X$ is a Riemannian metric on $X_{reg}$ such that on the tubular neighborhood $T_Y$ the metric locally takes the form:

\[dr^2+\pi^*(g_{Y})+r^2g_{\partial X_r / Y}+O(r),\]
where $r$ stays for the radial coordinate on the cone and $O(r)$ indicates a term vanishing as $r \rightarrow 0$ (i.e. while approaching the singular stratum $Y$). Again, $g_{\partial X_r / Y}$ indicated a metric on the vertical tangent bundle.

\begin{remark}\label{wedgemetricresolution}
	Note that, by construction, the regular set of the stratified space \(X\) is diffeomorphic to the interior of its resolution \(X_r\). We can therefore use this diffeomorphism to induce a metric on the interior of \(X_r\), suitably renaming the coordinate \(r\) so that it corresponds to the boundary defining function for \(\partial X_r\). In this way, we obtain a metric that extends, in a non-degenerate manner, to a metric on the wedge tangent bundle $\leftindex^{w}TX_r \to X_r$. Observe that this metric will have a different behaviour around the boundary $\partial X_r$. In particular, note that (\ref{wedgeframe}) constitutes an orthonormal frame for it.
	
	The metric obtained is clearly incomplete: however, for a wedge metric $g$, one can always associate a metric:
	
	\[\widetilde{g}=r^{-2}g,\]
	which is an example of an edge metric and, in particular, turns out to be a complete metric.
\end{remark}

We say that a wedge metric is of positive scalar curvature if it has positive scalar curvature as a Riemannian metric on the regular smooth stratum $X_{reg}$ or, equivalently, using what said in the above remark, on the interior of the resolution $X_r$.

The above definition extends similarly to the case of a smoothly stratified space with boundary $(X, \partial X)$. In that case, one requires that on a collar neighborhood $\mathcal{U}=\partial X \times [0,1]$ the metric is of the product-like form $g=dx^2+g_{\partial X
}$, where $x$ is the obvious normal coordinate to the boundary and $g_{\partial X}$ is an wedge metric of the stratified space $\partial X$. \\

\subsection{The L-bordism groups}\label{secbordismgroups}

Now we will proceed to introduce some bordism groups for smoothly stratified spaces of depth 1 endowed with wedge metrics. These, in particular, will play a key role in the next section where the Stolz sequence in this setting will be introduced (recall section \ref{stolzsequence}).

First of all, we will need to introduce the concept of a spin pseudomanifold. Let \(M\) be a smoothly stratified space of depth 1, and denote, changing a bit the notations used before, its regular and singular strata by \(M_{\text{reg}}\) and \(\beta M\), respectively. Furthermore, assume that \(g\) is a wedge metric on \(M\).

\begin{definition}
	We say that $(M,g)$ is a \textbf{spin stratified} (or simply a \textit{spin pseudomanifold}) if both its resolution $M_r$ and the stratum $\beta M$ are spin. 
\end{definition}

\begin{remark}
	Fixing a spin structure on the resolution $M_r$ fixes a a spin structure on $\partial M_r$ too. Moreover, fixed a spin structure on the stratum $\beta M$ fixes a spin structure on the vertical tangent bundle of the fibration $\partial M_r \to \beta M$ (see \cite[Proposition 1.15]{LM}).
\end{remark}

Fix a link $L$, which in our case will be a closed manifold, and let $M,M'$ be two compact spin pseudomanifolds with fibered $L$-singularities with strata $\{M_{reg}, \beta M\}$ and $\{M'_{reg}, \beta M'\}$ respectively. Moreover, suppose that both have dimension $n$. We introduce the following bordism groups.

\begin{itemize}
	
	\item$\Omega^{spin,L-fib}_n$. 
	
	Cycles are given by compact $n$-dimensional spin pseudomanifolds with fibered $L$-singularities.
	 We say that $M$ and $M'$ are equivalent if there exists a compact $n+1$-dimensional smoothly spin stratified space with fibered $L$-singularities with boundary $(W, \partial W)$, where $\partial W = M \sqcup M'$. This means that W consists in two strata $\{W_{reg}, \beta W\}$ such that:
	 
	 \[W_{reg} \cap \partial W = M_{reg} \sqcup M'_{reg} \quad \quad \beta W \cap \partial W = \beta M \sqcup \beta M'.
	 \] 
	 
	 Moreover, the depth-$1$ stratum $\beta W$ has a tubular neighborhood $N(\beta W)$ and a fibration $N(\beta W) \rightarrow \beta W$, whose fibers are cones $C(L)$, which restricts to those of $\beta M$ and $\beta M'$ on the boundary. 
	 Finally, it is required $W$ to be spin and that the spin structures of its strata extend those of the strata of $M$ and $M'$. \\
	  Note that a spin pseudomanifold with fibered $L$-singularities with boundary $(X, \partial X)$ represents a bordism in this group from its boundary to the empty set. Thus, in particular $\partial X$, i.e. a boundary of a smoothly spin stratified space with fibered $L$-singularities, represents the zero class, hence the identity element in $\Omega^{spin, L-fib}_n$.

	 \item $Pos^{spin, L-fib}_n$.
	 
	 In this group, cycles are required in addition to be endowed with wedge metrics of positive scalar curvature. \\
	 Let $M$ and $M'$ be as above, and suppose they are equipped with wedge metrics of positive scalar curvature $g$ and $g'$ respectively whose associated metrics on their depth-$1$ stratum are $g_{\beta M}$ and $g'_{\beta M'}$. \\
	 We say that $M$ and $M'$ are bordant if there exists a compact spin pseudomanifold with fibered $L$-singularities with boundary $(W, \partial W)$ such that $\partial W= M \sqcup M'$ and $W$ is endowed with a psc wedge metric $h$. In particular $h$ is asked to be of product type on a collar neighborhood of the boundary $\partial W$ and to restrict there to $g$ and $g'$.
	 Denoting by $\pi: N(\beta W) \rightarrow \beta W$ the fibration of the depth-$1$ stratum $\beta W$ of $W$, the above request implies the following data: 
	 \begin{itemize}
	 	\item a Riemannian metric $g_{\beta W}$ on its depth-$1$ stratum $\beta W$ which is of product type near $\partial(\beta W)= \beta M \sqcup \beta M'$ and restricting to $g_{\beta M}$ and $g'_{\beta M'}$;
	 	\item a connection, say $\nabla_W$, on $T(\partial W_r)$ inducing a splitting in a vertical and horizontal part restricting to the respective connections on the two boundary components;
	 	\item on the tubular neighborhood $N(\beta W)$, denoting by $r$ the radial coordinates on the cones $C(L)$, the metric $h$ takes locally the form:
	 	
	 	\[dr^2+\pi^*g_{\beta W}+ r^2g_{\partial W_r / \beta W}+O(r).\]
	 	
	 	 \end{itemize}	
	 
	Observe that two equivalent pseudomanifolds in such group are also equivalent in $\Omega_{n}^{spin, L-fib}$.

	 \item $R^{spin, L-fib}_n$. \\
	 Assume now that $M$ and $M'$ are with boundary $\partial M$ and $\partial M'$ respectively. Suppose such boundaries to be endowed with wedge metrics of positive scalar curvature and call them $g_{\partial M}$ and $g_{\partial M'}$. Cycles in this group are given by spaces like these.\\
	  A bordism between $M$ and $M'$ will be represented by a compact spin pseudomanifold with fibered L-singularities with corners $(W, \partial W, \partial^2 W)$, meaning that $W$ has two strata $\{W_{reg}, \beta W\}$ and boundary:
	 
	 \[ \partial W = \partial^0 W \bigcup_{\partial^2 W} \partial^1 W,\] 
	 where:
	 
	\begin{itemize} 
	 \item $\partial^0 W= M \sqcup M'$, i.e. it is a pseudomanifold with fibered $L$-singularities with boundary $\partial^2 W:=\partial M \sqcup \partial M'$;
	 \item $\partial^1 W$ is a pseudomanifold with fibered $L$-singularities with same boundary $\partial^2 W$. In particular, we ask that $\partial^1 W$ realizes a bordism between $\partial M$ and $\partial M'$, in the sense of $Pos^{spin, L-fib}_{n-1}$ introduced before. This means that on $\partial^1 W$ there is a positive scalar curvature wedge metric which is of product type in a collar neighborhood of the boundary and restricting to $g_{\partial M}$ and $g_{\partial M'}$.
	 \item $\partial^2 W$ represents the corner locus of the pseudomanifold. It is itself a stratified pseudomanifold with $L$-fibered singularities of dimension $n-1$, endowed with a neighborhood $\mathcal{V} \subset W$, which is stratified isomorphic to $[0,1]^2 \times \left(\partial X \sqcup \partial X'\right)$ and its strata are then $\{(M_{reg} \cap \partial M) \sqcup (M'_{reg} \cap \partial M'), (\beta M \cap \partial M) \sqcup (\beta M' \cap \partial M')\}$.
\end{itemize}
 The bordism group $R^{spin, L-fib}_n$ associated to this relation will be called the n-dimensional \textit{$L$-fibered Stolz group}.

\end{itemize}

Observe that in all three of the groups described above, it is required that the space \( W \) realizing the bordism be spin stratified. In particular, its strata must be endowed with spin structures that extend those of the strata of \( M \) and \( M' \).

\begin{remark}
	Observe that the $L$-bordism groups for which $n \leq dim(L)$, meaning that they are represented by pseudomanifolds of dimension less than that of the link $L$, it is assumed that the singular stratum is empty. 
\end{remark}

Now, if X is a generic topological space, then we can extend all the above bordism theories by requiring that all the spaces are endowed with continuous reference maps with values in $X$. Then, a class in each of the above theories is given by specifying a pair $(M, f: M \rightarrow X)$, with $M$ as above, and in the equivalence relations one simply requires that such reference maps are interpolated along the bordism. The above bordism groups are then simply bordism groups whose reference space is a point.

\subsection{The L-fibered Stolz sequence} \label{LStolzseq}

Let us now proceed to explain in detail the maps that will constitute the Stolz sequence. Recall from the previous section that a cycle $[M]$ means:

\begin{itemize}
	 \item a pair $(M,f\colon M \to X)$, if $[M] \in \Omega_n^{spin, L-fib}(X)$;
	 \item a tuple of the form $(M, g, g_{\beta M}, \nabla_M, f: M \rightarrow X)$, if $[M] \in Pos^{spin, L-fib}_n(X)$;
	 \item $(M, \partial M, g_{\partial M}, g_{\beta(\partial M)}, \nabla_{\partial M}, f: M \rightarrow X)$, when $[M] \in  R^{spin, L-fib}_n(X)$.
	 
\end{itemize}	 

Then, define the following maps:

\begin{enumerate}[label=(\roman*)]
	\item The \textit{forgetful map}: 
	
	\[\varphi: Pos^{spin, L-fib}_*(X) \longrightarrow \Omega^{spin, L-fib}_*(X),\]
	sends a class represented by $(M, g, g_{\beta M}, \nabla_M, f: M \rightarrow X)$ to the class represented by $M$ and $f$. This map simply forgets about the wedge psc metric.
	
	\item The \textit{inclusion map}:
	
	\[\iota: \Omega^{spin, L-fib}_*(X) \longrightarrow R^{spin, L-fib}_*(X),\]
	sends the class of $(M,f)$ to the class represented by $M$ and $f$ seen as a pseudomanifold with $L$-fibered singularities with empty boundary (and then with no psc wedge metric metric).
	
	\item The \textit{boundary map}:
	
	\[\partial: R^{spin, L-fib}_*(X) \longrightarrow Pos^{spin, L-fib}_{*-1}(X),\]
	sends a class $[(M, \partial M, g_{\partial M}, g_{\beta(\partial M)}, \nabla_{\partial M}, f: M \rightarrow X)]$ to the class represented by $(\partial M, g_{\partial M}, g_{\beta(\partial M)}, \nabla_{\partial M}, f|_{\partial M}: \partial M \rightarrow X))$, i.e. acts by a restriction to the boundary. \\
\end{enumerate}

We observe that the boundary map is well defined: in fact, consider $[M] \sim [M'] \in R^{spin, L-fib}_*(X)$ and $W$ the $n+1$-dimensional space which realizes the bordism. By the definition of the equivalence relation in the $R$-group, the boundary component $\partial^1 W$ of $W$ realizes a bordism between the boundaries of $M$ and $M'$. Then we have that:
 \[\partial[M]=[\partial M] \sim [\partial M']=\partial[M'] \in Pos^{spin, L-fib}_{*-1}(X).\]

\begin{definition}
	Let X be a topological space and $L$ be a fixed compact smooth manifold, then we have the following sequence of bordism groups:
	
\begin{equation} \label{stolzseq}
\xymatrix{
	\ldots \ar[r] & R^{spin, L-fib}_{n+1}(X) \ar[r]^{\partial} & Pos^{spin, L-fib}_{n}(X)\ar[r]^{\varphi} & \Omega^{spin, L-fib}_{n}(X)\ar[r]^{\iota} & R^{spin, L-fib}_{n}(X)\ar[r] & \ldots},
\end{equation}

which we call the \textbf{L-stratified Stolz sequence}.
	
\end{definition}

It is not difficult to show that this sequence is in particular a complex of abelian groups. In fact:
\begin{itemize}
	\item $\varphi \circ \partial = 0$ \\
	Consider a class $[M, f \colon M \to X] \in R^{spin, L-fib}_{n+1}(X)$, then its image under the map $\varphi \circ \partial$ is represented by $(\partial M, f|_{\partial M})$ in $\Omega^{spin,L-fib}_n(X)$. Then  $M$ itself trivially represents a bordism between $\partial M$ and the empty set. Hence we have that: 
	\[\varphi \circ \partial([M]) \sim [\emptyset] = 0\].
	
	\item $\partial \circ \iota=0$ \\
	This is straightforward since the image of $\iota$ is given by all pseudomanifolds with empty boundary. Then:
	 \[\partial \circ \iota[M]= [\partial M] = [\emptyset] = 0, \quad \forall \ [M] \in \Omega^{spin, L-fib}_n(X).\]
	
	\item $\iota \circ \varphi=0$ \\
	Let $[M, f\colon M \to X] \in Pos^{spin, L-fib}_n(X)$, then the composition $\iota \circ \varphi$ first forgets about the metric and then consider the space as a pseudomanifold with empty boundary. Let $W:=M \times [0,1]$ (whose stratification is obtained from that of \(X\) by simply taking the direct products of the strata of \(X\) with the closed interval \([0,1]\)) and $\tilde{f}: W \rightarrow X$ be the trivial extension of $f$.\\
	 $W$ is a pseudomanifold with fibered $L$-singularities with boundary $\partial W= M \sqcup M$ and realizes a bordism in the $R$-group between $M$ and the empty set. In fact, using the same notation of the definition of the $R$-groups, we identify a copy of $M$ as $\partial^0 W$ and the other one as $\partial^1 W$, with no intersection since both $M$ and the empty set have empty boundary. If \(\partial^1 W = M\) is equipped with the wedge metric of positive scalar curvature of \(M\) defining \([M]\), then this is a stratified space without boundary, thereby realizing a bordism in \( Pos^{spin, L-fib}_n(X) \) between the empty set and the boundary of \(M\).
	
\end{itemize}

\begin{theorem}\label{Lstolzexact}
	The $L$-stratified Stolz sequence (\ref{stolzseq}) is exact.
\end{theorem}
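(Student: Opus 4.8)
The plan is to prove exactness at each of the three spots of the sequence, following the strategy of the classical Stolz sequence but carefully tracking the extra control data (the fibration $\pi$, the connection $\nabla$, and the wedge structure near the depth-$1$ stratum). In each case one inclusion ``$\mathrm{im}\subseteq\ker$'' is already established above, since we showed the sequence is a complex; so at every spot it remains to prove the reverse inclusion ``$\ker\subseteq\mathrm{im}$.'' The key technical tool, used repeatedly, is that a psc wedge metric on a pseudomanifold with fibered $L$-singularities that is product-like near the boundary can be glued along collar neighborhoods with another such metric, and that the cylinder $M\times[0,1]$ over a cycle is always a legitimate bordism carrying the product metric (with its stratification and fibration the product of those on $M$ with $[0,1]$, as noted in the verification of $\iota\circ\varphi=0$).

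\medskip
\noindent\textbf{Exactness at $\Omega^{spin,L-fib}_n(X)$.} Take $[M,f]\in\ker\iota$, so $M$, viewed as a pseudomanifold with empty boundary, bounds in the $R$-group: there is a compact $(n+1)$-dimensional spin pseudomanifold with fibered $L$-singularities with corners $(W,\partial W,\partial^2 W)$ with $\partial^0 W=M$, $\partial^2 W=\emptyset$ (since $M$ has no boundary), so $\partial W=M\sqcup\partial^1 W$ and $\partial^1 W$ carries a product-like psc wedge metric $h$. Then $W$ itself, now regarded simply as a bordism in $\Omega^{spin,L-fib}_{n+1}(X)$ between $M$ and $\partial^1 W$, shows $[M,f]=[\partial^1 W, f|_{\partial^1 W}]$. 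Since $\partial^1 W$ is equipped with a psc wedge metric (and the required connection and induced metric on its depth-$1$ stratum), it defines a class in $Pos^{spin,L-fib}_n(X)$ mapping to $[M,f]$ under $\varphi$. Hence $\ker\iota\subseteq\mathrm{im}\,\varphi$.

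\medskip
\noindent\textbf{Exactness at $R^{spin,L-fib}_n(X)$.} Take $[M,\partial M,g_{\partial M},\dots,f]\in\ker\partial$, so $(\partial M, g_{\partial M},\dots,f|_{\partial M})$ bounds in $Pos^{spin,L-fib}_{n-1}(X)$: there is a compact spin pseudomanifold with fibered $L$-singularities with boundary $(V,\partial V)$ with $\partial V=\partial M$, carrying a product-like psc wedge metric restricting to $g_{\partial M}$, together with the attendant connection and depth-$1$ data, and a reference map extending $f|_{\partial M}$. Glue $M$ and $V$ along $\partial M$ to form a closed $n$-dimensional spin pseudomanifold with fibered $L$-singularities $M':=M\cup_{\partial M}V$, with reference map $f'$ obtained from $f$ and the one on $V$. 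One checks $M'$ has empty boundary, so $[M',f']\in\Omega^{spin,L-fib}_n(X)$, and $\iota[M',f']=[M,\partial M,g_{\partial M},\dots,f]$ in the $R$-group: the required bordism is $M\times[0,1]$ with a corner introduced along $\partial M\times\{1\}$, where the $\partial^1$-face is $V$ glued to the cylinder $\partial M\times[0,1]$ carrying the product metric $dt^2+g_{\partial M}$, which is product-like and restricts correctly. Hence $\ker\partial\subseteq\mathrm{im}\,\iota$.

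\medskip
\noindent\textbf{Exactness at $Pos^{spin,L-fib}_n(X)$.} This is the main obstacle, as in the classical case. Take $[M,g,g_{\beta M},\nabla_M,f]\in\ker\varphi$, so there is a compact $(n+1)$-dimensional spin pseudomanifold with fibered $L$-singularities with boundary $(W,\partial W)$, $\partial W=M$, with a reference map $F$ extending $f$, but \emph{a priori no psc wedge metric} on $W$. We must produce a cycle in $R^{spin,L-fib}_{n+1}(X)$ whose boundary, in the $Pos$-sense, is $(M,g,\dots)$. The idea, following Stolz, is to view $W$ itself as the underlying pseudomanifold of the desired $R$-cycle: set $\partial^0$-data $=W$ with $\partial(W)$ playing the role of the ``boundary with psc metric,'' except that here $\partial W=M$ already carries the psc wedge metric $g$. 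Concretely, $[W, \partial W=M, g, g_{\beta M}, \nabla_M, F]$ is a cycle in $R^{spin,L-fib}_{n+1}(X)$, and by definition $\partial$ of this cycle is precisely $[M,g,g_{\beta M},\nabla_M,f]\in Pos^{spin,L-fib}_n(X)$. The content to be checked is that $[W,\partial W,\dots,F]$ is indeed a well-defined class: one must verify that the required neighborhood structure of $\partial W=M$ inside $W$ (a collar, compatible with the fibration of the depth-$1$ stratum and with the connection $\nabla_M$) can be arranged, using that $M$ has a collar in $W$ as a pseudomanifold with fibered $L$-singularities and that $\nabla_M$, $g_{\beta M}$ extend trivially over the collar. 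The hard point — where the wedge/fibered structure genuinely enters, beyond the smooth Stolz argument — is ensuring these extensions are compatible with the iterated fibration structure near $\beta W$; this is handled by the product structure of the control data on a collar of a stratified boundary, which was built into the definition of ``smoothly stratified space with boundary'' and of the bordisms in $\Omega^{spin,L-fib}$. Granting this, $\varphi[W,\dots]$ would need to be checked $=0$ only implicitly; what we actually need is $\mathrm{im}\,\partial\ni[M,g,\dots]$, which the cycle $[W,\dots]$ provides. Hence $\ker\varphi\subseteq\mathrm{im}\,\partial$, completing the proof.
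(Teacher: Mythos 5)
Your proposal is correct and follows essentially the same route as the paper's proof: at $\Omega_n$ one reads the $R$-null-bordism as an $\Omega$-bordism from $M$ to the psc face $\partial^1 W$; at $R_n$ one caps off $\partial M$ with the $Pos$-null-bordism $V$ and exhibits an $R$-bordism from $M$ to the closed space $M\cup_{\partial M}V$; at $Pos_n$ one observes that the $\Omega$-null-bordism $W$ itself, with $\partial W=M$ carrying $g$, is already an $R$-cycle with $\partial[W]=[M,g]$. The only cosmetic difference is in the $R_n$ step, where the paper realizes the required bordism cleanly as $(M\cup_{\partial M}V)\times[0,1]$ with $\partial^0=M\sqcup(M\cup_{\partial M}V)$ and $\partial^1=V$, which is the precise version of the slightly informal ``cylinder with a corner'' you describe.
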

 
 \begin{proof}
 	We have just shown that the sequence constitutes a complex, so what remains is to show that the images of each map are included in the kernels of the subsequent ones.
 	\begin{enumerate}
 		
 	\item $Ker(\varphi) \subseteq Im(\partial)$\\
 	Let $[M, f \colon M \to X] \in Pos^{spin, L-fib}_{n}(X)$ be such that $\varphi([M])=0$. Then there exists a bordism between $M$ and the emptyset, i.e. a spin pseudomanifold with fibered $L$-singularities $W$ with boundary $\partial W=M$ and a mapping $f_W: W 	\rightarrow X$ extending $f$. Of course, $M$ admits a wedge metric of positive scalar curvature, and then $W$ on its boundary too, hence $W$ represents the required class in $R^{spin, L-fib}_{n+1}(X)$ such that $\partial([W])=[M]$.
 	
 	\item $Ker(\partial) \subseteq Im(\iota)$\\
 	Let $[M, f \colon M \to X] \in R^{spin, L-fib}_{n}(X)$ be such that $\partial([M]) = 0$. Then the boundary $\partial M$ is bordant to the empty set, meaning that there exists a pseudomanifold with fibered $L$-singularities $W$ such that $\partial W = \partial M$, a wedge psc metric on $W$ extending suitably that of $\partial M$ and a mapping $f_W: W \rightarrow X$ extending the restriction $f|_{\partial M}$. \\
 	Now, construct the following: 
 	\[N:=M \cup_{\partial M} W,\]
 	with the obvious map to $X$. This is a spin pseudomanifold with fibered $L$-singularities without boundary whose stratification, following the previous notations, is given by:
   \[N_{reg}:=M_{reg} \bigcup_{\partial M_{reg}} W_{reg}, \quad \quad \beta N:=\beta M \bigcup_{\beta(\partial M)} \beta W. \]
   
  Now, the product $N \times [0,1]$, whose stratification is given as before, has boundary $N \sqcup N$ decomposable as:
 \[N \sqcup N=(M \cup_{\partial M} W) \sqcup N =(M \sqcup N) \cup_{\partial M} W. \]
 
 Identify $\partial^0 N=M \sqcup N$ and $\partial^1 N=W$ and observe then that, by construction, $\partial^1 N$ provides by construction a bordism in $Pos^{spin, L-fib}_{n-1}(X)$ between $\partial M$ and $\partial N=\emptyset$. But then $N \times [0,1]$ represents a bordism between $M$ and $N$ in $R^{spin, L-fib}_{n}(X)$ and since $N$ has no boundary, we have that $[M]=\iota([X])$.
 
 \item $Ker(\iota) \subseteq Im(\varphi)$ \\
 Let $[M, f \colon M \to X] \in \Omega^{spin, L-fib}_n(X)$ be such that $\iota([M])=0$. This means that there exists a spin pseudomanifold with fibered $L$-singularities $W$ with empty corners, boundary $\partial W=M \sqcup \partial^1 W$ (we have identified $\partial^0W$ with $M$) and with a mapping $f_W\colon W \to X$ extending $f$. Note that both \( M \) and \( \partial^1 W \) have no boundary, then $W$ can be also seen as a bordism in $\Omega^{spin, L-fib}_n(X)$ between them, i.e. $[M] \sim [\partial^1W]$. However, by construction, $\partial^1 W$ realizes a bordism in $Pos^{spin, L-fib}_{n-1}(X)$ between the emptyset and itself, hence an n-dimensional spin pseudomanifold with fibered $L$-singularities without boundary endowed with a wedge metric of positive scalar curvature. Then $[M] \sim \varphi([\partial^1 W])$, with $[\partial^1 W] \in Pos^{spin, L-fib}_n(X)$ and

 	\end{enumerate}	
 \end{proof}

\newpage

\section{Stratified Morse theory in the (L,G)-setting}\label{secstrmorse}

\subsection{Controlled vector fields}

Observe that a generic smooth manifold $M$ can be endowed with a (trivial) stratified space structure with the only stratum $M$ itself and control data given by:

 \[\{T_M=M, \pi_{M}=Id_M, \rho_M=0\}\]

\begin{definition}
	Let $X$ be a stratified space and $M$ be a smooth manifold. Endow $M$ with the above trivial stratification, then a weak morphism $F: X \rightarrow M$ is called \textbf{controlled}.
\end{definition}

In particular, if the above weak morphism is such that its restrictions $F|_{Y_\alpha}$ to the smooth strata of $X$ are submersions, then we say that $F$ is a \textit{controlled submersion}.

\begin{example}\label{excontrolledsubm} Consider a stratum $Y_{\alpha}$ of a stratified space $X$, then both the restrictions to $K:=T_{\alpha} \setminus Y_{\alpha}$:
	\[(\pi_{\alpha}, \rho_{\alpha})|_K\colon K \to Y_\alpha \times \mathbb{R}_{>0}, \quad \rho_{\alpha}|_K\colon K \to \mathbb{R}_{>0}\] 
	are controlled submersions.
\end{example}	

This class of maps is of particular importance because the following result can be proven, which allows, as in the smooth case, the definition of global objects by working locally.

\begin{lemma}[\cite{Verona}, Lemma 1.3]\label{partunity}
	Let $X$ be a stratified space. For each open covering $\mathcal{U}$ of $X$, there exists a controlled partition of unity subordinated to $\mathcal{U}$, i.e. made by controlled mappings.
\end{lemma}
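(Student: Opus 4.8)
The plan is to argue by induction on the depth $d=\mathrm{depth}(X)$, exploiting at each stage the cone-bundle structure of a tubular neighbourhood of the deepest strata. Here a \emph{controlled partition of unity subordinate to $\mathcal{U}$} means a locally finite family $\{\phi_\lambda\}$ of controlled maps $\phi_\lambda\colon X\to[0,1]$ with $\sum_\lambda\phi_\lambda\equiv 1$ and each $\mathrm{supp}\,\phi_\lambda$ contained in a member of $\mathcal{U}$; since $X$ is metrizable, hence paracompact, one may freely pass to locally finite refinements. When $d=0$ the space $X$ is a smooth manifold with the trivial stratification ($T_M=M$, $\pi_M=\mathrm{Id}$), so the compatibility condition $f\circ\pi_M=f$ is vacuous and any ordinary smooth partition of unity subordinate to $\mathcal{U}$ already works.

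For the inductive step assume $d\ge 1$ and that the statement holds in depth $\le d-1$. Let $\Sigma$ be the union of the strata of depth exactly $d$. Any such stratum is minimal for the order $<$ of Definition \ref{stratifiedspace} (a strictly smaller stratum would have depth $\ge d+1$), hence closed; thus $\Sigma$ is a closed disjoint union of smooth manifolds and $X\setminus\Sigma$ is an open stratified subspace of depth $\le d-1$. After the usual shrinking of the tubes, choose pairwise disjoint tubular neighbourhoods of the components of $\Sigma$, let $T$ be their union, and let $\pi\colon T\to\Sigma$ and $\rho\colon T\to[0,\infty)$, $\rho^{-1}(0)=\Sigma$, be the assembled controlled retraction and control function. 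I then build the part of the partition of unity near $\Sigma$ by pulling back from the manifold $\Sigma$: cover $\Sigma$ by relatively compact open sets $V_j$ with $\overline{V_j}\subseteq U_{i(j)}\cap\Sigma$ for suitable $U_{i(j)}\in\mathcal{U}$, take a locally finite smooth partition of unity $\{\psi_j\}$ on $\Sigma$ subordinate to $\{V_j\}$, and for each $j$ a smooth $\chi_j\colon[0,\infty)\to[0,1]$ with $\chi_j\equiv 1$ near $0$ and $\mathrm{supp}\,\chi_j$ so small that $\pi^{-1}(\overline{V_j})\cap\mathrm{supp}(\chi_j\circ\rho)\subseteq U_{i(j)}$ (possible by compactness of $\overline{V_j}$, since the sets $\{\rho<\varepsilon\}$ shrink onto $\Sigma$ as $\varepsilon\to 0$). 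Setting $\widetilde\psi_j:=(\psi_j\circ\pi)\cdot(\chi_j\circ\rho)$ on $T$ and extending by $0$, each $\widetilde\psi_j$ is a controlled map with $\mathrm{supp}\,\widetilde\psi_j\subseteq U_{i(j)}$, the family $\{\widetilde\psi_j\}$ is locally finite, and $\lambda:=\sum_j\widetilde\psi_j$ is a controlled function with $0\le\lambda\le 1$, $\lambda\equiv 1$ on a neighbourhood of $\Sigma$, and $\mathrm{supp}\,\lambda$ a closed subset of $X$ contained in $T$.

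It remains to cover $X$ away from $\Sigma$. Apply the inductive hypothesis to $X\setminus\Sigma$ with the restricted cover $\{U_i\cap(X\setminus\Sigma)\}$ to obtain a locally finite controlled partition of unity $\{\theta_i\}$ there, subordinate to it. Since $1-\lambda$ is controlled and vanishes on a neighbourhood of $\Sigma$, the products $(1-\lambda)\theta_i$ extend by $0$ to controlled maps on all of $X$ with $\mathrm{supp}\big((1-\lambda)\theta_i\big)\subseteq U_i$, forming a locally finite family with $\sum_i(1-\lambda)\theta_i=1-\lambda$ on $X$. Then $\{\widetilde\psi_j\}_j\cup\{(1-\lambda)\theta_i\}_i$ is locally finite, each member is controlled and supported in a member of $\mathcal{U}$, and the total sum is $\lambda+(1-\lambda)=1$; this is the desired controlled partition of unity, which closes the induction.

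The only substantive point --- and the one place where the abstract stratified structure is genuinely used --- is the verification that $\psi\circ\pi$ and $\chi\circ\rho$ are controlled, i.e.\ weak morphisms to $\mathbb{R}$: smoothness on each stratum $Y_\beta$ follows from condition (ii) of Definition \ref{stratifiedspace}, which makes $(\pi_\alpha,\rho_\alpha)$ a submersion on $T_\alpha\cap Y_\beta$ (compare Example \ref{excontrolledsubm}), while the requirement that these functions be constant along the fibres of every tubular projection $\pi_\beta$ is exactly the Thom--Mather compatibility relations $\pi_\alpha\pi_\beta=\pi_\alpha$, $\rho_\alpha\pi_\beta=\rho_\alpha$ of condition (iii), applied after the standard shrinking of tubes so that $\pi_\beta$ maps $T_\alpha\cap T_\beta$ into $T_\alpha$. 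I expect this bookkeeping, together with the routine point-set checks --- that the supports above are closed in $X$, that extension by $0$ across $\partial T$ preserves controlledness, and that all the chosen families are locally finite --- to be the main (but essentially standard) obstacle; the geometric input is entirely contained in the cone-bundle description of $T$ recalled after Definition \ref{stratifiedspace}.
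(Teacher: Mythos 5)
The paper does not give a proof of this lemma: it is cited directly from Verona. Your inductive argument on depth --- build a controlled collar function $\lambda$ near the deepest strata $\Sigma$ by pulling back an ordinary partition of unity from $\Sigma$ along $\pi_\Sigma$, cut it off with $\chi_j\circ\rho_\Sigma$, and recurse on the complement $X\setminus\Sigma$, which has strictly smaller depth --- is the standard construction in the literature and is in the spirit of Verona's argument, so the overall route is sound.

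There is, however, a slip in the final paragraph. You attribute the controlledness of $\chi_j\circ\rho_\Sigma$ entirely to the Thom--Mather compatibility relations $\pi_\alpha\pi_\beta=\pi_\alpha$, $\rho_\alpha\pi_\beta=\rho_\alpha$. Those relations govern pairs of \emph{distinct} strata $Y_\alpha<Y_\beta$; for $\beta=\alpha=\Sigma$ one has $\rho_\Sigma\circ\pi_\Sigma\equiv 0$, so $\chi_j\circ\rho_\Sigma$ is emphatically \emph{not} constant along the fibres of $\pi_\Sigma$ on all of $T_\Sigma$, and neither is $\widetilde\psi_j$. What actually makes $\widetilde\psi_j$ controlled with respect to the tube of $\Sigma$ is your normalization $\chi_j\equiv 1$ near $0$: on the shrunken tube $\{\rho_\Sigma<\varepsilon\}$ where this holds, $\chi_j\circ\rho_\Sigma\equiv 1$, hence $\widetilde\psi_j=\psi_j\circ\pi_\Sigma=\widetilde\psi_j\circ\pi_\Sigma$ there. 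This is also the correct reading of ``controlled'' in the Thom--Mather/Verona framework, where the control data is a germ of tubes and the weak-morphism compatibility is only required after possibly shrinking the tubes. The ingredient you need is therefore present in the construction, but the justification should be split: Thom--Mather compatibility handles the tube projections $\pi_\beta$ of strata $Y_\beta$ meeting $T_\Sigma$ from above, while the vanishing of $\chi_j-1$ near $0$ handles the tube projection $\pi_\Sigma$ of $\Sigma$ itself.
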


As for the smooth case, the above lemma holds similarly in the case of stratified spaces with boundary.

\begin{definition}
	A \textbf{stratified vector field} $\xi$ on a stratified space $X$ with strata $\{Y_{\alpha}\}$ is a collection:
	
	\[\xi=\{\xi(x) \in T_xY_\alpha: \quad x \in  Y_{\alpha}\},\]
	such that for each stratum $Y_\alpha$, the map:
	\[\xi_\alpha\colon Y_\alpha \to T_xY_\alpha, \quad   x \mapsto \xi(x),\]
	defines a vector field on $Y_\alpha$. If the family $\{\xi_\alpha\}$ is made by smooth (resp. $C^k$, with $k \in \mathbb{N}$) vector fields, then $\xi$ is said smooth (resp. of class $C^k$).
\end{definition}

In the following, we will not make any distinction between $\xi_{\alpha}$ and $\xi|_{\alpha}$.

Now we will introduce compatibility conditions for the vector fields defining a stratified vector field. Indeed, the given definition does not guarantee anything about the behavior of these fields in the neighborhoods of a stratum, and therefore, a priori, no condition of "continuity" can be assumed when transitioning from one vector field of the family to another. These conditions, in particular, will allow us to conclude that the flow generated by the vector field is locally continuous.

\begin{definition}
	A stratified vector field $\xi$ is said \textbf{controlled} if for each pair of strata $Y_{\alpha} < Y_{\beta}$ and $x \in T_{\alpha} \cap Y_{\beta}$, it satisfies the following:
	
	\begin{itemize}
		\item $(\pi_{\alpha})_{*}(\xi_{\beta})(x)=\xi_{\alpha}(\pi_{\alpha}(x))$;
		\item $(\rho_{\alpha})_{*}(\xi_{\beta})(x)=0$.
	\end{itemize}
\end{definition}

\begin{proposition}\label{controlledlift}
	Let $X$ and $M$ be as above. If $F\colon X \to M$ is a controlled submersion, then for each vector field $\chi\colon M \to TM$, there exists a controlled stratified vector field $\xi$ on $X$ such that on each stratum $Y_\alpha$:
	
	\[\eta \circ F|_{Y_\alpha}= (F|_{Y_\alpha})_*(\xi_\alpha).\]
	
	\end{proposition}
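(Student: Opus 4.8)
The plan is to construct the controlled stratified vector field $\xi$ locally, using local trivializations of the control data, and then to patch the local pieces together with a controlled partition of unity (Lemma \ref{partunity}). The point is that the lifting problem is trivial on a single stratum—since $F|_{Y_\alpha}$ is a submersion, any $\chi$ can be lifted pointwise over $Y_\alpha$—but the local lifts must be chosen so that the two control conditions
\[(\pi_\alpha)_*(\xi_\beta)(x)=\xi_\alpha(\pi_\alpha(x)),\qquad (\rho_\alpha)_*(\xi_\beta)(x)=0\]
hold on the overlaps $T_\alpha\cap Y_\beta$. First I would treat the model situation: near a point $p$ of a depth-$k$ stratum $Y_\alpha$, the space $X$ looks like $B\times C(L)$ with $B$ a ball in $Y_\alpha$, and the control maps $\pi_\alpha,\rho_\alpha$ are projection onto $B$ and the radial coordinate. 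In this chart, I would first lift $\chi$ through $F|_{Y_\alpha}$ to a local field $\xi_\alpha$ on $B$; then I would extend it to the whole chart by declaring $\xi$ to be constant along the cone directions (i.e. it has no $\partial_{r}$-component and its component along $L$ is obtained from $\xi_\alpha$ by the local product structure), which manifestly makes $(\pi_\alpha)_*\xi=\xi_\alpha\circ\pi_\alpha$ and $(\rho_\alpha)_*\xi=0$. Because $F$ is a weak morphism, $F\circ\pi_\alpha=F$ on $T_\alpha$, so this extension still projects to $\chi$ under $F$ on every stratum of the chart.

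The inductive structure is on depth. Having built a controlled lift $\xi^{(j)}$ on the open set $T_{(j)}$ consisting of all points of depth $\le j$ (equivalently, on a neighbourhood of the union of strata of depth $\le j$), I would extend it to depth $j+1$: over a chart $B\times C(L')$ around a point of a depth-$(j+1)$ stratum, $L'$ is itself a stratified space of depth $\le j$ carrying, by induction, a controlled field compatible with $\xi^{(j)}$ on a punctured cone neighbourhood; I lift $\chi$ along $F|_{Y}$ for the new stratum $Y$ and glue, using the product structure and a cutoff in the radial variable $r$, to the already-constructed field on the region $r$ bounded away from $0$. Here the compatibility of the transition functions of the cone bundles with the control data (built into the definition of stratified space, Definition \ref{stratifiedspace}(iii)) is what guarantees that the locally defined extensions agree on overlaps up to a controlled partition of unity, which I then apply to produce the global $\xi^{(j+1)}$. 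Since $X$ has finite depth, after finitely many steps this yields $\xi$ on all of $X$.

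The main obstacle, and the step that needs the most care, is verifying that the gluing via a controlled partition of unity preserves the control conditions. Convex combinations of vector fields are harmless for the linear condition $(\pi_\alpha)_*\xi=\xi_\alpha\circ\pi_\alpha$ and for $(\rho_\alpha)_*\xi=0$ \emph{provided} the cutoff functions themselves are controlled, i.e. constant along the $\pi_\alpha$-fibers and independent of $\rho_\alpha$ in the relevant directions—this is exactly why Lemma \ref{partunity} (controlled partitions of unity) is invoked rather than an ordinary partition of unity. One also has to check that the lifts produced in different charts around the \emph{same} stratum, while not equal, all satisfy the same control identities, so that any convex combination still does; this follows because each chart-lift is built to be $\pi_\alpha$-related to a field on $Y_\alpha$ and to annihilate $\rho_\alpha$, and these are closed conditions under the operations used. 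Finally I would remark that the same argument applies verbatim to stratified spaces with boundary, since Lemma \ref{partunity} holds there as well, and that the resulting $\xi$ inherits the smoothness (or $C^k$) of $\chi$ because all the local constructions and the partition of unity are smooth.
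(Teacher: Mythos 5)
The paper states Proposition~\ref{controlledlift} without proof; it is recorded as a standard fact about Thom--Mather stratified spaces (Verona's monograph, already cited for Lemma~\ref{partunity}, and Mather's notes are the natural references). So there is no argument in the paper to compare against, and I can only evaluate your proposal on its own. Your ingredients are the right ones and the proof is essentially correct: lift $\chi$ through the submersion $F|_{Y_\alpha}$ stratum by stratum; extend over a cone chart $B\times C(L)$ by the horizontal lift along the cone directions, so that $(\pi_\alpha)_*\xi = \xi_\alpha\circ\pi_\alpha$ and $(\rho_\alpha)_*\xi=0$ hold by construction; observe that $F\circ\pi_\alpha = F$ on $T_\alpha$ (because $F$ is controlled and $M$ carries the trivial control data), so the extension remains $F$-related to $\chi$ on every stratum of the chart; and glue with a controlled partition of unity, noting that the two control identities are preserved under convex combination with controlled coefficient functions. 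You are right that the reason for invoking Lemma~\ref{partunity} rather than a generic partition of unity is exactly this last point.

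One thing worth reworking is the direction of your induction. You process strata from low depth (the open dense stratum) upward; the classical argument goes downward in depth, starting with the deepest strata, because the control condition for $Y_\alpha<Y_\beta$ determines $\xi_\beta$ near $Y_\alpha$ in terms of $\xi_\alpha$, not conversely. Your direction can be made to work, but it forces you, each time you arrive at a new stratum $Y$ of depth $j+1$, to \emph{overwrite} the already-built field on the shallower strata meeting $T_Y$ via a cutoff in $\rho_Y$, and then to check that this overwrite does not spoil the control conditions already established among those shallower strata inside $T_Y$. The reason this works --- and it deserves to be stated rather than folded into ``these are closed conditions under the operations used'' --- is that a cutoff depending only on $\rho_Y$ is automatically a controlled function relative to every stratum $Y_\alpha$ with $Y<Y_\alpha$, because of the compatibility identity $\rho_Y\circ\pi_\alpha = \rho_Y$ from Definition~\ref{stratifiedspace}(iii); combined with the fact that the horizontal lift of $\xi_Y$ is itself controlled inside the cone chart and $F$-related to $\chi$, this is precisely what keeps the convex combination in the class of controlled lifts of $\chi$. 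Spelling that out would close the gap; as written the sentence carries the weight without showing the work.
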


In particular, we call the stratified vector field $\xi$ of \ref{controlledlift} a \textit{controlled lift} of the vector field $\chi$.

\begin{remark}
	Observe that if one has a stratified space $X$ with stratification given by:
	 \[X_1 < X_2 < \ldots < X_n, \quad dim(X_i) < dim(X_{i+1}),\] 
	 then a vector field on the stratum $X_1$ has a controlled lift on $X$ (recall Example \ref{excontrolledsubm}).
\end{remark}

We now observe that the properties we will use later also hold for a weaker class of stratified vector fields. In particular, we introduce the following two conditions for a stratified vector field \(\xi = \{\xi_\alpha\}\), valid for suitable pairs of strata $Y_\alpha < Y_\beta$ and $x \in T_\alpha \cap Y_\beta$:

\begin{itemize}
	\item \begin{equation} \label{weakcontrol1} (\pi_{\alpha})_{*}(\xi_{\beta})(x)=\xi_{\alpha}(\pi_{\alpha}(x))+\rho^2_{\alpha}(x)\chi(\pi_{\alpha}(x)), \end{equation}
	where $\chi$ is a bounded stratified vector field;
	\item \begin{equation} \label{weakcontrol2}
	|(\rho_{\alpha})_{*}(\xi_{\beta})(x)|<A\rho_{\alpha}(x),
\end{equation}
for some positive constant $A$.
\end{itemize}

With these weaker conditions, it can be shown that the stratified vector field still produces a locally continuous flow, i.e. it is locally integrable (see \cite[Proposition 2.5.1]{Duplessis}).

\begin{definition}
      Let $Y_{\alpha}$ be a stratum of a stratified space $X$. A stratified vector field $\xi_r$ is called \textbf{radial} with respect to $Y_{\alpha}$ if:
      
      \begin{itemize}
      	\item $\xi_r|_{Y_{\alpha}}=0$;
      	\item $\xi_r|_{T_{\alpha} \setminus Y_{\alpha}}$ is a controlled lift of the vector field $-t \frac{\partial}{\partial t}$ of $\mathbb{R}$ along the controlled submersion $\rho_{\alpha}|_{T_{\alpha} \setminus Y_{\alpha}}$ in the sense of Proposition \ref{controlledlift}.
      \end{itemize}
\end{definition}

Notice that in the above definition, in particular for the second point, the weaker version of controlled vector field is needed. 

\begin{definition}
	Let $\xi_{\alpha}$ be a vector field of a smooth stratum $Y_{\alpha}$ of $X$, then we say that a stratified vector field $\xi$ on $T_{\alpha}$  is a \textbf{radial extension} of $\xi_{\alpha}$ if: 
	
	\[\xi(x)=\eta(x)+\xi_r(x), \ \ \forall x \in T_{\alpha}\]
	
	where $\xi_r$ is radial with respect to the stratum $Y_{\alpha}$ while $\eta$ is the sum of a controlled lift of $\xi_{\alpha}$ using the controlled submersion $\rho_{\alpha}$ and a $O(\rho^2_{\alpha}$) term.
\end{definition}

\subsection{Morse pairs}

We now introduce the concept of a Morse pair for a stratified space. In particular, this will represent the analog of choosing a Morse function on a smooth manifold, with which it can be established that the homotopy type of a stratified space is that of a CW-complex.

From now on, we furthermore assume that the stratified space $X$ is smoothly stratified, compact and of depth 1.

\begin{definition}
	  A \textbf{smooth function} $f\colon X \to \mathbb{R}$ (resp. $C^k$, $k\geq0$) on a stratified space $X$ is a continuous function on \( X \) such that its restrictions to all strata \( Y_\alpha \) of \( X \) are smooth (resp. $C^k$) functions, i.e.:
	  
	  \[f=\{f_{\alpha}=f|_{Y_{\alpha}} \colon f_\alpha \in C^{\infty}(Y_{\alpha}) \ (C^k(Y_{\alpha})) \}. \]
\end{definition}

It is clear that many concepts regarding smooth functions on smooth manifolds (in particular, in this case, regarding Morse functions) can thus be adapted to the stratified context by utilizing their respective restrictions on the smooth strata. 

\begin{definition} We say that \( x \in X \) is a \textbf{critical point} for a smooth function \( f \) on \( X \) if it is a critical point for the restriction \( f_\alpha \), where \( Y_\alpha \) is the stratum containing \( x \).
\end{definition}
	
\begin{definition}
Consider a pair $(f,g)$ consisting of a smooth function and a wedge metric on $X$. The \textbf{gradient vector field} $\nabla_gf$ of $f$ is defined as the stratified vector field obtained by the following family:
	
	\[ \{ \nabla_{g_{\alpha}}f_{\alpha} \colon g_\alpha(\nabla_{g_{\alpha}}f_{\alpha},v)(x)=(df_{\alpha})(v)(x), \quad \forall x \in Y_{\alpha}, \quad \forall v \in T_{x}Y_{\alpha}\},\]
	where $g_\alpha$ denote the Riemannian metric on the stratum $Y_\alpha$ induced by $g$.
\end{definition}	

We now need to introduce a suitable notion of \textit{non-degeneracy} for a critical point $p \in X$. In particular, an extra condition on the behaviour of the gradient vector field in the normal direction to the stratum containing a critical point along its tubular neighborhood $T_{\alpha}$ is requested.
 
\begin{definition}
	Let $(f,g)$ be a pair consisting of a smooth function $f$ and a wedge metric $g$ on a stratified space $X$. Then a critical point $p \in Y_{\alpha}$ is \textbf{non-degenerate} if:
	
	\begin{itemize}
		\item $p$ is a non degenerate critical point for the smooth function $f_{\alpha}$;
		\item on a neighborhood of $p$, the gradient vector field $\nabla_{g}f$ is a radial extension of $\nabla_{g_{\alpha}}f_{\alpha}$.
	\end{itemize}
\end{definition}

\begin{definition} \label{morsepair}
	The pair $(f,g)$ is said to be a \textbf{Morse pair} if all the critical points are non-degenerate and the negative gradient vector field $-\nabla_gf$ satisfies the weak control conditions (\ref{weakcontrol1}) and (\ref{weakcontrol2}) and:
	
	 \[(\rho_{\alpha})_*(-\nabla_gf)<0, \quad \forall \ \alpha.\]
	
\end{definition}

A direct consequences of the definition is that the flow lines of $-\nabla_gf$ go from larger strata to smaller strata. Moreover, the condition (\ref{weakcontrol2}) implies that they cannot reach the smaller strata in finite time, as \( \rho_\alpha \) tends to zero as they approach them.
\begin{definition} The \textbf{index} of a critical point $p$ of a Morse pair $(f,g)$ as the index of the restriction $f_{\alpha}$ in the usual sense if $p$ is contained in $Y_{\alpha}$, i.e. as the number of the negative eigenvalues of the non singular Hessian of $f_{\alpha}$ at $p$.
\end{definition}

\begin{lemma}\label{exstncmorsepair}
	On a compact stratified space $X$ of depth $1$ there always exists a Morse pair $(f,g)$.
\end{lemma}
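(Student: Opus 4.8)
The strategy is to build the Morse pair on a compact depth-$1$ stratified space $X$ by first working on the two smooth pieces separately --- the closed singular stratum $Y$ (of depth $0$) and the regular part --- and then gluing them compatibly across the tubular neighbourhood $T_Y$ using the control data and a controlled partition of unity (Lemma \ref{partunity}). First I would fix, once and for all, a wedge metric $g$ on $X$; this exists by the proposition of \cite{ALMP} quoted in Section \ref{secwedge}. Then I would choose an ordinary Morse function $f_Y\colon Y\to\RR$ on the closed manifold $Y$, which exists by classical Morse theory, and extend it to the tubular neighbourhood $T_Y$ by composing with the retraction: on $T_Y$ set $\tilde f = f_Y\circ\pi + \varphi(\rho)$, where $\varphi\colon\RR_{\ge 0}\to\RR$ is a smooth increasing function with $\varphi'>0$ (for instance $\varphi(\rho)=\rho$ near $0$, suitably cut off). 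The term $f_Y\circ\pi$ contributes the critical points of $f_Y$ (now sitting on $Y$) and, by construction, near each such point the gradient vector field splits as a horizontal lift of $\nabla_{g_Y}f_Y$ plus the radial part coming from $\varphi(\rho)$; the monotonicity $\varphi'>0$ is exactly what forces $(\rho_\alpha)_*(-\nabla_g f) = -\varphi'(\rho)\,\|\,\partial_\rho\,\|_g^{-1}<0$ along $T_Y$, as required in Definition \ref{morsepair}, and makes each critical point of $f_Y$ a non-degenerate critical point of $\tilde f$ in the stratified sense (the radial-extension condition holds because a wedge metric makes $\nabla_g$ of an $f_Y\circ\pi + \varphi(\rho)$-type function a radial extension up to $O(\rho^2)$ errors controlled by the connection chosen in the wedge structure).

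Next I would extend $\tilde f$ from a slightly smaller tube $\{\rho\le 1/2\}$ to a smooth function $f$ on all of $X$ whose restriction to $X_r\setminus T_Y$ is an ordinary Morse function with no critical points near the gluing region: this is routine, using a smooth cutoff $\chi$ supported in $\{\rho\le 1\}$, writing $f = \chi\cdot\tilde f + (1-\chi)\cdot f_{\mathrm{reg}}$ for a generic Morse function $f_{\mathrm{reg}}$ on the regular part, and then perturbing in the interior (away from $T_Y$) to make all interior critical points non-degenerate. Because the perturbation is supported away from $T_Y$, the control conditions near $Y$ are untouched. The resulting pair $(f,g)$ then has: finitely many critical points, all non-degenerate (those on $Y$ by the construction of $\tilde f$, those in the regular part by the genericity of $f_{\mathrm{reg}}$ and the interior perturbation); and the negative gradient $-\nabla_g f$ satisfies the weak control conditions \eqref{weakcontrol1} and \eqref{weakcontrol2} on $T_Y$, because in the wedge frame \eqref{wedgeframe} the gradient of $f_Y\circ\pi+\varphi(\rho)$ is, up to the $O(\rho^2)$ term built into the definition of radial extension, exactly a controlled lift of $\nabla_{g_Y} f_Y$ in the horizontal directions (giving \eqref{weakcontrol1}) while its $\rho$-component is $-\varphi'(\rho)$ times a bounded function, which is $O(\rho)$ once $\varphi$ is chosen with $\varphi'(\rho)=\rho$ near $\rho=0$ --- this gives both \eqref{weakcontrol2} and the strict inequality $(\rho_\alpha)_*(-\nabla_g f)<0$.

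The main obstacle --- and the step I would be most careful about --- is reconciling the wedge geometry with the control-theoretic conditions: the gradient $\nabla_g$ is computed with respect to $g$, which near $Y$ looks like $dr^2+\pi^*g_Y+r^2 g_{\partial X_r/Y}+O(r)$, so the "$1/r$" blow-up of the vertical directions in the wedge frame \eqref{wedgeframe} must be shown not to spoil controlledness. Concretely, one must verify that $(\pi_\alpha)_*(-\nabla_g f)$ agrees with $-\nabla_{g_Y}f_Y$ up to a $\rho^2$-bounded error (condition \eqref{weakcontrol1}) and that the radial component decays like $\rho$ (condition \eqref{weakcontrol2}); both hinge on the $O(r)$ remainder in the wedge metric being genuinely lower order and on having lifted $TY$ horizontally via a connection, which is part of the definition of a wedge metric. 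The cleanest way to organise this is to pass to the resolution $X_r$: there $g$ extends to a genuine (incomplete) metric on $\leftindex^{w}TX_r$ for which \eqref{wedgeframe} is an orthonormal frame, and the computation of $\nabla_g f$ reduces to an explicit finite expansion in $r$ near $\partial X_r$, after which the two weak control inequalities are read off directly. Once that local computation is pinned down, the globalisation is a standard partition-of-unity argument and the lemma follows.
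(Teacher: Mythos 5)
Your proposal follows essentially the same route as the paper: pick a Morse function $f_Y$ on the compact singular stratum $Y$, extend it to the tubular neighbourhood $T_Y$ via the retraction $\pi$ plus a function of the radial coordinate $\rho$, glue with a Morse function on the regular part by a controlled partition of unity (Lemma \ref{partunity}), and verify the control conditions near $Y$. The paper is terser --- it writes only the extension $f'_Y = f_Y\circ\pi + \rho^2$ and asserts the control conditions --- whereas you spell out the gluing and the verification, but the underlying construction coincides.

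There is, however, an internal inconsistency in your choice of the radial profile $\varphi$ that matters. You first suggest $\varphi(\rho)=\rho$ near $0$, but later (correctly) observe that the weak control condition \eqref{weakcontrol2}, $|(\rho_\alpha)_*(-\nabla_g f)|<A\rho_\alpha$, forces the radial component of the gradient to be $O(\rho)$, i.e.\ $\varphi'(\rho)=O(\rho)$. With $\varphi(\rho)=\rho$ one has $\varphi'\equiv 1$ near $0$, so the radial component of $-\nabla_g f$ stays bounded away from zero as $\rho\to 0$ and \eqref{weakcontrol2} fails; the flow of $-\nabla_g f$ would then hit the singular stratum in finite time, which is exactly what that control condition is designed to rule out. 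The statement ``$\varphi'(\rho)=\rho$ near $\rho=0$'' at the end of your second paragraph is the correct requirement, and it forces $\varphi(\rho)=\rho^2/2+\text{const}$ near $0$, which is (up to a constant) the paper's choice $\rho^2$. You should delete the suggestion $\varphi(\rho)=\rho$ and commit to $\varphi'(\rho)=O(\rho)$, i.e.\ a quadratic vanishing at the stratum. With that correction, the rest of the argument --- the computation in the wedge orthonormal frame \eqref{wedgeframe}, the $O(\rho^2)$ error in \eqref{weakcontrol1}, the interior perturbation away from $T_Y$, and the partition-of-unity globalisation --- is sound and matches the paper's intent.
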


\begin{proof}
	Call $X_{reg}$ and $Y$ the strata of $X$ of depth $0$ and $1$ respectively. T: Given the existence of a partition of unity, as mentioned in Lemma \ref{partunity}, it is sufficient to reason locally and use this to define the object globally. Since both $X_{reg}$ and $Y$ are compact smooth manifolds, they admit a Morse function: indicate by  $f_Y$ that on $Y$. Then, denoting by $(\pi, \rho): T_Y \rightarrow Y \times \mathbb{R}_{\geq 0}$ the mappings of its tubular neighborhood, we extend $f_Y$ on $T_1$ as the following:
	
	\[f'_Y(x)= f_Y \circ \pi (x) + \rho^2 (x), \quad \forall x \in T_Y.\]
	
	Each critical point of $f'_Y$ lies in $Y$ and is a critical point of $f_Y$. It is easy to show that the negative gradient vector field of $f'_Y$ satisfies the control conditions requested in Definition \ref{morsepair}.
\end{proof}

Observe that by construction, no critical points of such a Morse pair $(f,g)$ are contained in $T_Y \setminus Y$. Moreover, analogously to the smooth case, we can suppose that $f$ is \textit{self-indexing}, i.e. such that for each critical point $p$ of index $k$ one has that $f(p)=k$.

\begin{remark}Lemma \ref{exstncmorsepair} can be generalized obtaining a pair $(f,\xi)$ of a smooth function on $X$ and a stratified vector field satisfying suitble control conditions and with particular classes of singular points. In that context, our case will thus represent an example where \(\xi = -\nabla_g f\). See \cite[Proposition 6.2]{Ludwig} for details.
\end{remark}
Now, we want to extend this notion to the case of a space with non empty boundary. In particular, we consider spaces which may represent a bordism between stratified space.  

Let $(W, X_1, X_2)$ be a triple made by a compact depth-$1$ stratified space with boundary $(W,\partial W)$ such that $\partial W= X_1 \sqcup X_2$.

\begin{definition}\label{specialmorse}
	Given a triple $(W,X_1,X_2)$ as above, we say that a pair $(f,g)$, with $f$ a smooth function and $g$ a wedge metric on $W$ respectively, is a \textbf{special Morse pair} if:
	
	\begin{itemize}
		\item $f: W \rightarrow [a,b]\subset \mathbb{R}$ and $f^{-1}(a) = X_1$, $f^{-1}(b)=X_2$;
		\item all critical points of $f$ are non degenerate and lying in the interior of $W$;
		\item $-\nabla_gf$ satisfies the same conditions of Definition \ref{morsepair}, except that $(\rho_{1})_*(-\nabla_gf)\leq0$ and it vanishes on a neighborhood of the boundary.
	\end{itemize}
\end{definition}

\begin{lemma} \label{existencespecialmorsepair}
	Let $(W,X_1,X_2)$ be a triple as above, then there always exists a special Morse pair $(f,g)$.
\end{lemma}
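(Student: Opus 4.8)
The plan is to reduce to the closed case (Lemma \ref{exstncmorsepair}) by a standard ``collar surgery'' on a Morse function, carried out compatibly with the stratification. First I would fix a collar neighborhood of the boundary: since $(W,\partial W)$ is a depth-$1$ stratified space with boundary, the last axiom in the definition of such spaces gives a stratified isomorphism $\mathcal{U}\simeq \partial W\times[0,1]$, and I would arrange $\mathcal{U}_1\simeq X_1\times[0,1/4)$, $\mathcal{U}_2\simeq X_2\times(3/4,1]$ as disjoint collars of the two boundary pieces. On the interior stratum $W_{\mathrm{reg}}$, which is an honest smooth manifold with boundary $X_{1,\mathrm{reg}}\sqcup X_{2,\mathrm{reg}}$, choose an ordinary Morse function $f_0\colon W_{\mathrm{reg}}\to[a,b]$ with $f_0^{-1}(a)=X_{1,\mathrm{reg}}$, $f_0^{-1}(b)=X_{2,\mathrm{reg}}$, no critical points in the collars, and $f_0$ equal to the (rescaled) collar coordinate near each boundary component; this is the classical existence result for Morse functions on bordisms. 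Do the same on the depth-$1$ stratum $Y=\beta W$, obtaining $f_Y\colon Y\to[a,b]$ with $f_Y^{-1}(a)=Y\cap X_1$, $f_Y^{-1}(b)=Y\cap X_2$, critical points in the interior, and collar-linear near $\partial Y$.

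Next I would glue these together through the tubular neighborhood $T_Y$ of the depth-$1$ stratum, exactly as in the proof of Lemma \ref{exstncmorsepair}: on $T_Y$ set $f:=f_Y\circ\pi+\rho^2$, and use a controlled partition of unity subordinate to $\{T_Y,\;W\setminus Y\}$ (Lemma \ref{partunity}) to patch this with $f_0$ away from $Y$, taking care that the patching functions are supported away from the collars so that $f$ remains collar-linear there and $f^{-1}(a)=X_1$, $f^{-1}(b)=X_2$ as stratified subspaces. The critical points of $f$ are then precisely those of $f_Y$ (in $Y$) and those of $f_0$ away from $T_Y$ (in $W_{\mathrm{reg}}$); all are non-degenerate and, since $f_Y,f_0$ were chosen with interior critical points only and $T_Y\setminus Y$ contains no critical points of $f_Y\circ\pi+\rho^2$, they all lie in the interior of $W$. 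Finally choose any wedge metric $g$ on $W$ that near $Y$ splits as $dr^2+\pi^*g_Y+r^2g_{\partial W_r/Y}+O(r)$ and that is product-like $dx^2+g_{\partial W}$ on the two collars (existence by Proposition \cite{ALMP} 3.1, combined with a collar cutoff); with respect to such $g$ one checks, exactly as in Lemma \ref{exstncmorsepair}, that $-\nabla_g f$ satisfies the weak control conditions \eqref{weakcontrol1}, \eqref{weakcontrol2} and that $(\rho_\alpha)_*(-\nabla_g f)\le 0$, with equality (indeed $-\nabla_g f=0$) on a neighborhood of $\partial W$ because $f$ is collar-linear there, so $(f,g)$ is a special Morse pair.

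The main obstacle, and the step deserving the most care, is the compatibility of the three simultaneous normalizations near the corner region $T_Y\cap\mathcal{U}$, where the tubular neighborhood of the depth-$1$ stratum meets the collar of the boundary: one must choose the collar of $\partial W$ so that it is compatible with the control data of $Y$ (i.e. the stratified isomorphism $\mathcal{U}\simeq\partial W\times[0,1]$ carries $T_Y\cap\mathcal{U}$ to $T_{Y\cap\partial W}\times[0,1]$ respecting $\pi$ and $\rho$), and then verify that the formula $f=f_Y\circ\pi+\rho^2$ genuinely agrees with the collar-linear prescription on the overlap, so that the partition-of-unity patching does not reintroduce critical points or destroy the condition $f^{-1}(a)=X_1$. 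Once this bookkeeping near the corner is set up, everything else is a routine adaptation of the closed-case argument.
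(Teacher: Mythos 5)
Your overall strategy coincides with the paper's: take special Morse functions on the two strata $W_{\mathrm{reg}}$ and $Y$ with the prescribed boundary values, extend the one on $Y$ across the tubular neighborhood $T_Y$, and glue with a controlled partition of unity (Lemma \ref{partunity}). However, there is a genuine gap in the extension step. You propose $f = f_Y\circ\pi + \rho^2$ on all of $T_Y$, whereas the paper uses $f = f_Y\circ\pi + \phi(\pi(\cdot))\,\rho^2$ with a cutoff $\phi \geq 0$ that vanishes on a neighborhood of $\partial Y$ contained in the critical-point-free collar of $f_Y$. This cutoff is not cosmetic: without it your formula fails two of the requirements of Definition \ref{specialmorse} in the corner region $T_Y\cap\mathcal{U}$. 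First, for a point $x \in X_1\cap(T_Y\setminus Y)$ one has $\pi(x)\in Y\cap X_1$, hence $f(x) = a + \rho(x)^2 > a$, so $f^{-1}(a)$ picks up only $Y\cap X_1$ rather than all of $X_1$. Second, near $\partial W$ the term $\rho^2$ forces $(\rho)_*(-\nabla_g f) \approx -2\rho < 0$ on $T_Y\setminus Y$, whereas the definition demands that this quantity \emph{vanish} on a neighborhood of the boundary. You correctly flag the corner region as the delicate point, but the resolution you sketch --- verifying that $f_Y\circ\pi+\rho^2$ ``agrees with the collar-linear prescription on the overlap'' --- cannot succeed, because it does not agree: the radial term is incompatible with any function that is constant along the cone fibers over $Y\cap\partial W$.

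The fix is exactly the paper's device: choose $\phi$ supported away from $\partial Y$, so that near the boundary $f = f_Y\circ\pi$ is constant on the cone fibers; there the negative gradient flow stays on level sets of $\rho$, giving $(\rho)_*(-\nabla_g f) = 0$, and $f^{-1}(a)$ becomes the full preimage $\pi^{-1}\bigl(f_Y^{-1}(a)\bigr)\cap X_1$ together with $X_1\cap W_{\mathrm{reg}}$, i.e.\ all of $X_1$. With that modification the rest of your argument (non-degeneracy of the critical points, their location in the interior, the weak control conditions \eqref{weakcontrol1} and \eqref{weakcontrol2}, and the patching with the special Morse function on $W_{\mathrm{reg}}$) goes through as you describe and matches the paper's proof.
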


\begin{proof}
	Let us call by $\{T_{Y}, \pi,\rho\}$ its control datum associated to its stratification $\{W_{reg}, Y\}$. 
	Recall that the intersections of the strata of $W$ with the boundary $\partial W$ realize the stratifications of both the two boundary components $X_1$ and $X_2$. By construction, since $W_{reg}$ and $Y$ are compact manifold with boundary $W_{reg} \cap \partial W$ and $Y \cap \partial W$ respectively, then they both admit special Morse functions (see \cite[Theorem 2.5]{milnor}) such that:
	\begin{enumerate}
		\item they take arbitrary constant values on the two boundary components respectively;
		\item elsewhere, they take on values that are between those assumed on the boundary;
		\item they admit critical points only in the interiors.
	\end{enumerate}

 Observe that such functions can be defined by using a collar neighborhood of the boundary and its defining function. We then consider a special Morse function on $Y$ of this kind which takes values $a$ and $b$ on its boundary components $Y \cap X_1$ and $Y \cap X_2$ respectively. We now need to find a suitable extension of the special Morse function $f_Y$ defined on $Y$ along the tubular neighborhood $T_Y$. 
%
%

Again, we know that there is a neighborhood of the boundary $\partial Y$ containing no critical points of $f_Y$. Now, take a non negative function $\phi \in C^{\infty}(Y)$ such that the complement of its support is contained in such a neighborhood. In particular, all critical points of $\phi$ are contained in its support.

Then, extend the function $f_Y$ on $T_Y$ as follows:

\begin{equation}f(x)=f_Y(\pi(x))+\phi(\pi(x))\rho^2(x), \quad \forall x \in T_Y \label{f}
\end{equation}	

Such extension is compatible with the weak control conditions requested in the above definition and the flow lines of the negative gradient vector field goes from larger to smaller stratum except for the neighborhood in which $\phi=0$, where these stay on a level set of the function $\rho$ and then there:
 \[(\rho)_*(-\nabla_gf)=0.\]
Now, by taking into account the special Morse function on the stratum $W_{reg}$ with same values of $f_Y$ taken on the boundary components, and a controlled partition of unity Lemma \ref{partunity}, we obtain the requested special Morse pair.
\end{proof}

\begin{remark}\label{levelsurface}
	Observe that in the above construction the function $\phi$ can be chosen with an additional property, i.e. that it has non vanishing gradient only in the neighborhood of the boundary specified in the above proof. Moreover, one can suppose that there the level surfaces of $\phi$ are exactly the ones of $f$.
\end{remark}

Now, let $(f,g)$ be a Morse pair on a pseudomanifold with fibered $L$-singularities $X$. Suppose $f(X) \subseteq [a,b]$ and take a regular value $c \in [a,b]$. Then it is known that $f^{-1}(c)$ is a stratified space whose stratification is given by the intersections of the strata of $X$ with $f^{-1}(c)$. In particular, the proof makes use of the weak control condition of the gradient vector field of $f$ and hence the result is also valid in the case of a special Morse pair on a compact stratified space with boundary. 

In the following proposition we want to extend this by saying more about the regular level set $f^{-1}(c)$ in the case of a pseudomanifold with fibered $L$-singularities.

\begin{proposition}
	Let $(W,X_1,X_2)$ be a triple as above with $W$ a pseudomanifold with fibered $L$-singularities. Then if $c \in [a,b]$ is a regular value of a special Morse pair $(f,g)$ on $W$, $f^{-1}(c)$ is a pseudomanifold with fibered $L$-singularities.
\end{proposition}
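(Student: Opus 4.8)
The strategy is to combine two facts we already have: first, that for a (special) Morse pair the regular level set $f^{-1}(c)$ is a smoothly stratified space whose strata are the intersections $Y_\alpha \cap f^{-1}(c)$ (this is the standard statement recalled just before the proposition, whose proof uses only the weak control conditions on $-\nabla_g f$ and Ehresmann's theorem); second, that being a pseudomanifold with fibered $L$-singularities is a condition only on \emph{how} the depth-$1$ stratum sits inside the space, namely that its tubular neighborhood is a cone bundle with fiber $C(L)$. So the plan is: take the abstract stratified structure on $f^{-1}(c)$ for granted, and then check that its depth-$1$ stratum has a tubular neighborhood of the required fibered form with link $L$.

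First I would set up notation: $W$ has strata $\{W_{reg}, Y\}$ with control data $\{T_Y,\pi,\rho\}$, and $\pi\colon T_Y\to Y$ is a cone bundle with fiber $C(L)$. Since $c$ is a regular value of the special Morse pair, $Z:=f^{-1}(c)$ is a compact stratified space of depth $\leq 1$ with strata $Z_{reg}=W_{reg}\cap Z$ and $Z_Y:=Y\cap Z$; the latter is a smooth closed submanifold of $Y$ of codimension one (it is a regular level set of $f_Y=f|_Y$, which is itself a special Morse function with $c$ regular). The candidate tubular neighborhood of $Z_Y$ in $Z$ is $T_Y\cap Z$, with maps obtained by restricting $\pi$ and $\rho$. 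I then need to verify that $(\pi|,\rho|)\colon (T_Y\cap Z)\cap Z_{reg}\to Z_Y\times\RR_{\geq 0}$ is a proper submersion — this follows because $(\pi,\rho)\colon T_Y\cap W_{reg}\to Y\times\RR_{\geq 0}$ is one and $Z$ is cut out transversally by $f$; here I use the weak control condition $(\rho)_*(-\nabla_g f)\leq 0$ together with non-degeneracy, which guarantees that near $Y$ the function $f$ is essentially pulled back from $Y$ via $\pi$ (it is a radial extension), so $Z\cap T_Y$ is a union of cone-fibers over $Z_Y$ up to the collapsed radial direction. Concretely, in the local model $T_Y\simeq \mathcal{U}\times C(L)$ the Morse pair being a radial extension means $f$ looks like $f_Y(\pi(x))+\phi(\pi(x))\rho^2(x)$ (cf.\ \eqref{f}), which depends on the $C(L)$-factor only through $\rho$; hence $f^{-1}(c)\cap(\mathcal{U}\times C(L))\simeq (f_Y^{-1}(c)\cap\mathcal{U})\times C(L)$, exhibiting $T_Y\cap Z$ as a cone bundle over $Z_Y$ with fiber $C(L)$.

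Having the fibered cone neighborhood, the remaining points are bookkeeping: check that $Z$ satisfies the axioms of Definition \ref{stratifiedspace} and of a smoothly stratified space (the transition functions of $\pi|$ are restrictions of stratified isomorphisms, hence still stratified isomorphisms preserving $\rho$; the filtration condition $Z_n\setminus Z_{n-2}$ dense and open smooth follows from the analogous property of $W$ transverse to $f$), and that $Z_{reg}=Z\cap W_{reg}$ is open and dense in $Z$. Since the link has been identified as $L$ throughout, $Z$ is a pseudomanifold with fibered $L$-singularities. The same argument applies verbatim near the corner/boundary because the special Morse pair has $-\nabla_g f$ vanishing near $\partial W$, so $f^{-1}(c)$ meets a collar of $\partial W$ in a product and inherits the stratified-with-boundary structure there.

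The main obstacle is the regularity of the cone-bundle structure transverse to $f$: one must be sure that the weak control conditions \eqref{weakcontrol1}–\eqref{weakcontrol2} are strong enough to force $f$ to be, up to a harmless $O(\rho^2)$ term, constant along cone-fibers near $Y$, so that slicing by $f^{-1}(c)$ does not destroy the local triviality $T_Y\simeq \mathcal{U}\times C(L)$. This is exactly where the hypothesis that $(f,g)$ is a \emph{special Morse pair} (hence a radial extension near $Y$, with no critical points in $T_Y\setminus Y$) is essential, and the local normal form \eqref{f} does the job; I expect the write-up of this step to be the only non-routine part.
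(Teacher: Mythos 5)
Your overall plan (restrict the stratification, then exhibit a cone-bundle neighborhood of $Y\cap f^{-1}(c)$ with fiber $C(L)$) is the right shape, but the key step is wrong. You claim that since $f=f_Y\circ\pi+(\phi\circ\pi)\,\rho^2$ depends on the $C(L)$-factor only through $\rho$, one gets
$f^{-1}(c)\cap(\mathcal{U}\times C(L))\simeq (f_Y^{-1}(c)\cap\mathcal{U})\times C(L)$. This is false wherever $\phi>0$: if $\pi(x)=p$ with $f_Y(p)=c$ and $\rho(x)>0$, then $f(x)=c+\phi(p)\rho^2(x)>c$, so the cone fiber over a point of $f_Y^{-1}(c)$ meets the level set only in its vertex. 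The level set is not a product of the base level set with $C(L)$; it is a ``tilted'' hypersurface $\{f_Y(y)+\phi(y)\rho^2=c\}$ which, for fixed radius $\rho>0$, sits over points $y$ with $f_Y(y)<c$. And you cannot arrange $\phi\equiv 0$ near the relevant level sets in general, since the support of $\phi$ must contain all critical points of $f_Y$; the paper's Remark on level surfaces shows the nontrivial case ($f^{-1}(c)$ entirely inside $\mathrm{supp}\,\phi$) genuinely occurs.

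The paper's proof handles exactly this: it does \emph{not} restrict $(\pi,\rho)$, but builds new control data $(\pi_c,\rho_c)$ on $f^{-1}(c)$ by flowing along $\xi=-\nabla_{g_Y}f_Y$ on the stratum $Y$. A point $x\in T_Y\cap f^{-1}(c)$ has $\pi(x)=p_t$ on the flow line through some $p\in f_Y^{-1}(c)$, and one sets $\pi_c(x)=\Phi_{\xi,\pi(x)}(-t)$, $\rho_c(x)=t$; the fiber $\pi_c^{-1}(p)$ is then identified with $C(L)$ using the fact that for each $t>0$ the slice $\{\pi=p_t,\ \rho=\rho_t\}$ is a copy of $L$. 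There is also a uniformity point you omit entirely: one must find a single $t_c>0$ so that these flow-line parametrizations give a genuine tubular neighborhood of all of $f_Y^{-1}(c)$ at once, which the paper gets from a compactness/contradiction argument. Without the flow-line construction (or an equivalent reparametrization, e.g.\ solving $\rho=\sqrt{(c-f_Y(y))/\phi(y)}$ and checking this gives controlled data), your proof does not establish the cone-bundle structure of the singular neighborhood in the level set.
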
 

\begin{proof}
	Call $W_{reg}$ and $Y$ the depth-$0$ and depth-$1$ strata of $W$. As said before, $f^{-1}(c)$ is a compact stratified space of depth $1$ and its stratification is given by $\{f^{-1}(c) \cap W_{reg}, f^{-1}(c) \cap Y\}$. Moreover, let us denote its control datum by $\{T_Y,\pi,\rho\}$.  We need now to prove that $f^{-1}(c)$ is smoothly stratified and that its depth-1 stratum has link diffeomorphic to $L$.
	
	 Assume $(f,g)$ has been  extended along the tubular neighborhood $T_Y$ of $Y$ as in (\ref{f}). In a region $\mathcal{U} \subset Y \cap f^{-1}(c)$ such that $\pi(\mathcal{U})$ belongs to the zero locus of $\phi$, $f$ is trivially extended, meaning that the $L$-cones over $\mathcal{U}$ still belongs to the level set $f^{-1}(c)$, then the control datum is obtained by restriction.
	 
	However, the observation made in Remark \ref{levelsurface} ensures that the level surfaces of $f$ can be characterized by being completely contained in the support of $\phi$ or in its complement. Assume we are in the first, and hence non trivial, case. 
	
	By (\ref{f}), for a fixed value of the radial map $\rho$, the function depends only on $\pi(x)$: in particular, it is constant on the link $L$. Moreover the function $f$ increases while moving from the vertex of the cone, since $\phi$ is positive there. Then if $p \in f^{-1}(c) \subset Y$, except for its vertex, the fiber $c(L)$ over $p$ will not lie in $f^{-1}(c)$.
	
	 Since $f_Y$ is a smooth Morse function on $Y$ and $c$ is a regular value for $f$, the gradient vector field $\xi:=-\nabla_{g_Y}f_Y$ is non vanishing on $f_Y^{-1}(c)$ and normal to it. This means that the flow line of $\xi$ passing through $p$ on time $t_0=0$:
	 
	 \[\{p_t:=\Phi_{\xi, p}(t), \ t \in [-\delta,\delta],\  \delta >0\} \subset Y,\]
	 wil intersect $f_Y^{-1}(c)$ transversally. Clearly $f(p_t)=f_Y(p_t)<c$, for $t>0$.
	 
	By what discussed in section \ref{smoothlystrspace}, there is a open neighborhood $\mathcal{U}_p \subset Y$ around $p$ whose preimage along $\pi$ is domain of a trivialisation, meaning that:
	
	\[\pi^{-1}(\mathcal{U}_p) \simeq \mathcal{U}_p \times c(L), \]
	via a stratified isomorphism $\psi_p$.
	
	Therefore, let $t_p>0$ such that all the points $p_t$ are contained in such $\mathcal{U}_p$ for $0 \leq t < t_p$. From (\ref{f}) it is clear that for each $t \in [0,t_p)$ there exist a unique $\rho_t \geq 0$ and a set:
	
	\[ K_{p_t}:=\{ y\in T_Y \colon \pi(y)=p_t, \  \rho(y)=\rho_t\},\]
	both depending on $t$ such that:
	\[f(x)=c, \quad \forall x \in K_{p_t}.\]
	
	In particular, $\rho_t$ is zero if and only if $t=0$ (for which $K_{p_0}$ is simply given by $x=p_0=p$). 
	
	Using the above trivialisation one can observe that for a fixed value of $t>0$, since $\rho_t$ is a fixed value, the set $K_{p_t}$ is diffeomorphic to the link of the stratum $Y$, i.e. to $L$:
	
	\begin{equation} \label{trivialisation}
		(\pi,\rho)^{-1}(p_t,\rho_t) \simeq L, \ \  \forall t \in (0,t_p].
	 \end{equation}

	Conversely, consider a point $x \in T_Y \cap f^{-1}(c)$ such that $\pi(x)$ lies in a sufficiently small neighborhood of $f_Y^{-1}(c)$ (we will be more precised later). Then we know that $\pi(x)$ must lie in a unique flow line for $\xi$, i.e. $\pi(x)=p_t$, for some point $p \in f_Y^{-1}(c)$ and positive time $t$. Of course, if $x$ lies in the stratum $Y$, and $x$ coincides with $p$ and $t=0$.
	
	Define the following:
	
	\[\pi_{c}(x):= \Phi_{\xi,\pi(x)}(-t), \quad \quad \rho_{c}(x):=t.\]
	
	These are well defined continuous map and in particular $\pi_{c}$ is a continuous retraction from an open set of $T_Y \cap f^{-1}(c)$ to $Y \cap f^{-1}(c)$. 
	
	In particular, for a fixed a point $p \in f_Y^{-1}(c) \subset Y$, the set $\pi_{c}^{-1}(p) \subset T_Y$ consists of all points $x \in T_Y$ such that $f(x)=c$, $\pi(x)=p_t$ and $\rho(x)=\rho_t$, for some $t$, i.e. $K_{p_t}$.

	 In particular, if $0<t<t_p$ we can restrict the trivialisation $\psi_p$ defined on $\pi^{-1}(\mathcal{U}_p)$ and from (\ref{trivialisation}) have that:
	 \[ (\pi_{c},\rho_{c})^{-1}(p,t)=(\pi,\rho)^{-1}(p_t,\rho_t)\simeq L, \]
	 while for $t=0$ it is just the point $p$. 
	 
	 Then the fibre $\pi_{c}^{-1}(p)$ is identified to a cone $c(L)$ with radial coordinate given by the parameter $t$. In fact:
	 \[\psi_p|_{\pi_{c}^{-1}(p)}\colon \pi_{c}^{-1}(p) \rightarrow \faktor{p_t \times L}{\{p_0\} \times L} \simeq \faktor{[0,t_p) \times L}{ \{0\} \times L} \simeq \{p\} \times c(L)\]
	 
	 To each point $p \in f_Y^{-1}(c)$ we associated a $t_{p}>0$ such that the flow line $\{p_t\colon t \in [0,t_p)\}$ is contained in a trivialisation domain. However, we would like to find a unique  $t_c > 0$ for which the above is valid for each $p$. Consider a map $\lambda: f_Y^{-1}(c) \rightarrow \mathbb{R}_{\geq 0}$ which assign to each $p$ the largest $t_p$. Of course $\lambda$ is strictly positive and in order to find such $t_c$ it is sufficient to show the existence of $\epsilon > 0$ such that $\lambda(p)>\epsilon$ for all $p \in f_Y^{-1}(c)$. In that case, it suffices to fix $t_c := \epsilon$.
	 
	 Assume that such $\epsilon$ does not exist: this means that:
	 \[\forall \delta > 0, \quad  \exists \  q_{\delta} \  \colon \  \lambda(q_{\delta}) < \delta.\]
	 
	 From this, we can then obtain a sequence $\{q_n\} \subset f_1^{-1}(c)$ associated to $\{\delta_n=1/n\}$. 
	 
	 By compactness of $f_Y^{-1}(c)$, then such a sequence admits a subsequence convergent to a point, say $q$. Take a trivialisation domain $\mathcal{U}_q$ around $q$: there exists a subset $K_q \subset \mathcal{U}_q$ and a positive $t_q$ such that the evolution of $K_q$ along the flow of $\xi$ for the time $0 \leq t < t_q$ is completely contained in $\mathcal{U}_q$. But then, there exists $\overline{n} \in \mathbb{N}$ such that $\lambda(q_n)>t_q$, for all elements of the subsequence with $n \geq \overline{n}$ and this makes a contradiction.

\end{proof}

\begin{corollary}
	Let $X$ be a spin pseudomanifold with fibered $L$-singularities, and let $(f,g)$ be a Morse pair defined on it. Assume that $a,b$ are two regular values of $f$ with $a<b$, then the preimage $f^{-1}([a,b])$ is a spin pseudomanifold with fibered $L$-singularities with boundary $f^{-1}(a) \sqcup f^{-1}(b)$. 
	
	In particular, it realizes a bordism in $\Omega_*^{spin, L-fib}$ between the two level surfaces.
\end{corollary}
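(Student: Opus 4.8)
The plan is to reduce the statement to the analysis of regular level sets already carried out above, adding a collar in the $[a,b]$-direction. Set $W:=f^{-1}([a,b])$; since $X$ is compact and $f$ continuous, $W$ is compact. Write $W_{reg}:=W\cap X_{reg}$ and $\beta W:=W\cap Y$ for the intersections of $W$ with the depth-$0$ and depth-$1$ strata $X_{reg},Y$ of $X$. Because $a,b$ are regular values of the Morse pair $(f,g)$, they are regular values of the restrictions $f|_{X_{reg}}$ and $f|_{Y}$, so $W_{reg}$ and $\beta W$ are compact smooth manifolds with boundary, with $\partial W_{reg}=(f^{-1}(a)\sqcup f^{-1}(b))\cap X_{reg}$ and $\partial(\beta W)=(f^{-1}(a)\sqcup f^{-1}(b))\cap Y$. (Critical values occurring in $(a,b)$ cause no trouble: by construction of a Morse pair the corresponding critical points lie in $Y$ and are interior points of $\beta W$, and they do not affect the stratified structure.) By the analysis of regular level sets above, $f^{-1}(a)$ and $f^{-1}(b)$ are pseudomanifolds with fibered $L$-singularities; these will be the boundary components of $W$.

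First I would build the collars. Since $a$ is a regular value, on a neighbourhood of $f^{-1}(a)$ the stratified vector field $-\nabla_g f$ has nowhere-vanishing components along the level surfaces, and as $(f,g)$ is a Morse pair it satisfies the weak control conditions (\ref{weakcontrol1}) and (\ref{weakcontrol2}); hence its flow is locally integrable and strata-preserving (cf.\ \cite[Proposition 2.5.1]{Duplessis}). Reparametrising time so that $f$ becomes the flow parameter yields a strata-preserving homeomorphism from a neighbourhood of $f^{-1}(a)$ in $W$ onto $f^{-1}(a)\times[a,a+\varepsilon)$; moreover, exactly as in the proof above, near a regular level set the extension (\ref{f}) of the Morse function is trivial in the cone directions (equivalently $(\rho_\alpha)_*(-\nabla_g f)$ is tangent to the level surfaces there, see Remark \ref{levelsurface}), so this homeomorphism is a stratified isomorphism carrying the cone-bundle structure of $T_Y$ to the product one. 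The symmetric argument at $b$, run with $+\nabla_g f$, gives the collar of $f^{-1}(b)$. Hence $W$ fulfils all the conditions of the definition of a smoothly stratified space with boundary, with $\partial W=f^{-1}(a)\sqcup f^{-1}(b)$ and strata $\{Y_\alpha\cap W\}$.

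Next I would verify the fibered $L$-singularity and spin structures. The depth-$1$ stratum of $W$ is $\beta W=Y\cap W$, and a tubular neighbourhood is obtained by restriction, $N(\beta W):=T_Y\cap W$, with the restricted submersion $\pi|_{N(\beta W)}\colon N(\beta W)\to\beta W$ and radial function $\rho|_{N(\beta W)}$; the axioms of Definition \ref{stratifiedspace}, the fact that the transition functions are stratified isomorphisms, and the identification of the typical fibre of $\pi$ with the open cone $c(L)$ are all inherited from $X$, and the analysis above shows that this data restricts coherently to $f^{-1}(a)$ and $f^{-1}(b)$. For the spin structures, let $\beta_X\colon X_r\to X$ be the blowdown map of $X$; the resolution $W_r$ is canonically identified with the compact manifold with corners $(f\circ\beta_X)^{-1}([a,b])\subset X_r$, and since $a,b$ are regular it is a codimension-$0$ submanifold of $X_r$ whose extra boundary hypersurfaces $(f^{-1}(a))_r$ and $(f^{-1}(b))_r$ carry trivial normal bundles, so $W_r$ inherits a spin structure from $X_r$, restricting to the induced ones on those hypersurfaces. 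Likewise $\beta W\subset Y=\beta X$ inherits a spin structure from $Y$, and, as in the remark following the definition of spin pseudomanifold, the two are compatible on $N(\beta W)$. Therefore $(W,g|_W)$ is a compact spin pseudomanifold with fibered $L$-singularities with boundary $f^{-1}(a)\sqcup f^{-1}(b)$, whose spin structures restrict on the boundary to those of the two level sets; by the very definition of $\Omega^{spin, L-fib}_{*}$ it is thus a bordism between $f^{-1}(a)$ and $f^{-1}(b)$.

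The step I expect to be the main obstacle is the corner analysis near the locus where a boundary level set meets the singular stratum --- showing that the collar coordinate and the radial coordinate $\rho$ of the cone bundle really fit together, so that $f^{-1}(a)$ and $f^{-1}(b)$ admit honest collar neighbourhoods in $W$ as a smoothly stratified space with boundary. This is exactly where one must use that $(f,g)$ is a Morse pair, so that $-\nabla_g f$ obeys (\ref{weakcontrol1}) and (\ref{weakcontrol2}) and the normal-form extension (\ref{f}) is available, and it is handled by transporting the trivialisations (\ref{trivialisation}) of the cone bundle along the flow lines of $-\nabla_g f$ over a whole neighbourhood of the level set, just as in the proof above.
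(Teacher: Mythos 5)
Your proof takes the natural approach that the paper implicitly intends: the corollary follows by applying the preceding proposition to the two level sets, restricting the stratified data of $X$ to $W := f^{-1}([a,b])$, and checking the collar and spin conditions. The paper states the corollary without explicit proof, so your write-up is in effect a fleshing-out of the ``direct consequence'' that the paper leaves to the reader, and it is essentially correct.

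One factual slip worth fixing: you assert that for a Morse pair ``by construction the corresponding critical points lie in $Y$.'' That is not what the paper's construction gives. In the proof of Lemma \ref{exstncmorsepair} one chooses Morse functions on \emph{both} strata $X_{reg}$ and $Y$ and patches them; the only thing guaranteed is that no critical points lie in $T_Y \setminus Y$, i.e.\ critical points of $f$ lie either in $Y$ or in $X_{reg} \setminus T_Y$. Your conclusion is nonetheless unharmed, since any critical point with critical value in $(a,b)$ is an interior point of $W$ (either an interior point of $\beta W$ or of $W_{reg}$), and interior critical points do not obstruct $W$ being a smoothly stratified space with boundary. The rest of the argument --- the collar construction via the reparametrised gradient flow using the weak control conditions (\ref{weakcontrol1})--(\ref{weakcontrol2}) and the local form (\ref{f}), the restriction of the tube $(T_Y, \pi, \rho)$ giving the $c(L)$-bundle structure of $N(\beta W)$, and the spin structure on $W_r$ inherited as a codimension-$0$ submanifold of $X_r$ --- is sound and correctly identifies the one genuinely nontrivial point, namely the corner compatibility between the collar coordinate and the radial coordinate $\rho$ near where a boundary level set meets $Y$.
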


Let us consider a compact stratified space $W$ of depth $1$ and denote by $W_a$ and $W_{[a,b]}$ the sets of all points $p$ of $W$ are such that $f(p)\leq a$ and $f(p) \in [a,b]$ respectively. (see \cite[Lemma 8.1]{Ludwig})

\begin{proposition}
	Let $(f,g)$ be a Morse pair on $W$. 
	\begin{itemize}
		\item If the interval $[a,b]$ does not contain any critical value for $f$, $W_a$ and $W_b$ are homotopy equivalent.
		\item If $p$ is a critical point of index $k$ and the only one with critical value $c$, then $W_{c+\delta}$ has the homotopy type of $W_{c-\epsilon}$ with a $k-cell$ attached, where $\epsilon, \delta >0$ are chosen in orden to have an interval $[c-\epsilon, c+\delta]$ with no critical values other than $c$.
	\end{itemize}
\end{proposition}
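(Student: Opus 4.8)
The plan is to follow the classical Morse‑theoretic arguments of \cite[Chapter 3]{milnor}, the only substantial modification being that the smooth gradient flow must be replaced throughout by the flow of the stratified gradient‑like vector field $-\nabla_g f$. Two facts make this work in our setting: by Definition \ref{morsepair} the field $-\nabla_g f$ satisfies the weak control conditions (\ref{weakcontrol1})--(\ref{weakcontrol2}), so by \cite[Proposition 2.5.1]{Duplessis} it is locally integrable and generates a locally continuous flow on $W$; and by Lemma \ref{partunity} every open cover of $W$ admits a subordinate controlled partition of unity, which lets us perform the usual cut‑off constructions inside the stratified category. Here "homotopy equivalence'' and "homotopy type'' refer to the underlying topological spaces, as in \cite[Lemma 8.1]{Ludwig}.

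\textbf{First bullet.} Suppose $[a,b]$ contains no critical value. Then $\nabla_g f$ is nowhere zero on $f^{-1}([a,b])$, and using the wedge orthonormal frame (\ref{wedgeframe}) near the singular stratum one checks that $|\nabla_g f|_g^2$ stays bounded away from $0$ there (on the singular stratum this is the non‑vanishing of $\nabla_{g_Y}(f|_Y)$, the remaining contributions being $O(\rho^2)$). Choose a controlled function $\eta$ with $\eta\equiv 1$ on $f^{-1}([a,b])$ and support in $f^{-1}((a-1,b+1))$, and set
\[
X \;:=\; -\,\frac{\eta}{\,|\nabla_g f|_g^{2}\,}\;\nabla_g f .
\]
This is again a weakly controlled stratified vector field, bounded in the wedge norm; since $W$ is compact and, by (\ref{weakcontrol2}), flow lines never reach a lower stratum in finite time, its flow $\Phi_t$ is defined for all $t$. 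Along $\Phi_t$ the function $f$ decreases at unit rate inside $f^{-1}([a,b])$, so $\Phi_{b-a}$ is a homeomorphism of $W$ carrying $W_b$ onto $W_a$, and the standard reparametrised flow (equal to the identity outside $f^{-1}([a,b])$ and tracking $\Phi$ inside) is a deformation retraction of $W_b$ onto $W_a$.

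\textbf{Second bullet.} Fix $\epsilon,\delta>0$ with no critical value in $[c-\epsilon,c+\delta]$ other than $c$; by the first bullet it suffices to analyse a neighbourhood of the unique critical point $p$. First, $p$ cannot lie in $T_Y\setminus Y$, since the condition $(\rho_\alpha)_*(-\nabla_g f)<0$ of Definition \ref{morsepair} forces $\nabla_g f\neq 0$ there; hence either $p\in W_{reg}\setminus T_Y$ or $p\in Y$. If $p\in W_{reg}\setminus T_Y$, a neighbourhood of $p$ is an ordinary smooth manifold, the Morse Lemma puts $f$ in the form $c-|x_-|^2+|x_+|^2$ with $|x_-|$ of length $k$, and the handle‑modification argument of \cite[Theorem 3.2]{milnor} --- carried out using the stratified flow of $X$ outside a small ball around $p$ --- shows that $W_{c+\delta}$ has the homotopy type of $W_{c-\epsilon}$ with the descending $k$‑cell attached. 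If $p\in Y$, recall that near $p$ the Morse pair was built so that $f=f_Y\circ\pi+\rho^2$, with $f_Y$ having a non‑degenerate critical point of index $k$ at $p$ (the function $\phi$ being a positive constant near interior critical points); in a local chart $B^{\dim Y}\times C(L)$ this reads $f=c-|y_-|^2+|y_+|^2+r^2$, with $r$ the cone coordinate and $|y_-|$ of length $k$. Thus the descending disk is the $k$‑disk $\{y_+=0,\ r=0\}\subset Y$, and the cone directions enter exactly like additional ascending directions; running the same handle modification, now with the ascending \emph{and} cone directions pushed down by the flow of $X$, gives that $W_{c+\delta}$ has the homotopy type of $W_{c-\epsilon}$ with a $k$‑cell attached, the cell lying inside the singular stratum $Y$.

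\textbf{Main obstacle.} The delicate point is analytic rather than topological: one must verify that the \emph{normalised} field $X$ --- obtained by dividing $-\nabla_g f$ by $|\nabla_g f|_g^2$ computed in the \emph{incomplete} wedge metric --- still satisfies (\ref{weakcontrol1})--(\ref{weakcontrol2}), so that \cite[Proposition 2.5.1]{Duplessis} continues to apply and the flow is continuous and complete on $W$ for the finite times needed; this is what forces the estimate near the singular stratum above and the use of \emph{controlled} cut‑off functions. A secondary subtlety, in the case $p\in Y$, is that the handle modification must be performed compatibly with the cone bundle over $Y$; this is precisely what the special form $f=f_Y\circ\pi+\rho^2$ guarantees, since it keeps the link coordinate out of $f$ and so prevents the fibre directions from interfering with the deformation retraction.
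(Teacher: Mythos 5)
Your proposal follows essentially the same path as the paper, which obtains this proposition by adapting classical Morse theory to the depth-$1$ stratified setting and delegates the details to \cite[Lemma 8.1]{Ludwig}. Both proofs hinge on the weak control conditions (\ref{weakcontrol1})--(\ref{weakcontrol2}) of Definition \ref{morsepair}, which guarantee that the flow of the stratified gradient-like field is locally integrable (via \cite[Proposition 2.5.1]{Duplessis}) and that flow lines never cross into a lower stratum in finite time, and on controlled partitions of unity (Lemma \ref{partunity}) to glue the local deformations. You also correctly isolate the genuine analytic point, namely that the normalised field $X = -\eta|\nabla_g f|^{-2}\nabla_g f$ must again be weakly controlled; this is exactly where the hypothesis $(\rho_\alpha)_*(-\nabla_g f)<0$ and the boundedness of $|\nabla_g f|_g$ away from zero on $f^{-1}([a,b])$ enter.

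One point deserves attention. In the case $p\in Y$ you invoke the explicit local form $f = f_Y\circ\pi + \rho^2$, which is the form \emph{produced} by the construction in Lemma \ref{exstncmorsepair}, not a consequence of the hypotheses. The proposition is for an arbitrary Morse pair, and what non-degeneracy of $p$ (Definition \ref{morsepair}) actually grants is that $\nabla_g f$ is a \emph{radial extension} of $\nabla_{g_\alpha}f_\alpha$ near $p$ — i.e.\ a controlled lift of the horizontal gradient, plus a radial field lifting $-t\partial_t$, plus an $O(\rho^2_\alpha)$ correction — which constrains the flow dynamics but does not on its own put $f$ in the normal form $c - |y_-|^2 + |y_+|^2 + r^2$. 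To close this step you should either argue directly from the radial extension condition that the flow of $-\nabla_g f$ near $p$ pushes the cone coordinate $\rho$ towards $Y$ and the horizontal coordinates along $-\nabla_{g_Y}f_Y$, so that the descending set is still the descending $k$-disk of $f_Y$ inside $Y$ (which is all that is needed for the handle modification argument to go through, since $C(L)$ is contractible), or else show that a controlled, flow-compatible change of coordinates brings a general radial extension into your model. Once that is done the argument is sound and matches the one in \cite[Lemma 8.1]{Ludwig}.
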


\begin{corollary} A compact stratified space $X$ has the homotopy type of a $CW$-complex, whose cells are in correspondence with the critical points of a Morse pair $(f,g)$ defined on $X$. 
In particular, to a $k$-cell corresponds a critical point of index $k$.
	
	Moreover, if $f$ is self-indexing, one has the correspondence between its $k$-cells and the preimage of the critical value $k$.
\end{corollary}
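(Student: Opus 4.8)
The plan is to obtain this as a formal consequence of the preceding proposition, by the standard inductive Morse-theoretic argument adapted to the stratified setting. First I would fix, via Lemma \ref{exstncmorsepair}, a Morse pair $(f,g)$ on $X$; since $X$ is compact and each critical point is non-degenerate, hence isolated inside the stratum containing it, there are only finitely many critical points. Write $X_a := f^{-1}((-\infty,a])$ and list the critical values as $c_1 < \dots < c_m$, choosing regular values $a_0 < c_1 < a_1 < \dots < c_m < a_m$ with $X_{a_0} = \emptyset$ and $X_{a_m} = X$.

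Then I would run the induction on $i$. By the first bullet of the preceding proposition, $X_{a_{i-1}} \simeq X_{c_i-\epsilon}$ (no critical value is crossed) and $X_{c_i+\delta} \simeq X_{a_i}$; by the second bullet, $X_{c_i+\delta}$ has the homotopy type of $X_{c_i-\epsilon}$ with one cell of dimension $k$ attached for each critical point of index $k$ lying on the level $c_i$. Composing, $X_{a_i}$ is obtained, up to homotopy, from $X_{a_{i-1}}$ by attaching one cell per critical point at level $c_i$, each of dimension equal to its index. Starting from the empty set and iterating $m$ times yields $X = X_{a_m}$ as homotopy equivalent to a CW-complex with exactly one $k$-cell for every index-$k$ critical point of $(f,g)$, i.e.\ cells in bijection with the critical points. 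I would then recall, without belaboring it, the classical point that the successive attaching maps may be homotoped to cellular ones (cellular approximation, induction on skeleta), so that the target is literally a CW-complex.

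For the final assertion, if $(f,g)$ is self-indexing then by definition $f(p) = k$ for every critical point $p$ of index $k$; hence $f^{-1}(k) \cap \mathrm{Crit}(f)$ is precisely the set of index-$k$ critical points, which under the correspondence just produced is exactly the set of $k$-cells. The main obstacle is really already resolved by the preceding proposition (handle attachment across a critical value in the depth-$1$ stratified setting, using the weak control conditions on $-\nabla_g f$); what remains here is only the routine assembly of these steps into a CW-structure, which is identical to the smooth case. The degenerate case $n \le \dim L$, where the singular stratum is empty and $X$ is an ordinary manifold, reduces to the classical theorem.
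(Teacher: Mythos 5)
Your proposal is correct and is exactly the standard inductive argument that the paper leaves implicit: the corollary is stated there without proof as a direct consequence of the preceding proposition, and your assembly (finitely many critical points by compactness and non-degeneracy, crossing one critical level at a time, cellular approximation of the attaching maps) is the intended derivation. The only point worth flagging is that the proposition as stated treats a single critical point per critical value, whereas the self-indexing case forces several critical points on the same level; the standard remark that the handle attachments at a common level are disjoint and can be performed simultaneously closes this, and is needed for your step at level $c_i$.
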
	

\newpage

\section{(L,G)-singularities}\label{L,Gsetting}

Let us proceed by introducing a class of stratified spaces that we will work with in the following sections. These are introduced and discussed in detail in \cite{BPR1} and \cite{BPR2}. 

The idea is that, while working on a smoothly spin stratified space of depth $1$, imposing extra conditions on the geometry of the link $L$ associated to its depth-$1$ stratum guarantees nice properties for the index associated to a Dirac operator similar to those of the closed smooth case. For example, one obtains well defined mappings from suitable bordism groups to $KO$-homology groups, i.e. involving suitable fundamental classes of Dirac operators (see \cite[Proposition 5.1]{BPR1}) or can extend metrics of positive scalar curvature along spaces realizing such bordisms (\cite[Theorem 4.5]{BPR1}). In particular, in this section, this latter property will play a relevant role. Furthermore, alongside these geometric conditions, an additional class of metrics adapted to this specific context will be introduced. Here we will provide a summary of the main results contained there.

Let us fix a compact, connected, semisimple Lie group $G$ with an $Ad$-invariant metric on its Lie algebra $\mathfrak{g}$. In this way, a bi-invariant metric on the whole group $G$ is obtained. 

\begin{remark}\label{killing}
	When $G$ is a simple Lie group, an $Ad$-invariant metric on $\mathfrak{g}$ is necessarily a multiple of its Killing form. In particular, this metric has constant Ricci curvature (meaning that the Ricci curvature is a multiple of the metric). In particular, up to scale by a constant, one can obtain a metric with constant positive scalar curvature equal to a fixed value (see \cite[Lemma 7.6]{Milnor2}). 
\end{remark}

Then, we ask $L$ to be a homogeneous space $G/K$, as defined in \ref{homogeneousspace}.
The tangent bundle of $L$ can be identified to $G \times_K \mathfrak{g}/\mathfrak{k}$, where $K$ acts on $\mathfrak{g}$ via the adjoint action and $\mathfrak{k}$ is the Lie algebra of $K$ (see, for example, \cite[Section 18.16]{Michor}).

 A metric on \(\mathfrak{g}\) thus induces a metric on \(\mathfrak{g}/\mathfrak{k}\) and, consequently, on \(L\). In particular, the latter will clearly be \(G\)-invariant and have positive scalar curvature.

We now follow the notations already introduced in Section \ref{LStolzseq}. Let $M$ be a spin stratified pseudomanifold of depth $1$ with link $L\simeq G/K$. If $\beta M$ is its compact depth-$1$ stratum, we assume that the fibration of the boundary of its resolution $\pi_r\colon \partial M_r \to \beta M$, with fibre diffeomorphic to $L$, comes from a principal $G$-bundle $P$ over $\beta M$. More specifically, assume that \(\pi_r\) can be described as an associated bundle to \(P\), that is:

\[
\pi_r\colon\partial M_r = P \times_G (G/K) \to \beta M
\]

A principal connection induces an associated connection on $\pi_r$, which induces a splitting of the tangent bundle of $\partial M_r$ as:

\[T(\partial M_r)=T_V(\partial M_r) \oplus \pi_r^*(T(\beta M)),\]
where the vertical tangent bundle is given by:

\[T_V(\partial M_r) = P \times_G (TL) = P \times_G (G \times_K \mathfrak{g}/\mathfrak{k})=P \times_K \mathfrak{g}/\mathfrak{k}.\]

Similarly, the fibration of the tubular neighborhood \(\pi\colon N(\beta M) \to \beta M\), with fiber diffeomorphic to the cone \(c(L)\), can be described using the action of \(G\) on the cone that preserves the radial coordinate on it. In this way, a connection is also associated with this bundle.

With such connection, if $g_L$ is a fixed Riemannian metric on $L$ built as described before, then one obtain on $N(\beta M)$ a metric of the form $(dr^2+r^2g_{\partial M_r / \beta M})\oplus \pi^*(g_{\beta M})$, with $g_{\beta M}$ a Riemannian metric on $\beta M$. Observe that $g_{\partial M_r / \beta M}$, as already remarked in Section \ref{secwedge}, is a metric in the vertical tangent bundle of $\pi_r$. However, in this case, we assume that on each fiber it restricts to the same metric and then we are denoting it with a small abuse of notation simply by $g_L$. In this way, each vertical fiber of $\pi$ is totally geodesic and with scalar curvature:
\[ k_{c(L)} = \frac{k_L - k_l}{r^2}, \quad k_l = l(l-1), \quad l = \dim(L), \]
where $r$ is a radial coordinate along the cone and $k_l$ is the scalar curvature of the standard $l$-sphere of radius $1$.

\begin{definition}
	A smoothly stratified space $M$ of depth-$1$ is a \textbf{pseudomanifold with (L,G)-singularities} if the link $L$ is given by homogeneous space $G/K$ and the fibration on the boundary of its resolution $\pi_r$ is an associated bundle to a a principal bundle as described above.
\end{definition}

\begin{definition}\label{welladaptedwedgemetric}
	Let $M$ be a pseudomanifold with $(L,G)$-singularities, then a \textbf{well-adapted wedge metric} $g$ is the following datum.
	
	\begin{itemize}
		\item A Riemannian metric $g_r$ on the resolution $M_r$ which is of product type on a collar neighborhood of the boundary;
		\item On the tubular neighborhood has the form described before, meaning that the vertical metric $g_{\partial M_r/\beta M}$ induces a fixed $g_L$, suitably scaled such that it has constant scalar curvature equal to $k_l=l(l-1)$, with $l=dim(L)$ (recall Remark \ref{killing});
		\item On a neighborhood of the boundary $\partial M_r$ in $N(\beta M)$, the metric transitions smoothly to a product type metric: 
		\[
			dr^2 + Cg_{\partial M_r/\beta M} \oplus \pi_r^*(g_{\beta M}),\]
		with $C$ a positive constant.
	\end{itemize} 
\end{definition}

If the second condition of the definition is omitted, thereby allowing the metric $g_{\partial M_r/\beta M}$ on the vertical tangent bundle to depend on the point of the stratum \(\beta M\), then \(g\) is simply said to be an \textit{adapted wedge metric}.

\begin{remark}\label{transition}
	The smooth transition of the above definition can be described as follows. Let $\partial M_r \times [0,\epsilon]$ be the neighborhood in $N(\beta M)$, where $\partial M_r$ is identified with $\partial M_r \times \{\epsilon\}$ and $\partial M_r \times \{0\}$ corresponds to those values in the cone bundle with radial coordinate equal to $R$. Then we ask the metric to be locally expressed by: 
	 \[dr^2+f(r)^2g_{\partial M_r/\beta M}+\pi^*g_{\beta M},\]
	 for a suitable smooth function $f$ defined on $r \in [0,\epsilon]$ such that around $0$ it has the form $R+r$, while it is equal to the constant $R + \epsilon/2$ on a neighborhood of $\epsilon$.
\end{remark}

Given these choices, in \cite[Proposition 3.4]{BPR1} is described a result concerning the scalar curvature of a space \(X\), equipped with a Riemannian submersion \(X \to B\) with fiber a totally geodesic space \(F\), in terms of the scalar curvature of \(B\), \(F\), and the O'Neill \(T\)-tensor of the fibers. Since this tensor, under our hypotheses, vanishes, the following result is proven.

\begin{theorem}[\cite{BPR1}, Theorem 3.5] \label{wedgethm}
	Let $L=G/K$ be a homogeneous space, where $G$ is a connected, semisimple Lie group, and equip it with a $G$-invariant metric with scalar curvature $k_L=l(l-1)$, where $l=dim(L)$. Then, if $M$ is a stratified pseudomanifold with $(L,G)$-singularities, the conical fibres $c(L)$ are scalar flat. Moreover:
	
	\begin{itemize}
		\item $\partial M_r$ always admits a well-adapted wedge metric of positive scalar curvature;
		\item if its depth-$1$ stratum $\beta M$ admits a metric with positive scalar curvature, then its tubular neighborhood $N(\beta M)$ has a well-adapted wedge metric of positive scalar curvature;
		\item it $N(\beta M)$ has a well-adapted wedge metric of positive scalar curvature, then $\beta M$ has a metric of positive scalar curvature.
	\end{itemize}
\end{theorem}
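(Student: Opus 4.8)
The plan is to deduce every assertion from the O'Neill-type scalar curvature formula for Riemannian submersions with totally geodesic fibres recorded in \cite[Proposition 3.4]{BPR1}, which under the standing hypothesis of a vanishing $T$-tensor reads
\[ \mathrm{scal}(X) = \pi^{*}\mathrm{scal}(B) + \mathrm{scal}(F) - |A|^{2} \]
for a Riemannian submersion $\pi\colon X\to B$ with totally geodesic fibre $F$ and O'Neill integrability tensor $A$ (so $|A|^{2}\ge 0$). The geometric input from the $(L,G)$-structure is twofold: first, $L=G/K$ carries a $G$-invariant metric $g_{L}$ which, because $G$ is compact semisimple so that a multiple of the negative Killing form is bi-invariant (Remark \ref{killing}), can be normalised to have constant scalar curvature $k_{l}=l(l-1)$; second, on an associated bundle $P\times_{G}F$ built from a principal connection, the connection metric $\pi^{*}g_{B}\oplus g_{F}$ has totally geodesic fibres isometric to $(F,g_{F})$, since $G$ acts on $F$ by isometries — this is exactly the fact already used in the excerpt for the cone bundle $\pi\colon N(\beta M)\to\beta M$. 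I will apply the displayed formula with $F=L$ (for $\partial M_{r}$) and with $F=c(L)$ carrying the cone metric $dr^{2}+r^{2}g_{L}$ (for $N(\beta M)$).

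The scalar flatness of the conical fibres is immediate: the vertical metric on a fibre of $\pi\colon N(\beta M)\to\beta M$ is $dr^{2}+r^{2}g_{L}$, whose scalar curvature is $(k_{L}-k_{l})/r^{2}$ by the formula recalled just before the statement, and this vanishes identically once $k_{L}=k_{l}=l(l-1)$. For $\partial M_{r}=P\times_{G}(G/K)$, equip it with the connection metric $\pi_{r}^{*}(\lambda^{2}g_{\beta M})\oplus g_{L}$ for an arbitrary metric $g_{\beta M}$ on $\beta M$ and a scaling parameter $\lambda>0$; the fibres are totally geodesic with scalar curvature $l(l-1)>0$, and since rescaling the base metric by $\lambda^{2}$ multiplies $\mathrm{scal}(\beta M)$ by $\lambda^{-2}$ and $|A|^{2}$ by $\lambda^{-4}$ (the tensor $A$ itself being unchanged and the vertical metric fixed), the formula gives
\[ \mathrm{scal}(\partial M_{r}) = \lambda^{-2}\pi_{r}^{*}\mathrm{scal}(g_{\beta M}) + l(l-1) - \lambda^{-4}|A|^{2}. \]
As $\lambda\to\infty$ the two variable terms go to $0$ uniformly on the compact manifold $\partial M_{r}$, so this metric has positive scalar curvature for $\lambda$ large and is of the required well-adapted form. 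The same computation, with the scalar-flat totally geodesic cone fibre and with $g_{\beta M}$ now chosen to be a psc metric on $\beta M$, yields on the model part of $N(\beta M)$
\[ \mathrm{scal}(N(\beta M)) = \lambda^{-2}\bigl(\pi^{*}\mathrm{scal}(g_{\beta M}) - \lambda^{-2}|A|^{2}\bigr), \]
which is positive for $\lambda$ large because $\mathrm{scal}(g_{\beta M})$ is bounded below by a positive constant on the compact $\beta M$. It remains to interpolate near $\partial M_{r}$ to the product form $dr^{2}+Cg_{\partial M_{r}/\beta M}\oplus\pi_{r}^{*}g_{\beta M}$ of Definition \ref{welladaptedwedgemetric}: this is a standard bending of the warping function $f$ of Remark \ref{transition}, since choosing $f$ monotone with $|f'|\le 1$ and $f''\le 0$ makes the warped-fibre contribution $\tfrac{l(l-1)(1-(f')^{2})}{f^{2}}-\tfrac{2lf''}{f}$ non-negative, so the positivity coming from the (large) base term is preserved.

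For the converse, suppose $g$ is a well-adapted psc wedge metric on $N(\beta M)$. Near the singular stratum it has the form $dr^{2}+\pi^{*}g_{\beta M}+r^{2}g_{L}+O(r)$, that is, to leading order as $r\to0$ it is precisely the connection metric over $(\beta M,g_{\beta M})$ with scalar-flat totally geodesic cone fibre; the submersion formula applied to this model gives $\mathrm{scal}=\pi^{*}\mathrm{scal}(g_{\beta M})-|A|^{2}\le\pi^{*}\mathrm{scal}(g_{\beta M})$, so at every point $\pi^{*}\mathrm{scal}(g_{\beta M})\ge\mathrm{scal}>0$, and letting $r\to0$ forces $\mathrm{scal}(\beta M,g_{\beta M})>0$ pointwise, i.e. $g_{\beta M}$ is a psc metric on $\beta M$. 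I expect this last step to be the main obstacle: one must control the $O(r)$ correction — which a priori could contribute a term growing like $r^{-1}$ to the scalar curvature through cross terms with the cone Christoffel symbols — and justify that the estimate $\mathrm{scal}\le\pi^{*}\mathrm{scal}(g_{\beta M})+o(1)$ survives the passage to the limit at the stratum; this is the delicate near-stratum analysis of \cite{BPR1}, whereas everything else is an application of the submersion formula together with the rescaling and bending arguments above.
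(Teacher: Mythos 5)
Your argument reproduces the proof that the paper attributes to \cite[Theorem 3.5]{BPR1}: everything hinges on the O'Neill-type scalar curvature formula of \cite[Proposition 3.4]{BPR1} for a Riemannian submersion with totally geodesic fibres (the $T$-tensor vanishes because $G$ acts by isometries through the associated bundle construction), together with the normalization $k_{L}=l(l-1)$ which makes the cone fibre $dr^{2}+r^{2}g_{L}$ scalar flat. The only cosmetic difference is the shrinking parameter: you enlarge the base via $g_{\beta M}\mapsto\lambda^{2}g_{\beta M}$ and send $\lambda\to\infty$, whereas the post-theorem remark in the paper indicates choosing the cone radius $R$ of Remark~\ref{transition} small; an overall homothety converts one device into the other, and only the ratio of horizontal to vertical scales governs the sign of the curvature, so the two are interchangeable. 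Your check that $|f'|\le 1$ and $f''\le 0$ keep the warped-fibre contribution $\tfrac{l(l-1)(1-(f')^{2})}{f^{2}}-\tfrac{2lf''}{f}$ non-negative is precisely the reason for the choice of $f$ encoded in Remark~\ref{transition}.

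Where you overestimate the difficulty is the converse (third bullet). You flag the $O(r)$ correction and a limiting argument as $r\to0$ as the delicate step, but in the well-adapted $(L,G)$ setting they do not arise: Definition~\ref{welladaptedwedgemetric}, read against the paragraph of Section~\ref{L,Gsetting} that introduces the form $(dr^{2}+r^{2}g_{\partial M_{r}/\beta M})\oplus\pi^{*}(g_{\beta M})$, prescribes the metric on the tubular neighborhood to be the exact connection metric near the stratum, not merely to leading order. Hence at any interior point $p$ with $r(p)>0$ in the cone region the submersion formula holds on the nose,
\[
\mathrm{scal}(g)(p)=\mathrm{scal}(g_{\beta M})\bigl(\pi(p)\bigr)-|A|^{2}(p)\le\mathrm{scal}(g_{\beta M})\bigl(\pi(p)\bigr),
\]
and $\mathrm{scal}(g)(p)>0$ immediately forces $\mathrm{scal}(g_{\beta M})(\pi(p))>0$; since $\pi$ restricted to the cone region is surjective onto $\beta M$, positivity holds everywhere. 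No passage to the limit at the stratum, and no control of lower-order terms, is required: the pointwise inequality you already used for the first two bullets settles the third.
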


Observe, in particular, that the normalization chosen for the metric \(g_L\) is necessary in order to prove the second point of this theorem, while the first point is proven by making a choice for the value of the radius of cones $R$ (see Remark \ref{transition}).

\subsection{The (L,G)-fibered Stolz groups}

In this section, we proceed to adapt the contents of the previous chapter to the newly introduced context and thus define a bordism theory for spaces with \((L, G)\)-singularities. To do this, we will make some modifications to the definitions given in Sections \ref{secbordismgroups} and \ref{LStolzseq}, requiring that the links be homogeneous spaces of the form just discussed and that the wedge metrics be well-adapted.

Firstly, all cycles are given by compact spin pseudomanifolds with \((L, G)\)-singularities, i.e., the link \(L=G/K\) is fixed. Referring to the bordism group \(\Omega^{spin, L-fib}_*\) introduced in \ref{secbordismgroups}, what needs to be added is that the fibrations of the tubular neighborhoods of the strata of depth 1 must be described as associated bundles. We make this requirement by recalling that any \(G\)-principal bundle $P$ can be described via a classifying map $f$ from the base space to a universal space \(BG\): then the bundle will be isomorphic to the pullback bundle of a universal bundle $EG \to BG$, i.e. $P \simeq f^*(EG)$. Similarly, the associated bundles with fiber $L$ have a universal bundle. In particular, any associated bundle $E \to B$ with fiber $L$ is simply described via a map $f\colon B \to BG$ as:

\begin{equation} 
	\begin{tikzcd}
		f^*(EG\times_G L) \simeq E  \arrow[r]  \arrow[d] & EG \times_G L \arrow[d] \\
		B \arrow[r, "f"] & BG
	\end{tikzcd}
\end{equation}

Observe that if, instead of considering fibers \(L\), we consider the cones \(C(L)\) with a \(G\)-action that preserves the radial coordinate, then associated bundles are obtained by pulling back from the universal bundle \(EG \times_G C(L) \to BG\). 

In the cases of our interest, that is, those where the fibrations with fibers \(C(L)\) and \(L\) are that of the tubular neighborhood and its restriction to the boundary of the resolution, it is clear that they determine each other reciprocally.

Consequently, we will define the group \(\Omega^{spin,(L,G)}_\ast\) simply by adding the condition that a bordism between \(M\) and \(M'\) is realized by a space \(W\) whose tubular neighborhood \(N(\beta W) \to \beta W\) is described by a map \(f_W \colon \beta W \to BG\) such that this restricts on the boundary (given by the disjoint union of the two depth 1 strata of $M$ and $M'$) to the maps that classify the fibrations of the tubular neighborhoods of \(M\) and \(M'\).

Now we move on to the case of the bordism groups $Pos$ and $R$, which involve the metric structure in their description. In this case, we want the wedge metrics to be well-adapted, and thus, we first fix the metric \( g_L \) on the link \( L \) so that it has scalar curvature equal to that of the \( l \)-dimensional sphere of radius 1, with \( l = \text{dim}(L) \).

In this situation, a well-adapted wedge metric is thus fully described by a metric \( g_{M_r} \) on the resolution, which we recall to be product-like in a neighborhood of its boundary, a metric \( g_{\beta M} \) on the depth 1 stratum, and a principal connection. In this sense, in order to define the groups $Pos^{spin,(L,G)}_*$ and $R^{spin,(L,G)}_*$,  it is sufficient to additionally require that a bordism between \( M \) and \( M' \) is realized through a space equipped with a well-adapted wedge metric such that these three objects determining the metric restrict on the boundary to those of \( M \) and \( M' \). Thus, we obtain the following result, which is a straightforward adaption of Theorem \ref{Lstolzexact}.

\begin{theorem}\label{l,gstolzseq}
	Let X be a topological space and $L$ be as before. Then the \textbf{(L,G)-stratified Stolz sequence}:
	
	\begin{equation}
		\xymatrix{
			\ldots \ar[r] & R^{spin, (L,G)}_{n+1}(X) \ar[r]^{\partial} & Pos^{spin, (L,G)}_{n}(X)\ar[r]^{\varphi} & \Omega^{spin, (L,G)}_{n}(X)\ar[r]^{\iota} & R^{spin, (L,G)}_{n}(X)\ar[r] & \ldots},
	\end{equation}
	
	is exact.
	
\end{theorem}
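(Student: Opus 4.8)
The plan is to run the proof of Theorem~\ref{Lstolzexact} essentially unchanged, checking at each step that the additional data carried by a pseudomanifold with $(L,G)$-singularities --- the homogeneous link $L=G/K$, the classifying map $\beta(\cdot)\to BG$ exhibiting the tubular neighbourhood as an associated bundle, a principal connection, and a well-adapted wedge metric built from the fixed normalized $g_L$ --- is transported along all the cut-and-paste operations used there. That the sequence is a complex is immediate: the model bordisms appearing in the proofs of $\varphi\circ\partial=0$, $\partial\circ\iota=0$, $\iota\circ\varphi=0$ are a pseudomanifold seen as a bordism from its boundary to $\emptyset$, and cylinders $M\times[0,1]$; the cylinder has tubular neighbourhood $N(\beta M)\times[0,1]$ classified by $\beta M\times[0,1]\to\beta M\to BG$, and the evident product wedge metric near its boundary is well-adapted, so no new verification is needed.

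For exactness I would repeat the three inclusions of Theorem~\ref{Lstolzexact} verbatim, supplying the $(L,G)$-decorations. For $\mathrm{Ker}(\varphi)\subseteq\mathrm{Im}(\partial)$: a nullbordism $W$ of a cycle $M$ of $Pos^{spin,(L,G)}_{n}(X)$ in $\Omega^{spin,(L,G)}_{n+1}(X)$ becomes, with the well-adapted metric already present on $M=\partial W$, a cycle of $R^{spin,(L,G)}_{n+1}(X)$ mapping to $[M]$; crucially no psc metric has to be pushed over handles, so Theorem~\ref{wedgethm} is not invoked and the adaptation is ``straightforward''. For $\mathrm{Ker}(\partial)\subseteq\mathrm{Im}(\iota)$: given $\partial[M]=0$ via a nullbordism $W$ of $\partial M$ carrying a well-adapted wedge psc metric, form the closed $N:=M\cup_{\partial M}W$ and run the $N\times[0,1]$ argument, after checking that $N$ is again an $(L,G)$-pseudomanifold. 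For $\mathrm{Ker}(\iota)\subseteq\mathrm{Im}(\varphi)$: from $\iota([M])=0$ one obtains $W$ with $\partial W=M\sqcup\partial^{1}W$, both pieces closed, so $[M]=[\partial^{1}W]$ in $\Omega^{spin,(L,G)}_{n}(X)$ and $\partial^{1}W$, carrying the well-adapted wedge psc metric of the $R$-bordism, represents a class in $Pos^{spin,(L,G)}_{n}(X)$ with $\varphi([\partial^{1}W])=[M]$. Reference maps to $X$ are carried along passively throughout.

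The one point requiring genuine (if routine) care --- and the place I expect the real content of the proof to sit --- is checking that gluings and products stay inside the $(L,G)$-category: that gluing two associated $G$-bundles with connections along classifying maps to $BG$ that agree on the overlapping stratum yields an associated $G$-bundle (so the glued singular stratum $\beta N=\beta M\cup_{\beta(\partial M)}\beta W$ again fibres over $BG$), and that gluing two well-adapted wedge metrics that are of product type near the common boundary produces a wedge metric still of the well-adapted form near $\beta N$. Here one uses that the normalization of $g_L$ (scalar curvature $l(l-1)$, $l=\dim L$) is fixed once and for all, so there is no scaling mismatch across the seam, and that the product-type collar conditions built into the definitions of $Pos^{spin,(L,G)}$ and $R^{spin,(L,G)}$ are precisely what makes the glued metrics smooth and well-adapted. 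Once these compatibilities are recorded, the remainder of the argument is word-for-word that of Theorem~\ref{Lstolzexact}.
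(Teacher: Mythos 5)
Your proposal is correct and follows exactly the route the paper takes: the paper presents Theorem \ref{l,gstolzseq} as a "straightforward adaption" of Theorem \ref{Lstolzexact}, obtained by building the classifying maps to $BG$ and the well-adapted metric data (with the fixed normalization of $g_L$) into the bordism relations so that the cut-and-paste arguments go through verbatim. Your explicit identification of the gluing compatibilities as the only point needing verification is consistent with how the paper sets up the $(L,G)$-bordism groups.
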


\subsection{Invariance of the (L,G)-R-groups under 2-connected maps}

We now are going to state a result similar to Theorem \ref{G2conn} for the just introduced $R$-groups. In its proof, we will use the results concerning the Morse theory for stratified spaces obtained in the previous sections (in particular regarding the CW-complex structure that follows) and the Gromov-Lawson theorem stated below in a formulation that, along with Theorem \ref{wedgethm} on well-adapted wedge metrics, will allow us to extend a metric along a bordism (see \cite{EbertFrenck}).

\begin{theorem}\label{gromovlawson}
	Considering a Riemannian manifold $(M,g)$, where $g$ is a positive scalar curvature metric, then if $M'$ is another manifold obtained from M by surgeries of codimension $\geq 3$, then:
	\begin{itemize}
		
		\item one can construct a psc metric on M';
		\item in particular, one can construct a psc metric on the bordism between M and M' (the trace of the surgery) which reduces to a product metric on the boundary.
	\end{itemize}
\end{theorem}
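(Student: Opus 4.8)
The plan is to follow the classical argument of Gromov and Lawson \cite{GL}, in the form made canonical in \cite{EbertFrenck}. First I would reduce to a single surgery: an arbitrary surgery of codimension $\geq 3$ factors as a composition of elementary ones, and both assertions are stable under composition, so it suffices to treat the surgery along an embedded sphere $S^k\hookrightarrow M^n$ with trivial normal bundle and $n-k\geq 3$, so that the dual sphere $S^{n-k-1}$ that gets glued in has dimension $\geq 2$. Fix an embedding $\varphi\colon S^k\times D^{n-k}\hookrightarrow M$ onto a tubular neighbourhood of the core; surgery removes $\varphi(S^k\times\mathring D^{n-k})$ and glues in $D^{k+1}\times S^{n-k-1}$ along $S^k\times S^{n-k-1}$.

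The heart of the argument is a curvature estimate near the core sphere. Writing the metric on the normal disk bundle in geodesic coordinates transverse to $S^k$ as $g=g_{S^k}+dr^2+r^2 g_{S^{n-k-1}}+O(r)$, one deforms $g$, supported in an arbitrarily thin tube, to a metric which near $S^k\times\{0\}$ is the exact Riemannian product $g_{S^k}+g^\epsilon_{\mathrm{tor}}$, where $g^\epsilon_{\mathrm{tor}}$ is the rotationally symmetric ``torpedo'' metric on $D^{n-k}$: a spherical cap of radius $\epsilon$ near the origin, smoothly joined to the cylinder $dt^2+\epsilon^2 g_{S^{n-k-1}}$ near the boundary. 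The deformation is realised as the graph of a slowly bending curve in the $(r,t)$-halfplane; the scalar curvature of the deformed metric picks up a term from the second fundamental form of the bend, which is controlled by the derivatives of the curve and the ambient curvature and may be negative but is bounded, together with the dominant term $\frac{(n-k-1)(n-k-2)}{\epsilon^2}$ from the round $S^{n-k-1}$-factor of radius $\epsilon$. Since $n-k-1\geq 2$ this last term is strictly positive; choosing the bending region long and $\epsilon$ small makes it dominate, so the deformed metric has positive scalar curvature throughout — and one obtains in fact a path of psc metrics from $g$ to the standardised metric $g_{\mathrm{std}}$.

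Granting this, both conclusions reduce to local gluings in which positivity is again furnished by the $\epsilon^{-2}$-term. For the first bullet, replace the piece $S^k\times D^{n-k}$ carrying $g_{S^k}+g^\epsilon_{\mathrm{tor}}$ by $D^{k+1}\times S^{n-k-1}$ carrying $h\oplus\epsilon^2 g_{S^{n-k-1}}$, where $h$ is any warped-product metric on $D^{k+1}$ equal to $g_{S^k}+dt^2$ near the boundary; the scalar curvature of this product is $\mathrm{scal}(h)+\frac{(n-k-1)(n-k-2)}{\epsilon^2}$, positive for $\epsilon$ small irrespective of the sign of $\mathrm{scal}(h)$ — precisely the point at which codimension $\geq 3$ is used, since no positivity is required of $h$. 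For the second bullet, take the trace of the surgery $W=(M\times[0,1])\cup_{\varphi(\cdot)\times\{1\}}(D^{k+1}\times D^{n-k})$; on $M\times[0,\tfrac12]$ install the product-like psc metric built from the path of psc metrics above (reparametrised to be constant, hence a genuine product, near $M\times\{0\}$ and $M\times\{\tfrac12\}$, which keeps $dt^2+g_t$ of positive scalar curvature when the path is traversed slowly enough), and over the attached handle build the standard handle metric interpolating the two standardised slices, with positivity again secured by the dominant sphere term. The result is a psc metric on $W$, product near both boundary components, restricting to $g$ on $M\times\{0\}$ and to the psc metric $g'$ of the first bullet on $M'$.

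The main obstacle is the estimate of the second paragraph: performing the bend so that the negative scalar curvature generated by the second fundamental form of the graph is genuinely dominated, uniformly over the tube, by $\frac{(n-k-1)(n-k-2)}{\epsilon^2}$. This is the technical core of Gromov--Lawson, and I would invoke it in the sharpened form of \cite{EbertFrenck}, where the construction is moreover arranged functorially enough that the extension over the trace of the surgery — and thus the second bullet — comes for free.
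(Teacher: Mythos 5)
The paper states this theorem only as a recall of a classical result, citing \cite{GL,SchoenYau} and, for the formulation used, \cite{EbertFrenck}; it contains no proof of its own, since the theorem is standard and is invoked as a black box in the proof of Theorem \ref{2conrgroup}. Your sketch is a faithful outline of the standard Gromov--Lawson surgery argument in the form organised in \cite{EbertFrenck}: reduction to a single elementary surgery along an embedded $S^k$ with trivial normal bundle, deformation of the metric in a thin tubular neighbourhood to a product $g_{S^k}\oplus g^\epsilon_{\mathrm{tor}}$ via a slowly bending curve in the $(r,t)$-halfplane, dominance of the scalar curvature by the $\tfrac{(n-k-1)(n-k-2)}{\epsilon^2}$ term coming from the round fibre $S^{n-k-1}$ of radius $\epsilon$ (this is exactly where codimension $\geq 3$, equivalently $n-k-1\geq 2$, is used), and the handle gluing for $M'$. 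The isotopy-to-concordance step for the second bullet --- slowing the parametrisation so that $dt^2+g_t$ stays psc while installing the path over $M\times[0,\tfrac12]$ and then the standard handle metric over the attached handle --- is likewise the standard device. At the level of a sketch this is correct and you correctly identify the curvature estimate for the bending region as the technical core; a full argument would have to carry out that estimate uniformly in the tube (controlling the second fundamental form against the $\epsilon^{-2}$ term), which is precisely the content of \cite{GL} and \cite{EbertFrenck} and which you appropriately defer to those references.
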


Recall that a continuous function \( f: X \to Y \) is called \( n \)-connected if it induces isomorphisms for all homotopy groups of order \( k < n \), while it induces a surjective map for the \( n \)-th homotopy group. Furthermore, remember that the introduced bordism groups behave functorially with respect to their reference spaces, in the sense that if \( f: X \to Y \), then there exists a map:

\[ f_*: \Omega^{\text{spin},(L,G)}_n(X) \to \Omega^{\text{spin}, (L,G)}_n(Y), \]

obtained simply by composing the reference map of a bordism class with \( f \) (the same for $Pos$ and $R$ groups).

\begin{theorem} \label{2conrgroup}
	Let $X, Y$ be two CW-complexes such that $f: X \to Y$ is a continuous, 2-connected map. Then the functorially induced map on the $R$-groups:
	
	\[f_*\colon R_n^{spin, (L,G)}(X) \to R_n^{spin, (L,G)}(Y),\]
	
	is an isomorphism.
\end{theorem}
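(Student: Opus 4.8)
The strategy is to adapt, essentially verbatim, the proof of Theorem \ref{G2conn} (the $(G,\mathcal{F})$-equivariant case with $G$ trivial) to the $(L,G)$-singular setting, the only novelty being that handle attachments and surgeries must now be carried out compatibly with the $(L,G)$-structure, and that the extension of the psc wedge metric along surgery traces of codimension $\geq 3$ must invoke Theorem \ref{gromovlawson} together with Theorem \ref{wedgethm} rather than the classical Gromov--Lawson theorem. As in the proof of Theorem \ref{G2conn}, one may first reduce to the case where $X$ and $Y$ are simply connected (a $2$-connected map induces an isomorphism on $\pi_1$, and by Proposition \ref{extension}-type reasoning one passes to universal covers; alternatively, since no $\pi_1$-hypothesis is separately imposed here, this reduction is automatic). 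The proof then splits into surjectivity and injectivity.

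\textbf{Surjectivity.} Given a class $[W,\varphi\colon W\to Y, g_{\partial W}]\in R_n^{spin,(L,G)}(Y)$, regard $W$ as a bordism from $\partial W$ to $\emptyset$ and choose a \emph{special Morse pair} $(\alpha,h)$ on it in the sense of Section \ref{secstrmorse} (Lemma \ref{existencespecialmorsepair}), with critical points rearranged in order of increasing index (the stratified analogue of \cite[Theorem 4.8]{milnor}; this is available because the stratum $\beta W$ is an honest smooth manifold and the Morse data on $W_{reg}$ and $\beta W$ can be rearranged simultaneously). Split $W=W_1\cup_{M_1} W_2$ where $W_1$ contains only $(L,G)$-handles of index $\leq 2$ and $W_2$ only those of index $\geq 3$. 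Viewing $W_2$ as a bordism from $\partial W$ to $M_1$ via $-\alpha$, the critical points correspond to surgeries of codimension $\geq 3$; by Theorem \ref{gromovlawson} together with Theorem \ref{wedgethm} (which guarantees the $(L,G)$-fibered and well-adapted structure is preserved and the psc condition propagates across the singular stratum, since the conical fibres are scalar flat) we extend $g_{\partial W}$ to a psc well-adapted wedge metric $\bar g$ on $W_2$, and restrict it to $g_1$ on $M_1$. Then $(W_1,\varphi|_{W_1},g_1)$ is a cycle bordant to $(W,\varphi,g_{\partial W})$ in $R_n^{spin,(L,G)}(Y)$ (via $W\times[0,1]$). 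Since $W_1$ is built from $\emptyset$ by handles of index $\leq 2$, it is homotopy equivalent to a $2$-dimensional CW-complex, so by cellular approximation $\varphi|_{W_1}$ factors up to homotopy through the $2$-skeleton $Y_{(2)}\hookrightarrow Y$; because $f$ is $2$-connected, $f_{(2)}\colon X_{(2)}\to Y_{(2)}$ admits, up to homotopy, a right inverse $h\colon Y_{(2)}\to X_{(2)}$ (Whitehead-type argument, exactly as in the diagram \eqref{diagram1}). Composing, we get $\psi\colon W_1\to X$ with $f_*[W_1,\psi,g_1]=[W,\varphi,g_{\partial W}]$.

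\textbf{Injectivity.} Given $[W,\varphi\colon W\to X,g]\in R_n^{spin,(L,G)}(X)$ with $f_*[W,\varphi,g]=0$, there is an $(L,G)$-pseudomanifold with corners $B$, with codimension-$1$ faces $W$ and a $Pos^{spin,(L,G)}_{n-1}$-nullbordism $V$ of $\partial W$, and $\Psi\colon B\to Y$ restricting to $f\circ\varphi$. As in the proof of Theorem \ref{G2conn}, take a collar so that $\partial B$ has three faces ($W$, the vertical cylinder $\partial W\times[0,1]$, and $\bar V$), choose a special Morse function on $B$ with all critical points in the interior away from the vertical face, and split $B=B_1\cup_{W_1} B_2$ with $B_1$ involving only index-$\leq2$ handles relative to $W$ and $B_2$ only index-$\geq3$. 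Apply Theorem \ref{gromovlawson}/\ref{wedgethm} to extend the psc well-adapted wedge metric on $\bar V$ across $B_2$, obtaining $g_1$ on $W_1$. The attaching data of $B_1$ over $W$ lives in cells of dimension $\leq 2$; as before $\Psi|_{B_1}$ factors up to homotopy through $f$ via $j\circ h\circ\Psi|_Z\colon Z\to X$ on the union $Z$ of these $2$-cells, and since $B_1$ is, up to homotopy, the pushout of $W\hookrightarrow B_1\hookleftarrow Z$ over $\partial Z$, universality produces $\Phi\colon B_1\to X$ restricting to $\varphi$ on $W$. Then $(B_1,\Phi,g_1)$ is an $R^{spin,(L,G)}_n(X)$-nullbordism of $(W,\varphi,g)$.

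\textbf{Main obstacle.} The delicate point is not the homotopy-theoretic bookkeeping (which is identical to Theorem \ref{G2conn}) but verifying that the Morse-theoretic decomposition and the subsequent surgeries can be performed within the category of $(L,G)$-pseudomanifolds with well-adapted wedge metrics: one must check that (i) the stratified special Morse pairs of Section \ref{secstrmorse} yield handle decompositions whose handles are compatible with the fibered $L$-structure on a neighborhood of $\beta W$ (the handles of low index should be taken in $W_{reg}$ away from the singular stratum, using that $\phi$ can be chosen to vanish near critical regions, cf. Remark \ref{levelsurface}), and (ii) the Gromov--Lawson extension in Theorem \ref{gromovlawson}, applied fiberwise/on $M_r$, preserves the well-adapted form of Definition \ref{welladaptedwedgemetric} — this is precisely where Theorem \ref{wedgethm} is indispensable, since it is what converts positivity of scalar curvature on the regular part into a genuine psc well-adapted wedge metric across the depth-$1$ stratum. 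Once these compatibility checks are in place, the argument is formally the same as in the smooth and $(G,\mathcal{F})$ cases.
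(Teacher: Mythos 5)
Your proposal is correct and follows essentially the same route as the paper: a (special) stratified Morse pair splits the bordism into a low-index piece $W_1$ and a piece $W_2$ carrying only codimension $\geq 3$ surgeries, Gromov--Lawson together with Theorem \ref{wedgethm} extends the well-adapted psc wedge metric over $W_2$, and the $2$-connectedness of $f$ gives the factorization through the $2$-skeleton exactly as in Theorem \ref{G2conn} (with the same pushout argument for injectivity). The only cosmetic difference is the preliminary normalization: rather than passing to universal covers (which has no stated $(L,G)$-analogue of Proposition \ref{extension} and would leave the compact category), the paper simply takes $Y=B\pi_1(X)$ without loss of generality; since your main argument never actually uses simple connectivity, this does not affect the proof.
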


\begin{proof}
	The proof will proceed in a manner very similar to that of Theorem \ref{G2conn}.
	
	Assume, without loss of generality, that both $X$ and $Y$ are connected and $Y=B\pi_1(X)$. We recall that, since $\pi_1(X)$ is a discrete group, $B\pi_1(X)$ has the homotopy type of an Eilenberg-MacLane space $K(\pi_1(X), 1)$, i.e.:
	\begin{equation}
	 \begin{cases}
	 	\pi_k(B\pi_1(X))\simeq 0, \quad k\neq 1 \\
	 	\pi_1(B\pi_1(X))\simeq \pi_1(X)
	 \end{cases}
	 \end{equation}
	 
	Moreover, $B\pi_1(X)$ is a final object, in the sense that for each space $Z$ with fundamental group $\pi_1(X)$, it is the target of a unique $2$-connected map $f_Z\colon Z \to B\pi_{1}(X)$ up to homotopy.
	
	Now we prove the surjectivity of $(f_X)_*$.
	
	Let $[M]=(M, \partial M, g_{(\partial M)_r}, g_{\beta(\partial M)}, \nabla_{\partial M}, f: M \rightarrow B\pi_1(X)) \in R_n^{spin, (L,G)}(B\pi_1(X))$.
	Proving the surjectivity of $(f_X)_*$ is equivalent to find a map $h\colon M \to X$ giving the following factorization:
    
    \[
    \xymatrix{M \ar[r]^{f} \ar[d]^{h}& B\pi_1(X) \\ X \ar[ur]^{f_X}}
    \]
    
    Let us consider $M$ as a bordism in $R_{n-1}^{spin, (L,G)}(B\pi_1(X))$ between $\partial M$ and the empty set: then take a self-indexing, special Morse pair $(\varphi, h)$ (recall Definition \ref{specialmorse}) on $M$ seen as a triple $(M, \partial M, \emptyset)$. A regular value $t_0 \in \mathbb{R}$ allows us to split $M$ in two bordisms:
    
     \[M_1:=\varphi^{-1}(\{t \leq t_0\}), \quad M_2:=\varphi^{-1}(\{t \geq t_0\}),\]
     such that $M= M_1 \cup M_2$ and $S=M_1 \cap M_2 \cong \varphi^{-1}(t_0)$.
     In particular, since $\varphi$ is self-indexing, we can choose $t_0$ suitably such that $M_1$ consists only on cells up to dimension $2$.
     
	In this way, we can apply the second point of the Gromov-Lawson Theorem $\ref{gromovlawson}$, which allows us to extend the psc metric $g_{\beta(\partial M)}$ along the depth-1 stratum of $M_2$ and then obtaining a well adapted wedge metric of positive scalar curvature on its tubular neighborhood using Theorem \ref{wedgethm}. Then one can extend the psc-metric on the interior of the resolution $(\partial M)_r$, obtaining a well-adapted wedge metric of positive scalar curvature on $M_2$.
	
	It follows that $S=\partial M_1$ gets a psc well-adapted wedge metric and that $M \times [0,1]$ provides a bordism between $M$ and $M_1$ in $R_{n}^{spin, (L,G)}(B\pi_1(X))$.

 Consider then $f|_{M_1}$, i.e. the reference map which defines the class $[M_1]$. $f$, by construction is a mapping from a space consisting only on cells up to dimension $2$ attached to the empty set, i.e. homotopy equivalent to a $2$-dimensional $CW$-complex.   
 
 Then, we can apply the arguments cited in the proof of \ref{G2conn}, simply adapting them to our situation, thereby first obtaining the following factorization:

	\[
\begin{tikzcd}
	M_1 \arrow[r,"f|_{M_1}"] \arrow[d, "k"] & B\pi_1(X)   \\
	B\pi_1(X)_{(2)}  \arrow[ur,"i"]
\end{tikzcd}
\]
and then, since $f_X$ is a $2$-connected map, we obtain the following:

\[
\begin{tikzcd}
	M_1 \arrow[r,"f|_{M_1}"] \arrow[d, swap, "k"] & B \pi_1(X)  \\
	(B\pi_1(X))_{(2)} \arrow[r,"V"] \arrow[ur, "i"] &  X_{(2)} \arrow[u, swap, "f_X|_{X_{(2)}}"]
\end{tikzcd}
\]

Finally, the composition $V \circ k$ provides the required factorization proving the surjectivity of $(f_X)_*$.

Now let's proceed to prove the injectivity of \((f_X)_*\).

Let us take a class $[N]=(N, \partial N, g_{(\partial N)_r}, g_{\beta(\partial N)}, \nabla_{\partial N}, f: N \rightarrow X) \in R_n^{spin, (L,G)}(X)$ such that $f_*([N])=0$ in $R_n^{spin, (L,G)}(B\pi_1(X))$. This means that there exists a stratified $(n+1)$-pseudomanifold with $(L,G)$-singularities with corners $(W, \partial W, \partial^2 W)$ with:

\[ \partial W = N \cup N', \quad \partial^2 W= \partial N.\] 

 In particular, $N'$ realizes a bordism in $Pos_{n-1}^{spin, (L,G)}$ between $\partial N$ and the empty set. Moreover, a well-adapted wedge metric of positive scalar curvature is defined on the whole $N'$ extending that of $\partial N$. 
 
 On $W$ there is a mapping $F\colon W \to B\pi_1(X)$ which restricts to $f$ on $N$. Therefore, $F$ factorizes to $X$ on $N$. What will allow us to prove the injectivity of \((f_X)_*\) is finding a bordism of \(N\) with the empty set or, equivalently, to a space representing the zero element in $R_n^{spin, (L,G)}(B\pi_1(X))$, realized by a space in which this factorization is achieved everywhere.

Let us now consider the space $W$ as a triple $(W, N, N')$ with vertical boundary given by a collar neighborhood $\partial N \times [0,1]$.
Then, a self-indexing, special Morse pair $(\varphi,h)$ defined on it will have only critical points defined on the interior, meaning that each regular level set will be a stratified pseudomanifold with $(L,G)$-singularities with boundary $\partial N$.

Similarly as before, choose a suitable regular value $t_0$ of $\varphi$ which splits the bordism $W$ in two bordism:
 \begin{itemize}
 	\item $W_1$, from $N$ to $S:=\varphi^{-1}(t_0)$;
 	\item $W_2$, from $S$ to $N'$.
 \end{itemize} 

In particular, the latter will have critical points associated to surgeries of codimension $\geq 3$. Therefore, we can apply the second point of the Gromov-Lawson theorem \ref{gromovlawson} to push the well-adapted wedge metric of positive scalar curvature to $S$.

Then, we observe that $W_1$ is realized as the pushout of the inclusions of $N$ and the union of all its closed cells up to dimension $2$ attached on its interior, say $Z$. 
Since, by construction, $Z$ is homotopy equivalent to a $2$-dimensional CW-complex, we can apply to $F|_Z$ the same argument used to prove the surjectivity obtaining a factorization:

	\[
\begin{tikzcd}
	Z \arrow[r,"F|_Z"] \arrow[d, ""] & B\pi_1(X)   \\
	X  \arrow[ur,"f_X"]
\end{tikzcd}
\]

Finally, by the universal property of the pushout, there exists a unique map on $W_1$ which factorizes over $f_X\colon X \to B\pi_1(X)$.

 Then, $W_1$, with this map, realizes a bordism to the zero class in $R_n^{L,G-fib}(X)$ and then $[N]=0 \in R_n^{spin, (L,G)}(X)$ proving the injectivity of $(f_X)_*$

\end{proof}

	\newpage

\section{Mapping (L,G)-Stolz to Higson-Roe} \label{mappingstolzlg}

\subsection{Dirac operators}\label{secwedgediracoperator}

Let us now recall, following \cite{LM}, the basic notions about $Cl_n$-linear Dirac operators, and subsequently propose an extension of these to the case of smoothly stratified spaces.

Firstly, if \((V, q)\) is a pair consisting of a vector space \(V\) over a commutative field and its quadratic form \(q\), then the \textit{Clifford algebra} \(Cl(V, \eta)\) is given by the quotient of the tensor algebra \(\mathcal{T}_V=\sum_k \bigotimes^k V\) with respect to an ideal, that is:

\[
Cl(V, q) := \mathcal{T}_V / \langle v \otimes v + \eta(v)1,  v \in V \rangle
\]

This algebra has a natural \(\mathbb{Z}_2\)-graded algebra structure:
\[Cl(V,q)=Cl^0(V,q)\oplus Cl^1(V,q),\] 
where $Cl^i(V,q)$ are given by the eigenspaces of the involutive automorphism in \(Cl(V, \eta)\) induced by the sign change in \(V\).

\begin{remark}
In the case where \(V = \mathbb{R}^{p+q}\) and \(\eta\) is in standard form with signature \((p, q)\), we will refer to the corresponding Clifford algebras as \(Cl_{p, q}\). In this situation, \(Cl_{p, q}\) is the algebra generated by an orthonormal frame \(\{e_1, \ldots, e_{p+q}\}\) subject to the relations:

\[
e_i e_j + e_j e_i = 
\begin{cases} 
	-2\delta_{ij}, & \text{if } i \leq p \\ 
	2\delta_{ij}, & \text{if } i > p 
\end{cases}
\]
\end{remark}

Within the Clifford algebra, it is possible to define its \textit{spin group} \( Spin(V, q) \) as the subgroup of the group of invertible elements with respect to the product of the algebra, consisting of elements in \( Cl^0(V, q) \) and generated by \( v \in V \) such that \( q(v) = \pm 1 \).

The spin group can be also defined as the double universal cover of the special orthogonal group \( SO(V) \). This introduces the definitions of a spin structure associated to the oriented orthonormal frame bundle of a (Riemannian, oriented) vector bundle $E\to M$ as an equivariant lift of it via the covering map.

In particular, if \( P_{spin}(E) \to M \) denotes such a spin structure, then the \textit{Clifford bundle} \( Cl(E) \) can be defined as:

\[ Cl(E) := P_{spin}(E) \times_{Ad} Cl_n \to M \]

where \( n \) is the rank of \( E \) and $Ad$ denotes the action of the spin group $Spin_n = Spin(\mathbb{R}^n)$ on $Cl_n$ made by $Ad_g(\varphi)=g\varphi g^{-1}$. Observe that the grading of $Cl_n$ induces one on $Cl(E)$.

In case $M$ is a Riemannian manifold, then the Clifford bundle associated to its tangent bundle is also denoted by $Cl(M)$. Then, any bundle of left module over $Cl(M)$ (whose action is fiberwise) endowed with a Riemannian metric and a connection $\nabla$ such that the action of unit vectors in $TM$ (seen as sections of $Cl(M)$) is orthogonal and its covariant derivative is a module derivation, is called a \textit{Dirac bundle}. The action of $Cl(M)$ on a Dirac bundle is called \textit{Clifford multiplication}, denoted by $c$.

\begin{definition}
	Consider a Dirac bundle $S$ over a Riemannian manifold $M$, then we define the \textbf{Dirac operator} as the operator $D\colon \Gamma(S) \to \Gamma(S)$ given by:
	
	\[D\sigma:=\sum_i c(e_i) \cdot \nabla_{e_i}\sigma,\]
	
	where $\{e_i\}$ is an orthonormal frame of $TM$, $\cdot$ denotes the Clifford multiplication.
\end{definition}

Now, consider an n-dimensional spin manifold $M$ and $P_{spin}(M)$ its spin structure. Then, given $l\colon Spin_n \to Hom(Cl_n)$ defined by left multiplication, the following associated bundle is defined:

\[\slashed{\mathfrak{S}}(M):=P_{spin}(M) \times_l Cl_n \to M\]

This has a structure of a right module bundle of rank 1 with respect to $Cl_n$, with the operation of obvious right multiplication, since it commutes with $l$. Moreover, an obvious action of $Cl(M)$ commuting with that of $Cl_n$ is defined on $\slashed{\mathfrak{S}}(M)$. Since it is an associated bundle of $P_{spin}(M)$, it carries a canonical Riemannian connection: it can be proven that $\slashed{\mathfrak{S}}(M)$ is then an example of Dirac bundle over $M$.

The action of $Cl_n$ is parallel with respect to the connection, so in particular, the $\mathbb{Z}_2$-grading of $Cl_n$ induces one on $\slashed{\mathfrak{S}}(M)$, namely $\slashed{\mathfrak{S}}(M)=\slashed{\mathfrak{S}}^0(M) \oplus \slashed{\mathfrak{S}}^1(M)$ such that:

\[ \slashed{\mathfrak{S}}^i(M) \cdot Cl^j_n \subseteq \slashed{\mathfrak{S}}^{i+j}(M), \quad Cl(M)^i \cdot \slashed{\mathfrak{S}}^{j}(M) \subseteq \slashed{\mathfrak{S}}^{i+j}(M)\]

The Dirac operator on $\slashed{\mathfrak{S}}(M)$ is, by construction, an odd operator with respect to such grading which commutes with the right $Cl_n$-action.

\begin{definition}
	The Dirac operator just discussed on the Dirac bundle $\slashed{\mathfrak{S}}(M)$, denoted $\slashed{\mathfrak{D}}$, is called the \textbf{Cl$_n$-linear Atiyah-Singer operator} of $M$.
\end{definition}

\begin{remark}
	It can be shown that, with respect to the inner product on $\Gamma(S)$ induced by the Riemannian metric on $S$, the Dirac operator is formally self-adjoint for sections with compact support. If we consider the space of $L^2$ sections of $S$, denoted by $L^2(S)$ and defined as the completion, with respect to the above mentioned inner product, of the space $\Gamma_c(S)$ of compactly supported sections of S, we can initially consider $D$ as a symmetric operator on $\Gamma_c(S)$, and then take its closure in $L^2(S)$. This results in an unbounded operator on $L^2(S)$. It is a standard fact that if $M$ is a complete Riemannian manifold, then the closure of the operator $D$ in $L^2(S)$ is self-adjoint, i.e. $D$ is essentialy self-adjoint (see \cite[Theorem 5.7]{LM}).
\end{remark}	

It is a well-known fact that for operators on Hilbert spaces $H$, it is possible to define the so-called \textit{Borel functional calculus}. Given an unbounded self-adjoint operator on $H$, this allows for the definition of a bounded operator $f(T)$ on $H$ for any bounded Borel function $f$ on $\mathbb{R}$, such that the correspondence $f \to f(T)$ is a ring homomorphism that respects the involutions. In particular, by elliptic regularity, the following holds.

\begin{proposition} \label{compactfunctcalc}
	For a Dirac operator $D$ on a compact (and therefore complete) manifold, if $f$ is a continuous function vanishing at infinity, that is $f \in C_0(\mathbb{R})$, then the operator $f(D)$ is compact.
\end{proposition}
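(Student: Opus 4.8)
The plan is to reduce the assertion to the compactness of a single resolvent operator, and then to obtain the latter from elliptic regularity together with the Rellich compactness theorem. Since $M$ is compact it is complete, so by the remark above $D$ is essentially self-adjoint on $L^2(S)$ and the Borel functional calculus furnishes a $*$-homomorphism $\Phi\colon C_0(\mathbb{R})\to \mathcal{B}(L^2(S))$, $f\mapsto f(D)$. The functions $g_{\pm}(x)=(x\pm i)^{-1}$ belong to $C_0(\mathbb{R})$, are nowhere zero, separate the points of $\mathbb{R}$, and satisfy $\overline{g_+}=g_-$; hence, by the Stone--Weierstrass theorem for $C_0$-algebras, the $C^*$-subalgebra of $C_0(\mathbb{R})$ they generate is all of $C_0(\mathbb{R})$. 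Because the ideal $\mathcal{K}(L^2(S))$ of compact operators is closed in $\mathcal{B}(L^2(S))$ and two-sided, it is enough to check that $\Phi(g_+)=(D+i)^{-1}$ is compact: then $\Phi$ maps the generators of $C_0(\mathbb{R})$ into $\mathcal{K}(L^2(S))$, hence all of $C_0(\mathbb{R})$, which is the claim.

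To see that $(D+i)^{-1}$ is compact, note first that since $D$ is self-adjoint one has $\|(D+i)u\|_{L^2(S)}^2=\|Du\|_{L^2(S)}^2+\|u\|_{L^2(S)}^2$ for all $u$ in the domain, so $D+i$ is invertible with bounded inverse on $L^2(S)$. Moreover $D$ is a first-order elliptic differential operator, so G{\aa}rding's inequality gives a constant $C>0$ with $\|u\|_{H^1(S)}\le C\bigl(\|Du\|_{L^2(S)}+\|u\|_{L^2(S)}\bigr)$; combined with the previous identity this shows that $(D+i)^{-1}$ maps $L^2(S)$ boundedly into the first Sobolev space $H^1(S)$. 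As $M$ is compact, the inclusion $H^1(S)\hookrightarrow L^2(S)$ is compact by the Rellich--Kondrachov theorem. Therefore $(D+i)^{-1}$ is the composition of a bounded operator $L^2(S)\to H^1(S)$ with the compact inclusion $H^1(S)\hookrightarrow L^2(S)$, hence compact, and the proof is complete.

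There is no serious obstacle here, the statement being a classical consequence of elliptic theory; the only point that genuinely uses the nature of $D$ is the elliptic estimate bounding the $H^1$-norm by the graph norm of $D$ (equivalently, the identification $\mathrm{dom}(\overline{D})=H^1(S)$ with equivalent norms), which rests on the invertibility of the principal symbol of $D$.
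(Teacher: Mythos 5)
Your argument is correct and is precisely the standard elliptic-regularity proof that the paper invokes without detail: reduce to compactness of the resolvent $(D+i)^{-1}$ via Stone--Weierstrass, then combine the G{\aa}rding estimate with the Rellich--Kondrachov compact embedding $H^1(S)\hookrightarrow L^2(S)$ on the compact manifold $M$. No gaps; this matches the approach the paper alludes to.
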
	

This allows us to define the index of a Dirac operator. The notion of index is abstractly defined for Fredholm operators, which are bounded operators on Hilbert spaces that are invertible modulo compact operators. For a Fredholm operator $T$, both $Ker(T)$ and $Coker(T)$ are finite-dimensional, and thus the index is defined as 
\[ind(T) = \dim(Ker(T)) - \dim(Coker(T)) \in \mathbb{Z}\]

Choosing an appropriate continuous function $\chi$, called a \textit{chopping function}, it is shown that on a compact manifold, $\chi(D)$ is a Fredholm operator, and this is independent of the choice of $\chi$ up to compact operators. However, since $\chi(D)$ is a self-adjoint operator and its index would consequently be zero (in fact, $\dim(Coker(T))=\dim(Ker(T^*))$), the grading of the bundle on which it is defined and the fact that $D$ is an odd operator with respect to this grading are used. Therefore, the index of $D$ is defined as the index of the restriction of the operator $\chi(D)$ to the even component.

Unfortunately, and this will be our case, on a non-compact manifold, Proposition \ref{compactfunctcalc} does not hold. However, it can be shown that the operator $f(D)$ is locally compact. Moreover, in our context, we cannot even be sure of dealing with essentially self-adjoint operators due to the presence of incomplete metrics.

\subsection{Roe algebras}\label{secroealgebras}

Let $X$ be a smoothly stratified space of depth $1$ and $g$ be a wedge metric on $X$. Recall that such metric is Riemannian in the regular set of $X$: however, each stratum has a metric and all those fit together continuously. In particular, $X$ can be topologized as a metric space with distance given by the infimum over all rectificable curves joining two points. With such distance, $X$ becomes a complete proper metric space in the sense of Pflaum (\cite[Theorem 2.4.17]{Pflaum}).

Now, let $\Gamma$ be a discrete group and consider an arbitrary Galois cover $\pi \colon X_{\Gamma} \to X$. Since $\pi$ is a local homeomorphism, on $X_{\Gamma}$ a stratification is induced by the covering considering the preimages of the strata of $X$. In particular, $X_{\Gamma}$ becomes a smoothly stratified space of the same depth, whose link over a point $p \in X_{\Gamma}$ is diffeomorphic to the one over $\pi(p) \in X$.

With the aim of introducing the Higson-Roe sequence, we will define some C*-algebras of operators, specifically the Roe algebras, defined on suitable modules with respect to functions on a proper metric space.

Let $C_0(Y)$ be the $C^*$-algebra of continuous functions on a locally compact space which vanish at infinity. Of course, we can consider $C_0(X_{\Gamma})$ since $\pi$ is a local homeomorphism and then locally compactness is preserved. 

\begin{definition}
	A \textbf{covariant $X_{\Gamma}$-module} is a Hilbert space $H$ with a $\ast$-representation of $C_{0}(X_{\Gamma})$, i.e. a $\ast$-homomorphism $\rho \colon C_{0}(X_{\Gamma}) \to \mathcal{L}(H)$ and a representation of $\Gamma$ by unitaries on $H$ compatible with $\rho$ in the following sense:
	
	\[ \rho(f \circ \gamma)=\gamma \rho(f) \gamma^{-1}, \quad \forall \gamma \in \Gamma, \quad \forall f \in C_0(X_{\Gamma})\]
\end{definition}

Moreover, we require that the module $H$ is \textit{ample}, in the sense that the $\ast$-representation $\rho$ is non degenerate and $\rho(f)$ is not a compact operator except for $f=0 \in C_0(X_\Gamma)$. In the following definitions, we will simply denote by $f$ the operator induced by the representation $\rho$, omitting the notation $\rho(f)$.

\begin{definition}
Let $T \in \mathcal{L}(H_{X},H_{Y})$ be a linear, bounder operator between an $X_{\Gamma}$-module $H_X$ and a $Y_{\Gamma}$-module $H_Y$.
\begin{enumerate}[label=(\roman*)]
	\item The \textbf{support} of $T$ is the subset $supp(T) \subset X_{\Gamma} \times Y_{\Gamma}$ such that:
	 \[(x,y)\notin supp(T) \Longleftrightarrow  gTf=0 \in \mathcal{L}(H_X,H_Y),\] 
	 for some $f \in C_0(X_{\Gamma})$ and $g \in C_0(Y_{\Gamma})$ such that $f(x)\neq0$ and $g(y)\neq0$.

\item The \textbf{propagation} of an operator $T\in \mathcal{L}(H_X)$ is the minimum value $R\geq0$ such that $supp(T)$ is contained in an $R$-neighborhood of the diagonal in $X_{\Gamma} \times X_{\Gamma}$.

\item $T \in \mathcal{L}(H_X)$ is said \textbf{locally compact} if the operators $fT$ and $Tf$ are compact operators, while it is \textbf{pseudolocal} if $[f,T]$ is compact, for any $f \in C_0(X_{\Gamma})$.

\item Given a closed subset $Z \subset X_{\Gamma}$, we say that $T \in \mathcal{L}(H_X)$ is \textbf{supported near Z} if $supp(T) \subseteq B_R(Z) \times B_R(Z)$, where $B_R(Z)$ denotes an $R$-neighborhood of $Z$ in $X_{\Gamma}$, for some $R \geq 0$.

\end{enumerate}
\end{definition}

Having introduced the necessary nomenclature, we will now proceed to define the Roe algebras.

\begin{definition}
	Consider an ample, covariant $X_{\Gamma}$-module $H_X$ and a closed, subset $Z \subset X_\Gamma$, then we define:
	\begin{itemize} 
		\item $D^{\ast}_c(X_{\Gamma}, H_X)$ as the $C^{\ast}$-subalgebra of $\mathcal{L}(H_X)$ made by pseudo-local operators of finite propagation;
		\item $C^{\ast}_c(X_{\Gamma}, H_X)$ as the $C^{\ast}$-algebraic ideal in $D^{\ast}_c(X_{\Gamma}, H_X)$ of operators which are locally compact. 
		\item $D^{\ast}_c(Z\subset X_{\Gamma}, H_X)$ and $C^{\ast}_c(Z \subset X_{\Gamma}, H_X)$ as the respective ideals in the above $C^{\ast}$-algebras given by operators which are supported near $Z$.
	
	\end{itemize}	

These algebras are not complete; therefore, their norm closures are considered. These closures are denoted by omitting the subscript "c", thus $D^{\ast}(X_{\Gamma}, H_X)$, $C^{\ast}(X_{\Gamma}, H_X)$, $D^{\ast}(Z\subset X_{\Gamma}, H_X)$ and $C^{\ast}(Z \subset X_{\Gamma}, H_X)$
\end{definition}

\begin{remark}	
Certainly, all these definitions are valid for any $X$-modules with respect to proper metric spaces $X$ endowed with a proper, free action of a discrete group $\Gamma$ by isometries. Moreover, observe that we haven't used yet the $\Gamma$-action in the above definitions. In fact, the above can be defined also when $\Gamma$ is the trival group. 

However, it is possible to define the corresponding equivariant algebras. The unitary representation of $\Gamma$ on $H_X$ induces an adjoint representation on both $D^{\ast}_c(X_{\Gamma}, H_X)$ and $C^{\ast}_c(X_{\Gamma}, H_X)$. Then, we define $D^{\ast}(X_{\Gamma}, H_X)^{\Gamma}$, $C^{\ast}(X_{\Gamma}, H_X)^{\Gamma}$, $D^{\ast}(Z\subset X_{\Gamma}, H_X)^{\Gamma}$ and $C^{\ast}(Z\subset X_{\Gamma}, H_X)^{\Gamma}$ (where now $Z$ is required to be also $\Gamma$-invariant) as the closure of the $\Gamma$-invariant parts of the above algebras with respect to this action.
\end{remark}

These algebras are functorial with respect to the class of continuous and coarse functions. Recall that a function $f: X \to Y$ between two proper metric spaces is called \textit{coarse} if for every $R > 0$ there exists an $S > 0$ such that the image of every $R$-ball is contained in an $S$-ball, and furthermore, the preimage of any bounded set is bounded.

Functoriality on algebras is given by conjugation with respect to an isometry which suitably covers the map between the two metric spaces. 

Refering to the $\Gamma$-equivariant case, a continuous, $\Gamma$-equivariant (for the standard case simply consider $\Gamma=\{e\}$), coarse map $f\colon X_{\Gamma} \to Y_{\Gamma}$ induces mappings:

 \[D^{\ast}(X_{\Gamma}, H_X)^\Gamma \to D^{\ast}(Y_{\Gamma}, H_Y)^\Gamma, \quad C^{\ast}(X_{\Gamma}, H_X)^\Gamma \to C^{\ast}(Y_{\Gamma}, H_Y)^\Gamma,\] 
 for $H_X$ and $H_Y$ ample modules as above. In particular, such maps become canonical on the $K$-theory level and then the $K$-theory groups do not depend on the choice of the modules (see \cite[Lemma 3]{HRY}). Because of this, we will denote the above algebras omitting the module $H$.

Another application of functoriality appears when $Z \subset X_\Gamma$ is a closed $\Gamma$-subset, and then the inclusion $Z \hookrightarrow X_\Gamma$ induces the following isomorphisms.

\begin{lemma}[\cite{HRY}, Section 5, Lemma 1]\label{inclisomk}
	In the above hypothesis, the inclusion functorially induces isomorphisms on the $K$-theory level, i.e.:
	\[K_{\ast}(D^{\ast}(Z)^{\Gamma}) \simeq K_{\ast}(D^{\ast}(Z \subset X_\Gamma)^{\Gamma}), \quad K_{\ast}(C^{\ast}(Z)^{\Gamma}) \simeq K_{\ast}(C^{\ast}(Z \subset X_\Gamma)^{\Gamma})\]
\end{lemma}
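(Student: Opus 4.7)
The plan is to establish both isomorphisms simultaneously by constructing a $*$-homomorphism induced by a suitable isometry between ample modules, and then promoting it to a K-theoretic isomorphism via an amplification/Eilenberg-swindle argument; the argument works in parallel for the ideal $C^{\ast}$ and the ambient algebra $D^{\ast}$ since the relevant isometries preserve both local compactness and pseudolocality.

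First, I would fix an ample, $\Gamma$-equivariant $X_{\Gamma}$-module $H_X$ and construct from it an ample $Z$-module $H_Z$ together with a $\Gamma$-equivariant isometry $V\colon H_Z\hookrightarrow H_X$ of propagation zero, with the property that $V\,f\,V^{\ast}=\rho_X(\tilde f)\cdot VV^{\ast}$ whenever $f\in C_0(Z)$ is the restriction of some $\tilde f\in C_0(X_{\Gamma})$. The compatibility says that $V$ intertwines the $C_0(Z)$-representation on $H_Z$ with the restriction to $Z$ of the $C_0(X_{\Gamma})$-representation on $H_X$. Existence of such $V$ uses the ampleness of $H_X$, together with a standard amplification by $\ell^{2}(\NN)$ if necessary: ample modules over a proper metric $\Gamma$-space are unique up to $\Gamma$-equivariant unitary equivalence of propagation zero after stabilization.

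Second, with $V$ in hand, I would define the inclusion maps
\[
\iota\colon A^{\ast}(Z,H_Z)^{\Gamma}\longrightarrow A^{\ast}(Z\subset X_{\Gamma},H_X)^{\Gamma}, \quad T\longmapsto VTV^{\ast},
\]
for $A\in\{C,D\}$. The image is supported near $Z$ because $V(H_Z)$ sits over $Z$, and finite propagation, pseudolocality, and local compactness are all preserved since $V$ has propagation zero and intertwines the module structures. That $\iota$ is a $\ast$-homomorphism is immediate from $V^{\ast}V=1_{H_Z}$.

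Third, and this is where the main work lies, I would show that $\iota$ induces isomorphisms in K-theory. The standard strategy is to replace $H_X$ by the stabilized module $H_X\otimes\ell^{2}(\NN)$, which leaves $K_{\ast}$ of the Roe and structure algebras invariant, and then to build a $\Gamma$-equivariant isometry $W$ on this stabilized module with finite propagation and final space contained in $V(H_Z)\otimes\ell^{2}(\NN)$. Conjugation by $W$ produces a $\ast$-homomorphism
\[
\psi\colon A^{\ast}(Z\subset X_{\Gamma},H_X\otimes\ell^{2}(\NN))^{\Gamma}\longrightarrow A^{\ast}(Z,H_Z\otimes\ell^{2}(\NN))^{\Gamma},
\]
and one verifies that $\psi\circ\iota$ and $\iota\circ\psi$ are each unitarily equivalent, via a $\Gamma$-equivariant unitary of finite propagation constructed from $W$, to the identity. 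The existence of $W$ uses that any covariant isometry between two ample $Z$-modules inside an ample $X_{\Gamma}$-module becomes inner after stabilization: this is the Eilenberg-swindle form of the uniqueness of ample modules.

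The main obstacle is the simultaneous control of the three features: finite propagation, local compactness or pseudolocality, and $\Gamma$-equivariance. In particular, constructing $W$ with finite propagation directly on $H_X$ is generally impossible — a global controlled partition of unity on $X_{\Gamma}$ subordinate to $\Gamma$-invariant sets at bounded distance from $Z$ need not exist — and this is precisely why one must pass through the $\ell^{2}(\NN)$-amplification, where there is enough room to shuffle supports along the countable index while keeping the $\Gamma$-action intact. Once this swindle is carried out, the same isometry $W$ handles both algebras, yielding the desired K-theoretic isomorphisms for $C^{\ast}$ and $D^{\ast}$ simultaneously.
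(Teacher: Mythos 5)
First, note that the paper does not prove this lemma at all: it is quoted verbatim from \cite{HRY} (Section 5, Lemma 1), so the relevant comparison is with the argument there. Your forward map is the standard one — a $\Gamma$-equivariant, propagation-zero covering isometry $V$ for the inclusion $Z\hookrightarrow X_\Gamma$, with $\mathrm{Ad}_V$ preserving finite propagation, local compactness and pseudolocality — and that part is fine.

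The genuine gap is in your inverse $\psi$. You ask for a single $\Gamma$-equivariant isometry $W$ of \emph{finite} propagation on $H_X\otimes\ell^2(\mathbb{N})$ with range inside $V(H_Z)\otimes\ell^2(\mathbb{N})$, such that conjugation by $W$ defines a $\ast$-homomorphism on all of $A^{\ast}(Z\subset X_\Gamma)^\Gamma$. For $T\mapsto W^{\ast}TW$ to be multiplicative (even up to K-theoretically invisible corrections) on that algebra, $WW^{\ast}$ must act as a unit on it, hence must dominate $\chi_{B_R(Z)}\otimes 1$ for every $R$, because $C^{\ast}(Z\subset X_\Gamma)^\Gamma$ is by definition the closure of the union over \emph{all} $R$ of operators supported in $B_R(Z)\times B_R(Z)$. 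Whenever $\bigcup_R B_R(Z)$ is all of $X_\Gamma$ (the typical situation), this forces $W$ to move vectors supported at distance $R$ from $Z$ onto $Z$ for every $R$, i.e.\ to have infinite propagation. The $\ell^2(\mathbb{N})$-amplification cannot rescue this: the swindle addresses uniqueness of ample modules and independence of the choice of covering isometry, not propagation bounds, so you have misdiagnosed the obstacle. The standard repair, and the argument in \cite{HRY}, is a direct-limit decomposition: $C^{\ast}(Z\subset X_\Gamma)^\Gamma$ is the closure of the increasing union of the subalgebras of operators supported near the penumbras $B_R(Z)$; each such subalgebra is the Roe algebra of $B_R(Z)$ on the submodule $\chi_{B_R(Z)}H_X$; the inclusion $Z\hookrightarrow B_R(Z)$ is a $\Gamma$-equivariant coarse equivalence (a coarse inverse moves points at most $R$, so here a covering isometry of propagation $\leq R$ does exist); and $K$-theory commutes with direct limits. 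The same skeleton, with the usual extra care about functoriality of $D^{\ast}$ under continuous coarse maps, gives the statement for $D^{\ast}$.
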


Since $C^{\ast}(X_{\Gamma})^{\Gamma}$ is by construction an ideal, we obtain the following short exact sequence:

	\[
	\begin{tikzcd}
		0 \arrow[r] & C^{\ast}(X_{\Gamma})^{\Gamma} \arrow[r] & D^{\ast}(X_{\Gamma})^{\Gamma} \arrow[r] & D^{\ast}(X_{\Gamma})^{\Gamma} / C^{\ast}(X_{\Gamma})^{\Gamma} \arrow[r] & 0
	\end{tikzcd}
	\]
	
The associated long exact sequence in $K$-theory is the well-known \textit{Higson-Roe surgery sequence}. However, assuming that the action of $\Gamma$ is free, then $K_{\ast}(D^{\ast}(X_{\Gamma})^{\Gamma} / C^{\ast}(X_{\Gamma})^{\Gamma}) \simeq K^{\Gamma}_{\ast-1}(X_{\Gamma})$, where $K^{\Gamma}_{\ast-1}(X_\Gamma)$ is the equivariant $K$-homology of $X_{\Gamma}$ (see \cite[Lemma 5.15]{Roeindextheory}) and one can obtains the following.

	\begin{equation}\label{higsonroeseq}
	\begin{tikzcd}[sep=small, row sep=1.5em]
		\ldots \arrow[r] & K_n(C^{\ast}(X_{\Gamma})^{\Gamma}) \arrow[r] & K_n(D^{\ast}(X_{\Gamma})^{\Gamma}) \arrow[r] & K^{\Gamma}_{n-1}(X_{\Gamma}) \arrow[r, "\mu"] & K_{n-1}(C^{\ast}(X_{\Gamma})^{\Gamma}) \arrow[r] & \ldots
	\end{tikzcd}	
	\end{equation}
	
where $\mu \colon K^{\Gamma}_{n-1}(X_{\Gamma})  \to K_{n-1}(C^{\ast}(X_{\Gamma})^{\Gamma})$ is the \textit{coarse assembly map}. 

If $X/\Gamma$ is a finite complex, i.e. the quotient is compact, one can use the $K$-homology of $X/\Gamma$ instead of $K^{\Gamma}_{\ast-1}(X)$.

\begin{remark} \label{roecoefficients}
	All the above can certainly be generalized to the context in which Hilbert $A$-modules (with $A$ a $C^*$-algebra) instead of Hilbert spaces and adjointable, Hilbert module maps (which, in particular, are $A$-linear with respect to the $A$-action) instead of bounded, linear operators are considered obtaining a description of Roe algebras \textit{with coefficients in $A$}. In this way, it is also possible to give a "real-analogue" of the description above, i.e., for real Hilbert spaces. The corresponding Roe algebras with coefficients in $A$ will subsequently be denoted as $C^{\ast}(X_{\Gamma}; A)^{\Gamma}$, $D^{\ast}(X_{\Gamma}; A)^{\Gamma}$...
\end{remark}

\subsection{KO-homology index class in the wedge setting}\label{k-homologyindexclass}

We now describe how an index class in equivariant $KO$-homology can be defined in the context of spin stratified pseudomanifolds. In partcular we will focus on the case of $(L,G)$-singularities, introduced in Section \ref{L,Gsetting}.

First of all, we need to clarify what kind of Dirac type operator we can consider starting from a wedge metric. In particular, we will follow the treatment contained in \cite{AlbinGell}.
	
Let then $M$ be a spin stratified pseudomanifold of depth $1$, endowed with a wedge metric $g$.
Consider its resolution $M_r$ as its blowup with the induced metric $g$ (see Remark \ref{wedgemetricresolution}) denote by $\leftindex^{w}TM_r$ its wedge tangent bundle.  Recall that a wedge metric $g$ can be considered a bundle metric on the wedge cotangent bundle. Thus, we can construct the \textit{wedge Clifford bundle} $\leftindex^{w}T^*M$ associated to this metric :

\[
\leftindex^{w}Cl(M_r,g) = \sum_k \otimes^k \left(\leftindex^{w}TM_r \right) / \langle v \otimes w + w \otimes v + 2 g(v,w) \rangle
\]
i.e. by taking the Clifford algebra on each fiber of $\leftindex^{w}TM_r$. Now, the orthonormal frame bundle of $\leftindex^{w}TM_r$ gives an extension of the orthonormal frame bundle of the interior of $M_r$ to the boundary (recall what said about the frame (\ref{wedgeframe}) in Section \ref{secwedge}), then denote by $\slashed{\mathfrak{S}}_g(M_r):= P_{Spin(n)}(M_{r}) \times_{l} Cl_n$ the bundle already introduced in Section \ref{secwedgediracoperator},  where $P_{Spin(n)}$ is the spin structure of the resolution of $M_r$, $Cl_n \equiv Cl_{n,0}$ denotes the real Clifford algebra and $l\colon Spin(n) \to Hom(Cl_n, Cl_n)$ the representation given by left multiplication. Again, $\slashed{\mathfrak{S}}_g(M_r)$ is $\mathbb{Z}_2$-graded and it has a canonical fiberwise right $Cl_n$-action which makes it a bundle of rank-$1$ right $Cl_n$-modules.

The Clifford multiplication extends smoothly to sections of $\leftindex^{w}TM_r$, meaning that:

\[c \colon \Gamma(\leftindex^{w}TM_r) \to \Gamma(End(\slashed{\mathfrak{S}}_g(M_r))),\]

Moreover, in \cite[2.1]{AlbinGell} performed a detailed study of the behaviour of the Levi-Civita connection of $g$ on $\leftindex^{w}TM_r$, especially on a collar neighborhood of the boundary of $M_r$. In particular, if $\nabla$ denotes the spin connection induced by this, we get the $Cl_n$-linear Atiyah-Singer operator $\slashed{\mathfrak{D}}^g$, acting on sections of $\slashed{\mathfrak{S}}_g(M_r)$ defined as usual as:

\[\slashed{\mathfrak{D}}^g=\sum_i c(e_i) \cdot \nabla_{e_i} \]

\begin{remark}
	We recall that $\slashed{\mathfrak{D}}_g$ is odd, commuting with the right $Cl_n$-action on $\slashed{\mathfrak{S}}_g(M_r)$. In particular, for this operator the \textit{Schrödinger-Lichnerowicz formula} holds:
	
	\[(\slashed{\mathfrak{D}}^g)^2=\nabla^*\nabla + \frac{1}{4}k_g,\]
where $k_g$ denotes the scalar curvature of $g$ and $\nabla^*\nabla$ is the connection Laplacian in which $\nabla^*$ denotes the formal adjoint of $\nabla$ (\cite[Theorem 8.8]{LM}).	
\end{remark}

Observe that the wedge metric $g$ induces a splitting on the fibration at the boundary of $M_r$ given by $T(\partial M_r / \beta M) \oplus \pi_r^*(T(\beta M))$, there $T(\partial M_r / \beta M)$ is the vertical tangent bundle, which consequently induces the following decomposition:

\[
\slashed{\mathfrak{S}}(\partial M_r) \simeq \slashed{\mathfrak{S}}(\partial M_r / \beta M) \widehat{\otimes} \slashed{\mathfrak{S}}(\beta M)
\]

In particular, omitting the metric $g$ in the notation, \cite[Lemma 2.2]{AlbinGell} states that on a collar neighborhood of the boundary (i.e. equivalently near the singular stratum) this operator has the following form:

\begin{equation}\label{wedgediracdec}
	\slashed{\mathfrak{D}}=c(\partial_r)\cdot\left(\partial_r + \frac{l}{2r}\right) + \frac{1}{r} \slashed{\mathfrak{D}}_{\partial M / \beta M} \widehat{\otimes} Id + Id \widehat{\otimes} \slashed{\mathfrak{D}}_{\beta M} + \mathfrak{B},\end{equation}

where $r$ is a boundary defining function of $\partial M_r$, $\mathfrak{B}$ indicates an endomorphism of $\slashed{\mathfrak{S}}(M_r)$, i.e. an element of $\Gamma(End(\slashed{\mathfrak{S}}(M_r)))$ and $\slashed{\mathfrak{D}}_{\partial M / \beta M}$ denotes the family of vertical Atiyah-Singer operators.

Factoring out $r^{-1}$ in (\ref{wedgediracdec}), it emerges that $\slashed{\mathfrak{D}} = r^{-1} \slashed{\mathfrak{D}}_e$, with $\slashed{\mathfrak{D}}_e$ being an edge differential operator. Thus, in particular, $\slashed{\mathfrak{D}}$ turns out to be a wedge differential operator of order $1$, which can be considered initially with domain equal to the compactly supported smooth sections $\Gamma_c(\slashed{\mathfrak{S}}(M_r))$ restricted to the interior $\mathring{M_r}$. In particular, $\slashed{\mathfrak{D}}$ can be seen as an unbounded operator in $\Gamma_c(\slashed{\mathfrak{S}}(M_r))|_{\mathring{M_r}} \subset L^2(\slashed{\mathfrak{S}}(M_r))$. 

The following result gives sufficient conditions to the existence of a self-adjoint extensions of this operator in $L^2(\slashed{\mathfrak{S}}(M_r))$.

\begin{theorem}\label{essentialyselfadj}
	[\cite{BPR1}, Theorem 3.5]
Let $M$ be a spin stratified pseudomanifold of depth $1$, $M_r$ be its resolution and $g$ be a wedge metric on $M$. Using the notations above, let $\slashed{\mathfrak{D}}_L$ indicate the generic operator induced on each fiber over each point of $\beta M$ by the vertical family $\slashed{\mathfrak{D}}_{\partial M / \beta M}$.  Then the following hold:
\begin{itemize}
	\item If for each fiber $L$ of the fibration $\partial M_r \to \beta M$,
	\begin{equation}\label{specl2}
		spec_{L^2}(\slashed{\mathfrak{D}}_L)\cap (-1/2,1/2)=\emptyset,
\end{equation}	
	 then the operator $\slashed{\mathfrak{D}}$  with domain $\Gamma_c(\slashed{\mathfrak{S}}(M_r))|_{\mathring{M_r}} \subset L^2(\slashed{\mathfrak{S}}(M_r))$ is essentialy self-adjoint and its unique self-adjoint extension defines a $Cl_n$-linear Fredholm operator;
	\item If the vertical metric $g_{\partial M/\beta M}$ induces a metric of positive scalar curvature on each vertical fiber $L$, then:
	\begin{equation}\label{specl22}
		spec_{L^2}(\slashed{\mathfrak{D}}_L)\cap (-\epsilon,\epsilon)=\emptyset,
	\end{equation}	
	meaning that, up to rescaling the wedge metric $g$ on the vertical tangent bundle of the boundary fibration, one can achieve condition (\ref{specl2}).
\end{itemize}

\end{theorem}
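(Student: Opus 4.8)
The plan is to reduce essential self--adjointness to a local question at the singular stratum $\beta M$ and then to a one--dimensional radial model, in which the spectral window $(-1/2,1/2)$ of the vertical operator is precisely the deciding quantity. Away from a collar of $\partial M_r$ the wedge metric $g$ is an honest smooth Riemannian metric on a precompact region, so the deficiency indices of the symmetric operator $\slashed{\mathfrak{D}}$ are governed entirely by the behaviour as $r\to 0$: an $L^2$ solution of $(\slashed{\mathfrak{D}}\pm i)u=0$ is smooth in the interior by elliptic regularity, and the obstruction to $u\equiv 0$ concentrates near $\beta M$. It thus suffices to treat the operator on a collar $\partial M_r\times(0,1)_r$, completed to the model cone bundle, where I would use the splitting $\slashed{\mathfrak{S}}(\partial M_r)\simeq\slashed{\mathfrak{S}}(\partial M_r/\beta M)\,\widehat{\otimes}\,\slashed{\mathfrak{S}}(\beta M)$ together with the normal form (\ref{wedgediracdec}) to write $\slashed{\mathfrak{D}}$ as the sum of the singular radial/vertical part $c(\partial_r)\bigl(\partial_r+\tfrac{l}{2r}\bigr)+\tfrac1r\,\slashed{\mathfrak{D}}_{\partial M/\beta M}\widehat{\otimes}\mathrm{Id}$ and the remainder $\mathrm{Id}\,\widehat{\otimes}\,\slashed{\mathfrak{D}}_{\beta M}+\mathfrak{B}$, whose coefficients are uniformly bounded. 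The remainder is relatively bounded with relative bound zero with respect to the singular part (it stays bounded while that part blows up like $1/r$), so a Kato--Rellich perturbation argument shows it does not affect essential self--adjointness, leaving only the radial/vertical model.

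Next I would separate variables in the model using the eigenspace decomposition of the vertical operator $\slashed{\mathfrak{D}}_L$ on the closed fibre $L$, with eigenvalues $\{\mu_j\}$. On each eigenspace, after conjugating out the weight $r^{l/2}$ from the conical volume density and diagonalizing the Clifford action of $c(\partial_r)$, the singular part becomes a first--order ordinary differential operator on $L^2((0,1),dr)$ with a regular singular point at $r=0$ whose indicial roots are determined by $\mu_j$. The classical limit point / limit circle dichotomy then gives that the minimal and maximal extensions at $r=0$ coincide precisely when $|\mu_j|\ge\tfrac12$, the value $\tfrac12$ being the one produced by the indicial equation of a Dirac--type operator on a metric cone. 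Under the hypothesis $\mathrm{spec}_{L^2}(\slashed{\mathfrak{D}}_L)\cap(-1/2,1/2)=\emptyset$ every block is therefore essentially self--adjoint; assembling the blocks and re--incorporating the interior together with the bounded perturbation, $\slashed{\mathfrak{D}}$ with domain $\Gamma_c(\slashed{\mathfrak{S}}(M_r))|_{\mathring{M_r}}$ is essentially self--adjoint, with a unique self--adjoint closure $\overline{\slashed{\mathfrak{D}}}$.

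For the $Cl_n$--linear Fredholm property I would show that $\overline{\slashed{\mathfrak{D}}}$ has discrete spectrum, i.e.\ that its graph--norm domain embeds compactly into $L^2(\slashed{\mathfrak{S}}(M_r))$. Writing $\slashed{\mathfrak{D}}=r^{-1}\slashed{\mathfrak{D}}_e$ with $\slashed{\mathfrak{D}}_e$ an edge differential operator on the \emph{compact} manifold with boundary $M_r$, the gap condition (\ref{specl2}) is exactly what makes the indicial family of $\slashed{\mathfrak{D}}_e$ invertible on the relevant weight line; the edge parametrix construction then yields a weighted elliptic estimate bounding a positive--order weighted edge--Sobolev norm of $u$ by $\|\slashed{\mathfrak{D}}u\|+\|u\|$, and on the compact $M_r$ such a weighted space includes compactly into $L^2$. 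Hence $\overline{\slashed{\mathfrak{D}}}$ has compact resolvent, in particular is Fredholm; since the operator, the weight conjugation and all parametrices commute with the fibrewise right $Cl_n$--action, $\overline{\slashed{\mathfrak{D}}}$ is a $Cl_n$--linear self--adjoint Fredholm operator and defines the desired index class.

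Finally, for the second bullet I would apply the Schr\"odinger--Lichnerowicz formula fibrewise to $\slashed{\mathfrak{D}}_L$ on $(L,g_{\partial M/\beta M}|_L)$, obtaining $\slashed{\mathfrak{D}}_L^{\,2}=\nabla^*\nabla+\tfrac14 k_L\ge\tfrac14\,m$ with $m:=\min_{\beta M}\min_{L}k_L>0$ (the minimum is attained and positive since $\beta M$ is compact and the fibrewise scalar curvature is positive), so $\mathrm{spec}_{L^2}(\slashed{\mathfrak{D}}_L)\cap(-\epsilon,\epsilon)=\emptyset$ with $\epsilon=\tfrac12\sqrt{m}$, which is (\ref{specl22}). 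Rescaling the vertical metric by a constant, $g_{\partial M/\beta M}\rightsquigarrow\lambda^2 g_{\partial M/\beta M}$, replaces $\slashed{\mathfrak{D}}_L$ by $\lambda^{-1}\slashed{\mathfrak{D}}_L$, so its spectrum avoids $(-\lambda^{-1}\epsilon,\lambda^{-1}\epsilon)$; taking $\lambda\le 2\epsilon$ produces the window $(-1/2,1/2)$, i.e.\ (\ref{specl2}), and one checks the rescaled datum is still a wedge (resp.\ well--adapted, cf.\ Remark \ref{killing}) metric up to $O(r)$ terms that are immaterial for the model. The main obstacle is the second step: pinning down $|\mu_j|\ge\tfrac12$ as the \emph{exact} threshold for essential self--adjointness and then upgrading this to the Fredholm statement, both of which rest on the precise indicial analysis of the edge/cone Dirac operator (as developed in \cite{AlbinGell} and \cite{BPR1}) rather than on soft arguments.
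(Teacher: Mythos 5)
The paper does not actually contain a proof of this theorem: it is stated as a citation to \cite[Theorem~3.5]{BPR1}, so there is no in-house argument to compare against. Your sketch is nonetheless worth examining on its own terms, and it contains one genuine gap together with one sound alternative route.

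The gap is in the Kato--Rellich step. You split $\slashed{\mathfrak{D}}$ in the collar, using (\ref{wedgediracdec}), into a ``singular part'' $A = c(\partial_r)\bigl(\partial_r + \tfrac{l}{2r}\bigr) + \tfrac1r\,\slashed{\mathfrak{D}}_{\partial M / \beta M}\widehat{\otimes}\mathrm{Id}$ and a ``remainder'' $\mathrm{Id}\widehat{\otimes}\slashed{\mathfrak{D}}_{\beta M} + \mathfrak{B}$, and assert that the remainder is relatively bounded with relative bound zero with respect to $A$ because its coefficients stay bounded while $A$ blows up like $1/r$. This is not correct when $\beta M$ has positive dimension. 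The operator $A$ is elliptic only in the radial and vertical directions; its symbol contains no horizontal (base) covectors. Consequently $\|\slashed{\mathfrak{D}}_{\beta M}u\|$ is in no way controlled by $\|Au\| + \|u\|$: taking sections of the form $u = e^{i\xi\cdot y}\phi(r,z)$ with $|\xi|\to\infty$ sends $\|\slashed{\mathfrak{D}}_{\beta M}u\|$ to infinity while $\|Au\|$ stays fixed. So $\slashed{\mathfrak{D}}_{\beta M}$ is not even relatively bounded, let alone with bound zero, and the reduction to the radial/vertical model via Kato--Rellich fails. The separation-of-variables step that follows (diagonalize $\slashed{\mathfrak{D}}_L$, conjugate out $r^{l/2}$, limit-point/limit-circle with threshold $|\mu_j|=\tfrac12$) is the correct classical argument for an \emph{isolated} conical singularity (where $\beta M$ is a point and $\slashed{\mathfrak{D}}_{\beta M}$ is absent), but one cannot simply peel off the horizontal Dirac operator and land there in the fibered case.

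The good news is that you already wrote down the correct mechanism when treating the Fredholm claim: write $\slashed{\mathfrak{D}} = r^{-1}\slashed{\mathfrak{D}}_e$ with $\slashed{\mathfrak{D}}_e\in\mathrm{Diff}^1_e(M_r)$, note that (\ref{specl2}) makes the indicial family of $\slashed{\mathfrak{D}}_e$ invertible on the relevant weight line, and build an edge parametrix whose mapping properties give a weighted elliptic estimate. This is precisely how Albin--Gell-Redman \cite{AlbinGell} (and \cite{BPR1}) prove \emph{both} essential self-adjointness and Fredholmness at once: the parametrix identifies the maximal and minimal domains (which is the same as essential self-adjointness for a symmetric operator) and simultaneously gives the compact embedding of the graph domain into $L^2$. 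So the fix is to run the edge-calculus argument for the whole of the first bullet rather than only for the Fredholm half, and to drop the Kato--Rellich reduction entirely. The point that everything commutes with the right $Cl_n$-action is fine. Your treatment of the second bullet is correct: Schr\"odinger--Lichnerowicz applied fibrewise gives $\slashed{\mathfrak{D}}_L^2 \geq \tfrac14 k_L^{\min} > 0$ uniformly over the compact base, hence a gap $(-\epsilon,\epsilon)$ with $\epsilon = \tfrac12\sqrt{k_L^{\min}}$, and a constant rescaling of the vertical metric scales $\slashed{\mathfrak{D}}_L$ by the reciprocal factor, producing (\ref{specl2}) while preserving the wedge form of the metric.
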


\begin{definition}
	Let $(M,g)$ be a spin stratified pseudomanifold of depth $1$ with a adapted wedge metric. We say that $M$ is \textbf{geometric Witt} if (\ref{specl2}) holds, while it is \textbf{psc-Witt} if the vertical metric $g_{\partial M_r/\beta M}$ induces psc metrics on the vertical fibers of the boundary of the resolution, and then satisfying condition (\ref{specl22}).
\end{definition}

\begin{remark}The main observation here is that, when dealing with spin pseudomanifolds with $(L,G)$-singularities, a well-adapted wedge metric always matches condition (\ref{specl22}) and then we can always assume the existence of such unique self-adjoint extension. 
\end{remark}	

Recall that these are in particular smoothly stratified spaces of depth $1$ where both the resolutions and the depth-$1$ stratum are spin manifolds such that the link over such stratum is a fixed homogeneous spaces $L=G/K$, with $G$ a connected, semisimple Lie group. 

We recall briefly the definition of $KO$-theory groups for a real unital $C^*$-algebra. 
Firstly, one define the following monoid:

\[V(A):=\{[p] \ \ | \ \ p=p^*=p^2 \in \mathbb{M}_\infty(A)\},\]
where $\mathbb{M}_\infty(A)$ denotes the $C^*$-algebra of square matrices with values in $A$ of infinite order, i.e. $\mathbb{M}_\infty(A):=\bigcup_{n \in\mathbb{N}} \mathbb{M}_n(A)$ and $[p]$ denotes an equivalence class of projections as follows. Two projections $p,q \in \mathbb{M}_\infty(A)$ are equivalent if there is a $v \in \mathbb{M}_\infty(A)$ such that $p=v^*v$ and $q=vv^*$.
The addition operation in $V(A)$ is given by:

\[[p]+[q]:=[diag(p,q)],\]
where $diag(p,q)$ denotes the block diagonal matrix made by $p$ and $q$.
\begin{definition}
	Let $A$ be a real, unital $C^*$-algebra.
	The group $KO_0(A)$ is defined as the Grothendieck group of the monoid $V(A)$.
	The $n$-th $KO$-theory group of $A$ is defined as:

	\[KO_n(A):=KO_0(A \otimes Cl_n),\]
	where $Cl_n$ denotes the real Clifford algebra. 

\end{definition}

\begin{remark}
An element in the $KO_0$-group can be seen as a formal difference of equivalence classes of projections in the real $C^*$-algebra $A \otimes \mathbb{K}$.

\end{remark}

Now let $\pi\colon M_{\Gamma} \to M$ be a Galois $\Gamma$-cover of a spin pseudomanifold with $(L,G)$-singularities as discussed in Section \ref{secroealgebras}. $M_\Gamma$ is automatically spin, and endow it with the lifted $\Gamma$-equivariant metric $g_{\Gamma}$ of a well-adapted wedge metric $g$ on $M$, which is $\Gamma$-equivariant, and consider the $\Gamma$-equivariant, $Cl_n$-linear (wedge) Atiyah-Singer operator $\slashed{\mathfrak{D}}_{\Gamma}$.

\begin{proposition} \label{fundamentalclass}
	Let $\pi \colon M_\Gamma \to M$ be a Galois $\Gamma$-covering as before, with $M$ a spin pseudomanifold with $(L,G)$-singularities of dimension $n$. 
	Then there is a well defined fundamental class associated to $\slashed{\mathfrak{D}}_\Gamma$, i.e. a $\Gamma$-equivariant real K-homology class $[\slashed{\mathfrak{D}}_\Gamma] \in KO^{\Gamma}_{n}(M_{\Gamma})$.
\end{proposition}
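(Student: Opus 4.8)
The plan is to run the standard construction that attaches a $KO$-homology class to an equivariant, essentially self-adjoint, odd, $Cl_n$-linear Dirac-type operator, the only genuine difficulty being the incompleteness of the wedge metric near the singular stratum; this is precisely what the analytic input of Section~\ref{k-homologyindexclass}, specialized to the $(L,G)$-setting, is designed to overcome. Since $g$ is well-adapted, $M$ (and hence any Galois cover $M_\Gamma$) is \emph{psc-Witt}, so Theorem~\ref{essentialyselfadj} applies and makes the operator amenable to the usual machinery.

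\textbf{Step 1: set up the analytic cycle.} Lift the well-adapted wedge metric $g$ to the $\Gamma$-equivariant metric $g_\Gamma$ on $M_\Gamma$; note that the resolution of $M_\Gamma$ is again spin and the boundary fibration is again an associated $G$-bundle, so $M_\Gamma$ is again a pseudomanifold with $(L,G)$-singularities. Form the graded Hilbert space $H = L^2(\slashed{\mathfrak{S}}_{g_\Gamma}(M_{r,\Gamma}))$ of $L^2$-sections of the $Cl_n$-linear spinor bundle built as in Section~\ref{k-homologyindexclass} (the frame (\ref{wedgeframe}) being orthonormal also over the boundary makes this well defined for the incomplete metric). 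Then $H$ is a right $Cl_n$-module and, via multiplication by functions in $C_0(M_\Gamma)$, an ample covariant $M_\Gamma$-module; the operator $\slashed{\mathfrak{D}}_\Gamma$, with initial domain $\Gamma_c(\slashed{\mathfrak{S}}_{g_\Gamma})|_{\mathring{M}_{r,\Gamma}}$, is odd, $\Gamma$-equivariant, and commutes with the right $Cl_n$-action. Because $M_\Gamma$ is psc-Witt, Theorem~\ref{essentialyselfadj} (after the harmless rescaling of the vertical part of the metric, addressed in Step~3) shows that $\slashed{\mathfrak{D}}_\Gamma$ is essentially self-adjoint and that its unique closure is a $Cl_n$-linear Fredholm operator.

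\textbf{Step 2: produce the cycle.} Pick an odd chopping function $\chi\in C_b(\mathbb{R})$ with $\chi(\pm\infty)=\pm1$ and $\chi^2-1\in C_0(\mathbb{R})$, and set $F=\chi(\slashed{\mathfrak{D}}_\Gamma)$ by Borel functional calculus. Then $F$ is odd, self-adjoint, $\Gamma$-equivariant, and commutes with $Cl_n$; one verifies that $F^2-1=(\chi^2-1)(\slashed{\mathfrak{D}}_\Gamma)$ and $[F,f]$ for $f\in C_0(M_\Gamma)$ are locally compact, i.e. $\Gamma$-compact over the compact quotient $M$. On the regular part this is classical elliptic theory together with finite-propagation approximation of $\chi(\slashed{\mathfrak{D}}_\Gamma)$ as in the Higson--Roe framework of Section~\ref{secroealgebras}; near $\beta M_\Gamma$ it follows from the edge decomposition (\ref{wedgediracdec}) and the spectral-gap condition (\ref{specl2}), via the microlocal estimates for $\slashed{\mathfrak{D}}=r^{-1}\slashed{\mathfrak{D}}_e$ of \cite{AlbinGell} and \cite{BPR1}. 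This packages $(H,F)$ into a cycle for $KO^{\Gamma}_n(M_\Gamma)$ (the $Cl_n$-linearity encoding the degree $n$), and we define $[\slashed{\mathfrak{D}}_\Gamma]$ to be its class.

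\textbf{Step 3: well-definedness, and the main obstacle.} Independence of $\chi$ is routine: two admissible chopping functions are joined by a path $\chi_t$ with $\chi_t^2-1\in C_0(\mathbb{R})$, yielding an operator homotopy of cycles. The self-adjoint extension is canonical, since Theorem~\ref{essentialyselfadj} gives essential self-adjointness, so no boundary condition is chosen; the rescaling of the vertical metric needed to achieve (\ref{specl2}) is a homotopy of well-adapted wedge metrics (invoking the homotopy statement of \cite{ALMP} adapted to this class), along which the operators form a norm-continuous family of self-adjoint Fredholm operators, so the class is unchanged; and the choice of ample module is immaterial at the level of $KO$-homology by the standard argument (cf. \cite[Lemma 3]{HRY}). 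The main obstacle in all of this is the analysis near the singular stratum in Step~2: showing that the incomplete conical directions do not spoil $KO$-homological finiteness, i.e. that $(\chi^2-1)(\slashed{\mathfrak{D}}_\Gamma)$ and $f(\slashed{\mathfrak{D}}_\Gamma)$ remain locally compact there. This is exactly where the $(L,G)$-geometry and the psc-Witt hypothesis are indispensable, since they force the vertical family $\slashed{\mathfrak{D}}_L$ to have a spectral gap around $0$; I would quote Theorem~\ref{essentialyselfadj} and the edge-calculus estimates of \cite{AlbinGell,BPR1} for this rather than redo the microlocal analysis.
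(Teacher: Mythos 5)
Your proposal is correct and follows essentially the same route as the paper: lift the well-adapted wedge metric, invoke the psc-Witt property together with Theorem \ref{essentialyselfadj} to get the essentially self-adjoint $Cl_n$-linear extension on $L^2(\slashed{\mathfrak{S}}((M_\Gamma)_r))$ viewed as an ample covariant module, and apply functional calculus with a chopping function. The only (cosmetic) difference is that you package $(H,\chi(\slashed{\mathfrak{D}}_\Gamma))$ directly as a Fredholm-module cycle, whereas the paper reads off the class from $\chi(\slashed{\mathfrak{D}}_\Gamma)\in D^{\ast}(M_\Gamma;Cl_n)^\Gamma$ via the Paschke-type isomorphism $KO_{n+1}(D^{\ast}(M_\Gamma;\mathbb{R})^\Gamma/C^{\ast}(M_\Gamma;\mathbb{R})^\Gamma)\simeq KO_n^\Gamma(M_\Gamma)$; these are equivalent descriptions of $KO$-homology.
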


\begin{proof}
	Note, in particular, that since $\slashed{\mathfrak{D}}_\Gamma$ is the $\Gamma$-equivariant lift of $\slashed{\mathfrak{D}}$, the induced operators on the vertical fibers are the same. Consequently, since $M$ is psc-Witt by construction, Theorem \ref{essentialyselfadj} can be applied to $M_\Gamma$: thus we assume the existence of a unique self-adjoint extension of $\slashed{\mathfrak{D}}_\Gamma$ in $L^2(\slashed{\mathfrak{S}}((M_\Gamma)_r))$.
	
	 Next, we can consider $L^2(\slashed{\mathfrak{S}}((M_\Gamma)_r))$ as an ample, covariant $M_\Gamma$-module using the $\ast$-representation of $C_0(M_{\Gamma})$ given by: 
	
	\[
	\begin{tikzcd}
		C_{0}(M_{\Gamma}) \arrow[r] & C_{0}((M_{\Gamma})_r) \arrow[r, "\rho"] & \mathcal{L}(L^2(\slashed{\mathfrak{S}}((M_\Gamma)_r))),
	\end{tikzcd}
 \]

	where the first arrow is simply the restriction of a function $f \in C_0(M_{\Gamma})$ to $(M_{\Gamma})_r$, while $\rho$ is  the usual representation of $C_0((M_{\Gamma})_r$).

	Then, let $\varphi \in C_0(\mathbb{R})$ and $\chi\colon \mathbb{R} \to [-1,1]$ be a chopping function, i.e. an odd continuous function such that $\chi(x) \to 1$ as $x \to \infty$. From \cite[Lemma 3.1]{Roebaumconnes}, it turns out that $\varphi(\slashed{\mathfrak{D}}_\Gamma)$ and $\chi(\slashed{\mathfrak{D}}_\Gamma)$, defined by using the functional calculus, are elements of $C^{\ast}(M_{\Gamma};Cl_n)^{\Gamma}$ and $D^{\ast}(M_{\Gamma};Cl_n)^{\Gamma}$ respectively.
	As reported in \cite[Lemma 3.1]{Roebaumconnes}, $\chi(\slashed{\mathfrak{D}}_\Gamma)$ defines an element in: \[KO_1(D^{\ast}(M_{\Gamma};Cl_n)^{\Gamma} / C^{\ast}(M_{\Gamma};Cl_n)^{\Gamma})\simeq KO_{n+1}(D^{\ast}(M_{\Gamma};\mathbb{R})^{\Gamma} / C^{\ast}(M_{\Gamma};\mathbb{R})^{\Gamma})\]
	
		Finally, one uses the isomorphism (see for example \cite[Lemma 2.2]{Roebaumconnes}):
	
	\[KO_{n+1}(D^{\ast}(M_{\Gamma};\mathbb{R})^{\Gamma} / C^{\ast}(M_{\Gamma};\mathbb{R})^{\Gamma}) \simeq KO_n^\Gamma(M_\Gamma),\]
	
	to obtain the desired fundamental class $[\slashed{\mathfrak{D}}_\Gamma]$.
	
%
%
%
%
%
	
\end{proof}

\begin{definition}\label{roeindex}
	In the above context, we define the \textbf{coarse index class} associated to $\slashed{\mathfrak{D}}_{\Gamma}$, as the image of $[\slashed{\mathfrak{D}}_{\Gamma}]$ under the coarse assembly map of the (real) Higson-Roe surgery sequence (\ref{higsonroeseq}):
	
	\[Ind(\slashed{\mathfrak{D}}_{\Gamma}):=\mu([\slashed{\mathfrak{D}}_{\Gamma}]) \in KO_n(C^{\ast}(M_{\Gamma}; \mathbb{R})^{\Gamma})\]
\end{definition}	

\begin{remark}\label{invindmetric}
	Note that the fundamental class just introduced, relative to the Atiyah-Singer operator in the wedge context, is equally well-defined when considering spin pseudomanifolds of depth 1 equipped with wedge metrics that are geometric Witt, i.e., satisfying the first condition of Theorem \ref{essentialyselfadj}. As a special case of \cite[Theorem 2.17]{BPR2}, if $g(t)$, for $t \in [0,1]$, is a family of geometric Witt metrics for every $t \in [0,1]$, then both the fundamental class and the coarse index class remain invariant (and the same for well-adapted wedge metrics). Therefore, since such a part can always be found between two well-adapted wedge metrics, the classes above in the $(L,G)$ context do not depend on the particular metric chosen, which is why we have omitted $g$ in the notation.
\end{remark}

\subsection{Graded Real $C^{\ast}$-algebras}

In this section, we proceed to introduce a new category of $C^*$-algebras, specifically those that are Real and equipped with a grading, for which we will subsequently define the respective $K$-theory groups. The advantage of working with these algebras lies in their greater generality and the fact that they allow us to work in both the complex and real cases simultaneously. 
Specifically, Real C*-algebras, as described below, are complex C*-algebras in the usual sense but endowed with an additional involution. With respect to this involution, the subalgebra given by the fixed points is a real C*-algebra, while the complexification of a real C*-algebra naturally admits a structure of a Real C*-algebra. In this way, the two categories are equivalent. A version of K-theory for Real C*-algebras will be introduced so that, by means of the forgetful functor that disregards the Real structure, it yields the usual K-theory for complex C*-algebras, while by restricting to the fixed points, it yields the KO-theory for real C*-algebras. For a more general introduction to the theory of Real $C^*$-algebras, we refer to \cite{Schroeder}.

\begin{definition} Let $A$ be a complex $C^{\ast}$-algebra:
	\begin{enumerate}[label=(\roman*)]
		\item $A$ is a \textbf{Real C$^*$-algebra} if it carries a $\ast$-isometric, antilinear, involutive automorphism $^- \colon a \mapsto \overline{a}$. 
		\item A $\ast$-morphism $\varphi\colon A \to B$ between two Real $C^*$-algebras is said Real if it preserves the Real-structures, meaning that $\overline{\varphi(a)}=\varphi(\overline{a})$, for each $a \in A$.
		\item A \textbf{$\mathbb{Z}_2$-grading} on a Real $C^{\ast}$-algebra $A$ is a Real, involutive $\ast$-automorphism $\alpha$. 
	\end{enumerate}
\end{definition}

Observe that the involution defining the Real structure preserves the order of multiplication. Moreover, a $\mathbb{Z}_2$-grading induces a decomposition:
\[A= A^{(0)} \oplus A^{(1)}, \quad A^{(i)}A^{(j)} \subset A^{(i+j)},\]
where $i,j \in \mathbb{Z}_2$. In particular, $\alpha(a)=(-1)^ia$ if $a \in A^{(i)}$.

A mapping $\varphi\colon A \to B$ between two graded $C^{\ast}$-algebras is \textit{graded} if preserves the grading, i.e. $\varphi(A^{(i)})\subset B^{(i)}$.

\begin{definition}\label{realification}
	If $A$ is a Real $C^*$-algebra, then its fixed point subalgebra with respect to this automorphism is naturally a real $C^{\ast}$-algebra, called the \textbf{realification} of $A$.
\end{definition} 

\begin{remark} 
	Most examples of Real $C^{\ast}$-algebras are given by complexification of real $C^{\ast}$-algebras $B$, i.e. $B \otimes_{\mathbb{R}} \mathbb{C}$ with involution given by complex conjugation. Of course, the realification in this case coincides with the real $C^*$-algebra $B$. Then one obtains that the category of real $C^*$-algebras and of Real $C^*$-algebras are equivalent. 
\end{remark}

\begin{definition}\label{maxgradedtensorproduct}
	Given two graded $C^{\ast}$ algebras $A$ and $B$, their \textbf{maximal graded tensor product} $A \widehat{\otimes} B$ is defined as the completion of their tensor product, but with product, grading, $\ast$-involution and $Real$-structure, given by:
	
	\[(a_1 \widehat{\otimes} b_1)(a_2 \widehat{\otimes}b_2)=(-1)^{ij} a_1a_2 \widehat{\otimes}b_1b_2, \quad b_1 \in B^{(i)}, \quad a_2 \in A^{(j)}, \]

	\[a \widehat{\otimes} b \in (A \widehat{\otimes} B)^{(i+j)}, \quad a \in A^{(i)}, \quad b \in B^{(j)},\]

	\[(a \widehat{\otimes} b)^{\ast}=(-1)^{ij}a^{\ast} \widehat{\otimes} b^{\ast}, \quad a \in A^{(i)}, \quad b \in B^{(j)},\]
	
	\[\overline{a \widehat{\otimes} b}=\overline{a} \widehat{\otimes} \overline{b}.\]
	
\end{definition}

\begin{definition}\label{RealCliffordalgebra}
	The \textbf{Real Clifford algebra} $Cl_{p,q}$ (here, with a small abuse of notation, we are using the same notation used for the real Clifford algebra in Section \ref{secwedgediracoperator} because of the following remark) is the unital, complex algebra generated by the real generators $\{e_1, \ldots, e_p, \epsilon_1, \ldots, \epsilon_q\}$ satisfying the relations:
	
	\[e_i^2=-1, \quad \epsilon_j^2=1,\] 
	\[\ e_ie_j=-e_je_i, \quad \epsilon_i\epsilon_j=-\epsilon_j\epsilon_i, \quad \text{if} \quad i \neq j,\]
	\[e_i\epsilon_j=-\epsilon_je_i, \quad \forall i,j.\]
	
	This become graded, Real $C^{\ast}$-algebras by assuming all generators are in $Cl_{p,q}^{(1)}$, i.e. they are odd, and the $\ast$-involution is:
	
	\[e_i^*=-e_i, \quad \epsilon_j^*=\epsilon_j, \quad \forall i,j\]

	Moreover, the Real structure is given by requiring that the involution $^-$ is the identity on the real generators and it is extended compatibly with the complex conjugation.
	
\end{definition}

\begin{remark} \label{Realreal}
	
Observe that $Cl_{p,q}$ corresponds, without considering the Real structure, to the complex Clifford algebra $\mathbb{C}l_{p,q}:=Cl(\mathbb{C}^{p+q}, \eta)$, where $\eta$ is the quadratic form:

\[\eta=\sum_{i=1}^p x_i^2 - \sum_{j=p+1}^{p+q} x_j^2.\]

However, if $p+q=n$, all quadratic forms on $\mathbb{C}^{n}$ are equivalent, then it suffices to consider $\mathbb{C}l_{p+q,0}$, which can be identified:

\[\mathbb{C}l_{p+q,0} \simeq Cl_{n,0} \otimes_{\mathbb{R}} \mathbb{C}.\]

Then, by choosing different Real structures, i.e. different involutions $^-$, one can obtain the real Clifford algebra $Cl_{r,s}$, where $r+s=n$ as its realification. In particular, with the involution defined in Definition \ref{RealCliffordalgebra}, one gets exactly the real Clifford algebras $Cl_{p,q}$.
\end{remark}
\begin{definition}\label{Realhilbertspace}
	A complex Hilbert space $H$ is called Real if it is equipped with a $\mathbb{C}$-antilinear involution $^-$. The complex C*-algebra $\mathcal{L}(H)$ of bounded operators on $H$ inherits an involution from that of $H$ by requiring that, if $T \in \mathcal{L}(H)$:
	
	\[\overline{T}(h):=\overline{T(\overline{h})}, \quad \forall h \in \mathcal{H}.\]
	
\end{definition}

\begin{remark}
	As reported in \cite[Appendix B]{HigsonRoe}, the K-theory groups of a Real C*-algebra $A$ are defined as the K-theory groups of $A$ (considered as a complex C*-algebra) where only the elements fixed by the Real structure $^-$ of $A$ are considered.
	
	In a similar manner, one can define the K-homology groups for a Real C*-algebra $A$ by considering Fredholm modules $(\rho, H, F)$, where $\rho$ is a representation of $A$, $H$ is a Real Hilbert space, and $F$ is an operator satisfying the usual conditions, along with the requirement that $\overline{F}=F$.
\end{remark}

\subsection{Localization algebras} \label{seclocalizationalgebras}

Now, we introduce a variant approach to the Roe algebras following \cite{Yu} and in \cite{Zeidler}: the localization algebras. These make easier to prove the next results.

 We need firstly to replace the Hilbert space in the definition of $X_{\Gamma}$-module with a graded Real $Cl_n$-Hilbert module (recall also Remark \ref{roecoefficients}), where $Cl_n$ is the Real Clifford algebra of Definition \ref{RealCliffordalgebra}. 

\begin{remark}
If $A$ is a Real $C^*$-algebra, we mean by a Real $A$-Hilbert module a $A$-Hilbert module $H$ such that it is endowed with a $\mathbb{C}$-antilinear 
 involution $^-$ compatible with that of $A$ (denoted again $^-$) in the following sense:
 
 \[\overline{w\cdot a}= \overline{w} \cdot \overline{a}, \quad \overline{\left(v,w\cdot a\right)}=\overline{\left(v,w\right)}\cdot \overline{a},\] 
 for all $v, w \in H$ and $a \in A$, where $\cdot$ denotes the right $A$-module operation and $\left(-,- \right)$ the $A$-valued product of $H$
\end{remark}

 Moreover, we require that all the representations are by even, $Cl_n$-linear, bounded, adjointable operators. We call such a module a covariant $(X_{\Gamma},Cl_n)$-module. It follows that all the above defined Roe algebras of Section \ref{secroealgebras}  have an analogue in this context (see also Remark \ref{roecoefficients}): we denote them $D^{\ast}(X_{\Gamma},H;Cl_n)^{\Gamma}$, $C^{\ast}(X_{\Gamma},H;Cl_n)^{\Gamma}$, $\ldots$
 Observe that, as specified in Definition \ref{Realhilbertspace}, such algebras become Real with involution induced by that of $H$.

\begin{definition}
	Let $H$ be an ample, covariant $(X_\Gamma, Cl_n)$-module. We introduce the following \textbf{localization algebras}.
	
	\begin{enumerate}[label=(\roman*)]
		\item $C_L^{\ast}(X_{\Gamma},H; Cl_n)^{\Gamma}$ is the $C^{\ast}$-subalgebra of the continuous functions from $[1,\infty)$ to $C^{\ast}(X_{\Gamma}, H;Cl_n)^{\Gamma}$ generated by bounded and uniformly continuous functions $L$ such that the propagation of $L(t)$ is finite and tends to $0$ as $t \to \infty$.
		
		\item If $Z \subset X_{\Gamma}$ is a $\Gamma$-invariant closed subset, then $C^{\ast}_L(Z \subset X_{\Gamma}, H; Cl_n)$ is the ideal in $C_L^{\ast}(X_{\Gamma},H; Cl_n)^{\Gamma}$ made by all the functions $L(t)$ such that $supp(L(t)) \subset B_{R(t)}(Z \times Z)$, where $R\colon [1,\infty) \to \mathbb{R}_+$ goes to zero as $t \to \infty$.
		
		\item There is a well defined surjective map 
		\[ev_1 \colon C_L^{\ast}(X_{\Gamma},H; Cl_n)^{\Gamma} \to C^{\ast}(X_{\Gamma},H;Cl_n)^{\Gamma}, \quad  L(t) \mapsto L(1).\] 
		We denote:
		\[C_{L,0}^{\ast}(X_{\Gamma},H;Cl_n)^{\Gamma}:= \ker{\left(ev_1\right)}, \quad  C_{L,Z}^{\ast}(X_{\Gamma},H; Cl_n)^{\Gamma}:=\left(ev_1\right)^{-1}\left(C^{\ast}(Z \subset X_{\Gamma},H; Cl_n)\right)\]
		
	\end{enumerate}
\end{definition}

All these are Real $C^*$-algebras with the obvious involutions induced by the localization algebras.
Similarly, definitions for the localization algebras $D_L^{\ast}(X_{\Gamma},H;Cl_n)^{\Gamma}$ are obtained. As for the case of Roe algebras, these algebras are functorial with respect to uniformly continuous and coarse maps. This means again that, at the $K$-theory level, such induced mappings become canonical (see \cite[Lemma 3.4]{Yu}, \cite[3.2]{Zeidler} for details). Therefore, we won't specify the choice of the ample module.

\begin{remark}\label{reallocalization}
	 All the above definitions still hold without a Clifford algebra. This means that instead of a $Cl_n$-Hilbert module, one consider a Hilbert space, real or complex. In that case one simply denotes the above algebras omitting $Cl_n$. The definitions still remain the same, except that the localization algebras are defined in terms of functions with values to the Roe algebras with the respective coefficients.
	
For example, take a complex Hilbert space $H$, which is also an ample covariant $X_\Gamma$-module. Then, the respective localization algebras are defined in terms of Roe algebras defined in \ref{secroealgebras} and are denoted $C_L^{\ast}(X_{\Gamma})^{\Gamma}$, $C^{\ast}_{L,0}(X_{\Gamma})^{\Gamma}$, \ldots 

Complex conjugation in $H$ defines a Real structure, and $H$ is isomorphic to the complexification of the real HIlbert space $H_\mathbb{R}$ given by fixed point of this involution. The operators fixed by the induced involution, i.e. the elements of the realification, are exactly the real operators on $H_\mathbb{R}$, and then one obtains that the realification of the complex localization algebras are exactly those with real coefficients.
\end{remark}	

The Yu's localization algebras introduced offer in particular an alternative approach to $K$-homology. In particular, refering to the complex case, there exists an isomorphism $Ind_L^{\Gamma}\colon K^{\Gamma}_{\ast}(X_{\Gamma}) \simeq K_{\ast}(C_L^{\ast}(X_{\Gamma})^{\Gamma})$ which is compatible with the coarse assembly map in the sense that the following diagram commutes (\cite[Section 4]{Yu}):

\begin{equation} \label{indassembly}
	\begin{tikzcd}
		K_{\ast}(C_L^{\ast}(X_{\Gamma})^{\Gamma}) \arrow[r,"(ev_1)_{\ast}"]  & K_{\ast}(C^{\ast}(X_{\Gamma})^{\Gamma})   \\
		K_{\ast}^{\Gamma}(X_{\Gamma})   \arrow[ur,"\mu"] \arrow[swap, u,"\sim" labl, "Ind_L^{\Gamma}"]
	\end{tikzcd}
\end{equation}

The above localization algebras fit into the following short exact sequence:

\begin{equation}\label{higsonroelocalizedshort}
\begin{tikzcd}
	0 \arrow[r] & C^{\ast}_{L,0}(X_{\Gamma})^{\Gamma} \arrow[r] & C^{\ast}_L(X_{\Gamma})^{\Gamma} \arrow[r, "ev_1"] & C^{\ast}(X_{\Gamma})^{\Gamma}  \arrow[r] & 0.
\end{tikzcd}
\end{equation}

The long exact sequence in $K$-theory associated to it can be identified to the Higson-Roe surgery sequence (\ref{higsonroeseq}) by
using the diagram (\ref{indassembly}) and the fact that the $K$-theory group $ K_{\ast}(C^{\ast}_{L,0}(X_{\Gamma})^{\Gamma})$ is isomorphic to $K_{\ast+1}(D^{\ast}(X_{\Gamma})^{\Gamma})$. One then obtains the following commutative diagram (see \cite[Proposition 6.1]{XieYu}):

\begin{equation} \label{higsonroelocalized}
\begin{tikzcd}[sep=small, row sep=1.5em]
	\ldots \arrow[r] & K_{n+1}(C^{\ast}(X_{\Gamma})^{\Gamma}) \arrow[r, "\delta"] \arrow[d,equal]& K_n(C^{\ast}_{L,0}(X_{\Gamma})^{\Gamma}) \arrow[r] & K_n(C^{\ast}_L(X_{\Gamma})^{\Gamma}) \arrow[r,"(ev_1)_{\ast}"] & K_n(C^{\ast}(X_{\Gamma})^{\Gamma})  \arrow[r] \arrow[d,equal] & \ldots \\
	\ldots \arrow[r] & K_{n+1}(C^{\ast}(X_{\Gamma})^{\Gamma}) \arrow[r] & K_{n+1}(D^{\ast}(X_{\Gamma})^{\Gamma}) \arrow[u, "\simeq"labl] \arrow[r] & K^{\Gamma}_{n}(X_{\Gamma}) \arrow[r, "\mu"] \arrow[swap, u,"\sim" labl, "Ind_L^{\Gamma}"] & K_{n}(C^{\ast}(X_{\Gamma})^{\Gamma}) \arrow[r] & \ldots
\end{tikzcd}	
\end{equation}

\begin{remark}
	The same results are obtained in $KO$-theory for real $C^*$-algebras considering the Roe algebras (and consequently the localization algebras) with real coefficients.
\end{remark}

\subsection{K-theory of graded Real $C^{\ast}$-algebras}

Now we will describe an alternative, and equivalent, approach to the $K$-theory ($KO$) of a complex (real) $C^*$-algebra due to Trout (see \cite{Trout}), which is alternative to the usual one in terms of projections and unitaries. This approach will simultaneously take into account any $\mathbb{Z}_2$-grading and Real structure of a $C^*$-algebra. Recall that a $C^*$-algebra is Real if it is complex and equipped with an involution (which plays the role of a "complex conjugation"): it is clear then that where this structure is trivial, what we consider are simply $C^*$-algebras over $\mathbb{C}$.

First of all, in order to define the $K$-theory groups, it is necessary to introduce the following graded, Real $C^*$-algebras.

\begin{definition} Define the following:
\begin{itemize}

	\item the graded, Real $C^{\ast}$-algebra $\mathbb{K}$ of compact operators acting on a graded, $Real$, countably infinite dimensional Hilbert space $\mathcal{H}=\mathcal{H}^{(0)}\oplus\mathcal{H}^{(1)}$ with grading given by the decomposition into diagonal and off-diagonal matrices and Real-structure given as in Definition \ref{Realhilbertspace};

	\item the graded, Real $C^{\ast}$-algebra $\mathcal{S}$ of functions in $C_0(\mathbb{R}, \mathbb{C})$ with grading given by even and odd functions and Real-structure by complex conjugation.
\end{itemize}
\end{definition}

If $B$ and $C$ are two graded, Real $C^{\ast}$-algebras, then we denote by $Hom(B,C)$ the set of homotopy classes of Real, graded, $\ast$-homomorphisms $\phi\colon B \to C$ with the \textit{point-norm topology}, meaning that $\phi_{\alpha} \to \phi$ if and only if $\phi_{\alpha}(b) \to \phi(b)$, for each $b \in B$, in the norm topology of $A \widehat{\otimes} \mathbb{K}$.

\begin{definition}
	We define the \textbf{$K$-theory groups} of a graded, Real $C^{\ast}$-algebra $A$ by:
	
	\[\widehat{K}_n(A):=\pi_n(Hom(\mathcal{S}, A \widehat{\otimes} \mathbb{K})), \quad \forall n \geq 0,\]
	where $\pi_n(Hom(\mathcal{S}, A \widehat{\otimes} \mathbb{K}))$ denotes the $n$-th homotopy group of $Hom(\mathcal{S}, A \widehat{\otimes} \mathbb{K})$ with respect to the point-norm topology and with zero map as base point.
\end{definition}

\begin{remark}\label{definingelementinK}
	Observe that any graded, Real, $\ast$-homomorphism $\varphi\colon \mathcal{S} \to A$ defines an element in the group $\widehat{K}_0(A)$. In fact, one can consider the element $[\varphi]$ represented by $\varphi \widehat{\otimes} e_{1,1}$, where $e_{1,1}$ denotes the even, rank-$1$ projection in $\mathbb{K}$.
\end{remark}

For each $n \geq 0$, $\widehat{K}_n(A)$ have an abelian group structure by considering the direct sum of two mappings and the identifications $(A \widehat{\otimes} \mathbb{K}) \oplus (A \widehat{\otimes} \mathbb{K}) \simeq A \widehat{\otimes} (\mathbb{K} \oplus \mathbb{K}) \simeq A \widehat{\otimes} \mathbb{K}$. These agree with the homotopy groups operations for $n \geq 1$.

\begin{remark}\label{Realreal2}
	Observe that for complex $C^*$-algebras, i.e. without a Real-structure, one can simply ignore all the Real structures and the Real condition on the $Hom$ set obtaining a complex analogue of the above definition.
	
On the contrary, if the Real structure is considered, the groups just introduced correspond to the $KO$-theory groups of real $C^*$-algebras in the following sense. Recalling what is meant by the realification of a Real $C^*$-algebra (see Definition \ref{realification}), it is observed that if one considers the fixed points of $\mathcal{S}$ with respect to its Real structure, one obtains the $C^*$-algebra $C_0(\mathbb{R}, \mathbb{R})$ of real-valued functions that vanish at infinity. Since Real morphisms preserve the Real structures, and therefore the fixed point spaces with respect to them, one can consider their restrictions to the corresponding realifications of the algebras. Consequently, the $K$-theory group of a Real $C^*$-algebra $A$ considered will correspond to the $KO$-theory group of the realification of the $C^*$-algebra $A$.
\end{remark}	 

\begin{remark}\label{K-KO}
	Recall that every $C^{\ast}$-algebra can be considered $\mathbb{Z}_2$-graded when paired with the trivial grading. Then, the definition above applies to any $C^*$-algebra. However, when $A$ is ungraded, its $K$-theory groups will be denoted simply by $K_{\ast}(A)$.
	
	 Observe that, when $A$ is trivially graded and unital, then $A \widehat{\otimes} \mathbb{K} \simeq M_2(A \otimes \mathbb{K})$ with grading given induced by $\mathbb{K}$, i.e. that given by diagonal and off-diagonal matrices. Consequently:
	 
	 \[\widehat{K}_n(A) = K_n(A)=\pi_n(Hom(\mathcal{S}, A \otimes \mathbb{K})), \quad \forall n \geq 0\]
	 
\end{remark}

It can be verified that $\widehat{K}_n(A)\simeq \widehat{K}_0(\Sigma^nA)$ where $\Sigma^nA$ denotes the $n$-th suspension of $A$ defined by $\Sigma^nA:=C_0(\mathbb{R}^n) \widehat{\otimes} A$, with $C_0(\mathbb{R}^n)$ is considered with trivial grading.

The $C^{\ast}$-algebra $\mathcal{S}$ is characterized by the presence of a $\ast$-homomorphism $\Delta\colon \mathcal{S} \to \mathcal{S} \widehat{\otimes} \mathcal{S}$ which gives to $\mathcal{S}$ a coalgebra structure (see \cite[Section 1.3]{HigsonGuentner} for details). 
We mention it because it induces an external product in $K$-theory:

 \[\widehat{K}_n(A) \otimes \widehat{K}_m(B) \to \widehat{K}_{n+m}(A \widehat{\otimes} B).\]
 
   This is defined by firstly making explicit what happens in the case when $n=m=0$. 
   Consider two classes $[\phi] \in \widehat{K}_0(A)$ and $[\psi] \in \widehat{K}_0(B)$ and define their product as the following class expressed in terms of their representatives and the above comultiplication:
    \[[\phi] \times [\psi] :=[(\phi \widehat{\otimes} \psi) \circ \Delta] \in \widehat{K}_0(A \widehat{\otimes} B).\] 
    
    This product is associative, commutative and functorial, meaning that $\varphi\colon A \to A'$ and $\chi\colon B \to B'$ are $Real$, graded $\ast$-homomorphisms, then: 
\[(\varphi \widehat{\otimes} \chi)_{\ast}([\phi] \times [\psi])=\varphi_{\ast}([\phi]) \times \chi_{\ast}([\psi]).\]

Finally, the product for each $n, m$ is induced by this (\cite[Section 1.7]{HigsonGuentner}).
\begin{example}
	Consider an unbounded, odd, self-adjoint operator $D$ on a graded Hilbert space $H$, i.e.:
	
	\[D=\begin{pmatrix}
		0 & D_{-} \\
		D_{+} & 0
	\end{pmatrix}
\]

Assuming that $D$ has a compact resolvent, then the functional calculus applied to $D$ defines a mapping $\psi_D\colon \mathcal{S} \to \mathbb{K}$, hence an element of $\widehat{K}_0(\mathbb{C})$. For example, $D$ can be any Dirac operator on a compact manifold. 

Under the isomorphism $\widehat{K}_0(\mathbb{C}) \simeq \mathbb{Z}$, to $\psi_D$ is associated the Fredholm index of $D_{+}$.

Now, if $A=B=\mathbb{C}$ and $\psi_{D_1}$, $\psi_{D_2}$ are defined as above, then their product $[\psi_{D_1}] \times [\psi_{D_2}]$ is the functional calculus associated to the self-adjoint operator $D_1 \widehat{\otimes} \mathbb{1} + \mathbb{1} \widehat{\otimes} D_2$, which is an operator whose Fredholm index is exactly the product of the indices of $D_1$ and $D_2$.
\end{example}

Let us consider $C_0(\mathbb{R}^n) \widehat{\otimes} Cl_{0,n}$, i.e. the graded, $C^{\ast}$-algebra of continuous functions vanishing at infinity from $\mathbb{R}^n$ to $Cl_{0,n}$. The natural inclusion of $\mathbb{R}^n$ in $Cl_{0,n}$ is, of course, a function non-vanishing at infinity, but by applying the functional calculus using a function in $\mathcal{S}$, one gets for each $f \in \mathcal{S}$ the following mapping:

\begin{equation}\label{bottelement}
	 \mathbb{R}^n \ni v \mapsto f(v) \in Cl_{0,n}, 
\end{equation}	
giving an element in $C_0(\mathbb{R}^n) \widehat{\otimes} Cl_{0,n}$: (\ref{bottelement}) then defines an element in $Hom(\mathcal{S},C_0(\mathbb{R}^n) \widehat{\otimes} Cl_{0,n})$. In this way one obtains the so called \textit{Bott element} $b_n \in \widehat{K}_0(C_0(\mathbb{R}^n) \widehat{\otimes} Cl_{0,n})$ (recall Remark \ref{definingelementinK}). The Bott element is of particular interest because of the following (see \cite[Theorem 1.14]{HigsonGuentner}).

\begin{theorem} \label{bottperiodicity}
	For any graded, $Real$, $C^{\ast}$-algebra A and for each $n \in \mathbb{N}$, the following is an isomorphism:
	
	\[\beta\colon \widehat{K}_0(A) \to \widehat{K}_0(A \widehat{\otimes} C_0(\mathbb{R}^n) \widehat{\otimes} Cl_{0,n}), \quad x \mapsto x \times b_n\]
	where $\times$ denotes the external product defined previously.
	
\end{theorem}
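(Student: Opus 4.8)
The plan is to deduce the statement from the case $n=1$ and then settle that case by the Dirac/dual-Dirac argument that underlies Bott periodicity, carried out in the functional-calculus ($\mathcal S$-)formalism of \cite{Trout} and \cite{HigsonGuentner}; since every construction involved respects gradings and Real structures, the real ($KO$) version will come out simultaneously (cf. Remark \ref{Realreal2}). The first step is to check that the Bott element is multiplicative: under the canonical graded Real isomorphism $C_0(\mathbb{R}^n)\widehat\otimes Cl_{0,n}\cong\bigl(C_0(\mathbb{R})\widehat\otimes Cl_{0,1}\bigr)^{\widehat\otimes n}$ one has $b_n=b_1\times\cdots\times b_1$ ($n$ factors). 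Indeed, by the definition of $\times$ and of the comultiplication $\Delta$, the external product of $n$ copies of the homomorphism representing $b_1$ is the functional calculus of the odd self-adjoint multiplier $\hat t=\sum_i 1\widehat\otimes\cdots\widehat\otimes t_i\epsilon\widehat\otimes\cdots\widehat\otimes 1$, whose square is $|t|^2$ because the Koszul sign kills the cross terms --- precisely the multiplier defining $b_n$. Using associativity and functoriality of the external product (recalled just before Theorem \ref{bottperiodicity}), the map $\beta=(-\times b_n)$ factors as the $n$-fold composite of maps $(-\times b_1)$ applied successively to $A$, to $A\widehat\otimes C_0(\mathbb{R})\widehat\otimes Cl_{0,1}$, and so on, so it suffices to prove that $(-\times b_1)\colon\widehat K_0(B)\to\widehat K_0(B\widehat\otimes C_0(\mathbb{R})\widehat\otimes Cl_{0,1})$ is an isomorphism for every graded Real $C^*$-algebra $B$.

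Next I would produce the candidate inverse. On the graded Real Hilbert space $H=L^2(\mathbb{R})\widehat\otimes Cl_{0,1}$ consider the Bott--Dirac operator $\mathfrak B$ --- the odd, Real, essentially self-adjoint operator assembled from $\tfrac{d}{dx}$, multiplication by $x$ and the generator of $Cl_{0,1}$, whose square agrees with the quantum harmonic oscillator up to a bounded term and hence has compact resolvent. Borel functional calculus then yields a graded Real $\ast$-homomorphism $\mathcal S\to\mathbb K(H)$; coupling it, after the usual one-parameter rescaling $\mathfrak B\leadsto\mathfrak B_s$ (so that the two pieces graded-commute in the limit), with the natural multiplication action of $C_0(\mathbb{R})\widehat\otimes Cl_{0,1}$ on $H$ produces the Dirac (asymptotic) morphism $\alpha$, and therefore a natural map $\alpha_\ast\colon\widehat K_0(B\widehat\otimes C_0(\mathbb{R})\widehat\otimes Cl_{0,1})\to\widehat K_0(B)$.

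It then remains to show $\alpha_\ast\circ(-\times b_1)=\mathrm{id}$ and $(-\times b_1)\circ\alpha_\ast=\mathrm{id}$. For the first identity I would unwind the composite to an external product with the class of $\mathfrak B$ in $\widehat K_0(\mathbb{C})\cong\mathbb{Z}$ and compute that this class equals $1$ (the kernel of $\mathfrak B$ is one-dimensional, spanned by a Gaussian of the correct degree, with vanishing cokernel); since this $1$ is the unit of the external product, $\alpha_\ast\circ(-\times b_1)$ is the identity. This part is essentially formal once the previous step is set up. The second identity carries the real content: I would prove it by the rotation trick on $\mathbb{R}^2=\mathbb{R}\oplus\mathbb{R}$, where rotation by $\pi/2$ gives a homotopy (through admissible families of Bott--Dirac operators) between $(-\times b_1)\circ\alpha_\ast$ and $\alpha_\ast\circ(-\times b_1)$ on $\widehat K_0$ of the $\mathbb{R}^2$-algebra with the two copies of $\mathbb{R}$ interchanged; feeding in the first identity twice then forces both composites to be the identity.

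The hard part will be this last point: keeping the two-variable family of Bott--Dirac operators under analytic control along the rotation homotopy and verifying that the resulting homotopy of (asymptotic) morphisms is admissible for the point-norm topology used to define $\widehat K$ --- this is exactly where the analysis of Bott periodicity lives. By contrast, the reduction to $n=1$ and the index computation are routine. As an alternative to reconstructing all of this, one may simply invoke \cite[Theorem 1.14]{HigsonGuentner}, of which Theorem \ref{bottperiodicity} is a restatement.
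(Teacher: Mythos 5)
The paper does not actually prove this theorem: it is stated as a direct quotation of \cite[Theorem 1.14]{HigsonGuentner}, which is exactly what your closing sentence observes. Your sketch of the underlying Dirac/dual-Dirac argument --- the multiplicativity $b_n=b_1\times\cdots\times b_1$ reducing to $n=1$, the Bott--Dirac operator with one-dimensional kernel giving the unit class in $\widehat{K}_0(\mathbb{C})$, and the rotation trick to upgrade the one-sided inverse --- is a faithful outline of how the cited result is proved, so the proposal is correct and consistent with the paper's treatment.
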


In particular, from this Theorem and the fact that $Cl_{n,0} \widehat{\otimes} Cl_{0,n} \simeq Cl_{n,n} \simeq M_{2n}(\mathbb{C})$, one obtains the following identifications:

\begin{equation}\label{bott}
	\widehat{K}_0(A \widehat{\otimes} Cl_n) \simeq \widehat{K}_0(A \widehat{\otimes}  C_0(\mathbb{R}^n) \widehat{\otimes} Cl_n \widehat{\otimes} Cl_{0,n}) \simeq \widehat{K}_0(\Sigma^n A) \simeq \widehat{K}_n(A).
	\end{equation}

Considering the Real structures, this corresponds to the usual $8$-periodicity of $KO$-theory of real $C^*$-algebras (recall Remark \ref{Realreal} and \ref{K-KO}), while it corresponds to the $2$-periodicity of $K$-theory in the complex case without considering them. 

Now, let us focus on the case of the localization algebras introduced in \ref{seclocalizationalgebras}. An element in $\widehat{K}_0(C^{\ast}_L(X_\Gamma; Cl_n)^{\Gamma})$ will be given by a homotopy class of a morphism:

\[\varphi\colon \mathcal{S} \to C^{\ast}_L(X_\Gamma; Cl_n)^{\Gamma} \widehat{\otimes} \mathbb{K}\]

Since the localization algebras are Real, $\varphi$ has to be considered Real too and its restriction to the fixed point set induces a mapping between the realifications. It then defines an element in the $KO$-theory of the realification of $C^{\ast}_L(X_\Gamma; Cl_n)^{\Gamma}$.

Recall that localization algebras are functorial with respect to $\Gamma$-equivariant, coarse maps. This implies that there is no dependence on the choice of the module. Then, let $H$ be a real Hilbert space and suppose that $H$ is an ample, covariant real $X_{\Gamma}$-module. Of course, $H_{\mathbb{C}}=H \otimes_{\mathbb{R}} \mathbb{C}$ is a complex analogue, with Real structure given by complex conjugation, while $H_{\mathbb{C}} \widehat{\otimes} Cl_n$ is an ample, covariant $(X_\Gamma, Cl_n)$-module whose Real structure is given by combining those of $H_{\mathbb{C}}$ and $Cl_n$ (recall Definition \ref{maxgradedtensorproduct}). 
			
This module makes explicit that $C^{\ast}_L(X_\Gamma; Cl_n)^{\Gamma}=C^{\ast}_L(X_\Gamma)^{\Gamma} \widehat{\otimes} Cl_n$ and its realification is the graded tensor product of the localization algebra with real coefficients and the real Clifford algebra $Cl_n$ (recall Remark \ref{Realreal} and \ref{reallocalization}).

 Then, by using (\ref{bott}), we obtain that:

\begin{equation}\label{grrealcorr}
	\widehat{K}_0(C^{\ast}_L(X_\Gamma; Cl_n)^{\Gamma}) \simeq K_n(C^{\ast}_L(X_\Gamma)^{\Gamma}),
\end{equation}	

which corresponds to $KO_n(C^{\ast}_L(X_\Gamma; \mathbb{R})^{\Gamma})$ by restricting to fixed points sets (recall Remark \ref{Realreal2}).

\subsection{The local index class of $\slashed{\mathfrak{D}}_\Gamma$ and the $\rho^\Gamma$ secondary invariant} \label{seclocalindex}

We now consider again the setting already introduced in Section \ref{k-homologyindexclass} with a minor modification, namely that we use instead of the real Clifford algebra, its Real counterpart. We denote again the bundle obtained $\slashed{\mathfrak{S}}(M_r)$.

 When $\pi\colon M_\Gamma \to M$ is a Galois $\Gamma$-cover, observe that the space of $L^2$ sections of the bundle $\slashed{\mathfrak{S}}((M_\Gamma)_r)$ over $(M_\Gamma)_r$, namely $L^2(\slashed{\mathfrak{S}}((M_\Gamma)_r))$, thanks to its right $Cl_n$ action, can be turned into a Hilbert $Cl_n$-module. Moreover, since $Cl_n$ denotes the Real Clifford algebra of Definition \ref{RealCliffordalgebra}, then it becomes a Real Hilbert $Cl_n$-module. More specifically, this is an ample, covariant $(X_\Gamma, Cl_n)$-module with graded and Real structures, with obvious Real structure induced by that of $Cl_n$.

We now introduce the local index class associated to the Atiyah-Singer operator $\slashed{\mathfrak{D}}_{\Gamma}$, following \cite[Chapter 4]{Zeidler}.
This will be given by a homomorphism $\varphi_{\slashed{\mathfrak{D}}_{\Gamma}}$ which makes use of the functional calculus to define the following mapping:
\[  \mathcal{S}\ni f \mapsto f(\frac{1}{t}\slashed{\mathfrak{D}}_{\Gamma}), \quad  t \in [1, \infty)\] 

By well known results (see, for example, \cite[Sections 5.3, 5.5]{HigsonRoe}), the operator $f(\frac{1}{t}\slashed{\mathfrak{D}}_{\Gamma})$ is locally compact for each $f \in \mathcal{S}$ and its propagation goes to zero as $t$ goes to infinity. Moreover, by general properties of the functional calculus and since $\slashed{\mathfrak{D}}_{\Gamma}$ is an odd operator, we obtain the following Real, graded, $\ast$-homomorphism:

\[\varphi_{\slashed{\mathfrak{D}}_{\Gamma}}\colon \mathcal{S} \to C^{\ast}_L(M_\Gamma; Cl_n)^{\Gamma}.\]

\begin{definition}
	The \textbf{local index class} $Ind^{\Gamma}_L(\slashed{\mathfrak{D}}_{\Gamma})$ associated to the operator $\slashed{\mathfrak{D}}_{\Gamma}$ is the $K$-theory element associated to $\varphi_{\slashed{\mathfrak{D}}_{\Gamma}}$, i.e.:
	
	\[Ind^{\Gamma}_L(\slashed{\mathfrak{D}}_{\Gamma}):=[\varphi_{\slashed{\mathfrak{D}}_{\Gamma}}] \in \widehat{K}_0(C^{\ast}_L(M_\Gamma; Cl_n)^{\Gamma})\]
	
In particular, using the correspondence (\ref{grrealcorr}), we obtain:

\[Ind^{\Gamma}_L(\slashed{\mathfrak{D}}_{\Gamma}) \in KO_n(C^{\ast}_L(M_\Gamma; \mathbb{R})^{\Gamma})\]	
\end{definition}

\begin{remark}The map induced in $K$-theory by the evaluation at $1$, namely  $(ev_1)_{\ast}$, sends  $Ind_L^{\Gamma}(\slashed{\mathfrak{D}}_{\Gamma})$ to the class $Ind(\slashed{\mathfrak{D}}_{\Gamma}) \in KO_n(C^{\ast}(M_{\Gamma}; \mathbb{R})^{\Gamma})$ of Definition \ref{roeindex}, accordingly to the diagram (\ref{indassembly}). In fact, the mapping $f \mapsto f(\slashed{\mathfrak{D}})$ corresponds the to the equivariant coarse index (see \cite[Proposition 5.3]{Trout}).
\end{remark}

Now, we introduce the localized analogue of the secondary invariant $\rho$ associated to a metric of uniformly positive scalar curvature (see \cite[Section 1.3]{PiazzaSchick}).

 Firstly, if $Z \subset M_{\Gamma}$ is a closed $\Gamma$-invariant subset, we say that a Riemannian metric $g$ (in our case,a well-adapted wedge metric $g$) has \textit{uniformly positive scalar curvature outside Z} if there exists $\varepsilon > 0$ such that the scalar curvature of $g$ is bounded by below by $\varepsilon$ on $M_{\Gamma} \setminus Z$.
 
Now, by \cite[Lemma 2.3]{RoePosCur}, if the Riemannian metric $g$ has uniformly positive scalar curvature with $\varepsilon = 4\tau^2$, for some $\tau>0$, then the restriction of the homomorphism $\varphi_{\slashed{\mathfrak{D}}_{\Gamma}}$ to those functions in $\mathcal{S}$ such that their support are contained in $(-\tau, \tau)$ takes value in $C^{\ast}_{L,Z}(M_{\Gamma}; Cl_n)^{\Gamma}$.

By \cite[Lemma 2.3]{Zeidler}, the inclusion $\iota$ of the set of functions supported in $(-r,r)$ into $\mathcal{S}$ is an homotopy equivalence of graded algebras (unique up to homotopy) for all $r>0$. Then, we set $r=\tau$ and we choose an homotopy inverse $\iota^{-1}$ to such inclusion, obtaining the following definitions.

\begin{definition}\label{partialsecondaryclass}
	Let $Z \subset M_{\Gamma}$, $g$ a well-adapted wedge metric on $M_\Gamma$ with uniformly positive scalar curvature outside $Z$ by a certain $\varepsilon$. Then the \textbf{partial secondary local index class} of the Atiyah-Singer operator $\slashed{\mathfrak{D}}_{\Gamma}^{g}$ associated to $g$ is defined as:
	
	\[Ind^{\Gamma}_{L,Z}(\slashed{\mathfrak{D}}_{\Gamma}^{g}):=[\varphi_{\slashed{\mathfrak{D}}_{\Gamma}}\circ \iota^{-1}] \in \widehat{K}_0(C^{\ast}_{L,Z}(M_{\Gamma}; Cl_n)^{\Gamma})\simeq KO_n(C^{\ast}_{L,Z}(M_{\Gamma};\mathbb{R})^{\Gamma}).\]
	
	In particular, if $Z=\emptyset$, i.e. $g$ is of uniformly positive scalar curvature, then the partial secondary local index class takes value in $C^{\ast}_{L,\emptyset}(M_{\Gamma}; Cl_n)^{\Gamma}$, which corresponds to $C^{\ast}_{L,0}(M_{\Gamma}; Cl_n)^{\Gamma}$. In this case, we define the \textbf{$\rho$-invariant} of $g$ as:
	
	\[\rho^{\Gamma}(g):=Ind^{\Gamma}_{L,\emptyset}(\slashed{\mathfrak{D}}_{\Gamma}^g) \in KO_n(C^{\ast}_{L, 0}(M_{\Gamma};\mathbb{R})^{\Gamma}).\]
\end{definition} 

\begin{remark}\label{partialtolocal}
	Observe that the local index class and its partial secondary counterpart are related as follows. Let $j\colon C^{\ast}_{L,Z}(M_{\Gamma};\mathbb{R})^{\Gamma} \hookrightarrow C^{\ast}_L(M_\Gamma; Cl_n)^{\Gamma}$ be the inclusion, then the induced map between their $KO_n$-groups sends exactly $Ind^{\Gamma}_{L,Z}(\slashed{\mathfrak{D}}_{\Gamma}^{g})$ to $Ind^{\Gamma}_L(\slashed{\mathfrak{D}}_{\Gamma})$.

This is quite obvious since, by construction: \[j_*(Ind^{\Gamma}_{L,Z}(\slashed{\mathfrak{D}}_{\Gamma}^{g}))=[j \circ \varphi_{\slashed{\mathfrak{D}}_{\Gamma}}\circ \iota^{-1}]=[ \varphi_{\slashed{\mathfrak{D}}_{\Gamma}}]=Ind^{\Gamma}_L(\slashed{\mathfrak{D}}_{\Gamma}),\]
since $\iota^{-1}$ is homotopic to the identity on $\mathcal{S}$.
\end{remark}	

\begin{remark}
	Note that in the definition of both the partial secondary index class and the rho invariant, we have emphasized the metric $g$. This is because the argument made for the coarse index class, discussed in Remark \ref{invindmetric}, cannot be repeated since it is not guaranteed that two metrics with uniformly positive scalar curvature outside a closed subset $Z$ can be connected by a continuous path of metrics with the same curvature condition.
\end{remark}

\subsection{The mapping theorem}

We are now finally able to describe the main result of this chapter, i.e. how the $(L,G)$-Stolz sequence of Theorem \ref{l,gstolzseq} can be mapped to the Higson-Roe surgery sequence in terms of localized algebras, as in \ref{higsonroelocalized}.

If $\Gamma$ is a countable, discrete group, then any proper, complete metric space $X_\Gamma$ with a free, cocompact $\Gamma$-action admits a coarse map into a universal space $E\Gamma$ for free $\Gamma$-actions (a contractible $CW$-complex with free $\Gamma$-action). In particular, this map is unique up to $\Gamma$-homotopy of coarse maps. Then, one defines the following \textit{universal} algebras: 

\[KO_{\ast}(C^{\ast}_{\Gamma}):=\varinjlim_{X_{\Gamma} \subset E\Gamma} KO_{\ast}(C^{\ast}(X_{\Gamma}; \mathbb{R})^{\Gamma}) \simeq KO_{\ast}(C^{\ast}_{r,\mathbb{R}}\Gamma),\]

where $C^{\ast}_{r,\mathbb{R}}\Gamma$ is the real reduced $C^{\ast}$-algebra of the group $\Gamma$, and the limit is performed along the family of all the spaces $X_\Gamma$ with the hypothesis above. 
The isomorphism easily follows from \cite[Lemma 5.14]{Roeindextheory} (which extends to the real case), which implies that this direct limit is obtained along canonical isomorphisms.

\begin{remark}
	The real \( C^* \)-algebra \( C^{\ast}_{r,\mathbb{R}} \Gamma \) is defined exactly as in the complex case. In particular, it is defined as the norm closure of the real group algebra:
	
	\[ \mathbb{R} \Gamma := \left\{ \sum_{g \in \Gamma} r_g g \mid g \in \Gamma, r_g \in \mathbb{R}, r_g = 0 \text{ for all but finitely many } r_g \right\}, \]
	
	which is viewed as a real operator on the real Hilbert space \( l^2(\Gamma) \) acting by left multiplication.
\end{remark}

Similarly, one obtains the following objects:

\[KO_{\ast}(C^{\ast}_{L;\Gamma}):=\varinjlim_{X_{\Gamma} \subset E\Gamma} KO_{\ast}(C^{\ast}_L(X_{\Gamma};\mathbb{R})^{\Gamma}),\] \[KO_{\ast}(C^{\ast}_{L,0;\Gamma}):=\varinjlim_{X_{\Gamma} \subset E\Gamma} KO_{\ast}(C^{\ast}_{L,0}(X_{\Gamma};\mathbb{R})^{\Gamma}).\]

Observe that the mappings in the Higson-Roe surgery sequence (and its localized version) (\ref{higsonroelocalized}) commute with all those of the direct family used to construct the above direct limits. Therefore, there is an induced canonical universal exact sequence:

\begin{equation}\label{higsonroelocalizedu}
	\begin{tikzcd}[sep=small, row sep=1.5em]
		\ldots \ar[r] & KO_{n+1}(C^{\ast}_{r,\mathbb{R}}\Gamma) \ar[r] & KO_n(C^{\ast}_{L,0;\Gamma}) \ar[r] & KO_n(C^{\ast}_{L;\Gamma}) \ar[r] & KO_n(C^{\ast}_{r, \mathbb{R}}\Gamma) \ar[r] & \ldots
\end{tikzcd}
\end{equation}

\begin{theorem}\label{mapstolzhigsonthm} Denoting by $B\Gamma=E\Gamma/\Gamma$ the classifying space for Galois $\Gamma$-coverings, then the $(L,G)$-Stolz sequence \ref{l,gstolzseq} with $B\Gamma$ as reference space maps to the universal, localized Higson Roe surgery sequence (\ref{higsonroelocalizedu}).
This means that the following diagram is commutative:

\begin{equation} \label{mapstolzhigson}
	\begin{tikzcd}[sep=small, row sep=1.5em]
		\ldots \ar[r, "\iota"] & R^{spin, (L,G)}_{n+1}(B\Gamma) \ar[r, "\partial"] \ar[d, "Ind^{\Gamma}_{rel}"] & Pos^{spin, (L,G)}_{n}(B\Gamma)\ar[r, "\varphi"] \ar[d, "\rho^{\Gamma}"] & \Omega^{spin, (L,G)}_{n}(B\Gamma)\ar[r, "\iota"] \ar[d, "Ind^{\Gamma}_L"] & R^{spin, (L,G)}_{n}(B\Gamma)\ar[r, "\partial"] \ar[d, "Ind^{\Gamma}_{rel}"] & \ldots \\
		\ldots \ar[r] & KO_{n+1}(C^{\ast}_{r,\mathbb{R}}\Gamma) \ar[r] & KO_n(C^{\ast}_{L,0;\Gamma}) \ar[r] & KO_n(C^{\ast}_{L;\Gamma}) \ar[r] & KO_n(C^{\ast}_{r,\mathbb{R}}\Gamma) \ar[r] & \ldots
	\end{tikzcd}	
\end{equation}

\end{theorem}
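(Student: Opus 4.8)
The plan is to define the three vertical maps on cycles, check each is a well-defined homomorphism, and then verify the three commuting squares; all the genuine work sits in the rightmost square, which is a delocalized Atiyah--Patodi--Singer statement adapted to the wedge $(L,G)$-setting.

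\emph{The vertical maps.} Given a cycle with reference map $f$ to $B\Gamma$, pull back $E\Gamma\to B\Gamma$ to obtain a Galois $\Gamma$-cover $M_\Gamma\to M$; by Section~\ref{secroealgebras} this is again a spin pseudomanifold with $(L,G)$-singularities carrying the lifted $\Gamma$-equivariant well-adapted wedge metric, and $f$ induces a $\Gamma$-equivariant coarse map $M_\Gamma\to E\Gamma$. By functoriality of the Roe and localization algebras (Sections~\ref{secroealgebras}, \ref{seclocalizationalgebras}) every analytic invariant built on $M_\Gamma$ maps canonically into the direct limits defining $KO_\ast(C^\ast_{r,\mathbb{R}}\Gamma)$, $KO_\ast(C^\ast_{L;\Gamma})$, $KO_\ast(C^\ast_{L,0;\Gamma})$. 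On $\Omega^{spin,(L,G)}_n(B\Gamma)$ we put $Ind^\Gamma_L[M,f]:=Ind^\Gamma_L(\slashed{\mathfrak{D}}_\Gamma)$, the local index class of Section~\ref{seclocalindex}; here we use that $M$ is psc-Witt by construction, so $\slashed{\mathfrak{D}}_\Gamma$ is essentially self-adjoint with Fredholm closure (Theorem~\ref{essentialyselfadj}), and that this class is independent of the chosen well-adapted wedge metric (Remark~\ref{invindmetric}). On $Pos^{spin,(L,G)}_n(B\Gamma)$ we put $\rho^\Gamma[M,g,\dots,f]:=\rho^\Gamma(g)$ of Definition~\ref{partialsecondaryclass}; since $g$ is a psc well-adapted wedge metric, Theorem~\ref{wedgethm} together with the rescaling of Theorem~\ref{essentialyselfadj} makes $\slashed{\mathfrak{D}}_\Gamma$ uniformly invertible (the fibrewise operators acquire a gap growing like $1/r$ towards the singular stratum, while $k_g>0$ elsewhere), so $\rho^\Gamma(g)\in KO_n(C^\ast_{L,0;\Gamma})$. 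On $R^{spin,(L,G)}_{n+1}(B\Gamma)$ we define $Ind^\Gamma_{rel}$ as the relative coarse index: make the metric product $dx^2+g_{\partial M}$ near $\partial M_r$, attach the cylinder $\partial M_\Gamma\times[0,\infty)$ with the product metric, and take the coarse index of $\slashed{\mathfrak{D}}_\Gamma$ on the result --- Fredholm by the psc-Witt condition, invertible along the cylindrical end by positivity of the scalar curvature there --- which, the end being coarsely equivalent to $M_\Gamma$, yields a class in $KO_{n+1}(C^\ast(M_\Gamma;\mathbb{R})^\Gamma)$, hence in $KO_{n+1}(C^\ast_{r,\mathbb{R}}\Gamma)$.

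\emph{Well-definedness and the left and middle squares.} Additivity under disjoint union is immediate. Bordism invariance of $Ind^\Gamma_L$ is the $(L,G)$-analogue of the closed case: a spin $(L,G)$-bordism $W$ --- again psc-Witt thanks to the rigid link geometry --- carries its own $Cl_n$-linear wedge Atiyah--Singer operator, providing a null-cobordism in equivariant $KO$-homology between the fundamental classes of the two ends, as in \cite[Proposition~5.1]{BPR1}; in the limit $Ind^\Gamma_L$ kills boundaries. For $\rho^\Gamma$ and $Ind^\Gamma_{rel}$ the argument is the familiar one for the Stolz sequence: the bordism realizing the relation carries a psc well-adapted wedge metric, so the Dirac operator on it is uniformly invertible and the relative index argument on the $\Gamma$-cover forces the two ends to have equal classes. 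For the left square, $\iota[M,f]$ has empty boundary, so $Ind^\Gamma_{rel}(\iota[M,f])$ reduces to the plain coarse index $Ind(\slashed{\mathfrak{D}}_\Gamma)$, and the bottom map $KO_n(C^\ast_{L;\Gamma})\to KO_n(C^\ast_{r,\mathbb{R}}\Gamma)$ is $(ev_1)_\ast$, which sends $Ind^\Gamma_L(\slashed{\mathfrak{D}}_\Gamma)$ to $Ind(\slashed{\mathfrak{D}}_\Gamma)$ by the remark after Definition~\ref{roeindex}. For the middle square, $\varphi$ only forgets the metric and $Ind^\Gamma_L$ is metric-independent, so the left composite is $Ind^\Gamma_L(\slashed{\mathfrak{D}}_\Gamma)$, and the bottom map $KO_n(C^\ast_{L,0;\Gamma})\to KO_n(C^\ast_{L;\Gamma})$ is induced by $j\colon C^\ast_{L,0}\hookrightarrow C^\ast_L$; the identity $j_\ast(\rho^\Gamma(g))=Ind^\Gamma_L(\slashed{\mathfrak{D}}_\Gamma)$ is exactly Remark~\ref{partialtolocal} with $Z=\emptyset$, using $C^\ast_{L,\emptyset}=C^\ast_{L,0}$.

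\emph{The right square --- the main obstacle.} We must show $\rho^\Gamma(g_{\partial M})=\delta\!\left(Ind^\Gamma_{rel}[M,\partial M,g_{\partial M},\dots]\right)$, where $\delta$ is the connecting map of the universal localized short exact sequence $0\to C^\ast_{L,0;\Gamma}\to C^\ast_{L;\Gamma}\xrightarrow{ev_1}C^\ast_{r,\mathbb{R}}\Gamma\to 0$. This is the delocalized APS index theorem in the wedge $(L,G)$-setting, the analogue of \cite{PiazzaSchick} and \cite[Ch.~4]{Zeidler}, and it carries essentially all the weight. The plan is to compute $\delta$ at the cycle level: represent $Ind^\Gamma_{rel}$ by the functional calculus $f\mapsto f(\slashed{\mathfrak{D}}_\Gamma)$ on the cylinder-completed $M_\Gamma$, build an explicit lift across $ev_1$ by a family over $[1,\infty)$ interpolating between $f(\slashed{\mathfrak{D}}_\Gamma)$ at $t=1$ and, as $t\to\infty$, an operator supported near the psc boundary, and read off that the resulting element of $C^\ast_{L,0;\Gamma}$ is homotopic to $\varphi_{\slashed{\mathfrak{D}}_\Gamma}\circ\iota^{-1}$ for $g_{\partial M}$, that is, to $\rho^\Gamma(g_{\partial M})$. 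The reason the smooth-case argument survives verbatim is that, by Theorem~\ref{wedgethm} and the psc-Witt/well-adapted structure, along the collar $\slashed{\mathfrak{D}}_\Gamma$ splits as a genuine product operator whose transverse part is uniformly invertible (after the rescaling of Theorem~\ref{essentialyselfadj}), so finite-propagation functional calculus and the local-compactness estimates of Section~\ref{k-homologyindexclass} behave as on a complete smooth manifold, and coarse excision along $\partial M_\Gamma\times[0,\infty)$ is unaffected by the conical fibres. The remaining care is the bookkeeping of the $\mathbb{Z}_2$-gradings and Real structures, already organised by the graded-Real $K$-theory of Section~\ref{seclocalizationalgebras}; granting that, the three squares and the well-definedness statements assemble into the claimed commutative ladder.
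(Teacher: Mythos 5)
Your proposal follows essentially the same route as the paper: define the three vertical maps via the Galois cover classified by the reference map and the (partial secondary) local index classes, establish bordism invariance via the relative index theorem and the delocalized APS theorem, and then verify the three squares — the $\partial$-square by the delocalized APS identity $\delta(Ind^\Gamma_{rel})=(i_\partial)_*\rho^\Gamma(g^\Gamma_{\partial M})$, the $\varphi$-square by the inclusion $C^\ast_{L,0}\hookrightarrow C^\ast_L$ carrying $\rho^\Gamma$ to $Ind^\Gamma_L$, and the $\iota$-square by $(ev_1)_*$ together with the collapse $M^\infty_\Gamma=M_\Gamma$ for empty boundary. The paper likewise treats the delocalized APS theorem as the imported ingredient (adapting Zeidler's smooth-case argument to the wedge setting), so your plan matches in both structure and substance.
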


Before proving the theorem, we now describe what are the vertical maps appearing in the above diagrams. 
\begin{itemize}
	\item Let $[M]=[M, \partial M, g_{\partial M}, f\colon M \to B\Gamma] \in R^{spin, (L,G)}_{n+1}(B\Gamma)$, where $g_{\partial M}$ is a well-adapted wedge metric of positive scalar curvature defined on the boundary $\partial M$. \\
	Firstly, extend $g_{\partial X}$ to a well-adapted wedge metric $g$ on $M$ (which is, of course, not necessarily of positive scalar curvature). Then, let $\pi\colon M_{\Gamma} \to M$ be the Galois $\Gamma$-cover classified by $f$ with the $\Gamma$-equivariant lift of $g$. 
	Define:
	
	 \[M_{\Gamma}^{\infty}:= M_{\Gamma} \cup_{\partial M_{\Gamma}} (\partial{M_{\Gamma}} \times [0, + \infty)),\]

	which is a non compact spin pseudomanifold with $(L,G)$-singularities without boundary of dimension $n+1$. \\
	 Since the metric on $\partial M_{\Gamma}$, i.e. the $\Gamma$-invariant lift of $g_{\partial M}$, is of positive scalar curvature, then by extending it along the cylinder $\partial{M_{\Gamma}} \times [0, + \infty)$, one gets a metric $g_{\Gamma}$ on $M_{\Gamma}^{\infty}$ which of uniformly positive scalar curvature outside the closed $\Gamma$-invariant subset $M_{\Gamma}$ (by construction and by how the $\Gamma$-action is trivially extended on $M_{\Gamma}^{\infty}$).\\
	  Then, we get a well defined partial secondary local index class of Definition \ref{partialsecondaryclass} associated to it:

	\[Ind^{\Gamma}_{L,M_{\Gamma}}(\slashed{\mathfrak{D}}_{\Gamma}^{g_{\Gamma}})\in KO_{n+1}(C^{\ast}_{L,M_{\Gamma}}(M_{\Gamma}^{\infty};\mathbb{R})^{\Gamma})\]

	Now, we define:
	
	\begin{equation}\label{indrelrel}
		Ind^{\Gamma}_{M_{\Gamma}}(\slashed{\mathfrak{D}}_{\Gamma}^{g_{\Gamma}}):=(ev_1)_{\ast}\left(Ind^{\Gamma}_{L,M_{\Gamma}}(\slashed{\mathfrak{D}}_{\Gamma}^{g_{\Gamma}})\right)\in KO_{n+1}(C^{\ast}(M_{\Gamma} \subset M_{\Gamma}^{\infty};\mathbb{R})^{\Gamma}),
		\end{equation}
	
	which can be also seen as an element in the $KO$-theory of the real Roe $C^*$-algebra of $M_\Gamma$ since by Lemma \ref{inclisomk} we have the following isomorphism:
	 \[KO_{n+1}(C^{\ast}(M_{\Gamma} \subset M_{\Gamma}^{\infty};\mathbb{R})^{\Gamma}) \simeq KO_{n+1}(C^{\ast}(M_{\Gamma};\mathbb{R})^{\Gamma})\]
	Finally, using the mapping $\widehat{f}\colon M_{\Gamma} \to E\Gamma$ which cover the classifying map $f$ of the $\Gamma$-cover, we finally define:
	
	\begin{equation} \label{indrel}
		Ind^\Gamma_{rel}([M]):=\widehat{f}_{\ast}(Ind^{\Gamma}_{M_{\Gamma}}(\slashed{\mathfrak{D}}_{\Gamma}^{g_{\Gamma}})) \in KO_{n+1}(C^{\ast}_{r,\mathbb{R}}\Gamma).
	\end{equation}

	\begin{remark}
	Note that the class (\ref{indrel}) just introduced can be more generally defined whenever the metric has positive scalar curvature outside a closed subset and the space is equipped with a proper map to $B\Gamma$.
	\end{remark}

	The bordism invariance of the class (\ref{indrel}) can be traced back to standard results in the literature. For example, the index classes introduced in the coarse context can be related to Mishchenko-Fomenko index classes (see \cite[Theorem 2.11]{BPR2}) defined for spaces with cylindrical ends. The general idea is to apply the results contained in \cite{Bunke}, and in particular its $K$-theoretic relative index theorem \cite[Theorem 1.2]{Bunke} (see also the proof of Proposition \ref{inddifffact}, where these results are discussed more in detail). Following the notation of Section \ref{secbordismgroups}, where we firstly introduced the $R$-groups for pseudomanifolds with fibered $L$-singularities, consider a bordism $W$ between two cycles, say $[M, g_{\partial M}]$ and $[M', g'_{\partial M'}]$.
	Its boundary \(\partial W\) is made by a bordism \(Z\) between $\partial M$ and $\partial M'$, equipped with a positive scalar curvature metric, with $M$ (respectively $M'$) attached along $\partial M$ (respectively $\partial M'$). Then \(\partial W\) clearly has positive scalar curvature outside a compact set (in particular, outside \(M \sqcup M'\)). One can proceed by applying the K-theoretic relative index theorem, cutting this space along the hypersurface made by $\partial M \sqcup \partial M'$ and to the cylinder constructed on \(\partial M \sqcup \partial M'\), with the obvious extended metric, noting that the index of the latter space is zero as it is equipped with a psc metric.
	Finally, since $\partial W$ has a vanishing local index class by the bordism invariance of the index class, which is still valid in the context of pseudomanifolds with $(L,G)$-singularities (see \cite[Theorem 2.17]{BPR2}), one obtains that 
	\[	Ind^\Gamma_{rel}([M]) = 	Ind^\Gamma_{rel}([M'])\]
	Observe that this idea is exactly the same used to prove \cite[Theorem 1.17]{Bunke}

	\item Let $[M, g, f\colon M \to B\Gamma] \in Pos^{spin, (L,G)}_{n}(B\Gamma)$, where $g$ is a well-adapted psc metric. Then, considering the $\Gamma$-covering $\pi\colon M_{\Gamma} \to M$ classified by $f$ and endowed with the lifted metric $g_{\Gamma}$, we can then defined the $\rho$-invariant associated to $g_{\Gamma}$ since it has positive scalar curvature on all $M_\Gamma$:

\[\rho^{\Gamma}(g_{\Gamma}) \in KO_n(C^{\ast}_{L,0}(M_{\Gamma};\mathbb{R})^{\Gamma})\]

Again, via the mapping $\widehat{f}$ covering $f$, we get:

\[\rho^{\Gamma}([M, g, f\colon M \to B\Gamma]) := \widehat{f}_{\ast}(\rho^{\Gamma}(g_{\Gamma})) \in KO_n(C^{\ast}_{L,0;\Gamma})\]

For the discussion regarding the well-definedness of this map, see the Remark \ref{wdrho} below.

	\item Let $[M, f\colon M \to B\Gamma] \in \Omega_n^{spin, (L,G)}(B\Gamma)$ and $\pi\colon M_{\Gamma} \to M$ be the Galois cover classified by $f$. Then consider a well adapted metric $g$ on $M$ and its $\Gamma$-invariant lift $g_\Gamma$ on $M_\Gamma$: as discussed in the previous section, we have a well defined local index class $Ind^{\Gamma}_L(\slashed{\mathfrak{D}}_{\Gamma}) \in KO_n(C^{\ast}_L(M_{\Gamma};\mathbb{R})^{\Gamma})$. \\
	Finally, via the mapping $\widehat{f}$ covering $f$, one obtains: 
\[Ind^{\Gamma}_L([M, f\colon M \to B\Gamma]):=\widehat{f}_{\ast}(Ind^{\Gamma}_L(\slashed{\mathfrak{D}}_{\Gamma})) \in KO_{n}(C^{\ast}_{L,\Gamma}).
\]

\end{itemize}

Once defined the vertical mappings appearing in the diagram (\ref{mapstolzhigson}), it remains to show that all the three squares there are commutative. 
In particular, the commutativity of the first square will be given directly by a corollary of an important result which we are going to state.

Let $[M, \partial M, g_{\partial M}, f\colon M \to B\Gamma] \in R^{spin, (L,G)}_{n+1}(B\Gamma)$, $\pi\colon M_{\Gamma} \to M$ and $M_{\Gamma}^{\infty}$ be endowed with the extended, $\Gamma$-invariant metric $g_{\Gamma}$, which is of psc outside $X_{\Gamma}$, as discussed in the above definition of $Ind^\Gamma_{rel}$.

 Since the $\Gamma$-action on $M_\Gamma$ is cocompact and preserving the boundary by construction, then the inclusion $i_{\partial}\colon\partial M_{\Gamma} \hookrightarrow M_{\Gamma}$ is a coarse equivalence, hence inducing isomorphism in $KO$-theory. Then, considering the class $Ind^{\Gamma}_{M_{\Gamma}}(\slashed{\mathfrak{D}}_{\Gamma}^{g_{\Gamma}})$ introduced in (\ref{indrelrel}), we get:

\[\left(i_{\partial}\right)_{\ast}^{-1}\left(Ind^{\Gamma}_{M_{\Gamma}}(\slashed{\mathfrak{D}}_{\Gamma}^{g_{\Gamma}})\right) \in KO_{n+1}(C^{\ast}(\partial M_{\Gamma}; \mathbb{R})^{\Gamma}).\]

However, since the lifted metric on the boundary $g_{\partial M}^{\Gamma}$ is of psc, there is also  defined a $\rho$-class associated to it: 
\[\rho^{\Gamma}(g_{\partial M}^{\Gamma}) \in K_n(C^{\ast}_{L, 0}(\partial M_{\Gamma};\mathbb{R})^{\Gamma}).\]

From \cite[Proposition 6.4.3]{HigsonRoe}, a suitable distance function can be defined on the cylinder $\partial M_\Gamma^\infty := \partial M_{\Gamma} \times [0, +\infty) \subset M_{\Gamma}^{\infty}$ in order to have that $KO_{\ast}(C^{\ast}(\partial M_\Gamma^\infty;\mathbb{R})^{\Gamma})=0$. 

Again, since in this case the inclusion of the boundary is a coarse equivalence, this implies $KO_{\ast}(C^{\ast}( M_{\Gamma}^\infty;\mathbb{R})^{\Gamma})=0$.
Therefore, by exactness of the Higson-Roe surgery sequence in terms of (real) localization algebras, there exists a unique class \[(i)_{\ast}^{-1}(Ind^{\Gamma}_{L}(\slashed{\mathfrak{D}}_{\Gamma})) \in KO_{n+1}(C^{\ast}_{L,0}(M_{\Gamma}^\infty;\mathbb{R})^{\Gamma}),\] where $i_{\ast}\colon KO_{n+1}(C^{\ast}_{L,0}( M_{\Gamma}^\infty;\mathbb{R})^{\Gamma}) \to KO_{n+1}(C^{\ast}_{L}( M_{\Gamma}^\infty;\mathbb{R})^{\Gamma})$ is induced by the inclusion $C^{\ast}_{L,0}( M_{\Gamma}^\infty;\mathbb{R})^{\Gamma} \hookrightarrow C^{\ast}_{L}( M_{\Gamma}^\infty;\mathbb{R})^{\Gamma}$.

Finally, we define:

\[\rho^{\Gamma}(M_{\Gamma}):= \partial_{MV}\circ (i)_{\ast}^{-1}(Ind^{\Gamma}_{L}\left(\slashed{\mathfrak{D}}_{\Gamma})\right) \in KO_n(C^{\ast}_{L,0}(\partial M_{\Gamma};\mathbb{R})^{\Gamma}),\]

where $\partial_{MV}\colon KO_{n+1}(C^{\ast}_{L,0}( M_{\Gamma}; \mathbb{R})^{\Gamma}) \to KO_n(C^{\ast}_{L,0}(\partial M_{\Gamma};\mathbb{R})^{\Gamma})$ is the boundary map of the Mayer-Vietoris exact sequence associated to the decomposition of $M_{\Gamma}^{\infty}$ into the union of $M_{\Gamma}$ and $\partial M_\Gamma^\infty$ along the common boundary $\partial M_{\Gamma}$, which represents their intersection.
For a detailed discussion on the Mayer-Vietoris sequences in the localization algebra settings and their properties, we refer entirely to \cite[Chapter 5]{Zeidler}.

\begin{theorem}[Delocalized APS Index Theorem] \label{delocalizedAPS}
	Let $M_{\Gamma}$, $M_{\Gamma}^{\infty}= M_\Gamma \bigcup_{\partial M_\Gamma} \partial M_\Gamma^\infty$ and $g^\Gamma_{\partial M}$ be as above. Then:
	
	\begin{equation}\label{apsindexthm}
		\delta \circ (i_\partial)_{\ast}^{-1}\left(Ind^{\Gamma}_{M_{\Gamma}}(\slashed{\mathfrak{D}}^{g_{\Gamma}}_{\Gamma})\right)=\rho^{\Gamma}(g_{\partial M}^{\Gamma})-\rho^{\Gamma}(M_{\Gamma}) \in KO_n(C^{\ast}_{L,0}(\partial M_{\Gamma};\mathbb{R})^{\Gamma}),
	\end{equation}	
	
	where $\delta\colon  KO_{n+1}(C^{\ast}(\partial M_{\Gamma};\mathbb{R})^{\Gamma}) \to KO_n(C^{\ast}_{L,0}(\partial M_{\Gamma};\mathbb{R})^{\Gamma})$ is the boundary map in the long exact sequence in $KO$-theory associated to:
	
	\[\begin{tikzcd}
		0 \arrow[r] & C^{\ast}_{L,0}(\partial M_{\Gamma};\mathbb{R})^{\Gamma} \arrow[r] & C^{\ast}_L(\partial M_{\Gamma};\mathbb{R})^{\Gamma} \arrow[r, "ev_1"] & C^{\ast}(\partial M_{\Gamma};\mathbb{R})^{\Gamma}  \arrow[r] & 0.
	\end{tikzcd}\]
\end{theorem}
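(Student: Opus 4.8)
The statement to be established is the delocalized Atiyah--Patodi--Singer index theorem, transplanted from smooth $\Gamma$-manifolds to pseudomanifolds with $(L,G)$-singularities carrying a well-adapted wedge metric. The plan is to run the localization-algebra proof of its smooth counterpart (the argument underlying the diagram (\ref{higsonroelocalized}), due to Zeidler \cite{Zeidler} and to Xie--Yu \cite{XieYu}) while isolating the one place where the singular geometry enters. That place is the following: by Theorem~\ref{essentialyselfadj} together with the psc-Witt property of well-adapted wedge metrics, the $Cl_n$-linear wedge Atiyah--Singer operator $\slashed{\mathfrak{D}}_{\Gamma}$ on $M_{\Gamma}^{\infty}$ is essentially self-adjoint with Fredholm closure, and the product metric on $\partial M_{\Gamma}\times[0,\infty)$ --- itself a well-adapted, hence psc-Witt, wedge metric on the (still singular) cylinder --- has the same property; therefore the functional calculus $f\mapsto f(\tfrac{1}{t}\slashed{\mathfrak{D}}_{\Gamma})$ produces genuine elements of the real localization algebras exactly as in Section~\ref{seclocalindex}, the cone directions near the singular stratum playing no further role, and from there the operator is manipulated purely formally. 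One also uses the bordism invariance of the (local) index class in the $(L,G)$-setting, see \cite{BPR2}. (Alternatively one could pass to the Mishchenko--Fomenko picture via \cite{BPR2} and invoke a relative index theorem for manifolds with cylindrical ends in the style of \cite{Bunke}; the route below stays on the Roe/localization side.)

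With this in hand the argument is the usual one. First I would fix the geometry: the decomposition $M_{\Gamma}^{\infty}=M_{\Gamma}\cup_{\partial M_{\Gamma}}(\partial M_{\Gamma}\times[0,\infty))$ and, for the families of real Roe algebras $C^{\ast}(-;\mathbb{R})^{\Gamma}$ and localization algebras $C^{\ast}_{L}(-;\mathbb{R})^{\Gamma}$, $C^{\ast}_{L,0}(-;\mathbb{R})^{\Gamma}$, the associated Mayer--Vietoris sequences (for which I follow \cite[Chapter 5]{Zeidler}), together with the relative short exact sequence
\[
0\longrightarrow C^{\ast}_{L,0}(M_{\Gamma}^{\infty};\mathbb{R})^{\Gamma}\longrightarrow C^{\ast}_{L,M_{\Gamma}}(M_{\Gamma}^{\infty};\mathbb{R})^{\Gamma}\xrightarrow{\;ev_1\;}C^{\ast}(M_{\Gamma}\subset M_{\Gamma}^{\infty};\mathbb{R})^{\Gamma}\longrightarrow 0,
\]
which maps to the localization short exact sequence of $M_{\Gamma}^{\infty}$ by the identity on the ideal and the evident inclusions elsewhere. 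The organising remark is that all four classes in (\ref{apsindexthm}) are images of the single partial secondary local index class $Ind^{\Gamma}_{L,M_{\Gamma}}(\slashed{\mathfrak{D}}_{\Gamma}^{g_{\Gamma}})\in KO_{n+1}(C^{\ast}_{L,M_{\Gamma}}(M_{\Gamma}^{\infty};\mathbb{R})^{\Gamma})$: under $C^{\ast}_{L,M_{\Gamma}}\hookrightarrow C^{\ast}_{L}$ it maps to $Ind^{\Gamma}_{L}(\slashed{\mathfrak{D}}_{\Gamma})$ (Remark~\ref{partialtolocal} applied on $M_{\Gamma}^{\infty}$), under $ev_1$ it maps to $Ind^{\Gamma}_{M_{\Gamma}}(\slashed{\mathfrak{D}}_{\Gamma}^{g_{\Gamma}})$ by definition, while $(i_{\partial})_{\ast}^{-1}$, $\rho^{\Gamma}(M_{\Gamma})$ and $\rho^{\Gamma}(g_{\partial M}^{\Gamma})$ are recovered by transporting along the coarse equivalences $\partial M_{\Gamma}\hookrightarrow M_{\Gamma}$ and $\partial M_{\Gamma}\hookrightarrow M_{\Gamma}^{\infty}$ and applying the Mayer--Vietoris boundary map. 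One then chases $Ind^{\Gamma}_{L,M_{\Gamma}}(\slashed{\mathfrak{D}}_{\Gamma}^{g_{\Gamma}})$ around the resulting diagram; commutativity of the Mayer--Vietoris ladder and of the two short exact sequences reduces (\ref{apsindexthm}) to one local statement.

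That local statement --- the geometric core --- is a relative-index, partitioned-pseudomanifold computation: cut $M_{\Gamma}^{\infty}$ along $\partial M_{\Gamma}$ into $M_{\Gamma}$ and $\partial M_{\Gamma}\times[0,\infty)$. On the cylinder the metric is of product type near $\partial M_{\Gamma}$ (Definition~\ref{welladaptedwedgemetric}) and psc, so the wedge Dirac operator decouples there, up to the universal $[0,\infty)$-factor, as the operator attached to the psc metric $g_{\partial M}^{\Gamma}$ on $\partial M_{\Gamma}$; hence the contribution of the cylinder side is, after transport, the rho class $\rho^{\Gamma}(g_{\partial M}^{\Gamma})\in KO_{n}(C^{\ast}_{L,0}(\partial M_{\Gamma};\mathbb{R})^{\Gamma})$. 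On $M_{\Gamma}$ the metric is not psc and the contribution is the coarse index class of $\slashed{\mathfrak{D}}_{\Gamma}$ on $M_{\Gamma}$ with its boundary conditions, that is $Ind^{\Gamma}_{M_{\Gamma}}$, which after $(i_{\partial})_{\ast}^{-1}$ and $\delta$ becomes $\delta\circ(i_{\partial})_{\ast}^{-1}(Ind^{\Gamma}_{M_{\Gamma}})$. Since the Mayer--Vietoris boundary of the (lifted) index class $Ind^{\Gamma}_{L}(\slashed{\mathfrak{D}}_{\Gamma})$ on the union is $\rho^{\Gamma}(M_{\Gamma})$ by its very definition, collecting the two contributions across the Mayer--Vietoris sequence gives $\delta\circ(i_{\partial})_{\ast}^{-1}(Ind^{\Gamma}_{M_{\Gamma}})=\rho^{\Gamma}(g_{\partial M}^{\Gamma})-\rho^{\Gamma}(M_{\Gamma})$, which is the claim.

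The main obstacle is twofold. The first half is sheer bookkeeping: making the three Mayer--Vietoris sequences, the relative localization sequence, the degree shifts $KO_{\ast}(C^{\ast}_{L,0}(-)^{\Gamma})\cong KO_{\ast+1}(D^{\ast}(-)^{\Gamma})$ and all the connecting-map signs fit into a single commutative diagram is delicate and error-prone, though not conceptually different from the smooth case. The second, more substantive half is the local decoupling on the cylinder end: one must verify that the product structure of the well-adapted wedge metric in the collar of $\partial M_{\Gamma}$ is respected by the functional calculus of $\slashed{\mathfrak{D}}_{\Gamma}$ uniformly in the localization parameter $t$, and that the singular directions inside $\partial M_{\Gamma}$ contribute only a fibered family of psc-Witt vertical operators, so that the spectral-gap hypothesis of Theorem~\ref{essentialyselfadj} persists along the end; after \cite{BPR1,BPR2} this is routine, but it is the load-bearing step, being precisely where the singular and the smooth arguments are glued.
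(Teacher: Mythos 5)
Your proposal is correct and reproduces the route the paper takes: the paper does not spell out a proof of Theorem~\ref{delocalizedAPS} but cites \cite[Theorem~6.5]{Zeidler} and asserts that ``all the arguments adapt to our context,'' and your outline faithfully unpacks that adaptation, in particular by isolating the psc-Witt/essential-self-adjointness step (Theorem~\ref{essentialyselfadj}) as the one place where the wedge geometry actually intervenes, after which the functional calculus lands in the real localization algebras and everything else is formal. The Mayer--Vietoris bookkeeping on $M_{\Gamma}^{\infty}=M_{\Gamma}\cup_{\partial M_{\Gamma}}\partial M_{\Gamma}^{\infty}$, the relative localization short exact sequence, and the observation that all four classes are images of the single class $Ind^{\Gamma}_{L,M_{\Gamma}}(\slashed{\mathfrak{D}}_{\Gamma}^{g_{\Gamma}})$ are exactly the content of Zeidler's diagram chase that the paper leaves implicit, together with the vanishing $KO_{\ast}(C^{\ast}(\partial M_{\Gamma}^{\infty};\mathbb{R})^{\Gamma})=0$ that justifies the lift defining $\rho^{\Gamma}(M_{\Gamma})$, which you use implicitly when transporting along the coarse equivalence $\partial M_{\Gamma}\hookrightarrow M_{\Gamma}^{\infty}$.
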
	

By applying to $(i_\partial)_{\ast}$ to (\ref{apsindexthm}), one obtains in particular the following.

\begin{corollary} \label{apsindexthmcor}
	In the above hypothesis, one gets:
	
	\begin{equation}\label{cordelocalizedapr}
		\delta\left(Ind^{\Gamma}_{M_{\Gamma}}(\slashed{\mathfrak{D}}^{g_{\Gamma}}_{\Gamma})\right)=(i_{\partial})_{\ast}\left(\rho^{\Gamma}(g_{\partial M}^{\Gamma})\right) \in KO_n(C^{\ast}_{L,0}(M_{\Gamma};\mathbb{R})^{\Gamma}),
	\end{equation}	
	where $\delta\colon  KO_{n+1}(C^{\ast}( M_{\Gamma};\mathbb{R})^{\Gamma}) \to KO_n(C^{\ast}_{L,0}( M_{\Gamma};\mathbb{R})^{\Gamma})$ is the boundary map of the long exact sequence in $KO$-theory associated to (\ref{higsonroelocalizedshort}).
\end{corollary}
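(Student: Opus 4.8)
The plan is to obtain the identity (\ref{cordelocalizedapr}) formally from Theorem \ref{delocalizedAPS}, by applying the homomorphism $(i_{\partial})_{\ast}\colon KO_n(C^{\ast}_{L,0}(\partial M_{\Gamma};\mathbb{R})^{\Gamma})\to KO_n(C^{\ast}_{L,0}(M_{\Gamma};\mathbb{R})^{\Gamma})$ induced by the inclusion $i_{\partial}\colon\partial M_{\Gamma}\hookrightarrow M_{\Gamma}$ to both sides of (\ref{apsindexthm}) and analysing the two sides separately. First I would note that $i_{\partial}$ induces a morphism between the short exact sequences of type (\ref{higsonroelocalizedshort}), with real coefficients, attached to $\partial M_{\Gamma}$ and to $M_{\Gamma}$, hence a commutative ladder of the associated six-term $KO$-exact sequences. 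Naturality of the connecting homomorphism then gives $(i_{\partial})_{\ast}\circ\delta_{\partial M_{\Gamma}}=\delta_{M_{\Gamma}}\circ(i_{\partial})_{\ast}$, where on the right the inner $(i_{\partial})_{\ast}$ is the map on Roe-algebra $KO$-groups; since the $\Gamma$-action on $M_{\Gamma}$ is cocompact and preserves the boundary, $i_{\partial}$ is a $\Gamma$-coarse equivalence, so that inner map is precisely the isomorphism whose inverse appears in (\ref{apsindexthm}) (compare Lemma \ref{inclisomk} and the coarse invariance recalled in Section \ref{secroealgebras}). Applying $(i_{\partial})_{\ast}$ to the left-hand side of (\ref{apsindexthm}) therefore collapses $(i_{\partial})_{\ast}\circ\delta_{\partial M_{\Gamma}}\circ(i_{\partial})_{\ast}^{-1}$ to $\delta_{M_{\Gamma}}$, yielding exactly $\delta\bigl(Ind^{\Gamma}_{M_{\Gamma}}(\slashed{\mathfrak{D}}^{g_{\Gamma}}_{\Gamma})\bigr)$.

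For the right-hand side, the key point I would establish is that $(i_{\partial})_{\ast}\bigl(\rho^{\Gamma}(M_{\Gamma})\bigr)=0$. By its very definition $\rho^{\Gamma}(M_{\Gamma})=\partial_{MV}\circ(i)_{\ast}^{-1}\bigl(Ind^{\Gamma}_{L}(\slashed{\mathfrak{D}}_{\Gamma})\bigr)$ lies in the image of the Mayer--Vietoris boundary map $\partial_{MV}$ of the decomposition $M_{\Gamma}^{\infty}=M_{\Gamma}\cup\partial M_{\Gamma}^{\infty}$ with intersection $\partial M_{\Gamma}$; in that Mayer--Vietoris sequence the arrow immediately following $\partial_{MV}$ is, up to sign, the pair of inclusion-induced maps into $KO_n(C^{\ast}_{L,0}(M_{\Gamma};\mathbb{R})^{\Gamma})\oplus KO_n(C^{\ast}_{L,0}(\partial M_{\Gamma}^{\infty};\mathbb{R})^{\Gamma})$, whose first component is exactly $(i_{\partial})_{\ast}$. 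Exactness forces $(i_{\partial})_{\ast}\circ\partial_{MV}=0$, hence $(i_{\partial})_{\ast}(\rho^{\Gamma}(M_{\Gamma}))=0$. Consequently $(i_{\partial})_{\ast}$ applied to the right-hand side of (\ref{apsindexthm}) equals $(i_{\partial})_{\ast}\bigl(\rho^{\Gamma}(g_{\partial M}^{\Gamma})\bigr)$, and comparing the two sides produces the asserted equality (\ref{cordelocalizedapr}); no further analysis of the operator $\slashed{\mathfrak{D}}_{\Gamma}$ is needed beyond what already entered Theorem \ref{delocalizedAPS}.

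The part that requires care is purely bookkeeping inside the localization-algebra formalism of Section \ref{seclocalizationalgebras} and \cite{Zeidler}: one must check that the isomorphism $(i_{\partial})_{\ast}$ on Roe-algebra $K$-theory used to form $(i_{\partial})_{\ast}^{-1}$ in (\ref{apsindexthm}) is compatible, through the naturality ladder of the six-term sequences, with the pushforward $(i_{\partial})_{\ast}$ on the $C^{\ast}_{L,0}$-groups, and that the Mayer--Vietoris sequence employed to define $\rho^{\Gamma}(M_{\Gamma})$ genuinely has its first leg given by the inclusion-induced maps with the expected signs. Once these identifications are pinned down, the corollary follows from the two diagram chases above.
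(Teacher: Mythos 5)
Your proposal is correct and matches the paper's argument: both apply $(i_{\partial})_{\ast}$ to the Delocalized APS Index Theorem, use naturality of the connecting homomorphism under the coarse equivalence $i_{\partial}$ to collapse $(i_{\partial})_{\ast}\circ\delta_{\partial M_{\Gamma}}\circ(i_{\partial})_{\ast}^{-1}$ to the boundary map for $M_{\Gamma}$, and kill $(i_{\partial})_{\ast}\bigl(\rho^{\Gamma}(M_{\Gamma})\bigr)$ by exactness of the Mayer--Vietoris sequence used to define $\rho^{\Gamma}(M_{\Gamma})$. The paper is terser (the Mayer--Vietoris vanishing is consigned to the remark following the corollary), but the substance is identical.
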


These results are treated in \cite[Theorem 6.5]{Zeidler} (see also \cite{PiazzaSchick} for a different approach) in the smooth case. However, all the arguments adapt to our context. Observe that in (\ref{cordelocalizedapr}) there is only one term on the right: in fact, $(i_\partial)_{\ast}(\rho^{\Gamma}(M_{\Gamma}))$ is vanishing by construction since the Mayer-Vietoris sequence associated to $M^\infty_\Gamma$ is exact.

\begin{remark} \label{wdrho}
	We have postponed the discussion regarding the well-definedness of the map \(\rho^\Gamma\), and in fact its bordism invariance, as we require the just stated Delocalized APS Index Theorem for localization algebras. \\
	In fact, a bordism in the group $Pos^{spin, (L,G)}_{n}(B\Gamma)$ is represented by a compact, spin pseudomanifold with $(L,G)$-singularities whose boundary is the disjoint union of two representatives of the bordism class. Then, since the $\rho$-invariant is additive with respect to disjoint union and changes sign for reverse spin structures, one gets that the difference between the two $\rho^\Gamma$ of the boundary components is zero using \ref{delocalizedAPS} and the fact that the localized index class of the bordism is zero since it has positive scalar curvature everywhere.
\end{remark}

We can now proceed and prove Theorem \ref{mapstolzhigsonthm}.
\begin{proof}
	
	As said above, it remains only to show that the three squares in (\ref{mapstolzhigson}) are commutative.
	
	Let $[M]=[M, \partial M, g_{\partial M}, f\colon M \to B\Gamma] \in R^{spin, (L,G)}_{n+1}(B\Gamma)$ and $\delta$ be the boundary map of Corollary \ref{apsindexthmcor}: call $\delta_{\Gamma}\colon KO_{n+1}(C^{\ast}_{r,\mathbb{R}}\Gamma) \to KO_n(C^{\ast}_{L,0;\Gamma})$ the induced map induced between the respecive direct limits by universality. 
	
	\begin{remark}
		We make the following specification about the induced map between direct limits, taking $\delta_\Gamma$ as an example. As we said above, the \textit{universal} $KO$-theory groups are defined via a direct limit construction, i.e. passing through direct families and a universality condition. In our case, direct families are given by the $KO$-theory groups of localization algebras of proper, complete metric spaces with a free cocompact $\Gamma$-action for a discrete group $\Gamma$ and inclusion maps as $\Gamma$-cocompact subsets. 
		
		Assume $X_\Gamma$ and $Y_\Gamma$ to be two such spaces and $\iota_{X,Y}\colon X_\Gamma \to Y_\Gamma$ a map of the family. Then, using the naturality property of the maps induced on $KO$-theory, meaning the fact that the following diagram commutes:
		
		\begin{equation}
			\begin{tikzcd}
				KO_{n+1}(C^{\ast}(X_{\Gamma};\mathbb{R})^{\Gamma}) \ar[d, "(\iota_{X,Y})_*"] \ar[r, "\delta"] & KO_n(C^{\ast}_{L,0}( X_{\Gamma};\mathbb{R})^{\Gamma}) \ar[d, "(\iota_{X,Y})_*"] \\
				KO_{n+1}(C^{\ast}(Y_{\Gamma};\mathbb{R})^{\Gamma}) \ar[r, "\delta"] & KO_n(C^{\ast}_{L,0}( Y_{\Gamma};\mathbb{R})^{\Gamma})
			\end{tikzcd}
		\end{equation}
	together with the universal property of the direct limit, to have an induced map $\delta_\Gamma$ which commutes with each universal map sending an element of the family to the direct limit. 
		\end{remark}
	
	By all the above definitions, we have that:
	
	\[\delta_{\Gamma}\circ Ind^{\Gamma}_{rel}([M])=\delta_{\Gamma} \circ \widehat{f}_\ast \left(Ind^{\Gamma}_{M_{\Gamma}}(\slashed{\mathfrak{D}}_{\Gamma}^{g_{\Gamma}})\right)=\widehat{f}_\ast \circ \delta \left(Ind^{\Gamma}_{M_{\Gamma}}(\slashed{\mathfrak{D}}_{\Gamma}^{g_{\Gamma}})\right), \]
	
	Therefore, we can apply Corollary \ref{apsindexthmcor} in order to obtain:
	
	\[\widehat{f}_\ast \circ \delta \left(Ind^{\Gamma}_{M_{\Gamma}}(\slashed{\mathfrak{D}}_{\Gamma}^{g_{\Gamma}})\right)= \widehat{f}_\ast \circ (i_{\partial})_{\ast}\left(\rho^{\Gamma}(g_{\partial M}^{\Gamma})\right)=(\widehat{f}|_{\partial M_\Gamma})_\ast \left(\rho^{\Gamma}(g_{\partial X}^{\Gamma})\right)   \in KO_n(C^{\ast}_{L,0;\Gamma})\]
	
	However, recalling that $\partial([M])=[\partial M, g_{\partial M}, f|_{\partial M} \colon \partial M \to B\Gamma] \in Pos^{spin, (L,G)}_{n}(B\Gamma)$, we have obtained that:
	
	\[\delta_{\Gamma}\circ Ind^{\Gamma}_{rel}\left([M]\right)=\rho^{\Gamma} \circ \partial \left([X]\right),\]
	which gives exactly the commutativity of the first square.
	
	Let $[M]=[M, g, f\colon M \to B\Gamma] \in Pos^{spin, (L,G)}_{n}(B\Gamma)$.
	
	 Denoting by $j\colon KO_n(C^{\ast}_{L,0}(M_{\Gamma};\mathbb{R})^{\Gamma}) \to KO_n(C^{\ast}_L(M_{\Gamma};\mathbb{R})^{\Gamma})$, and by $j_\Gamma\colon KO_n(C^{\ast}_{L,0;\Gamma}) \to KO_n(C^{\ast}_{L;\Gamma})$ the canonical, universal induced map, observe that by functorality:
	
	\[j_\Gamma \circ \rho^\Gamma([M]):=j_\Gamma \circ \widehat{f}_\ast\left(\rho^{\Gamma}(g_{\Gamma})\right)=\widehat{f}_\ast \circ j\left(\rho^{\Gamma}(g_{\Gamma})\right).\]

	However, observe that by the definition \ref{partialsecondaryclass} of the partial secondary local index class, the inclusion $j$ maps the $\rho$-invariant to the local index class. Then the commutativity of the second square follows by the following, when we also use what observed in Remark \ref{partialtolocal}:
	
	\[\widehat{f}_\ast \circ j\left(\rho^{\Gamma}(g_{\Gamma})\right)=\widehat{f}_\ast \left(Ind^{\Gamma}_L(\slashed{\mathfrak{D}}_{\Gamma})\right)=Ind^{\Gamma}_L \circ \varphi([M]).\]
	
	Let $[M]=[M, f\colon M \to B\Gamma] \in \Omega_n^{spin, (L,G)}(B\Gamma)$, and recall that $\iota([M])=[M, \emptyset, 0, f\colon M \to B\Gamma]$, i.e. includes $M$ with empty boundary. 
	
	Denoting by $(ev_1)_\Gamma$ the universal mapping induced by $(ev_1)_{\ast}$:
	
	\[(ev_1)_{\Gamma} \circ Ind^{\Gamma}_L([M]):=(ev_1)_{\Gamma} \circ \widehat{f}_\ast\left(Ind^{\Gamma}_L(\slashed{\mathfrak{D}}_{\Gamma})\right)=\widehat{f}_\ast \circ (ev_1)_{\ast}\left(Ind^{\Gamma}_L(\slashed{\mathfrak{D}}_{\Gamma})\right).\]
	
	Since $M$ has empty boundary, the space $M_{\Gamma}^{\infty}$ coincides with $M_{\Gamma}$, which implies that the partial secondary local index class lies in $KO_n(C^{\ast}_{L,M_{\Gamma}}(M_{\Gamma};\mathbb{R})^{\Gamma}) \simeq KO_n(C^{\ast}_{L}(M_{\Gamma};\mathbb{R})^{\Gamma})$ (the isomorphism follows by the obvious $C^{\ast}(M_{\Gamma} \subset M_{\Gamma};\mathbb{R})^{\Gamma} \simeq C^{\ast}(M_{\Gamma}; \mathbb{R})^{\Gamma}$).
	
	 In particular, this means that $Ind^{\Gamma}_{L,M_{\Gamma}}(\slashed{\mathfrak{D}}_{\Gamma}^{g_{\Gamma}}) \equiv Ind^{\Gamma}_L(\slashed{\mathfrak{D}}_{\Gamma})$.
	 
	Finally, recalling the definition of the mapping $Ind^{\Gamma}_{rel}$, we get:
	
	\[\widehat{f}_\ast \circ (ev_1)_{\ast}\left(Ind^{\Gamma}_L(\slashed{\mathfrak{D}}_{\Gamma})\right)=Ind^{\Gamma}_{rel}\left(\iota([M])\right), \]
	 which proves the desired commutativity of the third square.

\end{proof}

\newpage

\section{An estimate for the (L,G)-Pos groups} \label{estimate}

In this section, we will address an application of the results achieved in the previous sections, particularly Theorem \ref{mapstolzhigsonthm}, which provided a way to map the Stolz sequence into the Higson-Roe sequence in the context of spaces with \((L, G)\) singularities. Specifically, the aim is to provide a lower bound estimate on the rank of the bordism groups \(Pos^{spin, (L,G)}_*\). To proceed, we will need to introduce some spaces of metrics, particularly those of well-adapted wedge metrics, and the index-difference homomorphism in its formulation adapted to the wedge context.

\subsection{The space of well-adapted wedge metrics}

We now consider the space $\mathcal{R}(M)$ of all Riemannian metrics on a smooth manifold $M$. Recall that when $M$ is compact this is a convex subspace of the Fréchet space of smooth symmetric $2$-tensors on the tangent bundle $TM$. In particular, we are interested in the subspace $\mathcal{R}^+(M) \subset \mathcal{R}(M)$ of all Riemannian metrics with positive scalar curvature.

In the case in which $M$ has non empty boundary $\partial M \neq \emptyset$, one can consider metrics compatible with a collar neighborhood of $\partial M$. Recalling that a collar neighborhood is an open subset $\partial M \times [0,1) \subset M$, then for some $c \in (0,1)$, we ask $\mathcal{R}(M, \partial M)^c$ to be the space of all Riemannian metrics on $M$ which, on a collar neighborhood of the boundary $\partial M \times [0,c]$, is of the form $g_\partial+dx^2$, where $g_\partial$ is a Riemannian metric on the boundary $\partial M$. Again, $\mathcal{R}^+(M, \partial M)^c$ will denote its subset of positive scalar curvature Riemannian metrics (and observe that also $g_\partial$ will be of psc).

\begin{remark}
	The dependence on $c$ can be eliminated easily observing that if $b > c$, $\mathcal{R}^+(M, \partial M)^b \subset \mathcal{R}^+(M, \partial M)^c$ and these inclusion maps are homotopy equivalences. Then one can perform a direct limit, obtaining the space $\mathcal{R}^+(M, \partial M)$.
\end{remark}

\begin{remark} There is an obvious restriction of a metric which product near the boundary to the boundary itself, which then gives rise to a map 
	\[res \colon \mathcal{R}^{+}(M, \partial M) \to \mathcal{R}^{+}(\partial M)\] 
	
	As stated in \cite[Theorem 1.1]{EbertFrenck}, this map is in particular a Serre fibration (meaning that it satisfies the homotopy lifting property for each $CW$-complex), but obviously not a surjective map.
\end{remark}	

Similarly, we now proceed to introduce spaces of metrics in the singular context, particularly in the \((L,G)\)-singular context with well-adapted wedge metrics.

\begin{definition} Let us denote as $\mathcal{R}_w(M)$ the space of well-adapted wedge metrics as defined in \ref{welladaptedwedgemetric} on a compact pseudomanifold $M$ with $(L,G)$-singularities and its subspace $\mathcal{R}^+_w(M)$ of positive scalar curvature metrics.
\end{definition}	

\begin{remark} In particular, using also Theorem \ref{wedgethm}, recall that $g \in \mathcal{R}^+_w(M)$ consists of a triple $(g_{\beta M}, g_{M_r}, \nabla)$, where:
\begin{itemize}
\item $g_{\beta M}$ and $g_r$ are Riemannian metrics of psc on the depth-$1$ stratum $\beta M$ and on the resolution $M_r$;
\item $\nabla$ is a $G$-connection on the $G$-principal bundle over $\beta M$ inducing a splitting on the tangent bundle of the tubular neighborhood $N(\beta M)$;
\item $g_{r}$ is a product metric near the boundary of the resolution $\partial M_r$;
\item on the tubular neighborhood $N(\beta M)$, g takes the form of:

\[(dr^2+r^2g_{\partial M_r/\beta M})\oplus \pi^*(g_{\beta M}),\]
where $g_{\partial M_r/\beta M}$ is a metric on the vertical tangent bundle inducing on each fiber a fixed metric $g_L$ with scalar curvature $k_L=l(l-1)$, with $l=dim(L)$.
\end{itemize}
\end{remark}

The particular structure of these well-adapted wedge metrics allows us to immediately obtain the following maps.

\begin{enumerate}[label=(\roman*)]
\item \[\iota_L\colon \mathcal{R}^+(\beta M) \to \mathcal{R}^+(\partial M_r),\]
 which assings to each psc metric $g_{\beta M}$ the lifted Riemannian submersion metric , on the total space of $\pi_r\colon \partial M_r \to \beta M$. In particular, as already discussed in Section \ref{L,Gsetting}, this is performed thanks to the associated connection to $\nabla$ (which gives a splitting on the tangent bundle of $\partial M_r$), by imposing on the vertical tangent bundle the metric that restricts to the fixed metric \(g_L\) on each fiber. \\
 With a small abuse of notation regarding the vertical metric, then $g$ takes the form: 
 \[\iota_L(g_{\beta M}) = g_L+\pi_r^*g_{\beta M}\]
 
Observe that $\iota_L(g_{\beta M})$ is uniquely determined by $g_{\beta M}$, hence $\iota_L$ is an injective map. 
  
  \item \[res_{M_r}\colon \mathcal{R}^+_w(M) \to \mathcal{R}^+(M_r, \partial M_r),\]
  which assigns to a well-adapted metric $g$ its restriction to the resolution $M_r$.
  
  \item \[res_{\beta M}\colon \mathcal{R}^+_w(M) \to \mathcal{R}^+(\beta M),\] 
  which, similarly to the above, is a restriction map assigning to $g$ its restriction to the depth-1 stratum $\beta M$.

\end{enumerate}

\begin{remark}
	As previously noted, we observe here that we will associate to the resolutions of smoothly stratified spaces metrics that are obtained by restriction from wedge metrics. This should not be confused with the consideration that leads to considering a metric on the resolution (viewed as a blowup) starting from a wedge metric, using the diffeomorphism between the interior of the resolution and the regular stratum. Obviously, the two metrics in question are distinctly different.
\end{remark}		

Recall that since $M$ is a compact pseudomanifold with $(L,G)$-singularities, then its resolution $M_r$ is a compact smooth manifold with boundary. Therefore we can consider its spaces of Riemannian metrics introduced in the beginning of this section.

Let us denote by: 
 
\[\mathcal{R}^{+}(M_r, \partial M_r)_{g_\partial}:=res^{-1}(g_\partial)\] 
 \[\mathcal{R}^+_w(M)_{g_{\beta M}}:=res_{\beta M}^{-1}(g_{\beta M})\]
  the preimages of two restriction maps above, and assume that $\iota_L(g_{\beta M})=g_\partial$. In this situation, we obtain the following commutative diagram:

\begin{equation}
	\begin{tikzcd}
		\mathcal{R}^+_w(M)_{g_{\beta M}} \ar[d] \ar[r] & \mathcal{R}^+_w(M) \ar[d, "res_{M_r}"] \ar[r, "res_{\beta M}"] & \mathcal{R}^+(\beta M) \ar[d, "\iota_L"] \\
		\mathcal{R}^{+}(M_r, \partial M_r)_{g_\partial} \ar[r] & \mathcal{R}^{+}(M_r, \partial M_r) \ar[r, "res"] & \mathcal{R}^{+}(\partial M_r)
	\end{tikzcd}
\end{equation}

We now make the following important observations.

Assume that a psc-metric $g_{\beta M} \in \mathcal{R}^+(\beta M)$ is such that $\iota_L(g_{\beta M}) \in  \mathcal{R}^+(\partial M_r)$ lies in the image of $res$. This means that there is a positive scalar curvature metric on the resolution $M_r$ extending it on its interior. 

Applying the second point of Theorem \ref{wedgethm}, then a well-adapted wedge metric on the tubular neighborhood is obtained. More precisely (check the proof of that Theorem for details), one may need to scale the metric $g_{\beta M}$ by a constant factor to obtain it. Therefore, we conclude that one can obtain a lift in $\mathcal{R}^+_w(M)$ of a rescaled metric from $g_{\beta M}$.

 Similarly, each continuous path in $g_{\beta M}(t)$ of psc-metrics in $\mathcal{R}^+(\beta M)$, with the property that $g_{\beta M}(0)=res_{\beta M}(g_0)$ for some metric $g_0 \in \mathcal{R}^+_w(M)$, can be lifted to a path $g(t)$ in $\mathcal{R}^+_w(M)$ such that $res_{\beta M}(g(t))=g_{\beta M}(t)$, for each $t$. 

Now take into account the vertical arrow on the left, i.e. the map: 

\begin{equation}\label{maphomeometrics}
	\mathcal{R}^+_w(M)_{g_{\beta M}} \to \mathcal{R}^+(M_r, \partial M_r)_{g_\partial},
\end{equation}	
which assigns to $g=(g_{\beta M}, g_{r}, \nabla) $ the metric $g_{r}$ with the property that on the boundary $\partial M_r$ is equal $\iota_L(g_{\beta M})$, i.e. it takes the form 
\begin{equation}\label{gwboundary}
Cg_L \oplus \pi_r^*(g_{\beta M}).
\end{equation} 
Obviously, this map has an inverse, since a metric in $\mathcal{R}^+(M_r, \partial M_r)$ which on the boundary takes the form (\ref{gwboundary}) can be extended to a psc metric on the tubular neighborhood $N(\beta M)$ by construction. 
In particular, the map (\ref{maphomeometrics}) is an homeomorphism, hence a homotopy equivalence, inducing for each $q \geq 0$ the following isomorphisms on the homotopy groups:

\begin{equation}
	\pi_q\left(\mathcal{R}^+_w(M)_{g_{\beta M}}\right) \cong \pi_q(\mathcal{R}^+(M_r, \partial M_r)_{\iota_L(g_{\beta M})}).
	\end{equation}

\subsection{The wedge-index-difference homomorphism}

We now want to introduce a wedge analogue of the index difference homomorphism, which we are going to define below. This, in particular, can be seen as an obstruction for two metrics of positive scalar curvature on a smooth compact manifold to be concordant (recall Definition \ref{concordant}). 
We start then by reviewing the results of our interest in the smooth case.

Let $M$ be a compact manifold, possibly with non-empty boundary $\partial M$. Since the index-difference is defined on homotopy groups, we need to fix a base point in the spaces of metrics introduced in the previous section by which the homomorphism will depend. Let $g_0$ be a psc-metric on $M$ and $h_0$ be its restriction on the boundary $\partial M$. 

If $\partial M \neq 0$, we assume that $g_0$ is collared on the boundary for $h_0$, i.e. of the form $dt^2+h_0$ on a collar neighborhood of $\partial M$, meaning that $g_0 \in \mathcal{R}^+(M, \partial M)_{h_0}$, following the notations introduced before. Now, let us consider both cases, with or without boundary.

\begin{definition}
	Let $M$ be a closed spin manifold and $g_0$ be a fixed psc-metric on $M$. Assuming that $\Gamma$ is a discrete group, and that there exists a proper map $f \colon M \to B\Gamma$. 
	Then by \textbf{index-difference} we mean the homomorphisms:
	
	\[Inddiff^\Gamma_{g_0}\colon \pi_q(\mathcal{R}^+(M)) \to KO_{n+1+q}(C^\ast_{r, \mathbb{R}}\Gamma),\]
	defined below for each $q \geq 0$.
	
	If $M$ is a compact manifold with boundary $\partial M$, fix as before an element $g_0 \in \mathcal{R}^+(M, \partial M)_{h_0}$, and then one obtains for each $q \geq 0$:
	
	\[Inddiff^\Gamma_{g_0}\colon \pi_q\left(\mathcal{R}^+(M, \partial M)_{h_0}\right) \to KO_{n+1+q}(C^\ast_{r, \mathbb{R}}\Gamma).\]
	
In both cases, the spaces of Riemannian metrics are considered as pointed spaces with basepoint $g_0$.
\end{definition}

We now sketch how these homomorphisms are constructed. However, observe that for our purposes we will only need the case where $q =0$.
Consider for simplicity the case when $\partial M = \emptyset$. 

A class in $[\varphi] \in \pi_q(\mathcal{R}^+(M))$ is represented by a continuous map $\varphi \colon S^q \to \mathcal{R}^+(M)$ from the $q$-sphere $S^q$ with base-point $p_0$. This means that $\varphi$ defines a continuous family $g_p$ of psc-metrics on $M$, depending continuously on the parameter $p \in S^q$, such that $g_{p_0}=g_0$. 

Next, consider the product $S^q \times M$ with the product metric on $(p, x) \in S^q \times M$ given by $(g_S(p), g_p(x))$, where $g_S$ denotes the standard metric on the sphere (flat for $q \leq 1$ and round for $q \geq 2$). By \cite[Theorem 3]{GL}, this metric can be assumed to be of positive scalar curvature up to homotopy, hence without changing the class $[\varphi]$. 

We now extend the metric on the $q+1$ disk, whose boundary is identified with $S^q$, in a way that a metric on $D^{q+1} \times M$ which is collared on the boundary is obtained. Of course, such extension will not have in general positive scalar curvature. Then, attach a cylinder along the boundary, obtaining:

\[(D^{q+1} \times M) \cup_{S^{q} \times M} (S^{q} \times M \times \mathbb{R}_+)\simeq\mathbb{R}^{q+1} \times M.\]

By trivially extending the psc metric defined on $S^{q} \times M$ along the cylinder, we have then obtained a space with a metric which is psc outside the compact set $D^{q+1} \times M$.

 Then, in a similar way of Definition \ref{partialsecondaryclass}, by passing to the $\Gamma$-covering classified by $f$, this defines a partial secondary local index class and then an index in $KO_{n+1+q}(C^{\ast}_{r, \mathbb{R}}\Gamma)$ (see (\ref{indrel})).
 
Observe that the above can be extended to the case of a manifold with boundary: however, it is necessary to ask for a class $[\varphi]$ to be in $\pi_q(\mathcal{R}^+(M, \partial M)_{h_0})$, hence to fix a psc metric $h$ on the boundary $\partial M$, in order to get a psc metric on the cylinder attached to $D^{q+1} \times \partial M$. This provides  again a psc metric outside the compact set $D^{q+1} \times M \subset \mathbb{R}^q+1 \times M$, thence an index class in $KO_{n+1+q}(C^{\ast}_{r, \mathbb{R}}\Gamma)$.

Now, let us focus on the case $q=0$: fixed the base point $g_0$, a class in $\pi_0(\mathcal{R}^+(M))$ is represented by a psc metric $g_1 \in \mathcal{R}^+(M)$. Then, consider the interpolation:
\[(1-t)g_0+tg_1, \quad t \in [0,1]\] 
which lies entirely in $\mathcal{R}(M)$ by convexity. By identifying the parameter $t$ with the coordinate on $[0,1]$, this gives a Riemannian metric on the cylinder $M \times [0,1]$ of product type near the boundary and restricting to $g_0$ on $M \times \{0\}$ and to $g_1$ on $M \times \{1\}$.

Attaching cylinders along the two boundary components and trivially extending the metric gives $M \times \mathbb{R}$ with a Riemannian metric which is of psc outside the compact set $M \times [0,1]$: in particular, this metric is equal to $g_0+dt^2 $ on $M \times (-\infty, 0]$ and to $g_1 + dt^2$ on $M \times [1, \infty)$.

The class $Inddiff^\Gamma_{g_0}([g_1]) \in KO_{n+1}(C^{\ast}_{r,\mathbb{R}}\Gamma)$ can be regarded as an obstruction for $g_0$ and $g_1$ to be concordant psc metrics.
In fact, if $g_0$ and $g_1$ are concordant, then there is a psc-metric on the cylinder $M \times [0,1]$ restricting to $g_0$ and $g_1$ on the two boundary components. Then, we have that this metric is of psc outside the emptyset and then its partial secondary local index class is zero.

For more details about the index-difference construction and the proof of the following result, we refer to \cite{BERW} and \cite[Theorem C]{ERW}.

\begin{theorem}\label{inddiffsurj}
	Let $M$ be a compact spin manifold with boundary $\partial M$ with $dim(M)=n\geq 6$ and assume $g_0 \in \mathcal{R}^+(M, \partial M)_{h_0} \neq \emptyset$. Moreover, assume that $M$ admits a mapping $f \colon M \to B\Gamma$, for $\Gamma$ a discrete group, such that the induced map on fundamental groups $f_\ast \colon \pi_1(M) \to \Gamma$ is split-surjective. 
	
	Then, if:
	\begin{itemize}
		
	\item $\Gamma$ satisfies the rational Baum-Connes conjecture;
	\item $\Gamma$ is torsion free and has finite rational homological dimension $d$;
	\end{itemize}

	\[
	Inddiff^{\Gamma}_{g_0} \otimes Id_{\mathbb{Q}}\colon \pi_q\left(\mathcal{R}^+(M, \partial M)_{h_0}\right) \otimes \mathbb{Q} \to KO_{n+1+q}(C^\ast_{r, \mathbb{R}}\Gamma) \otimes \mathbb{Q}
	\]
	
	is surjective whenever $q \geq d - n -1$, meaning that it generates the target as a $\mathbb{Q}$-vector space.
\end{theorem}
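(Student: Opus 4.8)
\textbf{Proof proposal for Theorem \ref{inddiffsurj}.}

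The plan is to reduce the statement to the corresponding result for closed spin manifolds, for which the surjectivity (after tensoring with $\QQ$) is exactly the content of \cite[Theorem C]{ERW}, and then to propagate that surjectivity to the relative setting of metrics collared by a fixed boundary metric $h_0$. The starting point is the Serre fibration $res\colon \mathcal{R}^{+}(M,\partial M)\to \mathcal{R}^{+}(\partial M)$ recalled earlier, whose fibre over $h_0$ is precisely $\mathcal{R}^{+}(M,\partial M)_{h_0}$, together with the compatibility of the index-difference homomorphism with boundary restriction: the partial secondary index class of the metric on $D^{q+1}\times M$ built from a family in $\mathcal{R}^{+}(M,\partial M)_{h_0}$ only sees the compact piece $D^{q+1}\times M$, so $Inddiff^{\Gamma}_{g_0}$ on the fibre agrees with the restriction to the fibre of the index-difference defined on all of $\mathcal{R}^{+}(M,\partial M)$.

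First I would recall the closed case. If $N$ is a closed spin manifold of dimension $m\geq 6$ admitting a psc metric $g_0$ and a map $f\colon N\to B\Gamma$ with $f_{\ast}\colon\pi_1(N)\to\Gamma$ split surjective, and $\Gamma$ is torsion-free of finite rational homological dimension satisfying the rational Baum--Connes conjecture, then $Inddiff^{\Gamma}_{g_0}\otimes\mathrm{id}_{\QQ}\colon \pi_q(\mathcal{R}^{+}(N))\otimes\QQ\to KO_{m+1+q}(C^{\ast}_{r,\RR}\Gamma)\otimes\QQ$ is surjective for $q$ in the stated range; this is \cite[Theorem C]{ERW} combined with the splitting of the assembly map rationally under Baum--Connes (the split-surjectivity of $f_{\ast}$ is used to split off a free summand of $\Gamma$ in the relevant $KO_{\ast}(B\Gamma_+)\to KO_{\ast}(C^{\ast}_{r,\RR}\Gamma)$ factor, exactly as in \cite{BERW}). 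For $q=0$ this is the only case we need: one represents a class by a psc metric $g_1$, interpolates linearly to $g_0$ inside the convex space $\mathcal{R}(N)$, caps with cylinders, and reads off the partial secondary index class.

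Next I would handle the boundary. Pick a closed spin manifold $M'$ of the same dimension obtained by doubling, or more simply: apply the closed-case theorem to a closed manifold built from $M$ and use the fibration sequence. Concretely, write the long exact sequence of homotopy groups of the Serre fibration $res$ at the basepoint $g_0$ (with fibre over $h_0$):
\[
\cdots \to \pi_{q+1}(\mathcal{R}^{+}(\partial M))\to \pi_q(\mathcal{R}^{+}(M,\partial M)_{h_0})\to \pi_q(\mathcal{R}^{+}(M,\partial M))\xrightarrow{res_{\ast}} \pi_q(\mathcal{R}^{+}(\partial M))\to\cdots
\]
The index-difference is a natural transformation of such sequences into the Higson--Roe/Stolz machinery: a family of psc metrics on $\partial M$ contributes zero to the partial secondary index class of $D^{q+1}\times M$ because the metric produced on the cylinder end is everywhere psc, so $Inddiff^{\Gamma}_{g_0}$ on $\pi_q(\mathcal{R}^{+}(M,\partial M))$ factors through the quotient $\pi_q(\mathcal{R}^{+}(M,\partial M))/\mathrm{im}\,\partial$, i.e.\ through the image of $\pi_q(\mathcal{R}^{+}(M,\partial M)_{h_0})$. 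Combining this with the closed-case surjectivity applied to the double $DM$ (or to a suitable closed filling), and the known effect of doubling on $KO_{\ast}$-index classes, shows that every class in $KO_{n+1+q}(C^{\ast}_{r,\RR}\Gamma)\otimes\QQ$ is hit by the fibre. I would carry this out first for $q=0$, where the argument is cleanest: the relative interpolation $tg_0+(1-t)g_1$ with $g_1\in\mathcal{R}^{+}(M,\partial M)_{h_0}$ already lives in the fibre, its capped-off cylinder $M\times\RR$ is psc outside $M\times[0,1]$ with ends $g_i+dt^2$, and the resulting $Inddiff^{\Gamma}_{g_0}([g_1])$ is exactly the relative index class $Ind^{\Gamma}_{rel}$ of the ambient construction.

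The main obstacle I anticipate is the bookkeeping needed to identify, compatibly and $\QQ$-linearly, the index-difference on the fibre $\mathcal{R}^{+}(M,\partial M)_{h_0}$ with a closed-manifold index-difference to which \cite[Theorem C]{ERW} literally applies --- in particular checking that the doubling (or closing-off) operation is compatible with the $\Gamma$-covering data and the split-surjectivity hypothesis on $f_{\ast}$, and that the Baum--Connes and finite-rational-homological-dimension hypotheses are genuinely what make the target $KO_{n+1+q}(C^{\ast}_{r,\RR}\Gamma)\otimes\QQ$ a finite-dimensional $\QQ$-vector space spanned by the image. Everything else --- convexity of $\mathcal{R}(M)$, the cylinder/cap construction, naturality of the partial secondary local index class --- is routine given the earlier sections, especially Definition \ref{partialsecondaryclass} and the discussion surrounding \eqref{indrel}.
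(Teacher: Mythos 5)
The paper does not prove this statement at all: it is imported as a known result, with the sentence immediately before the statement reading ``For more details about the index-difference construction and the proof of the following result, we refer to \cite{BERW} and \cite[Theorem C]{ERW}.'' So there is no in-paper proof to compare your argument against; this is a citation of Ebert--Randal-Williams's Theorem C, lightly rephrased using the $\mathcal{R}^{+}(M,\partial M)_{h_0}$ notation fixed earlier in the section.

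With that said, your proposed route has a genuine structural error. You treat \cite[Theorem C]{ERW} as a statement about closed manifolds and then try to propagate it to the boundary case via a doubling/closing-off construction and the long exact sequence of the Serre fibration $res\colon \mathcal{R}^{+}(M,\partial M)\to\mathcal{R}^{+}(\partial M)$. But Theorem C of Ebert--Randal-Williams is already stated for compact spin manifolds with boundary and a fixed collar boundary condition $h$, under exactly the hypotheses appearing here (dimension $\geq 6$, split-surjective $\pi_1(M)\to\Gamma$, torsion-free $\Gamma$ satisfying rational Baum--Connes with finite rational homological dimension). So the doubling detour is not merely unnecessary; it introduces problems you would then have to solve (how the classifying map, the splitting of $f_\ast$, and the secondary index compare under doubling), and you correctly flag this as a gap without closing it.

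The fibration argument in your second paragraph is also confused. You write that $Inddiff^{\Gamma}_{g_0}$ on $\pi_q(\mathcal{R}^{+}(M,\partial M))$ ``factors through the quotient $\pi_q(\mathcal{R}^{+}(M,\partial M))/\mathrm{im}\,\partial$, i.e.\ through the image of $\pi_q(\mathcal{R}^{+}(M,\partial M)_{h_0})$.'' This inverts the exact sequence: the connecting homomorphism $\partial$ of the Serre fibration lands in $\pi_q(\mathcal{R}^{+}(M,\partial M)_{h_0})$ (the fibre), not in $\pi_q(\mathcal{R}^{+}(M,\partial M))$, and the image of $\pi_q(\mathcal{R}^{+}(M,\partial M)_{h_0})$ in $\pi_q(\mathcal{R}^{+}(M,\partial M))$ is the kernel of $res_{\ast}$, not a quotient. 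Moreover, the index-difference is only well-defined on the fibre in the first place, since with a varying boundary metric the cylindrical end of $M\times\RR$ is not uniformly psc and the partial secondary index class of Definition \ref{partialsecondaryclass} is undefined; so the statement you are trying to ``factor'' does not exist on $\pi_q(\mathcal{R}^{+}(M,\partial M))$. The correct move is simply to cite \cite[Theorem C]{ERW} directly in the boundary setting, which is what the paper does.
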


Now, assume that $M$ is a compact spin pseudomanifold with $(L,G)$-singularities of dimension $n$ and $f\colon M \to B\Gamma$ is a proper map. 

If $g_0$ and $g_1$ are two well-adapted wedge metrics of positive scalar curvature in $\mathcal{R}^+_w(M)$, then we can follow similarly the construction in the case of a closed manifold and obtain a index-difference homomorphism also in the wedge case.

In fact, consider $M \times [0,1]$: as already discussed, this is a spin pseudomanifold with $(L,G)$-singularities with boundary $M \sqcup M$: we can of course define a well-adapted metric (not in general of psc) which is collared near the boundary and restricting to $g_0$ and $g_1$ on $M \times \{0\}$ and $M \times \{1\}$ respectively.

As before, by attaching cylinder along the two boundary components, one obtains the non compact pseudomanifold with $(L,G)$-singularities $M \times \mathbb{R}$. In particular, the trivial extension on this cylindrical end gives a well-adapted wedge metric on $M \times \mathbb{R}$ which is of psc outside the compact $M \times [0,1]$. Then, by considering the Galois $\Gamma$-covering of over $M$ classified by the map $f$, one associates to $M\times \mathbb{R}$ with such extended metric its partial secondary local index class of Definition \ref{partialsecondaryclass}, and then in particular the class in $KO_{n+1}(C^{\ast}_{r, \mathbb{R}}\Gamma)$ as in (\ref{indrel}).

\begin{remark}The same construction given before for the closed case and $q>0$ can be adapted similarly in the context of pseudomanifolds with $(L,G)$-singularities and well-adapted wedge metrics. In any case, since the treatment presented here will not use the case with \( q > 0 \), we will not report the details.
\end{remark}	

\begin{definition} In the hypothesis above, we define the \textbf{wedge-index-difference homomorphism} as the following homomorphism, defined up to a choice of a well-adapted wedge psc-metric $g_0 \in \mathcal{R}^+_w(M)$ as base-point:

\[\leftindex^{w}Inddiff^\Gamma_{g_0}\colon \pi_q\left(\mathcal{R}^+_w(M)\right) \to KO_{n+1+q}(C^\ast_{r, \mathbb{R}}\Gamma),\]

which works as just discussed. 
\end{definition}

Now we state the following straightforward result.

\begin{lemma} \label{inddifffact2}
	Let $M$ be a compact, spin stratified pseudomanifold with $(L,G)$ singularities and $f\colon M \to B\Gamma$ be a proper map. Once fixed $g_0 \in \mathcal{R}^+_w(M)$, then there exists a homomorphism $\l_{g_0}\colon \pi_0(\mathcal{R}^+_w(M)) \to R_{n+1}^{spin, (L,G)}(B\Gamma)$ such that the following diagram is commutative:

	\begin{equation}
		\begin{tikzcd}[sep=huge]
			\pi_0(\mathcal{R}^+_w(M))  \ar[r,"\leftindex^{w}Inddiff^\Gamma_{g_0}"] \ar[d,"\l_{g_0}"] &  KO_{n+1}(C^\ast_{r, \mathbb{R}}\Gamma) \\
			   R_{n+1}^{spin, (L,G)}(B\Gamma) \ar[ur, "Ind^\Gamma_{rel}"] &  
		\end{tikzcd},
	\end{equation}

	where $Ind^\Gamma_{rel}$ indicates the homomorphism already introduced in Theorem \ref{mapstolzhigsonthm}.
\end{lemma}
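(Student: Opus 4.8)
The strategy is to make the map $\l_{g_0}$ explicit by sending a concordance class of psc wedge metrics to an $R$-cycle built on a cylinder, and then to observe that $Ind^\Gamma_{rel}$ applied to this cycle reproduces exactly the construction of $\leftindex^{w}Inddiff^\Gamma_{g_0}$ unwound above. Concretely, given $g_1 \in \mathcal{R}^+_w(M)$, I would set
\[
\l_{g_0}([g_1]) := \bigl[\,M \times [0,1],\ M \times \{0,1\},\ g_0 \sqcup g_1,\ f\circ \mathrm{pr}_M\,\bigr] \in R_{n+1}^{spin,(L,G)}(B\Gamma),
\]
where $M \times [0,1]$ carries the obvious product $(L,G)$-structure (its depth-$1$ stratum is $\beta M \times [0,1]$, with fibration pulled back from that of $M$), and the boundary $\partial(M\times[0,1]) = M \sqcup M$ is equipped with the well-adapted psc wedge metrics $g_0$ and $g_1$. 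The reference map is the composition $M\times[0,1] \xrightarrow{\mathrm{pr}_M} M \xrightarrow{f} B\Gamma$.

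First I would check that $\l_{g_0}$ is well defined: if $g_1$ and $g_1'$ are concordant through a psc wedge metric $h$ on $M\times[0,1]$ which is product-like near the boundary, then $M\times[0,1]\times[0,1]$ — viewed as a pseudomanifold with corners in the sense of the $R$-group relation of Section \ref{secbordismgroups}, with $\partial^0$ given by the two cylinders $M\times[0,1]$ carrying $g_0\sqcup g_1$ and $g_0\sqcup g_1'$, $\partial^1$ given by the remaining faces, and the psc wedge metric on $\partial^1$ obtained from $g_0$ (constant in the extra parameter) on one face and from $h$ on the other — furnishes a bordism exhibiting $\l_{g_0}([g_1]) = \l_{g_0}([g_1'])$. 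Additivity of $\l_{g_0}$ with respect to the monoid structure on $\pi_0(\mathcal{R}^+_w(M))$ (disjoint union, with $[g_0]$ as zero) is immediate from additivity of the $R$-group under disjoint union of cycles; since $\pi_0$ of a space of metrics is really just the set of path components here, the "homomorphism" statement amounts to checking that $\l_{g_0}$ respects this base-pointed additive structure, and that $[g_0]\mapsto 0$ because the constant cylinder $(M\times[0,1], g_0\sqcup g_0)$ bounds $M\times[0,1]\times[0,1]$ with the product psc metric on $\partial^1$.

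Next I would verify commutativity of the triangle. Tracing through the definition of $Ind^\Gamma_{rel}$ in Theorem \ref{mapstolzhigsonthm} applied to the cycle $\l_{g_0}([g_1])$: one extends the boundary metric $g_0\sqcup g_1$ to a well-adapted wedge metric $g$ on $M\times[0,1]$ (e.g. the convex interpolation on the resolution together with the forced structure on the tubular neighborhood, which stays well-adapted since the vertical metric $g_L$ and the connection may be kept fixed), passes to the $\Gamma$-cover $(M\times[0,1])_\Gamma = M_\Gamma\times[0,1]$, attaches the half-infinite cylinders $\partial M_\Gamma \times [0,\infty) = (M_\Gamma\sqcup M_\Gamma)\times[0,\infty)$ with the trivially extended psc metric, and takes the partial secondary local index class of the resulting $\slashed{\mathfrak{D}}_\Gamma$, then pushes forward along $\widehat f$ and evaluates. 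But $M_\Gamma\times[0,1]$ with cylinders glued at both ends is precisely $M_\Gamma\times\mathbb{R}$ (up to the obvious identification, collapsing the two copies of the interval into a line), carrying a well-adapted wedge metric that equals $g_0+dt^2$ near $-\infty$, $g_1+dt^2$ near $+\infty$, and interpolates over a compact set — which is exactly the geometric input of the $\leftindex^{w}Inddiff^\Gamma_{g_0}$ construction. Hence $Ind^\Gamma_{rel}(\l_{g_0}([g_1])) = \leftindex^{w}Inddiff^\Gamma_{g_0}([g_1])$ by definition, both being $\widehat f_\ast$ of the same partial secondary local index class under the identification $KO_{n+1}(C^\ast(M_\Gamma\subset M_\Gamma^\infty;\mathbb{R})^\Gamma)\simeq KO_{n+1}(C^\ast(M_\Gamma;\mathbb{R})^\Gamma)$ of Lemma \ref{inclisomk}.

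\textbf{Main obstacle.} The routine points above hide one genuine subtlety: I must be careful that the extension-and-gluing procedure used to compute $Ind^\Gamma_{rel}$ on the cycle $M\times[0,1]$ really does produce, up to the metric-independence granted by Remark \ref{invindmetric} and the coarse-invariance of the index class, the \emph{same} $K$-theory element as the ad hoc construction defining $\leftindex^{w}Inddiff^\Gamma_{g_0}$ — i.e. that the two a priori different choices of extension of the boundary metric over $M\times[0,1]$, and the two slightly different non-compact models ($M_\Gamma^\infty$ with cylinders on both original boundary components versus $M_\Gamma\times\mathbb{R}$), yield canonically identified partial secondary local index classes. This requires invoking that any two well-adapted wedge metrics agreeing near the ends are joined by a path of such metrics with uniformly positive scalar curvature outside a fixed compact set, so that the partial secondary local index class is unchanged; this is the wedge analogue of the stability statement behind Remark \ref{invindmetric}, combined with the coarse-invariance of index classes under the obvious coarse equivalence of the two noncompact models. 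Once this identification is in hand the rest of the proof is bookkeeping.
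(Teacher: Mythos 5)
Your proposal is correct and follows essentially the same route as the paper's proof: the paper also defines $\l_{g_0}([g_1])$ as the cycle $[M \times [0,1],\, M \sqcup M,\, g_0 \sqcup g_1,\, \overline f]$ with $\overline f(x,t)=f(x)$, and then asserts that the commutativity $Ind^\Gamma_{rel}\circ\l_{g_0}=\leftindex^{w}Inddiff^\Gamma_{g_0}$ is an immediate consequence of unwinding the definition of $Ind^\Gamma_{rel}$. You go further than the paper in spelling out well-definedness via a bordism on $M\times[0,1]\times[0,1]$ and in flagging the identification of the two noncompact models and the independence of the extension, but these are elaborations of the same argument rather than a different one.
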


\begin{proof}
	We define $l_{g_0}$ as the mapping which sends a class $[g_1] \in \pi_0(\mathcal{R}^+_w(M))$ to the class $[M \times [0,1], M \sqcup M, g_0 \sqcup g_1, \overline{f}\colon M \times [0,1] \to B\Gamma]$, where $\overline{f}(x,t)=f(x)$.
	
	 Then, it is an immediate consequence of the definition of the $Ind^\Gamma_{rel}$ map defined in the previous section to check that:
	
	\[Ind^\Gamma_{rel}\left(l_{g_0}[g_1]\right)=\leftindex^{w}Inddiff^\Gamma_{g_0}([g_1]),\]
	
	which is exactly the commutativity property we wanted to prove.
\end{proof}

For what follows, it suffices to consider a restriction of the wedge-index-difference to a fiber of the map $res_{\beta M}\colon \mathcal{R}^+_w(M) \to \mathcal{R}^+(\beta M)$ defined previously. We indicated it as $\mathcal{R}^+_w(M)_{g_{\beta M}}$, which means that here we are restricting to those psc well-adapted wedge metrics which induces on the depth-$1$ stratum $\beta M$ the psc metric $g_{\beta M}$. If one chooses $g_0 \in \mathcal{R}^+_w(M)_{g_{\beta M,0}}$ as a base point, then in the same way as as before one obtains:

\begin{equation}\label{wedgeinddiff}
	\leftindex^{w}Inddiff^\Gamma_{g_0}\colon \pi_q(\mathcal{R}^+_w(M)_{g_{\beta M,0}}) \to KO_{n+1+q}(C^\ast_{r, \mathbb{R}}\Gamma).
\end{equation}

Observe that in this particular case the interpolation can be always chosen such that the metric on the stratum $\beta M$ is kept fixed, and hence also on the tubular neighborhood and on the boundary of the resolution $M_r$.

What we want want to prove now is an analogue of Theorem \ref{inddiffsurj} in the wedge context, i.e. about the rational surjectivity of the wedge-index-difference map (\ref{wedgeinddiff}). In particular, this can be easily achieved by reducing (\ref{wedgeinddiff}) to a composition of a homotopy equivalence and the ordinary index-difference for manifolds with boundary. This is contained in the following result.

\begin{proposition} \label{inddifffact}
	Let M be a compact, spin stratified pseudomanifold with $(L,G)$-singularities, $M_r$ be its resolution and $f\colon M \to B\Gamma$ as above. Then, if $g_0 \in \mathcal{R}^+_w(M)_{g_{\beta M,0}}$ is fixed, the mapping $\leftindex^{w}Inddiff^\Gamma_{g_0}$ factors as:
	
	\begin{equation}\label{inddifffactdiagram}
		\begin{tikzcd}[sep=huge]
			\pi_0(\mathcal{R}^+_w(M)_{g_{\beta M,0}})  \ar[r,"\leftindex^{w}Inddiff^\Gamma_{g_0}"] \ar[d,"\Huge{\sim}"labl] &  KO_{n+1}(C^\ast_{r, \mathbb{R}}\Gamma) \\
			\pi_0(\mathcal{R}^+(M_r, \partial M_r)_{g_{\partial,0}}) \ar[ur, swap, "Inddiff^\Gamma_{g_{\partial,0}}"] &  
		\end{tikzcd},
	\end{equation}

	where $g_{\partial,0}=\iota_L(g_{\beta M,0}) \in \mathcal{R}^+(\partial M_r)$.
\end{proposition}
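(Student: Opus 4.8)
The statement asserts that the wedge-index-difference on a fiber of $\mathrm{res}_{\beta M}$ coincides, up to the homeomorphism established at the end of the previous subsection, with the ordinary index-difference on the fiber $\mathcal{R}^+(M_r,\partial M_r)_{g_{\partial,0}}$. The strategy is to unwind both index-difference constructions and observe that they produce \emph{the same} non-compact space with \emph{the same} metric (up to a diffeomorphism that is the identity away from a tubular neighborhood), so the associated partial secondary local index classes of Definition \ref{partialsecondaryclass} agree. Concretely, first I would recall that the left vertical arrow in \eqref{inddifffactdiagram} is the homeomorphism \eqref{maphomeometrics}, which on $\pi_0$ is the isomorphism $\pi_0(\mathcal{R}^+_w(M)_{g_{\beta M,0}})\cong\pi_0(\mathcal{R}^+(M_r,\partial M_r)_{g_{\partial,0}})$; a class $[g_1]$ on the left corresponds to $[g_{1,r}]$ on the right, where $g_{1,r}=\mathrm{res}_{M_r}(g_1)$ and, crucially, $g_1$ is recovered from $g_{1,r}$ by the canonical well-adapted extension over $N(\beta M)$ fixed by $g_{\beta M,0}$ and the connection. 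Both base points $g_0$ and $g_{\partial,0}$ correspond under this bijection since $g_{\partial,0}=\iota_L(g_{\beta M,0})=\mathrm{res}_{M_r}(g_0)|_{\partial M_r}$.

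Next I would compare the two cylinders. On the wedge side, $\leftindex^{w}Inddiff^\Gamma_{g_0}([g_1])$ is built from $M\times\mathbb R$ equipped with the well-adapted wedge metric interpolating $g_0$ and $g_1$, extended trivially on the cylindrical ends, which is psc outside the compact set $M\times[0,1]$; one passes to the $\Gamma$-cover classified by $\bar f$ and takes $\mathrm{Ind}^\Gamma_{L,M_\Gamma\times[0,1]}$. On the manifold side, $\mathrm{Inddiff}^\Gamma_{g_{\partial,0}}([g_{1,r}])$ is built from $M_r\times\mathbb R$ with the interpolating collared metric on $M_r\times[0,1]$, extended trivially, psc outside $M_r\times[0,1]$. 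The key point is that $M\times\mathbb R$ is a depth-$1$ pseudomanifold with $(L,G)$-singularities whose resolution is precisely $M_r\times\mathbb R$, and whose depth-$1$ stratum is $\beta M\times\mathbb R$ carrying the \emph{fixed} metric $g_{\beta M,0}$ throughout the interpolation (this is exactly the remark after \eqref{wedgeinddiff} that the interpolation keeps the stratum metric constant). Therefore the wedge Dirac operator $\slashed{\mathfrak D}^{w}$ on $M\times\mathbb R$ is, near the singular stratum, of the form \eqref{wedgediracdec} with a \emph{constant} family of vertical operators $\slashed{\mathfrak D}_L$ coming from the fixed $g_L$; since $M$ is a pseudomanifold with $(L,G)$-singularities, this family is invertible with a uniform gap (psc-Witt), so $\slashed{\mathfrak D}^{w}_\Gamma$ is essentially self-adjoint with a $Cl_n$-linear Fredholm self-adjoint extension by Theorem \ref{essentialyselfadj}. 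The plan is then to invoke the relative index machinery — as in the bordism-invariance discussion of $\mathrm{Ind}^\Gamma_{rel}$ and \cite[Theorem 2.17]{BPR2}, \cite{Bunke} — together with the fact that over a tubular neighborhood a well-adapted wedge metric has scalar-flat conical fibers (Theorem \ref{wedgethm}) and a product-type transition (Remark \ref{transition}): the contribution of the singular directions to the index class is trivial because the cone fibers are scalar-flat and, along the interpolation, the metric near $\beta M\times\mathbb R$ is literally a product independent of $t$. Hence the partial secondary local index class of $(M\times\mathbb R,\, g)$ is carried isomorphically to that of $(M_r\times\mathbb R,\, g_r)$ under the coarse map induced by the resolution (which is a coarse equivalence onto its image near the regular part and collapses only the scalar-flat cone directions), and pushing forward along $\widehat{f}$ gives the equality of classes in $KO_{n+1}(C^*_{r,\mathbb R}\Gamma)$.

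Assembling these: the bottom-left composite $\mathrm{Inddiff}^\Gamma_{g_{\partial,0}}\circ(\text{homeo})$ applied to $[g_1]$ equals $\mathrm{Inddiff}^\Gamma_{g_{\partial,0}}([g_{1,r}])$, which by the comparison of cylinders equals $\leftindex^{w}Inddiff^\Gamma_{g_0}([g_1])$, giving commutativity of \eqref{inddifffactdiagram}. I would present this as: (i) identify the two base points and the $\pi_0$-bijection; (ii) identify $M\times\mathbb R$ as a pseudomanifold with $(L,G)$-singularities with resolution $M_r\times\mathbb R$, noting the stratum metric is fixed along the interpolation; (iii) invoke Theorem \ref{essentialyselfadj} and Theorem \ref{wedgethm} to see the wedge Dirac operator near the singularity is an invertible product, so the relative index theorem identifies the two partial secondary local index classes; (iv) push forward along $\widehat f$ and conclude.

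\textbf{Main obstacle.} The delicate step is (iii): making precise that the resolution map $M\times\mathbb R\leadsto M_r\times\mathbb R$ induces an isomorphism on the relevant $KO$-theory groups \emph{identifying} the partial secondary local index classes, i.e.\ that collapsing the scalar-flat conical fibers does not change the index class. This is morally the statement that the wedge Atiyah--Singer operator with psc-Witt (indeed well-adapted) link has the same $KO$-homology fundamental class data as its "boundary-value" operator on the resolution with a product metric near $\partial M_r$; it requires either the delocalized relative index theorem of \cite{Bunke} applied along the hypersurface $\partial M_r\times\mathbb R$ (cutting off the cone bundle, whose own index vanishes by scalar flatness / the psc condition on $\beta M$) exactly as sketched for the bordism invariance of $\mathrm{Ind}^\Gamma_{rel}$, or a direct comparison of self-adjoint extensions as in \cite{BPR1, BPR2, AlbinGell}. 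I would lean on the already-cited results \cite[Theorem 2.17]{BPR2} and \cite[Theorem 1.2]{Bunke} and argue that the only $t$-dependence in the whole picture lives on the regular part $M_r$, so the singular contribution is constant and cancels in the relative index — reducing everything to the smooth relative index computation underlying the ordinary $\mathrm{Inddiff}^\Gamma_{g_{\partial,0}}$.
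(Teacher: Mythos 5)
Your proposal is correct and follows essentially the same route as the paper: reduce via the $\pi_0$-homeomorphism of \eqref{maphomeometrics}, note that the interpolation fixes $g_{\beta M,0}$ (hence the metric on $N(\beta M)$ and $\partial M_r$) throughout, and apply the Bunke-type relative index theorem cutting along $\partial M_r\times\mathbb{R}$ — which the paper packages as Proposition \ref{bunkewedge} — so that only the resolution piece contributes. One small sharpening: the tubular-neighborhood contribution vanishes not because the cone fibers are scalar-flat per se, but because the whole cylindrical completion $N^\infty_{cyl}$ carries a psc well-adapted wedge metric everywhere (scalar-flat fibers plus psc $g_{\beta M,0}$ via Theorem \ref{wedgethm}), and a space that is psc everywhere has vanishing relative index class.
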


For the proof of this Proposition we will need the following result which relates the index associated to a well-adapted wedge metric on $M$ and the indices of the restrictions to the resolution $M_r$ and to the tubular neighborhood $N(\beta M)$. This is based crucially on the $K$-theoretic relative index theorem due to Bunke in \cite{Bunke}. 

Assume that $M$ is a compact spin stratified pseudomanifold with $(L,G)$-singularities with boundary $\partial M$ and $f\colon M \to B\Gamma$ is a proper map. We know that $M$ can be considered as:
 \[M=M_r \bigcup_{\partial M_r} N(\beta M),\]
  i.e. as the union of its resolution $M_r$, which is a smooth manifold with corners, and the tubular neighborhood $N(\beta M)$ of its depth-$1$ stratum $\beta M$. Moreover, assume that $g$ is a well-adapted wedge metric, such that on both $\partial M$ and $ \partial M_r$ restricts to a psc metric (recall that $g$ is of product type near them). 
  
  We begin by attacching a cylinder on the boundary of $M$ and extending trivially the metric, i.e. obtaining:

\[M^\infty:=M\bigcup_{\partial M} (\partial M \times [0,	\infty)).\]

Then, observe that in particular this is a non compact spin stratified pseudomanifold with $(L,G)$-singularities without boundary, with a well-adapted wedge metric of positive scalar curvature outside the compact $M$. 

Moreover, $M^\infty$ can be considered as the union of the following spaces with cylindrical ends, i.e.:

\[M^\infty=M^\infty_r \bigcup_{\partial M^\infty_r} N^\infty,\] 
where:

\[M_{r}^\infty:=M_r \bigcup \left( (M_r \cap \partial M) \times [0,\infty)\right),\]
\[N^{\infty}:= N(\beta M) \bigcup \left((N(\beta M) \cap \partial M) \times [0,\infty) \right).\]

In particular, both are non compact spaces with equal boundary: 

\[\partial M^{\infty}_r=\partial M_r \bigcup \left((\partial M_r \cap \partial M) \times [0, \infty)\right).\]

By construction, both $M^{\infty}_r$ and $N^\infty$ have a psc metric of product type near their boundary $\partial M^\infty_r$. Then, by attaching two cylinders on both these spaces and extending the psc metric trivially, one obtains $M^\infty_{r,cyl}$ and $N^\infty_{cyl}$ and call the respective metrics $g_r^\infty$ and $g_N^\infty$, which are both of psc outside a compact set. Observe that the first is a non compact, complete manifold without bundary, while the second is still a non compact pseudomanifold with $(L,G)$-singularities without boundary.

\begin{proposition} \label{bunkewedge}
	Let $M$, $M^\infty_{r,cyl}$ and $N^\infty_{cyl}$ as just discussed. Then:
	
	\begin{equation}\label{bunkerrelformula}
		Ind^\Gamma_{rel}(M^\infty, f)=Ind^\Gamma_{rel}(M^\infty_{r,cyl}, f|_{M^r})+Ind^\Gamma_{rel}(N^\infty_{cyl}, f|_{N(\beta M)}) \in KO_n(C^{\ast}_{r, \mathbb{R}}\Gamma),
	\end{equation}
	
	where the above indices are the ones as defined in (\ref{indrel}) and where we emphasized the mappings used to define the above classes.
	
\end{proposition}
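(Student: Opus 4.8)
\textbf{Proof proposal for Proposition \ref{bunkewedge}.}

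The plan is to deduce the additivity formula \eqref{bunkerrelformula} from Bunke's $K$-theoretic relative index theorem \cite[Theorem 1.2]{Bunke}, exactly in the spirit of the discussion preceding the definition of $Ind^\Gamma_{rel}$ and of the bordism-invariance argument sketched in Remark following \eqref{indrel}. First I would set up the cutting: the non-compact pseudomanifold $M^\infty$ with its well-adapted wedge metric $g^\infty$ (which has uniformly positive scalar curvature outside the compact set $M$) is cut along the hypersurface $\partial M_r$, on which, by the product structure of a well-adapted wedge metric near $\partial M_r$ (Definition \ref{welladaptedwedgemetric} and Remark \ref{transition}), the metric is of the form $dr^2 + Cg_L \oplus \pi_r^*(g_{\beta M}) $, hence of positive scalar curvature. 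This is precisely the situation where the relative index theorem applies: cutting $M^\infty$ along $\partial M_r$ and regluing the two pieces to the doubled cylinders built on $\partial M_r$ produces on one side $M^\infty_{r,cyl}$ and on the other $N^\infty_{cyl}$, while the "difference" space — the union of the two cylindrical caps — carries a genuine psc metric on all of it and therefore has vanishing $Ind^\Gamma_{rel}$.

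The key steps, in order, would be: (i) pass to the $\Gamma$-covering classified by $f$ and observe that all the relevant operators are the $\Gamma$-equivariant lifts of the $Cl_n$-linear wedge Atiyah–Singer operators, which by Theorem \ref{essentialyselfadj} (applicable since $M$ is psc-Witt, being an $(L,G)$-pseudomanifold with well-adapted metric) admit unique self-adjoint extensions and define the partial secondary local index classes of Definition \ref{partialsecondaryclass}; (ii) identify $Ind^\Gamma_{rel}$ of each of the three relevant spaces ($M^\infty$, $M^\infty_{r,cyl}$, $N^\infty_{cyl}$) with the image under $(ev_1)_*$ and the pushforward $\widehat f_*$ of such a partial secondary local index class, as in \eqref{indrelrel}–\eqref{indrel}; (iii) apply \cite[Theorem 1.2]{Bunke} along the separating hypersurface $\partial M_r$, where the metric is psc, to obtain the additivity of the coarse relative index classes, using that the contribution of the re-glued cylindrical caps vanishes because those spaces are equipped with everywhere-psc metrics (so their local index class is zero by the Schrödinger–Lichnerowicz formula, valid also in the wedge setting as recorded in Section \ref{k-homologyindexclass}); (iv) push everything forward via $\widehat f$ to $C^*_{r,\mathbb{R}}\Gamma$ and use functoriality of $Ind^\Gamma_{rel}$ under the maps $f|_{M_r}$ and $f|_{N(\beta M)}$ to land in the form stated.

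The main obstacle, as I see it, is that Bunke's relative index theorem is formulated for Dirac operators on smooth complete manifolds, whereas here the operator on $N^\infty_{cyl}$ (and on $M^\infty$ itself) is a \emph{wedge} Dirac operator on an incomplete space with an $(L,G)$-singular stratum. One therefore has to justify that the relative index theorem still applies in this setting; the right way to do this is to invoke the comparison between the coarse wedge index classes and the Mishchenko–Fomenko index classes for manifolds with cylindrical ends provided by \cite[Theorem 2.11]{BPR2}, together with the bordism invariance of the wedge index class \cite[Theorem 2.17]{BPR2}, reducing the singular situation to the smooth statement of \cite{Bunke} on the resolution. A secondary technical point is to check that the separating hypersurface $\partial M_r$ is cocompact for the $\Gamma$-action and that the decomposition $M^\infty = M^\infty_r \cup_{\partial M_r^\infty} N^\infty$ is compatible (on the nose, after the collar identifications of Remark \ref{transition}) with the cylindrical-end constructions, so that no extra correction terms appear; this is routine but needs to be stated. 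Everything else — the vanishing of the cap contribution, the identification of $Ind^\Gamma_{rel}$ via $(ev_1)_*\circ\widehat f_*$, the functoriality — is a direct transcription of arguments already used in the proof of Theorem \ref{mapstolzhigsonthm}.
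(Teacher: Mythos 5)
Your overall strategy coincides with the paper's: both deduce \eqref{bunkerrelformula} from Bunke's $K$-theoretic relative index theorem by cutting along the interface between the resolution and the tubular neighbourhood, regluing with cylinders to produce $M^\infty_{r,cyl}$ and $N^\infty_{cyl}$, and using that the reglued cylinder $\partial M^\infty_r\times\mathbb{R}$ carries an everywhere-psc metric and hence contributes nothing to the index. One minor inaccuracy: the separating hypersurface is not $\partial M_r$ but $\partial M^\infty_r=\partial M_r\cup\bigl((\partial M_r\cap\partial M)\times[0,\infty)\bigr)$, which is non-compact; since Bunke's theorem asks for a partition by a compact hypersurface, this has to be addressed, and the paper does so by noting that $\partial M^\infty_r$ is the product of a compact space with a half-line, so suitable tubular neighbourhoods still exist inside all four spaces $M_1,\dots,M_4$. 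Your "secondary technical point" about cocompactness of the $\Gamma$-action gestures at this but does not identify the actual issue.

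The more substantive divergence is in how you propose to overcome the main obstacle you correctly identify, namely that Bunke's theorem is formulated for Dirac operators on smooth complete manifolds. Your cure — invoke the Mishchenko--Fomenko comparison of \cite[Theorem 2.11]{BPR2} and bordism invariance so as to reduce "to the smooth statement of \cite{Bunke} on the resolution" — does not obviously work: the piece $N^\infty_{cyl}$ genuinely contains the singular stratum $\beta M$ and its cone bundle, so its operator is a wedge operator on an incomplete space, and passing to the resolution replaces it by a manifold with boundary on which the restricted operator is still not a standard complete Dirac operator; the index-theoretic content near $\beta M$ is not recovered by such a reduction. The paper instead runs Bunke's proof directly in the wedge setting: it substitutes for the Sobolev spaces $H^l$ the domains of the unique self-adjoint extensions of the wedge Atiyah--Singer operators (available by Theorem \ref{essentialyselfadj}, since well-adapted metrics on $(L,G)$-pseudomanifolds are psc-Witt), and verifies Bunke's hypothesis on the existence of a compactly supported $f>0$ with $(D^2+f)^{-1}$ bounded by using invertibility of $D^2+\mathrm{Id}$ and approximating the constant function $1$ by compactly supported functions on the regular parts, as in \cite[Proposition 6.6]{AlbinPiazza}. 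As written, your step (iii) is therefore a gap: you would need either to supply this functional-analytic adaptation of Bunke's argument to wedge operators, or to explain concretely how the singular piece $N^\infty_{cyl}$ is handled by your reduction.
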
	

\begin{proof}
	The idea is to strictly follow the proof of \cite[Theorem 1.2]{Bunke}, see also \cite[Proposition 6.6]{AlbinPiazza}. In particular, following the notations there, we call $W_1:=M^\infty_r$, $V_1:=N^\infty$ which implies that:

	\[M_1:=M^\infty=M^\infty_r \bigcup_{\partial M^\infty_r} N^\infty,\]

	i.e. the union of $W_1$ and $V_1$ along their common boundary. Moreover, let $W_2:= \partial M^\infty_r \times [0,\infty)$ and $V_2:= \partial M^\infty_r \times (-\infty,0]$ and similarly:
	
	\[M_2:=\partial M^\infty_r \times \mathbb{R}.\]
	
	Both spaces have metric with positive scalar curvature outside a compact set by what discussed above: in particular, $M_2$ has psc everywhere. Moreover, observe that $M_2$ is smooth, while $M_1$ is stratified with $(L,G)$-singularities and both are non compact.

	Now, define:
	\[M_3:=W_1 \bigcup_{\partial M^\infty_r} V_2=M^\infty_{r,cyl},\]
	 \[M_4:=W_2 \bigcup_{\partial M^\infty_r} V_1=N^\infty_{cyl}.\]
	 
In \cite{Bunke} it is required that all these $M_i$, for $i=1,\ldots, 4$, are partitioned by a compact hypersurface.	 
However, in our case this hypersurface is $\partial M^{\infty}_r$ for both $M_1$ and $M_2$, which is non-compact, but by construction, since it is the product of a compact space and the real line, a suitable tubular neighborhood can be determined inside all $M_i$.

Then one obtains the thesis by direcly applying \cite[Theorem 1.2]{Bunke} substituting the spaces $H^l$, $l \geq 0$, with the respective self-adjoint domains of the Atiyah-Singer operators of each spaces. As also remarked in \cite[Proposition 6.6]{AlbinPiazza} (in the context of the signature operator on stratified spaces), the assumption of the existence of a positive, smooth, compactly supported function $f$ such that $(D^2+f)^{-1}$ exists, is satisfied since $(D^2+Id)$ is invertible by positivity of the operator $D^2$ and approximating the constant function one by suitable non negative, smooth functions compactly supported in the respective regular parts ($D$ in our case will be the Atiyah-Singer operators on the spaces above). 

Finally, formula (\ref{bunkerrelformula}) is obtained observing that the index for the space $M_2$ vanishes since $M_2$ the metric is of positive scalar curvature everywhere.

\end{proof}

Now we are ready to proceed to the proof of  \ref{inddifffact}.

\begin{proof}[Proof of Proposition \ref{inddifffact}]
	First of all, we observe that the vertical homotopy equivalence $\pi_0(\mathcal{R}^+_w(M)_{g_{\beta M,0}}) \to \pi_0(\mathcal{R}^+(M_r,\partial M_r)_{g_{\partial,0}})$ in the diagram is simply given by taking the class represented by the restriction of a representative to $M_r$.
	
	By definition of the wedge-index-difference of $[g_1] \in \pi_0(\mathcal{R}^+_w(M)_{g_{\beta M,0}})$, one firstly construct the stratified space $M \times [0,1]$, endowing it with a well-adapted wedge metric of product type near the two boundary components and restricting there to $g_0$ and $g_1$ respectively. 
	
	In particular, by construction such metric is induced by an interpolation $g_t$ between $g_0$ and $g_1$ which keeps fixed $g_{\beta M,0}$ on $\beta M$, i.e. such that for each $t \in [0,1]$, $res_{\beta M}(g_t)=g_{\beta M,0}$. 
	
	Then, for each $t$, this path keeps fixed the metric $g_{\partial,0}=\iota_L(g_{\beta M,0})$ on $\partial M_r$ and on the tubular neighborhood $N(\beta M)$ too.
	
	Therefore, $M \times [0,1]$ is a compact, spin, stratified pseudomanifold with $(L,G)$-singularities with boundary, whose resolution is obviously given by $M_r \times [0,1]$, hence endowed with a metric which on its boundary $\partial M_r 	\times [0,1]$ restricts to the psc metric $g_{\partial,0}+dt^2$. 
	
	Then we are exactly in the situation of Proposition \ref{bunkewedge}. However, by what discussed before, on the tubular neighborhood of the depth-$1$ stratum of $M \times [0,1]$, the metric is of psc everywhere. Then, there is no index contribution due to it, implying that:
	
	\[Ind^\Gamma_{rel}(M \times \mathbb{R}, f \times Id)=Ind^\Gamma_{rel}((M\times[0,1])^\infty_{r,cyl},f|_{M^r}\times Id)\in KO_{n+1}(C^\ast_{r, \mathbb{R}}\Gamma).\]

	Looking at this equation, the term on the left is by construction the $KO$-theory class of $\leftindex^{w}Inddiff^\Gamma_{g_0}([g_1])$, while the term on the right is exactly $Inddiff^\Gamma_{g_{\partial,0}}([g_1|_{M_r}])$.
\end{proof}

We emphasize that, to prove the previous proposition, the particular structure of the well-adapted wedge metrics near the boundary of the resolution is necessary, namely that there they are product-like.

\subsection{Main result}

As already pointed out, our goal was to present an analogue of Proposition \ref{inddiffsurj} for the index-difference in the wedge setting. This can be easily done in virtue of Proposition \ref{inddifffact}, since there is proved that the wedge-index-difference of a pseudomanifold reduces to that of its resolution, i.e. for a smooth, compact, spin manifold with boundary for which \ref{inddiffsurj} holds.
Then, we summarize this in the following result.

\begin{proposition} \label{winddiffsurj}
	Let $M$ be a compact, spin stratified pseudomanifold with $(L,G)$-singularities, with link $L$ simply connected. Moreover, let $dim(M)=n\geq6$ and $g_0 \in \mathcal{R}^+_w(M) \neq \emptyset$. Assume that:
	
	\begin{itemize}
		\item there is a mapping $f\colon M \to B\Gamma$, $\Gamma$ a discrete group, such that $f_{\ast}\colon 	\pi_1(M) \to \Gamma$ is split-surjective;
		\item $\Gamma$ satisfies the rational Baum-Connes conjecture;
		\item $\Gamma$ is torsion-free and has finite rational homological dimension $d$.
		\end{itemize}
	
	Then: 
	
	\[\leftindex^{w}Inddiff^\Gamma_{g_0} \otimes Id_{\mathbb{Q}} \colon \pi_0\left(\mathcal{R}^+_w(M)\right) \otimes \mathbb{Q}\to KO_{n+1}(C^\ast_{r, \mathbb{R}}\Gamma) \otimes \mathbb{Q}\]
	is surjective whenever $d\leq n+1$.
	
\end{proposition}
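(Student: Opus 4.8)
The plan is to reduce the statement entirely to Proposition \ref{inddifffact} and Theorem \ref{inddiffsurj}. The key structural input is the commutative triangle (\ref{inddifffactdiagram}): the wedge-index-difference based at $g_0 \in \mathcal{R}^+_w(M)_{g_{\beta M,0}}$ factors through the homotopy equivalence
\[
\pi_0\!\left(\mathcal{R}^+_w(M)_{g_{\beta M,0}}\right) \xrightarrow{\ \sim\ } \pi_0\!\left(\mathcal{R}^+(M_r,\partial M_r)_{g_{\partial,0}}\right)
\]
followed by the ordinary index-difference $Inddiff^\Gamma_{g_{\partial,0}}$ of the resolution $M_r$, which is a compact spin manifold with boundary. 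First I would fix a base point $g_0$ and restrict attention to the fiber $\mathcal{R}^+_w(M)_{g_{\beta M,0}}$ over $g_{\beta M,0} := res_{\beta M}(g_0)$; since we only need $q=0$, it suffices to prove rational surjectivity of the restricted map (\ref{wedgeinddiff}), because a rationally surjective restriction of a homomorphism along a subspace inclusion forces the ambient map to be rationally surjective as well.

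\textbf{Step 1: verify the hypotheses of Theorem \ref{inddiffsurj} for $M_r$.} The resolution $M_r$ is a compact spin manifold with boundary $\partial M_r$ of dimension $n$; here we use that fixing a spin structure on $M$ in the sense of Section \ref{secbordismgroups} fixes one on $M_r$. The map $f\colon M \to B\Gamma$ restricts to a proper map $f|_{M_r}\colon M_r \to B\Gamma$, and since $L$ is simply connected the inclusion of the regular set induces an isomorphism $\pi_1(M_r) \cong \pi_1(M)$ (collapsing a simply connected link does not change $\pi_1$), so $(f|_{M_r})_\ast\colon \pi_1(M_r)\to\Gamma$ remains split-surjective. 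The group $\Gamma$ is assumed torsion-free, of finite rational homological dimension $d$, and to satisfy the rational Baum-Connes conjecture. Finally $\mathcal{R}^+(M_r,\partial M_r)_{g_{\partial,0}}\neq\emptyset$ because $g_0$ restricts to such a metric via $res_{M_r}$, with $g_{\partial,0}=\iota_L(g_{\beta M,0})$ the collared boundary metric.

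\textbf{Step 2: apply Theorem \ref{inddiffsurj} and transport along the homotopy equivalence.} By Theorem \ref{inddiffsurj} applied with $q=0$, the map
\[
Inddiff^{\Gamma}_{g_{\partial,0}} \otimes Id_{\mathbb{Q}}\colon \pi_0\!\left(\mathcal{R}^+(M_r,\partial M_r)_{g_{\partial,0}}\right)\otimes\mathbb{Q}\to KO_{n+1}(C^\ast_{r,\mathbb{R}}\Gamma)\otimes\mathbb{Q}
\]
is surjective provided $0 \geq d-n-1$, i.e. $d \leq n+1$, which is exactly the hypothesis. Now tensoring the triangle (\ref{inddifffactdiagram}) with $\mathbb{Q}$ and using that the left vertical arrow is a bijection on $\pi_0$ (hence a $\mathbb{Q}$-vector space isomorphism after tensoring), we conclude that $\leftindex^{w}Inddiff^\Gamma_{g_0}\otimes Id_{\mathbb{Q}}$ restricted to $\pi_0(\mathcal{R}^+_w(M)_{g_{\beta M,0}})\otimes\mathbb{Q}$ is surjective. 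Composing with the inclusion $\pi_0(\mathcal{R}^+_w(M)_{g_{\beta M,0}}) \hookrightarrow \pi_0(\mathcal{R}^+_w(M))$ and noting that $\leftindex^{w}Inddiff^\Gamma_{g_0}$ on the larger space restricts to the one on the fiber (the interpolation keeping $g_{\beta M,0}$ fixed is a valid representative), the full map $\leftindex^{w}Inddiff^\Gamma_{g_0}\otimes Id_{\mathbb{Q}}$ is rationally surjective, i.e. generates the target as a $\mathbb{Q}$-vector space.

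\textbf{Main obstacle.} The delicate point is not any of the above bookkeeping but is already absorbed into Proposition \ref{inddifffact}, whose proof in turn rests on the relative index formula Proposition \ref{bunkewedge}: one must know that the contribution of the tubular neighborhood $N(\beta M)$ to the relative index vanishes because the well-adapted wedge metric has positive scalar curvature there everywhere (using Theorem \ref{wedgethm} and the scalar-flatness of the conical fibers). Thus the only genuine subtlety I would need to double-check in assembling this proof is that the homotopy equivalence in (\ref{inddifffactdiagram}) is indeed compatible with the base points and with the index-difference construction in families — i.e. that restricting a path of well-adapted wedge psc metrics with fixed $g_{\beta M,0}$ to $M_r$ yields precisely the path computing $Inddiff^\Gamma_{g_{\partial,0}}$ — but this is exactly the content established in the proof of Proposition \ref{inddifffact}, so no new work is required.
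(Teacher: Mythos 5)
Your proposal is correct and follows essentially the same route as the paper: both reduce to Proposition \ref{inddifffact} plus Theorem \ref{inddiffsurj}, and both observe that simple connectedness of $L$ is what lets one transfer split-surjectivity of $f_\ast$ to $(f|_{M_r})_\ast$ via the identification $\pi_1(M_r)\cong\pi_1(M)$ (which the paper makes explicit via Van Kampen, and you summarize a bit more loosely). The one place where you are slightly more careful than the paper is the final step: the paper's proof literally only concludes rational surjectivity on the restricted fiber $\pi_0(\mathcal{R}^+_w(M)_{g_{\beta M,0}})\otimes\mathbb{Q}$, whereas you explicitly note that this forces surjectivity of the full map on $\pi_0(\mathcal{R}^+_w(M))\otimes\mathbb{Q}$ since the restricted index-difference factors through the ambient one; this clarification is welcome but does not change the substance of the argument.
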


\begin{proof}
	
	As discussed above, this follows from \ref{inddiffsurj}, by applying Proposition \ref{inddifffact}. The only point which needs to be discussed here regards the split surjectivity of the map $f|_{M_r}$, which is needed in order to apply Proposition \ref{inddiffsurj} for the index-difference map of the resolution.  
	
	This justifies the simply connectedness assumption on the link $L$: in this case, the fibration $\partial M_r \to \beta M$ induces an isomorphism between the fundamental group of the boundary of the resolution and the one of the depth-$1$ stratum $\beta M$, while by construction the tubular neighborhood has a deformation retract to $\beta M$. 
	
	Then, by Van Kampen's Theorem:
	
	\[\pi_1(M) \simeq \pi_1(M_r) \ast_{\pi_1(\beta M)} \pi_1(\beta M) \simeq \pi_1(M_r),\]
		and in particular the inclusion of the resolution $M_r$ into $M$ induces an isomorphism on the fundamental groups. Therefore, the induced map $(f|_{M_r})_*$ of the restriction of $f$ to $M_r$ is a composition of an isomorphism and $f_*$, which is by hypothesis split surjective, and then is a split surjective map too.
	
	Finally, by Proposition \ref{inddiffsurj} the mapping $Inddiff^\Gamma_{g_{\partial, 0}} \otimes Id_{\mathbb{Q}}$ is surjective, and then by (\ref{inddifffactdiagram}) the surjectivity of the restriction of the wedge-index-difference to \[\pi_0\left(\mathcal{R}^+_w(M)_{g_{\beta M,0}}\right) \otimes \mathbb{Q}, \quad  \text{where:} \ \ g_{\beta M,0}=res_{\beta M}(g_0),\]
	 is obtained.
\end{proof} 

\begin{corollary} \label{indrelsurj}
	With the same hypothesis of proposition \ref{winddiffsurj}, the map:
	
	\[Ind^\Gamma_{rel} 	\otimes Id_{\mathbb{Q}}\colon R_{n+1}^{spin,(L,G)}(B\Gamma) \otimes \mathbb{Q} \to KO_{n+1}(C^\ast_{r, \mathbb{R}}\Gamma) \otimes \mathbb{Q}\]
			
	is surjective whenever $d\leq n+1$.		
\end{corollary}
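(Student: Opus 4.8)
\textbf{Proof of Corollary \ref{indrelsurj}.} The plan is to chain together Lemma \ref{inddifffact2}, which produces a homomorphism $l_{g_0}\colon \pi_0(\mathcal{R}^+_w(M)) \to R^{spin,(L,G)}_{n+1}(B\Gamma)$ factoring the wedge-index-difference through $Ind^\Gamma_{rel}$, with Proposition \ref{winddiffsurj}, which asserts that $\leftindex^{w}Inddiff^\Gamma_{g_0}\otimes Id_{\mathbb Q}$ is rationally surjective under the stated hypotheses (including $d\le n+1$). First I would fix a base point $g_0\in\mathcal{R}^+_w(M)$, which is nonempty by assumption, and apply Lemma \ref{inddifffact2} to obtain the commutative triangle
\begin{equation}
	\begin{tikzcd}[sep=huge]
		\pi_0(\mathcal{R}^+_w(M))  \ar[r,"\leftindex^{w}Inddiff^\Gamma_{g_0}"] \ar[d,"l_{g_0}"] &  KO_{n+1}(C^\ast_{r, \mathbb{R}}\Gamma) \\
		   R_{n+1}^{spin, (L,G)}(B\Gamma) \ar[ur, "Ind^\Gamma_{rel}"] &
	\end{tikzcd}.
\end{equation}
Tensoring the whole triangle with $\mathbb{Q}$ (an exact functor, so commutativity is preserved) gives $Ind^\Gamma_{rel}\otimes Id_{\mathbb Q}\circ (l_{g_0}\otimes Id_{\mathbb Q}) = \leftindex^{w}Inddiff^\Gamma_{g_0}\otimes Id_{\mathbb Q}$.

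The key step is then purely formal: since by Proposition \ref{winddiffsurj} the composite $\leftindex^{w}Inddiff^\Gamma_{g_0}\otimes Id_{\mathbb Q}$ is surjective onto $KO_{n+1}(C^\ast_{r,\mathbb R}\Gamma)\otimes\mathbb Q$ whenever $d\le n+1$, and this composite factors through $Ind^\Gamma_{rel}\otimes Id_{\mathbb Q}$, the latter map must itself be surjective onto the same target. Indeed, given $y\in KO_{n+1}(C^\ast_{r,\mathbb R}\Gamma)\otimes\mathbb Q$, pick $x\in\pi_0(\mathcal{R}^+_w(M))\otimes\mathbb Q$ with $\leftindex^{w}Inddiff^\Gamma_{g_0}\otimes Id_{\mathbb Q}(x)=y$; then $(l_{g_0}\otimes Id_{\mathbb Q})(x)\in R^{spin,(L,G)}_{n+1}(B\Gamma)\otimes\mathbb Q$ maps to $y$ under $Ind^\Gamma_{rel}\otimes Id_{\mathbb Q}$. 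This is exactly the assertion of the corollary.

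I do not expect any genuine obstacle here — the corollary is an immediate consequence of the two cited results once the factorization diagram is written down, and the hypotheses of Proposition \ref{winddiffsurj} are quoted verbatim as the hypotheses of the corollary. The one point worth a sentence of care is that the image of a homomorphism containing the image of a surjection forces surjectivity, and that $-\otimes\mathbb Q$ preserves both commutativity of the diagram and surjectivity of $\leftindex^{w}Inddiff^\Gamma_{g_0}\otimes Id_{\mathbb Q}$; all of this is standard. Thus the proof reduces to: apply Lemma \ref{inddifffact2}, tensor with $\mathbb Q$, invoke Proposition \ref{winddiffsurj}, and conclude. $\qed$
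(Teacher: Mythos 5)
Your argument is exactly the paper's proof, merely spelled out in more detail: the paper cites the surjectivity of $\leftindex^{w}Inddiff^\Gamma_{g_0}\otimes Id_{\mathbb{Q}}$ from Proposition \ref{winddiffsurj} together with the commutative triangle of Lemma \ref{inddifffact2}, which is precisely the factorization you write down. (A tiny pedantic note: commutativity of a diagram is preserved by any functor, so exactness of $-\otimes\mathbb{Q}$ is not actually needed for that step, only right-exactness to preserve surjectivity — but this has no bearing on the correctness of the argument.)
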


\begin{proof}
	The surjectivity of the rational wedge-index-difference map of proposition \ref{winddiffsurj} implies the thesis using the commutativity of the diagram of lemma \ref{inddifffact2}.
\end{proof}

Now, let $M$ be a compact, spin stratified pseudomanifold with $(L,G)$-singularities of dimension $n$, and assume $\pi_1(M)=\Gamma$ be a discrete group. It is well known that $M$ admits a canonical map $f\colon M \to B\Gamma$ up to homotopy, classifying its universal cover, say $\tilde{M}$. 

By general properties, this map is $2$-connected, which we recall this means that $f$ induces isomorphisms on the $i$-th homotopy groups for $i=0,1$, while it induces a surjective map for $i=2$. Then, we know that $f$ functorially induces maps:

\[f_*\colon R_k^{spin,(L,G)}(M) \to R_k^{spin,(L,G)}(B\Gamma),\]

which are isomorphisms for all $k\geq0$ from Theorem \ref{2conrgroup}. 

Moreover, we have shown in \ref{indrelsurj} that under certain hypothesis on the group $\Gamma$, the map $Ind^\Gamma_{rel}$ is rational surjective.
In particular, if $\Gamma$ is a torsion-free group satisfying the rational Baum-Connes conjecture, with rational homological dimension $d$ such that $d-n-1\leq0$, then one obtains the following diagram, in which each map is assumed to be tensorialized with the identity on $\mathbb{Q}$:

\begin{equation} \label{rankdiagram}
	\begin{tikzcd}[sep=small, row sep=2.5em, scale cd=0.85]
		\ldots \ar[r,"\varphi"] & \Omega_{n+1}^{spin,(L,G)}(M) \otimes \mathbb{Q} \ar[r, "\iota"] \ar[d,"f_* "] & R^{spin,(L,G)}_{n+1}(M)\otimes \mathbb{Q}  \ar[r, "\partial "] \ar[d, "f_* ","\Huge{\sim}"labl] & Pos^{spin,(L,G)}_{n}(M)\otimes \mathbb{Q} \ar[r, "\varphi"] \ar[d, "f_* "] &  \ldots \\
		\ldots \ar[r,"\varphi"] & \Omega_{n+1}^{spin,(L,G)}(B\Gamma)\otimes \mathbb{Q}  \ar[r, "\iota"] \ar[d,"Ind^{\Gamma}_L"] & R^{spin,(L,G)}_{n+1}(B\Gamma) \otimes \mathbb{Q} \ar[r, "\partial"] \ar[d, "Ind^{\Gamma}_{rel}", twoheadrightarrow] & Pos^{spin,(L,G)}_{n}(B\Gamma)\otimes \mathbb{Q} \ar[r, "\varphi "] \ar[d, "\rho^{\Gamma}"]  &  \ldots \\
		\ldots \ar[r] & KO_{n+1}(C^{\ast}_{L;\Gamma}) \otimes \mathbb{Q} \ar[r] \ar[d,"\Huge{\sim}"labl] &  KO_{n+1}(C^{\ast}_r\Gamma) \otimes \mathbb{Q} \ar[r] & KO_{n}(C^{\ast}_{L,0;\Gamma}) \otimes \mathbb{Q} \ar[r] & \ldots \\
		KO_{n+1}(M) \otimes \mathbb{Q} \ar[r, "f_{\sharp}"] \ar[from=ruuu, bend right=21, crossing over] & KO_{n+1}(B\Gamma) \otimes \mathbb{Q} \ar[swap, ur, "\mu", "\rotatebox{-60}{\(\sim\)}"labl] & & & 
	\end{tikzcd}	
\end{equation}

Observe that the curved vertical arrow on the left represents the fact that the composition $Ind_L^\Gamma \circ f_*$, can be thought as the composition:

\[\begin{tikzcd}
	\Omega_n^{spin,(L,G)}(M) \otimes \mathbb{Q} \ar[r] & KO_{n+1}(M) \otimes \mathbb{Q} \ar[r, "f_{\sharp}"] & KO_{n+1}(B\Gamma) \otimes \mathbb{Q},
	\end{tikzcd}
\]

where the first arrow associated to a bordism class the pushforward of the fundamental class in $KO$-homology (see proposition \ref{fundamentalclass}) along the map with range in $M$ of the bordism class, while the second arrow is the one functorially induced by $f$. Moreover, observe that since all the objects here are abelian groups, hence $\mathbb{Z}$-modules, by tensorizing with $\mathbb{Q}$ one annihilates the torsion and obtains a $\mathbb{Q}$-vector space. 

We now state and prove the following theorem, which is inspired by a result due to Schick and Zenobi (see \cite[Theorem 1.1]{SZ}).

\begin{theorem}\label{rankpos}
	Let $M$ be a compact, spin stratified pseudomanifold with $(L,G)$-singularities of dimension $n \geq 6$ with link $L$ simply connected. Assume that $\Gamma=\pi_1(M)$ satisfies all the hypothesis of proposition \ref{winddiffsurj} and denote by $f\colon M \to B\Gamma$ the 2-connected classifying map of the universal cover of $M$. 
	
	If:
	
	 \[k:=\dim\left(Coker\left(f_{\sharp}\colon KO_{n+1}(M) \otimes \mathbb{Q} \to KO_{n+1}(B\Gamma) \otimes \mathbb{Q}\right)\right),\]
	 
    then the following estimate holds:
    
    \[rk\left(Pos^{spin,(L,G)}_n(M)\right)\geq k\]
\end{theorem}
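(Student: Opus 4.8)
The plan is to run a diagram chase on the rational commutative diagram \eqref{rankdiagram}, extracting the desired lower bound on $\mathrm{rk}\bigl(Pos^{spin,(L,G)}_n(M)\bigr)$ from the rational surjectivity of $Ind^{\Gamma}_{rel}$ (Corollary \ref{indrelsurj}) together with the fact that $f_*\colon R^{spin,(L,G)}_{n+1}(M)\to R^{spin,(L,G)}_{n+1}(B\Gamma)$ is an isomorphism (Theorem \ref{2conrgroup}). First I would observe that $\rho^{\Gamma}\circ\partial\colon R^{spin,(L,G)}_{n+1}(B\Gamma)\otimes\mathbb{Q}\to KO_n(C^{\ast}_{L,0;\Gamma})\otimes\mathbb{Q}$ factors (by the commutativity of the first square of \eqref{mapstolzhigson}, tensored with $\mathbb{Q}$) as $\delta_{\Gamma}\circ Ind^{\Gamma}_{rel}$, where $\delta_{\Gamma}\colon KO_{n+1}(C^{\ast}_{r,\mathbb{R}}\Gamma)\otimes\mathbb{Q}\to KO_n(C^{\ast}_{L,0;\Gamma})\otimes\mathbb{Q}$ is the boundary map of the universal Higson--Roe sequence \eqref{higsonroelocalizedu}. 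Hence the image of $\rho^{\Gamma}\circ\partial$ contains $\delta_{\Gamma}\bigl(\mathrm{Im}(Ind^{\Gamma}_{rel})\bigr)=\delta_{\Gamma}\bigl(KO_{n+1}(C^{\ast}_{r,\mathbb{R}}\Gamma)\otimes\mathbb{Q}\bigr)=\mathrm{Im}(\delta_{\Gamma})$, using Corollary \ref{indrelsurj}. Since the bottom row of \eqref{rankdiagram} is exact, $\mathrm{Im}(\delta_{\Gamma})=\ker\bigl(KO_n(C^{\ast}_{L,0;\Gamma})\otimes\mathbb{Q}\to KO_n(C^{\ast}_{L;\Gamma})\otimes\mathbb{Q}\bigr)$; and by exactness again this kernel is the cokernel of the map $KO_{n+1}(C^{\ast}_{L;\Gamma})\otimes\mathbb{Q}\to KO_{n+1}(C^{\ast}_{r,\mathbb{R}}\Gamma)\otimes\mathbb{Q}$, i.e. $\mathrm{rk}\,\mathrm{Im}(\delta_{\Gamma})=\dim\bigl(KO_{n+1}(C^{\ast}_{r,\mathbb{R}}\Gamma)\otimes\mathbb{Q}\bigr)-\mathrm{rk}\bigl(KO_{n+1}(C^{\ast}_{L;\Gamma})\otimes\mathbb{Q}\to KO_{n+1}(C^{\ast}_{r,\mathbb{R}}\Gamma)\otimes\mathbb{Q}\bigr)$.

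Next I would identify this last number with $k$. Chasing the lower triangle of \eqref{rankdiagram}: the map $KO_{n+1}(C^{\ast}_{L;\Gamma})\otimes\mathbb{Q}\to KO_{n+1}(C^{\ast}_{r,\mathbb{R}}\Gamma)\otimes\mathbb{Q}$ is, under the identifications $KO_{n+1}(C^{\ast}_{L;\Gamma})\cong KO_{n+1}(B\Gamma)$ (Yu's localization isomorphism $Ind_L^{\Gamma}$, cf. \eqref{indassembly}) and the rational isomorphism $\mu\colon KO_{n+1}(B\Gamma)\otimes\mathbb{Q}\xrightarrow{\sim}KO_{n+1}(C^{\ast}_{r,\mathbb{R}}\Gamma)\otimes\mathbb{Q}$ (rational Baum--Connes for $\Gamma$), simply the assembly map composed with these isomorphisms, hence surjective; wait --- that would make $k=0$. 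The point is rather that one must use the map $Ind_L^{\Gamma}\circ f_*$ on bordism, which (as the curved arrow in \eqref{rankdiagram} records) factors through $f_{\sharp}\colon KO_{n+1}(M)\otimes\mathbb{Q}\to KO_{n+1}(B\Gamma)\otimes\mathbb{Q}$. So the relevant subspace of $KO_{n+1}(C^{\ast}_{r,\mathbb{R}}\Gamma)\otimes\mathbb{Q}$ that is hit through the Stolz side is $\mu\bigl(\mathrm{Im}\,f_{\sharp}\bigr)$, and the complementary directions --- of dimension exactly $k=\dim\mathrm{Coker}(f_{\sharp})$ --- are, by the above, realized by $\delta_{\Gamma}$ and therefore pulled back, via $(\rho^{\Gamma}\circ\partial)$ and the isomorphism $f_*$ on $R$-groups, to classes in $Pos^{spin,(L,G)}_n(M)\otimes\mathbb{Q}$. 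Concretely: given a $\mathbb{Q}$-basis $\{v_1,\dots,v_k\}$ of a complement of $\mathrm{Im}(f_{\sharp})$ in $KO_{n+1}(B\Gamma)\otimes\mathbb{Q}$, lift each $\mu(v_i)\in KO_{n+1}(C^{\ast}_{r,\mathbb{R}}\Gamma)\otimes\mathbb{Q}$ along the surjection $Ind^{\Gamma}_{rel}$ to $x_i\in R^{spin,(L,G)}_{n+1}(B\Gamma)\otimes\mathbb{Q}$, pull back along $f_*^{-1}$ to $y_i\in R^{spin,(L,G)}_{n+1}(M)\otimes\mathbb{Q}$, and set $z_i:=\partial(y_i)\in Pos^{spin,(L,G)}_n(M)\otimes\mathbb{Q}$.

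It then remains to show the $z_i$ are linearly independent over $\mathbb{Q}$. For this I would apply $\rho^{\Gamma}\circ f_*$ to the $z_i$ (using the commutativity of the second square of \eqref{mapstolzhigson} and the naturality squares for $f_*$ in \eqref{rankdiagram}): one gets $\rho^{\Gamma}(f_*(z_i))=\rho^{\Gamma}(\partial(f_*(y_i)))=\rho^{\Gamma}(\partial(x_i))=\delta_{\Gamma}(Ind^{\Gamma}_{rel}(x_i))=\delta_{\Gamma}(\mu(v_i))$. So it suffices to check that $\{\delta_{\Gamma}(\mu(v_1)),\dots,\delta_{\Gamma}(\mu(v_k))\}$ is linearly independent in $KO_n(C^{\ast}_{L,0;\Gamma})\otimes\mathbb{Q}$. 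By exactness of the bottom row, $\ker\delta_{\Gamma}$ (rationally) is the image of $KO_{n+1}(C^{\ast}_{L;\Gamma})\otimes\mathbb{Q}\to KO_{n+1}(C^{\ast}_{r,\mathbb{R}}\Gamma)\otimes\mathbb{Q}$, which under $\mu^{-1}$ and $Ind_L^{\Gamma}$ corresponds to $\mathrm{Im}(f_{\sharp})$ --- here is where the identification of the localized $K$-theory of $C^{\ast}_{L;\Gamma}$ with $KO_{n+1}(B\Gamma)$ and the factorization of the assembly-side map through $f_{\sharp}$ are used --- so $\delta_{\Gamma}\circ\mu$ has kernel exactly $\mathrm{Im}(f_{\sharp})\otimes\mathbb{Q}$, and since the $v_i$ were chosen to span a complement, the $\delta_{\Gamma}(\mu(v_i))$ are independent. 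Hence the $z_i$ span a $k$-dimensional subspace of $Pos^{spin,(L,G)}_n(M)\otimes\mathbb{Q}$, giving $\mathrm{rk}\bigl(Pos^{spin,(L,G)}_n(M)\bigr)\geq k$. The main obstacle I anticipate is making the identification in this last paragraph fully precise --- i.e. verifying that, rationally, $\ker\delta_{\Gamma}$ inside $KO_{n+1}(C^{\ast}_{r,\mathbb{R}}\Gamma)\otimes\mathbb{Q}$ corresponds under $\mu\circ Ind_L^{\Gamma}$ precisely to the image of $f_{\sharp}$, which requires carefully tracking the commutativity of the assembly/localization squares \eqref{indassembly}, \eqref{higsonroelocalized} together with the identification of $KO_*(C^{\ast}_{L;\Gamma})$ with $KO_*(B\Gamma)$ and the naturality of assembly under the classifying map $f$; everything else is a formal diagram chase.
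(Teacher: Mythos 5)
Your construction of the candidate classes is the same as the paper's: pick a complement of $\mathrm{Im}(f_{\sharp})$, use rational Baum--Connes and the surjectivity of $Ind^{\Gamma}_{rel}\circ f_*$ (Corollary \ref{indrelsurj} plus Theorem \ref{2conrgroup}) to lift to $R^{spin,(L,G)}_{n+1}(M)\otimes\mathbb{Q}$, and push down with $\partial$. The problem is your detection step, which is not just incomplete but false.

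You propose to verify linear independence of the $z_i$ by computing $\rho^{\Gamma}(f_*(z_i))=\delta_{\Gamma}(\mu(v_i))$ and claiming that, rationally, $\ker\delta_{\Gamma}$ corresponds under $\mu\circ Ind_L^{\Gamma}$ to $\mathrm{Im}(f_{\sharp})$. That identification is wrong. By exactness of the universal Higson--Roe sequence \eqref{higsonroelocalizedu}, $\ker\delta_{\Gamma}$ is the image of the \emph{full} universal assembly map $KO_{n+1}(C^{\ast}_{L;\Gamma})\to KO_{n+1}(C^{\ast}_{r,\mathbb{R}}\Gamma)$, i.e.\ of $\mu$ itself under the identification $KO_{n+1}(C^{\ast}_{L;\Gamma})\cong KO_{n+1}(B\Gamma)$ --- it has nothing to do with which classes are hit from $M$ through the bordism groups. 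Since $\Gamma$ satisfies rational Baum--Connes, $\mu$ is rationally surjective, so $\ker\delta_{\Gamma}\otimes\mathbb{Q}$ is everything and $\delta_{\Gamma}\otimes\mathrm{Id}_{\mathbb{Q}}=0$. In particular every $\mu(v_i)$ lies in $\ker\delta_{\Gamma}$, so $\delta_{\Gamma}(\mu(v_i))=0$ for all $i$ and your test detects nothing. (You in fact noticed this tension yourself --- ``wait, that would make $k=0$'' --- but the repair you chose reinterprets $\ker\delta_{\Gamma}$ rather than abandoning $\rho^{\Gamma}$ as the detecting invariant.)

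The correct detection happens one row up, in the Stolz sequence of $M$ itself, and this is what the paper does. By exactness, $\partial\bigl(\sum a_i y_i\bigr)=0$ iff $\sum a_i y_i\in\mathrm{Im}(\iota)\subset R^{spin,(L,G)}_{n+1}(M)\otimes\mathbb{Q}$. If that were the case, applying $Ind^{\Gamma}_{rel}\circ f_*\circ\iota$ to a preimage in $\Omega^{spin,(L,G)}_{n+1}(M)\otimes\mathbb{Q}$ and using the curved-arrow factorization of $Ind^{\Gamma}_L\circ f_*$ through $f_{\sharp}$ in \eqref{rankdiagram} would give $\sum a_i\,\mu(v_i)\in\mu(\mathrm{Im}\,f_{\sharp})$, hence $\sum a_i v_i\in\mathrm{Im}(f_{\sharp})$, contradicting the choice of the $v_i$ as a basis of a complement. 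So no nontrivial combination of the $y_i$ lies in $\ker\partial$, which gives the linear independence of the $z_i=\partial(y_i)$ directly. Replace your last paragraph with this argument and the proof goes through.
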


\begin{proof}
	As already remarked, all the hypothesis are needed to obtain a diagram like (\ref{rankdiagram}). 
	We want to show that using its properties, the rational bordism group $Pos^{spin,(L,G)}_n(M)$ is large at least as the Cokernel of $f_\sharp$.
	
	 Take a non zero element of $c\in Coker(f_\sharp)\subset KO_{n+1}(B\Gamma) \otimes \mathbb{Q}$ and consider its image under the assembly map $\mu(c) \neq 0$ (recall that $\mu$ is an isomorphism since $\Gamma$ is required satisfying the rational Baum-Connes conjecture). 
	 
	 Since both $Ind^\Gamma_{rel}$ and $f_\ast$ are surjective, we can consider the preimage of $\mu(c)$ along their composition $r \in R_{n+1}^{spin,(L,G)}(M) \otimes \mathbb{Q}$. Now, we want to show that $r$ is mapped injectively in $Pos_n^{spin,(L,G)}(M)$ using the boundary map $\partial$. This happens because $r \notin  Im(\iota)$. In fact, if $r$ lies in the image of $\iota$, then take a preimage of $r$ in $\Omega_{n+1}^{spin,(L,G)}(M) \otimes \mathbb{Q}$: its image under the composition $Ind^\Gamma_{rel} \circ f_\ast \circ \iota$ is by consturction $\mu(c)$, but then by commutativity of (\ref{rankdiagram}), this would imply that $c$ is in the image of $f_\sharp$, which is a contradiction.
	 
	 Then, since $Im(\iota)=Ker(\partial)$ by exactness, $r$ is mapped injectively in $Pos_n^{spin,(L,G)}(M) \otimes \mathbb{Q}$. 
	
	Finally, the thesis is obtained by repeating this argument for all basis elements of $Coker(f_\sharp)$ and by recalling that the rank of an abelian group $G$ (i.e. a $\mathbb{Z}$-module) coincides with the dimension of the $\mathbb{Q}$-vector space $G \otimes \mathbb{Q}$.
\end{proof}

\newpage

	\addcontentsline{toc}{section}{References}
	\bibliographystyle{plain}
	\nocite{*}
	\bibliography{bibliography}

\end{document}